\setlist[enumerate,1]{label={\arabic*.}}
\newcommand{\paren}[1]{\ensuremath{\left( #1 \right)}}
\newcommand{\set}[1]{\ensuremath{\left\{ #1 \right\}}}
\newcommand{\setdiv}{\,\middle|\,}
\newcommand{\abs}[1]{\ensuremath{\left| #1 \right|}}
\newcommand{\norm}[1]{\ensuremath{\left\| #1 \right\|}}
\newcommand{\braces}[1]{\ensuremath{\left[ #1 \right]}}
\newcommand{\innerprod}[1]{\ensuremath{\left< #1 \right>}}
\newcommand{\Z}{\mathbb{Z}}
\newcommand{\R}{\mathbb{R}}
\newcommand{\N}{\mathbb{N}}
\newcommand{\Q}{\mathbb{Q}}
\newcommand{\C}{\mathbb{C}}
\newcommand{\Matrix}[1]{\begin{pmatrix}#1\end{pmatrix}}
\newcommand{\SmallMatrix}[1]{\left(\begin{smallmatrix}#1\end{smallmatrix}\right)}
\DeclareMathOperator{\diag}{diag}
\renewcommand{\Re}{{\mathop{\mathgroup\symoperators Re}}}
\renewcommand{\Im}{{\mathop{\mathgroup\symoperators Im}}}
\newcommand{\sgn}{{\mathop{\mathgroup\symoperators \,sgn}}}
\DeclareMathOperator*{\res}{res}
\DeclareMathOperator{\Tr}{Tr}
\newcommand{\e}[1]{e\paren{#1}}
\newcommand{\wbar}[1]{\overline{#1}}
\newcommand{\wtilde}[1]{\widetilde{#1}}
\newcommand{\what}[1]{\widehat{#1}}
\newcommand{\trans}[1]{{#1}^T}
\newcommand{\Max}[1]{\ensuremath{\max \set{#1}}}
\newcommand{\Min}[1]{\ensuremath{\min \set{#1}}}
\newcommand{\summod}[1]{\ensuremath{\,(\mathrm{mod}\,#1)}}
\newcommand{\pFqName}[2]{{_{#1}F_{#2}}}
\newcommand{\pFq}[5]{\pFqName{#1}{#2}\paren{\begin{matrix}#3;\\#4;\end{matrix}\,#5}}
\newcommand{\pFqStarName}[2]{{_{#1}F^*_{#2}}}
\newcommand{\pFqStar}[5]{\pFqStarName{#1}{#2}\paren{\begin{matrix}#3;\\#4;\end{matrix}\,#5}}
\newcommand{\piecewise}[1]{\left\{\begin{matrix}#1\end{matrix}\right.}
\newcommand{\If}{\mbox{if }}
\newcommand{\Otherwise}{\mbox{otherwise}}
\DeclareMathAlphabet{\mathcalligra}{T1}{calligra}{m}{n}
\newcommand{\WigDName}{\mathcal{D}}
\newcommand{\WigDMat}[1]{\WigDName^{#1}}
\newcommand{\WigD}[3]{\WigDMat{#1}_{#2,#3}}
\newcommand{\WigdName}{\mathcalligra{d}}
\newcommand{\tildek}[1]{\wtilde{k}\paren{#1}}
\newcommand{\Dtildek}[2]{\mathcal{R}^{#1}\paren{#2}}
\newcommand{\vpmpm}[1]{v_{_{#1}}}
\newcommand{\AdSq}{{\mathrm{Ad}}^2}
\newcommand{\specmu}{\mathbf{spec}}
\newcommand{\sinmu}{\mathbf{sin}}
\newcommand{\Weyl}{\frak{W}}
\newcommand{\Ai}{\operatorname{Ai}}
\newcommand{\envAi}{\operatorname{envAi}}
\newcommand{\envU}{\operatorname{envU}}
\newcommand{\envJ}{\operatorname{envJ}}
\newcommand{\arccosh}{\operatorname{arccosh}}
\theoremstyle{plain} 
\newtheorem{thm}{Theorem}
\newtheorem{cor}[thm]{Corollary}
\newtheorem{lem}[thm]{Lemma}
\newtheorem{prop}[thm]{Proposition}
\newcommand{\claim}{\par\noindent \textit{Claim: }}
\theoremstyle{definition}
\newtheorem*{defn*}{Definition}
\theoremstyle{remark}
\newtheorem*{rem*}{Remark}
\newtheorem{prob}{Problem}
\newtheorem*{prob*}{Problem}
\crefname{thm}{Theorem}{Theorems}
\crefname{lem}{Lemma}{Lemmas}
\crefname{prop}{Proposition}{Propositions}
\crefname{cor}{Corollary}{Corollaries}
\crefname{conj}{Conjecture}{Conjectures}
\crefname{defn}{Definition}{Definitions}
\crefname{prob}{Problem}{Problems}
\title{The arithmetic Kuznetsov formula on $GL(3)$, II:\\ The general case.}
\author{Jack Buttcane}
\date{16 March 2018}
\address{Department of Mathematics \& Statistics, 5752 Neville Hall, Orono, ME 04469, USA}
\email{jack.buttcane@maine.edu}
\thanks{During the time of this research, the author was supported by NSF grant DMS-1916598.}
\begin{document}

\begin{abstract}
We obtain the last of the standard Kuznetsov formulas for $SL(3,\Z)$.
In the previous paper, we were able to exploit the relationship between the positive-sign Bessel function and the Whittaker function to apply Wallach's Whittaker expansion; now we demonstrate the expansion of functions into Bessel functions for all four signs, generalizing Wallach's theorem for $SL(3)$.
As applications, we again consider the Kloosterman zeta functions and smooth sums of Kloosterman sums.
The new Kloosterman zeta functions pose the same difficulties as we saw with the positive-sign case, but for the negative-sign case, we obtain some analytic continuation of the unweighted zeta function and give a sort of reflection formula that exactly demonstrates the obstruction when the moduli are far apart.
The completion of the remaining sign cases means this work now both supersedes the author's thesis and completes the work started in the original paper of Bump, Friedberg and Goldfeld.
\end{abstract}

\subjclass[2010]{Primary 11L05, 11F72; Secondary 11F55}

\maketitle

\section{Introduction}

In 1956, a paper of Selberg \cite{Sel01} introduced the study of trace formulas to the field of analytic number theory.
Selberg's trace formula attaches a geometric interpretation -- a sum over conjugacy classes of the discrete group -- to a reasonably arbitrary sum over the spectrum of a discrete quotient of a symmetric space.
In 1978, Bruggeman \cite{Brugg01} introduced a type of trace formula weighted on the spectral side by Fourier-Whittaker coefficients which has a more arithmetic interpretation as a sum over certain exponential sums of the type commonly attributed to Kloosterman.
In 1980, apparently independent of Bruggeman, Kuznetsov \cite{Kuz01} developed an identical formula and gave its inverse, expressing a reasonably arbitrary sum of Kloosterman sums as a sum over the spectrum of $PSL(2,\Z)\backslash PSL(2,\R)/SO(2,\R)$, weighted by the Fourier-Whittaker coefficients.
The formulas of Selberg and Kuznetsov have become a cornerstone of modern analytic number theory.
In particular, the inversion of Kuznetsov's formula is something which cannot readily be applied to the Selberg trace formula, and this is key to many \cite{BFKMM01,MeRizwan01} and varied \cite{Duke01,DFI02} results; it is this step that we achieve now for the generalization to $GL(3)$.

In the time since Kuznetsov's work, these trace formulas have been generalized in many different directions.
We now have the Arthur-Selberg trace formula which extends the Selberg formula to very general groups \cite{Arthur01}; multiple forms exist to handle certain difficulties related to the lack of absolute convergence in the naive formula and other technical considerations.
Of the generalizations of the spectral Kuznetsov formula, one should point out those of Miatello -- Wallach \cite{MiaWallach01} for rank-one groups and Li \cite{Gold01} for $SL(n,\Z)$; the Kloosterman sums occuring in these formulas are described by the Bruhat decomposition of the discrete group in question.

In the series of papers \cite{SpectralKuz,WeylI,WeylII,ArithKuzI}, we have considered the full generalization of Kuznetsov's formulas to $SL(3,\Z)$.
There are essentially seven (families of) such formulas:
When the test function is on the spectral side, one wishes to collect the forms by their minimal weight (i.e. according to the ramification at the Archimedean place), which yields a two parameter spectrum of weight-zero Maass forms treated in \cite{SpectralKuz}, a two parameter spectrum of weight-one Maass forms treated in \cite{WeylI}, and a sequence of one-parameter Maass forms of each weight $2 \le d \in \N$ treated in \cite{WeylII}.
When the test function is on the arithmetic (geometric) side, one should collect the long-element Kloosterman sums by their signs; since there are two moduli -- or, equivalently, two pairs of indices -- for the $SL(3)$ Kloosterman sums, this leads to four choices of signs, and the first of these was treated in \cite{ArithKuzI}.
The current paper completes this project by developing the remaining three arithmetic Kuznetsov formulas.
Of course, one could hope for two more arithmetic Kuznetsov formulas with the test function on the hyper-Kloosterman sum terms, but though it is obvious that such formulas exist (a simple modification of \eqref{eq:PreBesselExpand} shows integrals of the $w_5$ Bessel function are dense in the Schwartz-class functions on $\R^\times$), it is not clear that one can produce a version that is useful, and we will discuss this a bit more, below.

To apply the Kuznetsov formula, one needs an understanding of the Kloosterman sums, which is given in \cite{BFG,Stevens,DabFish, LargeSieve}, and an understanding of the integral transforms and special functions; the latter is typically the main obstruction (one may consider that the point of \cite{SpectralKuz} was the evaluation of an integral of real dimension 13).
The main effort of the papers \cite{SpectralKuz,WeylI,WeylII} was to show that the integral transforms can be written as kernel integral transforms and to provide some basic (but useable) integral representations of the kernel functions, which we call the $GL(3)$ Bessel functions.
(Now there are several more such representations, most notably in \cite{Subconv,GPSSubconv,MeFan01}.)
Obtaining the arithmetic Kuznetsov formulas then typically proceeds by an inversion formula.
The paper \cite{ArithKuzI} was able to accomplish this for the positive-sign Kloosterman sums by noting that in that case, the Bessel function is the spherical Whittaker function, and the inverse Whittaker transform has an inversion due to Wallach \cite{Wallach}.
For the general case, we have no such inversion formula, and constructing it will be the bulk of the work of the current paper.

We simultaneously prove the arithmetic Kuznetsov formula at all signs (including the positive-sign case) and the expansion of Schwartz-class functions on $(\R^\times)^2$ into the $GL(3)$ Bessel functions; the heart of the argument is that a certain complicated integral is really just the projection onto the span of a single Bessel function given by the usual inner product.
The Bessel expansion can be regarded as a new entry in the relatively thin book of theorems on rank-two hypergeometric functions, and relates to Wallach's theorem as follows:
If one regards the $K$-Bessel function as the Whittaker function of a spherical $GL(2)$ Maass form, then Wallach's theorem \cite[Theorem 15.9.3]{Wallach} generalizes Kontorovich-Lebedev inversion \cite{Lebedev01,KontLeb01} to Whittaker functions which are not necessarily spherical and to real reductive groups.
On the other hand, if one regards the $K$- and $J$-Bessel functions as the kernel functions occuring in the Kuznetsov formula, then the current theorem generalizes the Kontorovich-Lebedev and Sears-Titchmarsh \cite{SearsTitch01} inversions to $GL(3)$.
In this sense, one should regard Sears-Titchmarsh as the deeper theorem, as it must treat a certain point spectrum in addition to the continuous spectrum of Kontorovich-Lebedev; in a similar manner, the Bessel expansion of the current paper is much deeper than \cite[Theorem 3.1]{ArithKuzI}, as it requires the full strength of Wallach's theorem for $GL(3)$, while \cite[Theorem 3.1]{ArithKuzI} is just the spherical case of Wallach's theorem.
In particular, we must treat the entire spectrum of $GL(3)$, including both two-parameter spectra (for $d=0,1$) and the sequence of one-parameter spectra that are the generalized principal series representations.

Any arithmetic Kuznetsov formula worth its salt should immediately have something to say about cancellation in smooth sums of Kloosterman sums and analytic continuation of the Kloosterman zeta function -- in a different guise, these were Kuznetsov's original goals, and we give them here as basic applications.
As in the previous paper \cite{ArithKuzI}, we encounter difficulties when the moduli of the Kloosterman sums are far apart, so we must consider a lightly modified Kloosterman zeta function and we will see some extra error terms in the bounds for smooth sums.
On $SL(2)$, the nicest case for a Kloosterman zeta function is the positive-sign case (with the $J$-Bessel function), and for its analog on $SL(3)$ (the negative-sign case due to a difference in definition of the long Weyl element), we are able to give a sort of reflection formula which precisely quantifies these difficulties.

The proof of these applications will appear deceptively simple, but one should note that this is only \emph{after} the development of the Kuzetnsov formulas, where we are forearmed with detailed knowledge of the special functions and integral transforms.
If one steps back a bit to see that we are simultaneously treating the expansion of a Poincar\'e series over the full spectrum of $SL(3,\Z)\backslash PSL(3,\R)$ using deep methods (meaning the opposite of the so-called ``soft'' methods), the fact that this becomes so easy is an excellent argument for the strength of the techniques that led up to it.

\section{Some notation for $GL(3)$}
Let $G=PSL(3,\R) = GL(3,\R)/\R^\times$ and $\Gamma=SL(3,\Z)$.
The Iwasawa decomposition of $G$ is $G=U(\R) Y^+ K$ using the groups $K=SO(3,\R)$,
\[ U(R) = \set{\Matrix{1&x_2&x_3\\&1&x_1\\&&1} \setdiv x_i\in R}, \qquad R \in \set{\R,\Q,\Z}, \]
\[ Y^+ = \set{\diag\set{y_1 y_2,y_1,1} \setdiv y_1,y_2 > 0}. \]
The measure on the space $U(\R)$ is simply $dx := dx_1 \, dx_2 \, dx_3$, and the measure on $Y^+$ is
\[ dy := \frac{dy_1 \, dy_2}{(y_1 y_2)^3}, \]
so that the measure on $G$ is $dg :=dx \, dy \, dk$, where $dk$ is the Haar probability measure on $K$ (see \cite[section 2.2.1]{HWI}).
We generally identify elements of quotient spaces with their coset representatives, and in particular, we view $U(\R)$, $Y^+$, $K$ and $\Gamma$ as subsets of $G$.

Characters of $U(\R)$ are given by
\[ \psi_m(x) = \psi_{m_1,m_2}(x) = \e{m_1 x_1+m_2 x_2}, \qquad \e{t} = e^{2\pi i t}, \]
where $m\in\R^2$; we say $\psi=\psi_m$ is non-degenerate when $m_1 m_2 \ne 0$.
Characters of $Y^+$ are given by the power function on $3\times 3$ diagonal matrices, defined by
\[ p_\mu\paren{\diag\set{a_1,a_2,a_3}} = \abs{a_1}^{\mu_1} \abs{a_2}^{\mu_2} \abs{a_3}^{\mu_3}, \]
where $\mu\in\C^3$.
We assume $\mu_1+\mu_2+\mu_3=0$ so this is defined modulo $\R^\times$, renormalize by $\rho=(1,0,-1)$, and extend by the Iwasawa decomposition
\[ p_{\rho+\mu}\paren{x y k} = y_1^{1-\mu_3} y_2^{1+\mu_1}, \qquad x\in U(\R),y\in Y^+,k\in K. \]
Integrals in $\mu$ use the permutation-invariant measure $d\mu=d\mu_1 \, d\mu_2$.

The Weyl group $\Weyl$ of $G$ contains the six matrices
\begin{equation*}
	\begin{array}{rclcrclcrcl}
		I &=& \Matrix{1\\&1\\&&1}, && w_2 &=& -\Matrix{&1\\1\\&&1}, && w_3 &=& -\Matrix{1\\&&1\\&1}, \\
		w_4 &=& \Matrix{&1\\&&1\\1}, && w_5 &=& \Matrix{&&1\\1\\&1}, && w_l &=& -\Matrix{&&1\\&1\\1},
	\end{array}
\end{equation*}
with the relations $w_3 w_2=w_4$, $w_2 w_3=w_5$ and $w_2 w_3 w_2=w_3 w_2 w_3=w_l$.
The subgroup of order three is $\Weyl_3 = \set{I, w_4, w_5}$.
The Weyl group induces an action on the coordinates of $\mu$ by $p_{\mu^w}(a):=p_\mu(waw^{-1})$, and we denote the coordinates of the permuted parameters by $\mu^w_i := (\mu^w)_i$, $i=1,2,3$.
Explicity, the action is
\begin{align*}
	\mu^I =& \paren{\mu_1,\mu_2,\mu_3}, & \mu^{w_2} =& \paren{\mu_2,\mu_1,\mu_3}, & \mu^{w_3} =& \paren{\mu_1,\mu_3,\mu_2}, \\
	\mu^{w_4} =& \paren{\mu_3,\mu_1,\mu_2}, & \mu^{w_5} =& \paren{\mu_2,\mu_3,\mu_1}, & \mu^{w_l} =& \paren{\mu_3,\mu_2,\mu_1}.
\end{align*}

The group of diagonal, orthogonal matrices $V \subset G$ contains the four matrices $v_{\varepsilon_1,\varepsilon_2}=\diag\set{\varepsilon_1,\varepsilon_1 \varepsilon_2,\varepsilon_2}, \varepsilon\in\set{\pm 1}^2$, which we abbreviate $V = \set{\vpmpm{++}, \vpmpm{+-}, \vpmpm{-+}, \vpmpm{--}}$.
We write $Y=Y^+ V$ for the diagonal matrices in $G$ so that $Y \cap K = V$.
We tend not to distinguish between elements of $Y$ and pairs in $(\R^\times)^2$, as the multiplication is the same.

We will also require the Bruhat decomposition $G=U(\R) Y \Weyl U(\R)$.
When taking the Bruhat decomposition of an element $\gamma\in\Gamma$, we have $\gamma=bcvwb'$ with $b,b'\in U(\Q)$, $v\in V$, $w\in \Weyl$ and $c$ of the form
\begin{align}
\label{eq:cmat}
	\Matrix{\frac{1}{c_2}\\&\frac{c_2}{c_1}\\&&c_1}, \qquad c_1, c_2\in \N.
\end{align}

The unitary, irreducible representations of $K$, up to isomorphism, are given by the Wigner $\WigDName$-matrices $\WigDMat{d}:K\to GL(2d+1,\C)$ for $0 \le d \in \Z$.
We treat $\WigDMat{d}$ primarily as a matrix-valued function with the usual properties
\[ \WigDMat{d}(kk')=\WigDMat{d}(k)\WigDMat{d}(k'), \qquad \trans{\wbar{\WigDMat{d}(k)}} = \WigDMat{d}(k)^{-1} = \WigDMat{d}(k^{-1}). \]
The entries of the matrix-valued function $\WigDMat{d}$ are indexed from the center:
\[ \WigDMat{d} = \Matrix{\WigD{d}{-d}{-d}&\ldots&\WigD{d}{-d}{d}\\ \vdots&\ddots&\vdots\\ \WigD{d}{d}{-d}&\ldots&\WigD{d}{d}{d}}. \]
The entries, rows, and columns of of the derived matrix- and vector-valued functions (e.g. the Whittaker function \eqref{eq:JacWhittDef}) will be indexed similarly.
As the Wigner $\WigDName$-matrices exhaust the equivalence classes of unitary, irreducible representations of the compact group $K$, they give a basis of $L^2(K)$, as in \cite[section 2.2.1]{HWI}, by the Peter-Weyl theorem.
We tend to refer to the index $d$ as the ``weight'' of the Wigner $\WigDName$-matrix and any associated objects (e.g. the Whittaker function, Maass forms, etc.).

A Maass form (cuspidal or Eisenstein) of weight $d$ and spectral parameters $\mu$ for $\Gamma$ is a row vector-valued (or matrix-valued) smooth function $f:\Gamma\backslash G\to\C^{2d+1}$ that transforms under $K$ as $f(gk)=f(g)\WigDMat{d}(k)$, satisfies a moderate growth condition and is an eigenfunction of the Casimir operators with eigenvalues matching $p_{\rho+\mu}$ (see \cref{sect:UniformSE}).
Define
\begin{align*}
	\frak{a}^0 =& \frak{a}^1 = \set{\mu\in\C^3\setdiv\mu_1+\mu_2+\mu_3=0}, \\
	\frak{a}^d =& \set{\mu^d(r) \setdiv r \in \C}, d \ge 2, \\
	\frak{a}^d_\delta =& \set{\mu\in\frak{a}^d\setdiv\abs{\Re(\mu_i)}<\delta}, d=0,1, \\
	\frak{a}^d_\delta =& \set{\mu^d(r)\in\frak{a}^d \setdiv \abs{\Re(2r)}<\delta}, d \ge 2, \\
	\frak{a}^{0,c}_\delta =& \frak{a}^{1,c} = \set{(x+it,-x+it,-2it)\in\C^3\setdiv \abs{x} \le \delta, t\in\R},
\end{align*}
where (for $d \ge 2$)
\[ \mu^d(r) = \paren{\tfrac{d-1}{2}+r,-\tfrac{d-1}{2}+r,-2r}, \]
then the spectral parameters $\mu$ of a Maass form is known to lie in either $\frak{a}^d_0$ or the (conjecturally non-existent) complementary spectrum $\frak{a}^{d,c}_\theta$ with $0 \le \theta < \frac{1}{2}$ for $d=0,1$.
We refer to $\theta$ as the Ramanujan-Selberg parameter, and by \cite[Proposition 1 of appendix 2]{KS01} (see also \cite[Theorem 4]{WeylI}) we may assume $\theta \le \frac{5}{14}$.

The action of the Lie algebra of $G$ on Maass forms gives rise to five operators $Y^a$, $\abs{a} \le 2$ on vector-valued Maass forms (see \cref{sect:LieAlgebra}) that change the weight $d \mapsto d+a$, and a Maass form is said to have minimal weight if it is sent to zero by the lowering operators $Y^{-1}, Y^{-2}$ and an eigenfunction of the $Y^0$ operator.

Throughout the paper, we take the term ``smooth'', in reference to some function, to mean infinitely differentiable on the domain.
The letters $x,y,k,g, v$ and $w$ will generally refer to elements of $U(\R)$, $Y^+$, $K$, $G$, $V$, and $\Weyl$, respectively.
The letters $\chi$ and $\psi$ will generally refer to characters of $V$ and $U(\R)$, respectively, and $\mu$ will always refer to an element of $\C^3$ satisfying $\mu_1+\mu_2+\mu_3=0$, except for the final section, \cref{sect:WhittBds}, where it is used as the index of a classical Whittaker function (in keeping with the notation of \cite{DLMF}).
Vectors (resp. matrices) not directly associated with the Wigner $\WigDName$-matrices, e.g. elements $n \in \Z^2$ are indexed in the traditional manner from the left-most entry (resp. the top-left entry), e.g.  $n=(n_1,n_2)$.
We do not use the primed notation $F'$ for derivatives, but rather to distinguish functions and variables with similar purpose.

\section{Results}
\label{sect:Results}

The primary objects of study in this paper are the $GL(3)$ Bessel functions $K^d(y,\mu)$, $y\in Y$, of each weight $d$, which are introduced in \cref{sect:GL3Bessel}, and the long-element $SL(3,\Z)$ Kloosterman sum, defined in terms of the Bruhat decomposition as
\begin{align*}
	S_{w_l}(\psi_m,\psi_n;cv) =& \sum_{\substack{bcvw_l b'\in U(\Z)\backslash\Gamma/U(\Z)\\b,b'\in U(\Q)}} \psi_m(b) \psi_n(b'),
\end{align*}
with $c$ as in \eqref{eq:cmat}, and we write the sum in terms of modular arithmetic in \cref{sect:wlKloostermanSum}.
This sum occurs naturally in the Fourier expansion of an $SL(3,\Z)$ Poincar\'e series, and hence also in the $SL(3,\Z)$ spectral Kuznetsov formulas \cite{SpectralKuz,WeylI,WeylII}, where it forms the main off-diagonal term on the arithmetic/geometric side.
Up to factors that are small in summation, the sum $S_{w_l}(\psi_m,\psi_n;cv)$ has the square-root cancellation bound $(c_1 c_2)^{\frac{1}{2}+\epsilon}$, see \eqref{eq:SquareRoot}.

\subsection{The Bessel expansion}

In \cref{sect:BesselExpand}, we will use Wallach's theorem for $GL(3)$ (see \cref{sect:Wallach}) along with bounds on the Whittaker and Bessel functions (see \cref{sect:AnalyticPrelims}), and the uniqueness of spherical functions (see \cref{sect:Gode}) to show that nice functions of $Y$ can be expanded into (sums/integrals of) the $GL(3)$ Bessel functions.
Along the way, we show the generalization of \cite[Lemma 3]{Me01}, the Fourier transform of the spherical function, to the weight one case, but we eventually arrive at:
\begin{thm}
\label{thm:BesselExpand}
For $f:Y\to\C$ smooth and compactly supported,
\begin{align*}
	f(y) =& \frac{1}{4} \sum_{d\ge 0} \int_{\frak{a}^d_0} K^d_{w_l}(y,\mu) \int_Y f(y') \wbar{K^d_{w_l}(y',\mu)} dy' \, \sinmu^{d*}(\mu) d\mu.
\end{align*}
\end{thm}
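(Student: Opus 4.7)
The plan is to deduce the expansion from Wallach's Whittaker inversion for $GL(3)$ (see \cref{sect:Wallach}) by composing it with the integral identity that relates the $w_l$-term of the Jacquet/Bruhat expansion of the Whittaker function to $K^d_{w_l}$. Heuristically, Wallach's theorem already spectrally inverts the Whittaker transform, and $K^d_{w_l}$ is extracted from $W^d$ by integration against a non-degenerate character of $U(\R)$, so composing these two inversions should yield a Bessel inversion over the same spectrum.

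First, given $f:Y\to\C$ smooth and compactly supported, extend $f$ to a smooth function $F$ on $G$ of the shape $F(xyk)=\phi_\varepsilon(x)\,f(y)\,\Theta(k)$, where $\phi_\varepsilon$ is a bump on $U(\R)$ approaching the delta at $0$ against a fixed non-degenerate character $\psi_m$, and $\Theta$ is a carefully chosen $K$-finite factor built from entries of $\WigDMat{d}$. Wallach's theorem applied to $F$ produces (schematically, up to harmless normalizations)
\begin{align*}
F(g) = \sum_{d\ge 0} \int_{\frak{a}^d_0} \innerprod{F,\, W^d(\cdot,\mu,\psi_m)} \, W^d(g,\mu,\psi_m) \, \sinmu^{d*}(\mu) \, d\mu,
\end{align*}
where the pairing is the usual Whittaker inner product. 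Passing $\phi_\varepsilon\to\delta$ and evaluating on $y\in Y^+$ collapses the left side to $f(y)$, while on the right, the Jacquet/Bruhat representation of $W^d(y w_l x,\mu,\psi_m)$ on the big cell writes it as an integral of $K^d_{w_l}(y,\mu)$ twisted by $\psi_m(x)$, so in the limit the character integrations cancel and $W^d$ degenerates to $K^d_{w_l}$ at both ends of the double pairing.

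What remains inside the spectral integral is $f(y')$ paired against a smooth function of $y'$ and $\mu$ with the correct $K$-type and Casimir eigenvalues, transforming appropriately under $V$; the uniqueness of spherical functions developed in \cref{sect:Gode} identifies this function as a scalar multiple of $\wbar{K^d_{w_l}(y',\mu)}$, producing the inner integral $\int_Y f(y')\wbar{K^d_{w_l}(y',\mu)}\,dy'$. The prefactor $\tfrac14$ arises naturally from the measure-theoretic identification between $Y=Y^+V$ and $Y^+$ (with $\abs{V}=4$) once the orthogonality of the Wigner $\WigDName$-matrix entries has been used to collapse the $K$-index structure back to the plain Bessel function $K^d_{w_l}(y,\mu)$ outside.

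The principal obstacle is to justify the exchanges of integration orders and the passage $\phi_\varepsilon\to\delta$, since the spectral integrals in Wallach's theorem are only conditionally convergent in general. Absolute convergence will require uniform bounds in both $\mu$ and $d$ for $W^d$ and $K^d_{w_l}$ from \cref{sect:AnalyticPrelims}, combined with sufficient decay in $\mu$ and $d$ of the Bessel coefficients $\int_Y f(y')\wbar{K^d_{w_l}(y',\mu)}\,dy'$, obtained by repeated integration by parts against the Casimir operators using the compact support of $f$. A secondary subtlety lies in uniformly controlling the generalized principal series (weights $d\ge 2$), which sit as one-parameter subvarieties inside the larger $\frak{a}^d$ and require the full strength of Wallach's expansion rather than only its spherical case.
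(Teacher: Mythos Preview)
Your high-level strategy—derive the Bessel expansion from Wallach's Whittaker expansion, with Godement's uniqueness theorem playing a role—matches the paper's, but the execution has several genuine gaps.

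First, your extension $F(xyk)=\phi_\varepsilon(x)f(y)\Theta(k)$ does not lie in $C^\infty_c(U(\R)\backslash G,\psi_I)$: Wallach's theorem (\cref{thm:Wallach}) requires $F(xg)=\psi_I(x)F(g)$, not a bump in $x$. The paper instead builds $F_{g',u}$ on the \emph{Bruhat} cell via \eqref{eq:FgpuDef}, so that $F_{g',u}$ genuinely transforms by $\psi_I$ on the left and one recovers $f(y)=T_{g'}(F_{g',u})(y)$ exactly, with no $\phi_\varepsilon\to\delta$ limit. Applying Wallach to this $F_{g',u}$ and then the operator $T_{g'}$ of \eqref{eq:TgpDef}, the defining relation \eqref{eq:KdwDef} of $K^d_{w_l}$ produces \eqref{eq:PreBesselExpand} with coefficients $F^d_{g',u}(\mu)$ given by \eqref{eq:FdgpDef}. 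Your claim that ``$W^d(yw_lx,\mu,\psi_m)$ writes as an integral of $K^d_{w_l}$'' has the direction reversed: $K^d_{w_l}$ is defined as the $x$-integral of $W^{d'}$, not the other way around.

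Second, and more seriously, Godement's theorem does not identify the inner kernel as $\wbar{K^d_{w_l}(y',\mu)}$. The Bessel function is not a spherical function on $G$; it lives on $Y$ and does not satisfy the $K$-conjugation invariance hypothesis of \cref{thm:GodeTrans}. What must actually be shown is the orthogonality statement \eqref{eq:CddpDef}: that $C^{d_1,d_2}(\mu')=4\,\delta_{d_1=d_2}$. The paper does this in three pieces. For $d_1\ne d_2$ with $\max(d_1,d_2)\ge 2$ one uses the operator $\Lambda_{(d_1-1)/2}$ of \eqref{eq:LambdaX}, whose eigenvalues separate the spectra (\cref{sect:Ortho}). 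For $\{d_1,d_2\}=\{0,1\}$ this fails since the Casimir eigenvalues coincide; one instead proves a mixed Stade formula \eqref{eq:Psi01Eval} and a $V$-parity argument forcing the relevant matrix function to vanish (\cref{sect:d01Ortho}). For $d_1=d_2\in\{0,1\}$, Godement's theorem is used, but indirectly: a certain $g$-integral $H(x')$ is recognized as a spherical function $h^d_{\mu'}$ via \cref{thm:GodeTrans}, and then \cref{lem:FourSphFunAlpha} computes its Fourier transform as a product of Whittaker functions, which feeds back into Stade's formula to yield $C^{d,d}=4$. The case $d\ge 2$ is then reached by analytic continuation via \eqref{eq:KdwlAnCont}, not directly—precisely because the Jacquet integral and spherical-function arguments fail to converge absolutely off the line $\Re(\mu)=0$ when $d\ge 2$.

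Finally, the interchange of integrals you flag is not merely a matter of ``uniform bounds from \cref{sect:AnalyticPrelims}'': the $y$-$x'$ interchange in \cref{sect:Cdd} just fails to converge absolutely, and the paper justifies it by a stationary-phase argument (\cref{lem:InterchangeOfIntegrals}, proved via \cref{lem:InterchangeStationaryPhase}) after several explicit substitutions in the Iwasawa coordinates. Integration by parts against the Casimir operators gives decay in $\mu$ and $d'$, but does not by itself resolve this particular exchange.
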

The spectral weights $\sinmu^{d*}(\mu)$ are defined in \cref{sect:Wallach}.
(Compact support on $Y\cong (\R^\times)^2$ implies in particular that the support of $f(y)$ must be bounded away from the lines $y_i=0$.)
Note that \cref{thm:BesselExpand} utilizes the full $GL(3)$ spectrum, as compared to the spherical case in \cite[Theorem 3.1]{ArithKuzI}.

The factor $\frac{1}{4}$ in the theorem is exactly the difference between probability measure on $V$ (which we use in the Iwasawa decomposition $dg=dx\,dy\,dk$, see \cref{sect:BruhatDecomp}), and the counting measure on $V$ (which we use in $Y=V Y^+$).
Since we are ultimately interested in isolating the individual signs, we prefer the counting measure, and hence the factor $\frac{1}{4}$.
Note that we can achieve the expansion into Bessel functions at each sign by taking a test function $f_Y(vy) = \delta_{v=v^*} f_{Y^+}(y)$, and interestingly, this proves a sort of orthogonality of the Bessel transform at one sign to the inverse Bessel transform at any other sign (since $f_Y(vy)=0$ for $v\ne v^*$), provided one properly collects transforms/inverse transforms at every weight.


\subsection{The arithmetic Kuznetsov formula}

In \cref{sect:ArithKuzProof}, we construct the arithmetic Kuznetsov formula (for all signs) for $SL(3,\Z)$.
We use a construction of Cogdell and Piatetski-Shapiro (see \cref{sect:CPSmethod}) to dodge the spectral Kuznetsov formulas and the need to show the orthogonality of the Bessel transforms at $w = w_l$ to the inverse Bessel transforms at $w \ne w_l$, though the orthogonality of the various inverse transforms is an interesting question in its own right.
Their construction produces a very complicated spectral expansion, but comparing to the proof of the Bessel expansion, we can see that a certain sum over the matrix coefficients of a representation is, in fact, just the inner product of our test function with the relevant Bessel function.
Thus we arrive at the very concise formula \eqref{eq:PreArithKuz}, and with some renormalizations, plus assuming (as we may) that our basis consists of Hecke eigenfunctions, we have
\begin{thm}
\label{thm:ArithKuzHecke}
	For $f:Y \to \C$ smooth and compactly supported and non-degenerate characters $\psi_m,\psi_n$, $m,n\in\Z^2$, we consider the sum
	\begin{align}
	\label{eq:KLDef}
		\mathcal{K}_L(f) =& \sum_{c\in\N^2} \sum_{\varepsilon\in\set{\pm1}^2} \frac{S_{w_l}(\psi_m,\psi_{\varepsilon n},c)}{c_1 c_2} f\paren{\frac{\varepsilon_2 m_1 n_2 c_2}{c_1^2},\frac{\varepsilon_1 m_2 n_1 c_1}{c_2^2}}.
	\end{align}
	Then we have the spectral interpretation
	\begin{align}
	\label{eq:ArithKuzHecke}
		\mathcal{K}_L(f) = \sum_{d=0}^\infty \paren{\mathcal{C}^d(\wtilde{f}^d)+\mathcal{E}^d_\text{max}(\wtilde{f}^d)}+\mathcal{E}_\text{min}(\wtilde{f}^0),
	\end{align}
	where
	\begin{align}
	\label{eq:HeckeCuspTerm}
		\mathcal{C}^d(F) =& \frac{4}{3} \sum_{\Phi\in\mathcal{S}_3^{d*}} \frac{\lambda_\Phi(n)\wbar{\lambda_\Phi(m)}}{L(\AdSq \Phi,1)} F(\mu_\Phi), \\
	\label{eq:HeckeMaxEisenTerm}
		\mathcal{E}^d_\text{max}(F) =& 2 \sum_{\Phi\in\mathcal{S}_2^{d*}} \int_{\Re(r) = 0} \frac{\lambda_\Phi(n,r)\wbar{\lambda_\Phi(m,r)} F(\mu_\Phi+r,-\mu_\Phi+r,-2r)}{L(\AdSq \Phi,1) \, L(\Phi,1+3r)\,L(\Phi,1-3r)} \frac{dr}{2\pi i}, \\
	\label{eq:HeckeMinEisenTerm}
		\mathcal{E}_\text{min}(F) =& \frac{2}{3} \int_{\Re(\mu) = 0} \frac{\lambda_E(n,\mu)\wbar{\lambda_E(m,\mu)}}{\prod_{i\ne j} \zeta(1+\mu_i-\mu_j)} F(\mu) \frac{d\mu}{(2\pi i)^2}, 
	\end{align}
	using the integral transform
	\begin{align}
	\label{eq:yfBesselTrans}
		\wtilde{f}^d(\mu) =& \frac{1}{4} \int_Y \abs{y_1 y_2} \, f(y) \wbar{K^d_{w_l}(y,\mu)} dy.
	\end{align}
\end{thm}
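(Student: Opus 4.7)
The plan is to follow the Cogdell--Piatetski-Shapiro (CPS) strategy promised in the introduction: construct an auxiliary function on $\Gamma\backslash G$, built from $f$ together with the characters $\psi_m,\psi_n$ via the recipe of \cref{sect:CPSmethod}, whose twisted Whittaker period reproduces $\mathcal{K}_L(f)$ geometrically and admits a clean spectral expansion over $L^2(\Gamma\backslash G)$. This dodges the spectral Kuznetsov formulas of \cite{SpectralKuz,WeylI,WeylII} and with them the question of orthogonality between the $w_l$ Bessel transform and the inverse Bessel transforms at $w_4,w_5$.

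For the geometric side, I would unfold this Whittaker period via the Bruhat decomposition of $\Gamma$ (parameterized by $w,v,c$ as in \eqref{eq:cmat}). The Weyl elements $I,w_2,w_3$ contribute only degenerate Kloosterman terms that vanish against the non-degenerate characters $\psi_m,\psi_n$, and the $w_4,w_5$ cells contribute hyper-Kloosterman terms whose double unipotent integral over $U(\R)$ collapses to zero for the same reason. The long-element $w_l$ cell survives and produces the sum over $c\in\N^2$ and $v\in V$ appearing in \eqref{eq:KLDef}; the rescaling $y\mapsto(\varepsilon_2 m_1 n_2 c_2/c_1^2,\varepsilon_1 m_2 n_1 c_1/c_2^2)$ comes from conjugating the inner unipotent coordinates past $cvw_l$, and the denominator $1/(c_1 c_2)$ is the corresponding Jacobian.

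For the spectral side, I would decompose the CPS function against the full spectrum of $L^2(\Gamma\backslash G)$: cuspidal forms $\Phi\in\mathcal{S}_3^{d*}$ at each weight $d$, maximal parabolic Eisenstein series attached to $\Phi\in\mathcal{S}_2^{d*}$, and the minimal parabolic Eisenstein series on the $d=0$ piece. Each piece contributes a product of Whittaker coefficients at $m$ and $n$ times a complicated sum over the $K$-isotypic matrix coefficients of the representation with spectral parameter $\mu$. The main obstacle, and the critical step, is to show that after all the unwinding this matrix-coefficient sum equals the inner product $\tfrac14\int_Y\abs{y_1 y_2} f(y)\wbar{K^d_{w_l}(y,\mu)}\,dy = \wtilde{f}^d(\mu)$ from \eqref{eq:yfBesselTrans}. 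This is precisely the identification that powers \cref{thm:BesselExpand}: the sum over an orthonormal $K$-type basis collapses to the projection onto the single Bessel function $K^d_{w_l}(\cdot,\mu)$ by combining Wallach's theorem with the uniqueness of spherical functions from \cref{sect:Gode} and the Whittaker/Bessel bounds of \cref{sect:AnalyticPrelims}. To pass from raw Whittaker coefficients to Hecke eigenvalues $\lambda_\Phi$, I would then apply the Rankin--Selberg normalization relating $\abs{a_\Phi(1,1)}^2$ to $L(\AdSq\Phi,1)^{-1}$ for cusp forms, and the analogous formulas for the two Eisenstein spectra, producing the $L$-value denominators in \eqref{eq:HeckeCuspTerm}--\eqref{eq:HeckeMinEisenTerm}; the constants $\tfrac{4}{3}$, $2$, $\tfrac{2}{3}$ record the order of the relevant Weyl-group symmetry on each spectral piece. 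Uniform convergence in the smooth, compactly supported $f$ is then secured by the Ramanujan-Selberg bound $\theta\le\tfrac{5}{14}$ together with the rapid $\mu$-decay of $\wtilde{f}^d$ obtained by repeated integration by parts in $y$.
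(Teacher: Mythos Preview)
Your overall architecture matches the paper: construct a Poincar\'e series via the CPS recipe of \cref{sect:CPSmethod}, compute its $\psi_n$-Fourier coefficient geometrically via the Bruhat decomposition to obtain $\mathcal{K}_L(f)$, spectrally expand via \cref{thm:UniformSpectralExpand} to obtain $\sum_d\int_{\mathcal{B}^{d*}} \rho_\Xi(n)\wbar{\rho_\Xi(m)}\,F^d_{I,u}(\mu_\Xi)\,d\Xi$, invoke the key identification $F^d_{g',u}(-\wbar{\mu})=\wtilde{f}^d(\mu)$ (this is \cref{lem:WeirdBesselExpand}, proved alongside \cref{thm:BesselExpand}), and finally convert Fourier--Whittaker coefficients to Hecke eigenvalues via \cref{sect:HeckeConvert}.

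There is one genuine misconception on the geometric side. You write that the $w_4,w_5$ cells ``contribute hyper-Kloosterman terms whose double unipotent integral over $U(\R)$ collapses to zero'' by non-degeneracy of $\psi_m,\psi_n$. This is false: in a generic Poincar\'e series the $w_4,w_5$ cells do contribute nontrivially (these are exactly the hyper-Kloosterman terms in the spectral Kuznetsov formulas). The whole point of the CPS construction is that the kernel $F_{g',u}$ of \eqref{eq:FgpuDef} is \emph{defined} to vanish off the long-element cell $U(\R)Yw_lU(\R)$, so only $w=w_l$ survives by construction, not by any cancellation. This is precisely why the paper says the CPS method ``dodges \ldots\ the need to show the orthogonality of the Bessel transforms at $w=w_l$ to the inverse Bessel transforms at $w\ne w_l$.'' If you were to build the Poincar\'e series from a kernel living on all of $G$, you would not be able to discard the $w_4,w_5$ terms and would be forced back into exactly the orthogonality questions the method is designed to avoid.

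Two smaller points. The constants $\tfrac{4}{3},2,\tfrac{2}{3}$ are not Weyl-group orders; they come from the explicit coefficient-to-Hecke conversions \eqref{eq:CuspHeckConv} and following, combined with the normalizing constants $\tfrac{1}{24}$, etc., in \cref{thm:UniformSpectralExpand}. And for smooth compactly supported $f$, convergence of the spectral side does not require the numerical Ramanujan--Selberg input $\theta\le\tfrac{5}{14}$; the rapid decay of $F^d_{g',u}(\mu)$ follows from \cref{lem:WhittTransSuperPoly} and \cref{cor:GL3WhittBd}. The $\theta$-dependence enters only when one relaxes the hypotheses as in \cref{prop:ArithKuzConv}.
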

Here we take $\mathcal{S}_n^{d*}$ to be an orthonormal basis of Maass cusp forms for $SL(n,\Z)$ of minimal weight $d$ which are eigenfunctions of all of the Hecke operators, and $\lambda_\Phi(n)$,$\lambda_\Phi(n,r)$, and $\lambda_E(n,\mu)$ refer to the $n$-th Hecke eigenvalue of the cusp form $\Phi \in \mathcal{S}_3^{d*}$, the maximal parabolic Eisenstein series with spectral parameter $r$ twisted by $\Phi \in \mathcal{S}_2^{d*}$, and the minimal parabolic Eisenstein series with spectral parameters $\mu$, respectively.
These objects are described in detail in \cref{sect:UniformSE,sect:HeckeConvert}.

Because the Bessel function $K^d(y,\mu)$ disappears for positive $y$ when $d \ge 2$, a test function supported on positive $y$ only sees the weight $d=0,1$ terms, while the other signs always see the full spectral expansion.
This matches the behavior of the $GL(2)$ Kuznetsov formulas \cite{Kuz01}, though the positive $y$ case here corresponds to the negative case there, since we have separated the $V$-element $\vpmpm{--} \in G$ from the long Weyl element $w_l$, something that is not possible for $\SmallMatrix{-1\\&1} \notin SL(2,\R)$.

We give a weaker hypothesis for the test function in \cref{thm:ArithKuzHecke}
\begin{prop}
\label{prop:ArithKuzConv}
Suppose $f:(\R^+)^2\to\C$ satisfies
\begin{enumerate}
\item $f(y)$ is bounded by $\abs{y_1 y_2}^{1+\epsilon}$ as $y_1 y_2 \to 0$, for some $\epsilon > 0$,
\item for $j_1,j_2\le 6,j_1+j_2\le 6$, the derivatives $\partial_{y_1}^{j_1} \partial_{y_2}^{j_2} f(y)$ exist and are continuous,
\item for $j_1,j_2\le 6,j_1+j_2\le 5$, the functions
\[ \paren{\abs{y_1}^{-\theta-\epsilon}+\abs{y_1}^{\frac{8}{3}+\epsilon}} \paren{\abs{y_2}^{-\theta-\epsilon}+\abs{y_2}^{\frac{8}{3}+\epsilon}} y_1^{j_1-1} y_2^{j_2-1} \partial_{y_1}^{j_1} \partial_{y_2}^{j_2} (y_1 y_2 f(y)) \]
are all zero on the boundaries $y_i \to 0, \pm\infty$,
\item $f(y), f^*_1(y),f^*_2(y) \ll \prod_{i=1}^2 (1+\abs{y_i})^{\frac{1}{4}-\frac{1}{2}\theta-\epsilon}(1+\abs{y_i}^{-1})^{-\theta-\epsilon}$ for all $y\in(\R^+)^2$, and some $\epsilon > 0$,
\end{enumerate}
where $\theta$ is the Ramanujan-Selberg parameter, and
\[ f^*_i(y) := (y_1 y_2)^{-1} (\wtilde{\Delta_i})^{4-i} (y_1 y_2 f(y)), \]
\begin{align}
\label{eq:tildeDelta1Def}
	\wtilde{\Delta_1} =& -y_1^2 \partial_{y_1}^2-y_2^2\partial_{y_2}^2+y_1 y_2 \partial_{y_1} \partial_{y_2}+4\pi^2(y_1+y_2), \\
\label{eq:tildeDelta2Def}
	\wtilde{\Delta_2} =& y_1^2 \partial_{y_1}^2-y_1^2 y_2 \partial_{y_1}^2 \partial_{y_2}+y_1 y_2^2 \partial_{y_1} \partial_{y_2}^2-y_2^2 \partial_{y_2}^2+4 \pi ^2 y_1 y_2 \partial_{y_2}-4 \pi ^2 y_1 y_2 \partial_{y_1}+4 \pi ^2 (y_2-y_1).
\end{align}
Then \eqref{eq:ArithKuzHecke} holds for the test function $f$.
\end{prop}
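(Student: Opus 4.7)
The plan is to deduce \cref{prop:ArithKuzConv} from \cref{thm:ArithKuzHecke} by density. Approximate $f$ by a sequence $f_N = \phi_N f$, where $\phi_N$ is a smooth bump function supported in $\set{(y_1,y_2)\in(\R^+)^2 : N^{-1} \le y_i \le N}$, with $\phi_N \to 1$ pointwise and derivatives of orders up to $6$ bounded uniformly in $N$. Extending $f$ and $f_N$ by zero to $Y$ isolates a single sign $\varepsilon \in \set{\pm 1}^2$ (determined by $\sgn(m_1 n_2)$ and $\sgn(m_2 n_1)$) in the sum defining $\mathcal{K}_L$, so \cref{thm:ArithKuzHecke} applies to each $f_N$. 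The task then reduces to showing that both sides of \eqref{eq:ArithKuzHecke} are absolutely convergent for $f$ itself, and that they arise as limits of the corresponding quantities for $f_N$ by dominated convergence.

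Absolute convergence of the arithmetic side $\mathcal{K}_L(f)$ follows from the Weil-type bound $\abs{S_{w_l}(\psi_m,\psi_{\varepsilon n},c)} \ll (c_1 c_2)^{1/2+\epsilon}$ of \eqref{eq:SquareRoot} combined with hypothesis (1) applied at $y_1 y_2 = m_1 m_2 n_1 n_2/(c_1 c_2)$, which tends to $0$ as $c_1 c_2 \to \infty$. The resulting dominating series of order $\sum_{c_1,c_2} (c_1 c_2)^{-3/2-\epsilon}$ converges, is independent of $N$, and gives $\mathcal{K}_L(f_N) \to \mathcal{K}_L(f)$ by dominated convergence.

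The main obstacle is the spectral side, where one must obtain polynomial decay of the Bessel transform $\wtilde{f}^d(\mu)$ in $\abs{\mu}$ and $d$, of sufficient order to absorb the bounds on Hecke eigenvalues (controlled by the Ramanujan-Selberg parameter $\theta \le 5/14$), the Weyl-law density of cusp forms, the Whittaker/Bessel function bounds, and the Plancherel-type weight $\sinmu^{d*}(\mu)$, uniformly across all weights $d$ and through the complementary spectrum. The operators $\wtilde{\Delta_1}$ and $\wtilde{\Delta_2}$ in \eqref{eq:tildeDelta1Def}, \eqref{eq:tildeDelta2Def} are constructed so that, when integrated by parts against $y_1 y_2 \wbar{K^d_{w_l}(y,\mu)}$ under the measure $\abs{y_1 y_2}\,dy_1 dy_2/(y_1 y_2)^3$, they act as the two $GL(3)$ Casimir operators, whose eigenvalues $p_1(\mu), p_2(\mu)$ are polynomials in $\mu$ of degrees $2$ and $3$. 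Applying $\wtilde{\Delta_1}^3$ and $\wtilde{\Delta_2}^2$ to $y_1 y_2 f$ and integrating by parts therefore produces a schematic identity
\[ p_1(\mu)^3 p_2(\mu)^2 \, \wtilde{f}^d(\mu) = \alpha_1(\mu)\, \wtilde{f^*_1}^d(\mu) + \alpha_2(\mu)\, \wtilde{f^*_2}^d(\mu), \]
for polynomials $\alpha_i$. Hypothesis (2) supplies the six derivatives required, while condition (3) is calibrated exactly so that every boundary term at $y_i \to 0, \pm\infty$ vanishes: the weights $\abs{y_i}^{-\theta-\epsilon}$ and $\abs{y_i}^{8/3+\epsilon}$ match the size of $y_1 y_2 K^d_{w_l}(y,\mu)$ at these boundaries as recorded in the analytic bounds of \cref{sect:AnalyticPrelims}. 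Combined with the decay of $f, f^*_1, f^*_2$ from hypothesis (4), this delivers the required polynomial decay of $\wtilde{f}^d(\mu)$.

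The delicate point is the quantitative matching in this last step: the polynomial gain from six integrations by parts must dominate the combined growth of spectral density and special-function bounds, uniformly on each of $\mathcal{C}^d$, $\mathcal{E}^d_\text{max}$, $\mathcal{E}_\text{min}$ and in $d$, including the complementary spectrum. Once this is verified, the same integration-by-parts identity applied to $f - f_N$ gives $\wtilde{f_N}^d \to \wtilde{f}^d$ with a uniform dominator, so dominated convergence on the spectral side completes the extension and hence the proof.
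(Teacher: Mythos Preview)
Your overall architecture---approximate by compactly supported $f_N$, verify absolute convergence on both sides, and pass to the limit by dominated convergence---is exactly the approach the paper intends (it omits the proof, deferring to \cite[Proposition~2.2]{ArithKuzI} with \cref{lem:DoubleBesselBound,lem:BadDoubleBesselBound,lem:BesselDervAsymp} supplying the Bessel bounds). The role you assign to hypotheses (1)--(4) is correct.

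There is, however, a genuine error in your ``schematic identity''
\[
  p_1(\mu)^3 p_2(\mu)^2\,\wtilde{f}^d(\mu) = \alpha_1(\mu)\,\wtilde{f^*_1}^d(\mu) + \alpha_2(\mu)\,\wtilde{f^*_2}^d(\mu).
\]
To produce the eigenvalue factor $\lambda_1(\mu)^3\lambda_2(\mu)^2$ you would have to apply the composite $\wtilde{\Delta_1}^3\wtilde{\Delta_2}^2$, a twelfth-order operator, whereas hypothesis~(2) supplies only six derivatives and $f^*_1,f^*_2$ are by definition the results of applying $\wtilde{\Delta_1}^3$ and $\wtilde{\Delta_2}^2$ \emph{separately}. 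The correct statement is the pair of identities
\[
  \lambda_1(\mu)^3\,\wtilde{f}^d(\mu)=\wtilde{f^*_1}^d(\mu),
  \qquad
  \lambda_2(\mu)^2\,\wtilde{f}^d(\mu)=\wtilde{f^*_2}^d(\mu),
\]
from which one deduces $\wtilde{f}^d(\mu)\ll \Min{\abs{\lambda_1(\mu)}^{-3},\abs{\lambda_2(\mu)}^{-2}}$ times the common bound on $\wtilde{f^*_i}^d(\mu)$ coming from hypothesis~(4) and \cref{lem:DoubleBesselBound,lem:BadDoubleBesselBound}. This is precisely the shape required by item~6 of \cref{sect:WeylLaw} for convergence of the spectral sum.

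The distinction is not cosmetic: as the paper notes, the ``only new feature'' over \cite{ArithKuzI} is that for $d\ge 3$ the Laplacian eigenvalue $\lambda_1(\mu^d(it))$ can vanish (near $t=\tfrac{d-1}{2\sqrt{3}}$), so one \emph{cannot} always divide by $\lambda_1(\mu)^3$. One must switch to $\lambda_2(\mu)^2$ there, and this is why both $f^*_1$ and $f^*_2$ appear in the hypotheses and why the minimum, not the product, is the relevant quantity. With this correction your argument goes through.
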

We note that (even though it's less complicated) this is necessarily weaker than the positive-sign case \cite[Proposition 2.2]{ArithKuzI} (note that $\wtilde{\Delta_1}$ there is the restriction to spherical Whittaker functions instead of the restriction to Bessel functions), as the Bessel functions at the other signs do not have exponential decay at infinity.
Using \cref{lem:DoubleBesselBound,lem:BadDoubleBesselBound,lem:BesselDervAsymp} in place of the asymptotics of the Whittaker function, the proof of this proposition is essentially identical to \cite[Proposition 2.2]{ArithKuzI}, and we omit it; the only new feature occurs when the Laplacian eigenvalue is small compared to $\norm{\mu}$, which is discussed in \cref{sect:WeylLaw}.

\subsection{Smooth sums of Kloosterman sums}
\label{sect:SmoothSumsThm}

Kuznetsov's original application for his formulas was to show good cancellation in moduli sums of Kloosterman sums, answering a conjecture of Linnik.
His result was for a sharp cutoff function, but this follows from the smoothed version, and we only consider the smoothed version (which is more useful, in practice).
The proof is simply to apply \cref{thm:ArithKuzHecke} to a test function of the form
\begin{align}
\label{eq:fXDef}
	f_X(y) := f\paren{\frac{X_1}{m_1 n_2} y_1,\frac{X_2}{m_2 n_1} y_2}
\end{align}
and bound the resulting spectral expansion using the Weyl law of \cref{sect:WeylLaw}.
We have
\begin{thm}
\label{thm:SmoothSums}
Let $X_1, X_2 > 0$ with $X_1^2 X_2, X_1 X_2^2 > 1$.
Suppose $m,n\in \Z^2$ such that $m_1 m_2 n_1 n_2 \ne 0$, and $f$ smooth and compactly supported on $Y$, then
\begin{align*}
	\sum_{c\in\N^2} \sum_{\varepsilon\in\set{\pm1}^2} \frac{S_{w_l}(\psi_m,\psi_{\varepsilon n},c)}{c_1 c_2} f\paren{\frac{\varepsilon_2 X_1 c_2}{c_1^2},\frac{\varepsilon_1 X_2 c_1}{c_2^2}} \ll_{m,n,f,\epsilon}& (X_1 X_2)^{\theta+\epsilon}+X_1^{-1-\epsilon}+X_2^{-1-\epsilon},
\end{align*}
where $\theta$ is the Ramanujan-Selberg parameter.
\end{thm}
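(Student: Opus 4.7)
The plan is to apply Proposition \ref{prop:ArithKuzConv} directly to the rescaled test function $f_X$ of \eqref{eq:fXDef}; I first need to check that $f_X$ satisfies the hypotheses (which it does for any fixed smooth compactly-supported $f$, uniformly in $X_1, X_2$ for fixed $m, n$ in any bounded region where the support is separated from zero). The left-hand side $\mathcal{K}_L(f_X)$ then equals precisely the sum to be bounded, since substituting $y_i = \varepsilon_? m_? n_? c_?/c_?^2$ into $f_X$ produces $f(\varepsilon_2 X_1 c_2/c_1^2,\varepsilon_1 X_2 c_1/c_2^2)$. So it remains to bound the spectral side \eqref{eq:ArithKuzHecke} applied to $\wtilde{f_X}^d$.

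Next, I compute $\wtilde{f_X}^d(\mu)$ from \eqref{eq:yfBesselTrans} by the change of variables $u_i = (X_?/m_? n_?)\,y_i$, which reduces the transform to a fixed (compactly supported) integral of $f$ against $\wbar{K^d_{w_l}}$ evaluated at arguments of scale $(1/X_1,1/X_2)$, times a Jacobian factor that is a fixed function of $m,n$ absorbed into the implied constant. The $X$-dependence then comes entirely through the size of $K^d_{w_l}((m_1 n_2/X_1)u_1,(m_2 n_1/X_2)u_2,\mu)$ on the support of $f$, so we need uniform bounds on the Bessel function in a small-argument regime.

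To get these bounds, I invoke the double Bessel function estimates (Lemmas \ref{lem:DoubleBesselBound}, \ref{lem:BadDoubleBesselBound} and \ref{lem:BesselDervAsymp} used in the proof of Proposition \ref{prop:ArithKuzConv}), combined with integration by parts in $y$ against the differential operators $\wtilde{\Delta_1},\wtilde{\Delta_2}$ of \eqref{eq:tildeDelta1Def}--\eqref{eq:tildeDelta2Def} for which the Bessel function is an eigenfunction. Integration by parts produces arbitrary polynomial decay in $\norm{\mu}$ at the cost of powers of $(X_1 X_2)$ from derivatives of $f_X$; balancing this against the polynomial growth of the spectral measures from the Weyl law of the cited section yields absolute convergence of the spectral expansion. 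The small-argument Bessel bound contributes a factor of $\abs{y_1}^{-\theta-\epsilon}\abs{y_2}^{-\theta-\epsilon}$, which after the rescaling becomes $(X_1 X_2)^{\theta+\epsilon}$ — this is the source of the main term in the claimed bound, and it matches the Hecke-eigenvalue contribution $\lambda_\Phi(n)\wbar{\lambda_\Phi(m)} \ll_{m,n} 1$ (Kim--Sarnak giving $\theta \le 5/14$), with the $1/L(\AdSq\Phi,1)$ weight and analogous Eisenstein weights controlled by standard subconvex/lower bounds on these $L$-values on the edge of the critical strip.

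The main obstacle is the bookkeeping in the last step: the error terms $X_1^{-1-\epsilon}+X_2^{-1-\epsilon}$ arise from the degenerate region of $\mu$-space where one coordinate of $\mu$ is small but the others are comparatively large (so the diagonal behaviour of $K^d_{w_l}$ becomes asymmetric in $y_1,y_2$), and from the generalised principal series $d \ge 2$ where the one-parameter spectrum behaves differently in the two variables. Carefully splitting the $\mu$-integration/summation into regions according to whether $\norm{\mu}$ is large or small relative to $X_1^2 X_2$ and $X_1 X_2^2$ — the two natural scales dictated by the arguments of $K^d_{w_l}$ at $(1/X_1,1/X_2)$ — and applying either the stationary phase expansion or the small-argument expansion in each region, will produce exactly the three terms of the stated bound. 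The hypothesis $X_1^2 X_2, X_1 X_2^2 > 1$ ensures that we are in the regime where the Bessel transform is genuinely captured by the spectrum rather than vanishing to leading order, and it guarantees that the rescaled arguments lie below the transition between the polynomial and oscillatory regimes of $K^d_{w_l}$.
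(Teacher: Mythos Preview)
Your high-level strategy matches the paper's: apply the arithmetic Kuznetsov formula to $f_X$ and bound the spectral side via the Weyl law. (Since $f_X$ is smooth and compactly supported, \cref{thm:ArithKuzHecke} applies directly; Proposition~\ref{prop:ArithKuzConv} is not needed.) However, the implementation differs and your final paragraph, where the actual work lies, contains a genuine gap.

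The paper does not work in physical space with the double-Bessel bounds and integration by parts. Instead it writes $\wtilde{f_X}^d(\mu)$ as the Mellin--Barnes integral \eqref{eq:tildefMellin} and applies Stirling's formula to $\what{K}^d_{w_l}(s,v,\mu)$ as in \cref{sect:BesselStirling}. For $d=0,1$ the polynomial part of Stirling is sign-independent, so the analysis is literally the positive-sign case already done in \cite{ArithKuzI}. For $d\ge 3$, the bound \eqref{eq:BesselStirlingdge2} combined with item~4 of the Weyl law gives a contribution $\ll d^{1+\epsilon}(X_1 X_2)^\epsilon$ \emph{per weight $d$}; the $J$-Bessel cutoff \eqref{eq:JdBound} in the double-Bessel representation \eqref{eq:DblBsslmm}--\eqref{eq:DblBsslpm} shows $\wtilde{f_X}^d$ is negligible once $d\gg (X_1X_2)^\epsilon\max\{X_1^{-1/2},X_2^{-1/2}\}$, and summing $d^{1+\epsilon}$ up to that cutoff yields exactly $\max\{X_1^{-1},X_2^{-1}\}(X_1X_2)^\epsilon$. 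The case $d=2$ is handled separately by an ad hoc contour shift because only the weaker Weyl bound (item~5) is available there.

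Your account of where $X_i^{-1-\epsilon}$ comes from is the gap. It does not arise from a ``degenerate region of $\mu$-space where one coordinate is small'' nor from any stationary-phase dichotomy at scales $X_1^2X_2,\,X_1X_2^2$; it comes from the \emph{length of the $d$-sum} in the generalized principal series. Your integration-by-parts scheme against $\wtilde{\Delta_i}$ gives decay in $\lambda_i(\mu)$, but for $\mu=\mu^d(r)$ this entangles $d$ and $r$, and you have not explained how to extract the sharp cutoff $d\asymp \max\{X_i^{-1/2}\}$ from it. The Mellin approach makes this transparent because the $d$-dependence factors cleanly as $Q(d,s_1-r)Q(d,s_2+r)$ in \eqref{eq:KwldMB}, and one sees immediately (via \eqref{eq:BesselJMB} or Stirling) that this is essentially $J_{d-1}(4\pi|y_i|^{1/2})$ and hence negligible for $d$ beyond $|y_i|^{1/2}\asymp X_i^{-1/2}$. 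You also omit the $d=2$ case entirely, which requires its own argument.
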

The $\theta+\epsilon$ can actually be improved to $\Max{\theta,\epsilon}$, as the $\epsilon$ comes from logarithmic factors induced by the digamma function when the coordinates of the spectral parameters are not distinct, but this cannot occur on the complementary spectrum.
The proof is given in \cref{sect:SmoothSums}.

Excluding $\epsilon$ powers, \cref{thm:SmoothSums} is the limit of current technology.
Of course, we expect that $\theta=0$ is the truth, and perhaps the remaining factor $(X_1 X_2)^\epsilon$ should be replaced by logarithms, but the two terms $X_1^{-1-\epsilon}+X_2^{-1-\epsilon}$ are a bit more mysterious.
These terms are relevant when one of $X_i < 1$; as we have $X_1 \sim c_1^2/c_2$, $X_2 \sim c_2^2/c_1$, this occurs only when $c_1$ and $c_2$ are very far apart, i.e. $c_1 < c_2^{1/2}$ or visa versa.
In fact, the above bound loses to the trivial bound $(X_1 X_2)^{\frac{1}{2}+\epsilon}$ when, say, $X_1 < X_2^{-1/3}$ or equivalently, $c_1 < c_2^{1/5}$.
One might hope to tackle this case with $GL(2)$ methods, but this is an extremely difficult problem, see \cref{sect:WhatsNext}.

\cref{thm:SmoothSums} immediately implies
\begin{cor}
\label{cor:SpecKuz2CondConv}
	The long-element term of the naive weight two spectral Kuznetsov formula converges conditionally.
\end{cor}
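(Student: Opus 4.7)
The plan is to realize the long-element term of the naive weight-2 spectral Kuznetsov formula as $\mathcal{K}_L(\check{h})$, where $\check{h}(y)$ is the inverse Bessel transform of the spectral test function $h$ against the weight-2 part of the spectral measure $\sinmu^{2*}$, and then to show the defining sum over $c\in\N^2$ converges conditionally in the sense that the symmetric partial sums $L_T = \sum_{c_1, c_2 \le T}$ tend to a finite limit as $T \to \infty$.

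The key tool is \cref{thm:SmoothSums}. Choose a smooth partition of unity $1 = \sum_{j \in \Z} \omega(2^{-j} t)$ with $\omega$ supported in $[1/2, 2]$, and dyadically decompose
\[ \check{h}(y) = \sum_{j_1, j_2 \in \Z} f_{j_1, j_2}(y), \qquad f_{j_1, j_2}(y) = \check{h}(y)\, \omega(2^{-j_1}\abs{y_1})\,\omega(2^{-j_2}\abs{y_2}). \]
Each $f_{j_1, j_2}$ is smooth and compactly supported on $\abs{y_i} \sim 2^{j_i}$, so after the obvious rescaling it fits the hypothesis of \cref{thm:SmoothSums} with effective parameters $X_i \asymp 2^{-j_i}$ (up to fixed factors involving $m_i n_{3-i}$). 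Whenever $2j_1 + j_2 < 0$ and $j_1 + 2j_2 < 0$ the theorem yields
\[ \abs{\mathcal{K}_L(f_{j_1, j_2})} \ll_{m,n,\epsilon} M_{j_1, j_2} \paren{2^{-(j_1+j_2)(\theta+\epsilon)} + 2^{j_1(1+\epsilon)} + 2^{j_2(1+\epsilon)}}, \]
where $M_{j_1, j_2}$ is an appropriate $C^k$-norm of $\check{h}$ on the dyadic annulus.

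Next I control $M_{j_1, j_2}$ via the asymptotics of $K^2_{w_l}(y,\mu)$ from \cref{lem:DoubleBesselBound,lem:BadDoubleBesselBound,lem:BesselDervAsymp} (the same bounds invoked for \cref{prop:ArithKuzConv}): Stirling-type decay at infinity handles $j_i \to +\infty$, where either \cref{thm:SmoothSums} still applies or else the trivial estimate via the square-root bound $\abs{S_{w_l}} \ll (c_1 c_2)^{1/2+\epsilon}$ suffices. Near the origin the controlled growth $\abs{\check{h}(y)} \ll \prod_i \abs{y_i}^{-\theta-\epsilon}$ absorbs the blowup of the $2^{-(j_1+j_2)(\theta+\epsilon)}$ factor as $j_i \to -\infty$. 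Summing over $(j_1,j_2)$ yields an absolutely convergent dyadic series, and a standard tail argument (the symmetric cutoff $c_1, c_2 \le T$ corresponds to excising all dyadic pieces with $\min(j_1, j_2)$ sufficiently large in a direction controlled by the Bessel asymptotics) identifies $L_T$ with the partial dyadic sums up to an error that vanishes as $T\to\infty$.

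The main obstacle is the sharply unbalanced regime $\abs{j_1 - j_2}$ large—i.e., moduli $c_1, c_2$ of very different size—where the $X_i^{-1-\epsilon}$ terms in the theorem's bound dominate and reflect the ``obstruction when the moduli are far apart'' highlighted in the abstract. The savings $1+\epsilon$ is only just enough to beat the polynomial growth of $\check{h}$ along those degenerate directions, and this razor-thin margin is precisely what forces the convergence to be conditional rather than absolute; however, since \cref{thm:SmoothSums} gives it uniformly in $T$, the limit $L_T \to L(h)$ does exist.
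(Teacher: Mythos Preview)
Your overall strategy---smooth dyadic localization followed by \cref{thm:SmoothSums} on each piece---is exactly the paper's approach; the paper in fact says the corollary follows ``immediately'' from \cref{thm:SmoothSums} via the partition of unity appearing just after the corollary statement. However, two points in your execution are genuinely wrong and would cause the argument to fail as written.

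First, and most seriously, your asymptotic for $\check h$ near the origin is backwards. You assert $\abs{\check h(y)}\ll\prod_i\abs{y_i}^{-\theta-\epsilon}$, i.e.\ blowup as $y\to 0$, and claim this ``absorbs'' the factor $(X_1X_2)^{\theta+\epsilon}=2^{-(j_1+j_2)(\theta+\epsilon)}$. But both quantities blow up as $j_1+j_2\to-\infty$, so their product diverges and nothing is absorbed. The actual behavior, which the paper identifies explicitly, comes from the pole of $\what K^2_{w_l}(s,v,\mu^2(r))$ at $s=(-\tfrac12+r,-\tfrac12-r)$: the problematic piece of $H^2_{w_l}(F;y)$ is $\abs{y_1y_2}^{1/2}$ times a smooth function $\wtilde F$ of $c_1/c_2$, equivalently the offending Kloosterman sum is $\sum_c S_{w_l}(c_1c_2)^{-3/2}\wtilde F(c_1/c_2)$. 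It is precisely the \emph{decay} $\abs{y_1y_2}^{1/2}$ that makes the argument work: on a box $c_i\sim C_i$ the rescaled test function has size $(C_1C_2)^{-1/2}$, and multiplying by $(X_1X_2)^{\theta+\epsilon}=(C_1C_2)^{\theta+\epsilon}$ gives $(C_1C_2)^{\theta-1/2+\epsilon}$, which sums over dyadic $C_1,C_2$ because $\theta<\tfrac12$. Your bound $\prod_i\abs{y_i}^{-\theta-\epsilon}$ appears to be borrowed from the \emph{hypotheses} of \cref{prop:ArithKuzConv}, but the whole point here is that those hypotheses fail---that is why the convergence is only conditional.

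Second, your notion of conditional convergence (sharp cutoff $c_1,c_2\le T$) is not the one the paper proves. The paper defines conditional convergence as the existence of $\lim_{R\to\infty}\sum_{C_1,C_2\le R}$ over the \emph{smooth} dyadic pieces, which is exactly what \cref{thm:SmoothSums} delivers. Your sentence linking the sharp cutoff to ``excising all dyadic pieces with $\min(j_1,j_2)$ sufficiently large'' is not correct (the map $(c_1,c_2)\mapsto(y_1,y_2)$ is a linear change of variables on logarithms, so $c_i\le T$ becomes $2j_1+j_2\ge -3\log_2 T$ and $j_1+2j_2\ge -3\log_2 T$, not a condition on $\min(j_1,j_2)$), and passing from the smooth to the sharp version would require a separate argument you have not supplied. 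Finally, the unbalanced region is controlled not by the ``razor-thin margin'' of the $X_i^{-1-\epsilon}$ term but by the decay of $\wtilde F(c_1/c_2)$ in $c_1/c_2$, which you do not invoke.
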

In the weight two spectral Kuznetsov formula, if one takes the kernel function for the integral transform on the sums of long-element Kloosterman sums to be the $GL(3)$ long-element Bessel function as given in \cref{sect:GL3Bessel}, the sum of Kloosterman sums just fails to converge absolutely (assuming square-root cancellation is optimal for the Kloosterman sums), and we call this the naive formula.
The failure comes from a pole of the Mellin transform of $K^d_{w_l}(y,\mu^d(r))$ (more importantly, of Mellin transform of the corresponding Whittaker function) at ${(-\frac{1}{2}+r,-\frac{1}{2}-r)}$, and this leads to a sum of the form
\[ \sum_{c\in\N^2} \sum_{\varepsilon\in\set{\pm1}^2} \frac{S_{w_l}(\psi_m,\psi_{\varepsilon n},c)}{(c_1 c_2)^{3/2}} \wtilde{F}\paren{\frac{c_1}{c_2}}, \]
where $\wtilde{F}$ is smooth and $\wtilde{F}(x) \ll (x+x^{-1})^\delta$ for some $\delta > 0$.
By conditional convergence, we mean that the limit
\[ \lim_{R\to\infty} \sum_{C_1,C_2 \le R} \sum_{c\in\N^2} \omega\paren{\frac{c_1}{C_1},\frac{c_2}{C_2}} \sum_{\varepsilon\in\set{\pm1}^2} \frac{S_{w_l}(\psi_m,\psi_{\varepsilon n},c)}{(c_1 c_2)^{3/2}} \wtilde{F}\paren{\frac{c_1}{c_2}}, \]
converges where
\[ \sum_{C_1,C_2} \omega\paren{\frac{c_1}{C_1},\frac{c_2}{C_2}} = 1 \]
is a partition of unity with $\omega$ smooth and compactly supported on $(\R^+)^2$.

Then it is not to hard to see that the naive weight two spectral Kuznetsov formula holds with the long element term rearranged as above, and hence also that the Weyl law of \cite[Theorem 1]{WeylII} continues to hold at $d=2$.
The argument proceeds by analytic continuation as in the $d=3$ case of \cite[section 12]{WeylII}, but we do not include it here.

\subsection{The Kloosterman zeta function}
\label{sect:KloosZeta}

On $SL(2,\Z)$ in the positive sign case, the Kloosterman zeta function is an example of a Dirichlet series which has a functional equation (of sorts) and something resembling an Euler product (but isn't given by rational functions) for which the Riemann Hypothesis is known; the negative-sign case has a much more complicated meromorphic continuation.
On $SL(3,\Z)$, we encounter difficulties that exceed those of the negative-sign case on $SL(2,\Z)$ and, in fact, we are unable to show the meromorphic continuation of the Kloosterman zeta function itself, but must introduce some weights to reduce the contribution of the terms with the moduli very far apart; of course, this interfers with the equivalent of the Euler product for this multiple Dirichlet series.
The problem persists even in the best case of $\sgn(y)=(-,-)$, where it has rotated into the weight direction:
Even though the expected spectral interpretation converges exponentially at each $d$, the sum over $d$ itself does not appear to converge absolutely.
(If one hopes to cancel the difficulties by taking linear combinations, say like the sign-independent Kloosterman zeta function, this will also lead to disappointment.)

Our solution in the previous paper was to introduce an exponential decay factor; this is perhaps a bit excessive (a power $\prod_{i=1}^2 (1+ \abs{y_i})^{-t_i}$ for large $\Re(t_i)$ would be sufficient), but we use the same factor here, for continuity.
We apply \cref{prop:ArithKuzConv} to a function of the form
\begin{align}
\label{eq:KlZetaTestFun}
	f_{\varepsilon,s}(y) =& \delta_{\sgn(y)=\varepsilon} \abs{y_1}^{s_1} \abs{y_2}^{s_1} \exp\paren{-\abs{y_1}-\abs{y_2}},
\end{align}
and analyze the resulting spectral expansion to obtain
\begin{thm}
\label{thm:KloosZeta}
	Let $m,n\in \N^2$ and $\varepsilon\in\set{\pm1}^2$, then the Kloosterman zeta function
	\begin{align*}
		Z_{m,n}^{\varepsilon*}(s) :=& \sum_{c_1,c_2\in\N} \frac{S_{w_l}(\psi_m,\psi_n,\varepsilon c)}{c_1 c_2} f_{\varepsilon,s}\paren{\frac{m_1 n_2 c_2}{c_1^2},\frac{m_2 n_1 c_1}{c_2^2}}, \\
	\end{align*}
	initially convergent on $\Re(2s_1-s_2), \Re(2s_2-s_1) > \frac{1}{2}$, has meromorphic continuation to all of $s \in \C^2$ with potential poles whenever
	\begin{align*}
		s_1=&-\mu_i-\ell, &\text{or}&& s_2 =& \mu_i-\ell,
	\end{align*}
	 with $\mu=\mu_\varphi$ for some cusp form $\varphi$ of weight at most one, $i=1,2,3$ and $\ell \in \N_0=\N\cup\set{0}$, or
	\begin{align*}
		s_1=&-\tfrac{d-1}{2}-r-\ell, &\text{or}&& s_1=&2r-\ell &\text{or}&& s_2 =& -\tfrac{d-1}{2}+r-\ell, &\text{or}&& s_2=&-2r-\ell,
	\end{align*}
	 with $\mu^d(r)=\mu_\varphi$ for some cusp form $\varphi$ of weight at least two and $\ell \in \N_0=\N\cup\set{0}$, as well as potential poles coming from the Eisenstein series terms.
\end{thm}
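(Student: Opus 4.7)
The plan is to apply \cref{prop:ArithKuzConv} to the test function $f_{\varepsilon,s}$ defined in \eqref{eq:KlZetaTestFun} and read the meromorphic continuation of $Z_{m,n}^{\varepsilon*}(s)$ off the resulting spectral expansion. First I would identify a region $\Omega\subset \C^2$ in which $f_{\varepsilon,s}$ satisfies the hypotheses of \cref{prop:ArithKuzConv}: the factor $\exp(-\abs{y_1}-\abs{y_2})$ gives rapid decay at infinity, while the monomial $\abs{y_1}^{s_1}\abs{y_2}^{s_2}$ controls the behavior near the axes, so conditions (1)--(4) hold once $\Re(s_1),\Re(s_2)$ are sufficiently large, with the precise shape of $\Omega$ dictated by the action of $\wtilde{\Delta_1},\wtilde{\Delta_2}$. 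In this region, $\mathcal{K}_L(f_{\varepsilon,s})$ agrees, thanks to the sign selector $\delta_{\sgn(y)=\varepsilon}$, with $Z_{m,n}^{\varepsilon*}(s)$, so the right-hand side of \eqref{eq:ArithKuzHecke} provides a candidate analytic expression to be continued.

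Next I would compute the Bessel transform
\[ \wtilde{f_{\varepsilon,s}}^d(\mu) = \frac{1}{4}\int_Y \abs{y_1 y_2}\,f_{\varepsilon,s}(y)\,\wbar{K^d_{w_l}(y,\mu)}\,dy. \]
Using the integral representations of $K^d_{w_l}$ developed in \cite{SpectralKuz,WeylI,WeylII}, the integral against $\abs{y_1}^{s_1}\abs{y_2}^{s_2}\exp(-\abs{y_1}-\abs{y_2})$ should reduce, via a Mellin--Barnes unfolding of the Bessel kernel, to a contour integral whose integrand is a product of gamma functions in the variables $s_1+\mu_i$ and $s_2-\mu_i$ for $d=0,1$, and in the corresponding combinations $s_1+\tfrac{d-1}{2}+r$, $s_1-2r$, $s_2+\tfrac{d-1}{2}-r$, $s_2+2r$ for $d\ge 2$. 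The poles of these gamma products at $s_1=-\mu_i-\ell$, $s_2=\mu_i-\ell$, $s_1=-\tfrac{d-1}{2}-r-\ell$, $s_1=2r-\ell$, $s_2=-\tfrac{d-1}{2}+r-\ell$, $s_2=-2r-\ell$ then exactly reproduce the list of potential poles in the statement; the Eisenstein contributions produce the same families of poles together with additional pinching of the $r$- and $\mu$-contours against each other.

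The meromorphic continuation then follows by contour shifting. To extend the cuspidal piece $\mathcal{C}^d(\wtilde{f_{\varepsilon,s}}^d)$ beyond $\Omega$, I would push the $s_i$ variables across the critical lines, crossing the shifting poles of $\wtilde{f_{\varepsilon,s}}^d(\mu_\Phi)$ and collecting residues at precisely the spectral parameters of cusp forms, which become the potential poles of $Z_{m,n}^{\varepsilon*}(s)$; for the two Eisenstein pieces, one additionally shifts the $r$- or $\mu$-contours when they obstruct the continuation, picking up further potential poles. The tail integrals and sums remain absolutely convergent by combining Stirling's formula on the gamma factors, the Weyl law of \cref{sect:WeylLaw}, and the exponential decay of $\wtilde{f_{\varepsilon,s}}^d(\mu)$ in the Archimedean direction of $\mu$ (inherited from the exponential factor in $f_{\varepsilon,s}$). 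The argument runs parallel to the positive-sign case of \cite{ArithKuzI}, with the full Bessel spectrum replacing the spherical Whittaker spectrum.

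The main obstacle is the outer sum over the weight $d$. As noted in the introduction, even in the best sign case this sum fails to converge absolutely without some decay in $d$; the role of the exponential factor in $f_{\varepsilon,s}$ is precisely to supply that decay via Stirling applied to a factor like $\Gamma(\tfrac{d-1}{2}+r+s_1)\Gamma(\tfrac{d-1}{2}-r+s_2)$ against gamma denominators coming from the Bessel-function normalization. Tracking this $d$-decay uniformly in $s$ throughout every contour shift, and verifying that residues extracted from different weights combine cleanly so that the resulting pole set is exactly the tabulated one, will be the most delicate bookkeeping; once that is in hand, the stated meromorphic continuation with the indicated potential poles is immediate.
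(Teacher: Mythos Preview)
Your proposal is correct and follows essentially the same approach as the paper: apply \cref{prop:ArithKuzConv} to $f_{\varepsilon,s}$, express $\wtilde{f_{\varepsilon,s}}^d(\mu)$ via the Mellin--Barnes integrals of \cref{sect:BesselMBIntegrals} (whose pole locations in \cref{sect:BesselStirling} are exactly the ones you list), and continue by contour shifting with the exponential factor supplying $d$-decay. The paper itself gives no more detail than this, simply noting that the proof is essentially identical to that of \cite[Theorem 1.4]{ArithKuzI}.
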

The proof is essentially identical to that of \cite[Theorem 1.4]{ArithKuzI}.

\subsection{The reflection formula}
\label{sect:Reflect}

In the case of $\sgn(y)=(-,-)$, we wish to precisely identify the obstructions to optimal bounds for smooth sums and the meromorphic continuation of the zeta function, and we accomplish this by applying the spectral Kuznetsov formulas to the $d \ge 3$ terms of the arithmetic Kuznetsov formula.
The final formula is given by equation \eqref{eq:MainKlZetammCase} together with the expansions \eqref{eq:HIstarEval}, \eqref{eq:HwlstarEval} and \eqref{eq:Hw4starEval}.

For each $d \ge 0$, if $F(\mu)$ is Schwartz-class and holomorphic on $\frak{a}^d_{\frac{1}{2}+\delta}$ for some $\delta > 0$, the spectral Kuznetsov formula \cite[Theorem 4.1]{ArithKuzI}, \cite[Theorem 4]{WeylII} has the form
\[ \mathcal{C}^d(F)+\mathcal{E}^d_\text{max}(F)+\delta_{d=0} \mathcal{E}_\text{min}(F) = \sum_{w\in \set{I,w_4,w_5,w_l}} \sum_{v\in V} \sum_{c_1,c_2\in\N} \frac{S_w(\psi_m,\psi_n,cv)}{c_1 c_2} H_w^d\paren{F; mcvwn^{-1}w^{-1}}, \]
where
\begin{align*}
	H_w^d(F; y) =& \frac{1}{\abs{y_1 y_2}} \int_{\frak{a}^d_0} F(\mu) K^d_w(y, \mu) \specmu^d(\mu) d\mu,
\end{align*}
and for $d \ge 2$,
\begin{align}
\label{eq:specd}
	\specmu^d(\mu^d(r)) =& \frac{1}{16\pi^4 i}(d-1)\paren{\tfrac{d-1}{2}-3r}\paren{\tfrac{d-1}{2}+3r}.
\end{align}

If $f(y)=\delta_{\sgn(y)=(-,-)} f_{--}(-y)$ is Schwartz-class and supported on $\sgn(y)=(-,-)$, then $\wtilde{f}^d$ as in \cref{thm:ArithKuzHecke} meets the hypotheses of the spectral Kuznetsov formula, so for $3 \le D_1 \le D_2 \le \infty$, we have
\begin{gather}
\label{eq:MainKlZetammCase}
\begin{aligned}
	& \mathcal{K}_L(f) - \sum_{d\in [0,D_1)\cup(D_2,\infty)} \paren{\mathcal{C}^d(\wtilde{f}^d)+\mathcal{E}^d_\text{max}(\wtilde{f}^d)}-\mathcal{E}_\text{min}(\wtilde{f}^0) \\
	&= \sum_{w\in \set{I,w_4,w_5,w_l}} \sum_{v\in V} \sum_{c_1,c_2\in\N} \frac{S_{w_l}(\psi_m,\psi_n,cv)}{c_1 c_2} H_{w_l}^*\paren{mcvwn^{-1}w^{-1}},
\end{aligned}\\
	H_w^*(y) := \frac{1}{8\pi^3\abs{y_1 y_2}} \sum_{d=D_1}^{D_2} (d-1) \int_{\Re(r)=0} \wtilde{f}^d(\mu^d(r)) K^d_w(y, \mu^d(r)) \paren{\paren{\tfrac{d-1}{2}}^2-9r^2} \frac{dr}{2\pi i} \nonumber
\end{gather}
Note:  We specifically avoid applying the spectral Kuznetsov formula at $d=2$ for the reasons described in \cref{sect:SmoothSumsThm}.

We consider such a formula in the cases where either $f_{--}(y) = f_X(y)$ has the form \eqref{eq:fXDef} for smooth sums or $f_{--}(y) = f_M(y) = \prod_{i=1}^2 y_i^s \exp(1-y_i^{1/M})$ and take the limit as $M \to \infty$ for the \emph{unweighted} Kloosterman zeta function.
The analysis will be done in \cref{sect:Reflect}, but we describe the results here.

For the smooth sums, we will choose $D_1 = (X_1 X_2)^{\theta/2}$ and $D_2 = (X_1 X_2)^\epsilon \Max{X_1^{-1/2},X_2^{-1/2}}$ and we identify a pair of ``bad'' terms giving the primary asymptotics of $H_w^*(y)$ in this range; all other terms of \eqref{eq:MainKlZetammCase} are small compared to $(X_1 X_2)^{\theta+\epsilon}$, and in particular, if $X_1,X_2>1$ these bad terms disappear.
Ideally, one would then use a deeper analysis of these expressions in the case $X_1<1$ or $X_2 < 1$ to either show they are negligible or give an inversion formula which corrects for their presence (assuming they are not too close to the original function), but this beyond the scope of the current paper.

For the zeta function, we will choose $D_1=3$ and $D_2=\infty$.
We give only the first terms in a sequence of contour shifting and describe how one obtains the more general result; however, it should be clear that the formula \eqref{eq:MainKlZetammCase} can be used to continue the unweighted Kloosterman zeta function
\[ Z^{--}_{m,n}(s) := \sum_{c_1,c_2\in\N} \frac{S_{w_l}(\psi_m,\psi_n, (-c_1,-c_2))}{c_1 c_2} \paren{\frac{c_2}{c_1^2}}^{s_1} \paren{\frac{c_1}{c_2^2}}^{s_2}, \qquad m,n\in\N^2 \]
from its initial definition on $\Re(2s_1-s_2),\Re(2s_2-s_1)>\frac{1}{2}$ to the larger region $\Re(s_1),\Re(s_2)>\frac{1}{2}$.
The main obstruction comes from the same pair of terms as for the smooth sums, as the other terms tend to converge in left half-planes in $s$.
On the other hand, if we simply wrote down the spectral expansion of \cref{thm:ArithKuzHecke} for the test function $f_{--}(y) = y_1^{s_1} y_2^{s_2}$, we would see that the $d$ sum only converges for $\Re(s_1+s_2) < -1$ (so we cannot do this); in some sense we are demonstrating a type of conditional convergence for that sum on (a subregion of) $\Re(s_1+s_2) > -1$.

\section{Methods}
\subsection{The Bessel expansion}
In proving the Fourier expansion for functions on $[0,1]$, if one knows, by some other means, that the functions $\e{nx}$ form a basis, i.e. for any smooth function $f:[0,1]\to\C$, there are some coefficients $a_n$ so that
\[ f(x) = \sum_{n\in\Z} a_n \, \e{nx}, \]
where the series converges rapidly, then to find the $a_n$ it is sufficient to show the $\e{nx}$ are orthonormal on $[0,1]$.
It follows on taking the inner product of both sides of the previous display that
\[ \int_{[0,1]} f(x) \wbar{\e{nx}} dx = a_n. \]

The proof of the Bessel expansion in \cref{sect:BesselExpand} follows precisely along these lines.
Because the $GL(3)$ Bessel functions are defined in terms of an integral transform of the Whittaker function \eqref{eq:KdwDef}, it follows from Wallach's Whittaker expansion that the Bessel functions give a basis of the smooth, compactly supported functions $f:Y\to\C$.
The coefficients produced by Wallach's Whittaker expansion, in the form of \cref{thm:Wallach}, are quite unpleasant, and the work of the proof is to show the Bessel functions exhibit the required orthonormality, so these complicated coefficients are really just the inner product with the given basis element.
Of course, the proof relies on some delicate interchanges of integrals and some work to show everything converges, and we avoid the generalized principal series representations by analytic continuation, but in the end we arrive at \cref{thm:BesselExpand}.

The proof we give for the Bessel expansion should generalize very well in the group direction, provided one knows the relevant generalizations of Stade's formula, and assuming one can justify a certain troublesome interchange of integrals (see \cref{sect:InterchangeOfIntegrals}).
It is interesting to note that Wallach shows the expansion into Whittaker functions by a sort of Fourier transform on the left of Harish-Chandra's expansion into spherical functions, and we now establish the expansion into Bessel functions by the same sort of Fourier transform -- now on the right -- of Wallach's expansion.
(Of course, the theorem here is much less general.)

\subsection{The arithmetic Kuznetsov formula}
\label{sect:CPSmethod}
The simplest way to construct a Kuznetsov-type formula is to equate the spectral expansion and Bruhat decomposition of a Poincar\'e series (provided one already understands the convergence issues), and Cogdell and Piatetski-Shapiro \cite{CPS} have given a method of constructing arithmetic Kuznetsov formulas which avoids the use of inversion formulas.
Their method is simply to take the Fourier coefficient of a Poincar\'e series whose the kernel function is defined in terms of the Bruhat decomposition of the long-element Weyl cell.
The kernel function may easily be chosen to produce the desired test function on the sum of (long-element) Kloosterman sums, and since the long-element Weyl cell has full measure, the series will be well-defined in $L^2(\Gamma\backslash G)$ (provided the test function is sufficiently nice).
Then \cref{sect:UniformSE} gives a version of the spectral expansion which is nicely uniform with respect to the Fourier-Whittaker coefficients of Maass forms (cuspidal or Eisenstein) lying in the same irreducible representation (occuring in the Langlands decomposition of the right regular representation of $L^2(\Gamma\backslash G)$), and applying this to our newly-constructed Poincar\'e series gives a spectral interpretation as a sum of pairs of Fourier-Whittaker coefficients over all minimal-weight Maass forms (equivalently, over irreducible representations; again, both cuspidal and Eisenstein).
(Yangbo Ye \cite{Ye01} gave an adelic treatment of the method of Cogdell and Piatetski-Shapiro on $GL(n)$, but was unable to provide a spectral interpretation, as we do here for $SL(3)$.)

The weight functions in such a spectral expansion are very complicated and involve both an integral transform of the constructed kernel function and a sum over all weights (i.e. representations of $K$, or equivalently, vectors in the irreducible representation).
Fortunately, these unpleasant functions are precisely the complicated coefficients we encountered in proving the Bessel expansion, and we now know that these are just the inner product of our test function with the appropriate Bessel function, as expected.

One of the highlights of the method of Cogdell and Piatetski-Shapiro was that it replaced the Iwasawa decomposition and representation theory on $K$ with the Bruhat decomposition and representation theory on $U(\R)$ (i.e. the Kirilov model).
We keep the analysis on the Bruhat decomposition, but do not switch to the representation theory of $U(\R)$.
This is partially because we have a very explicit formulation of the Whittaker functions and the spectral expansion over the $K$-types in \cite{HWI,HWII}, but also because a key step in the argument relies on Godement's theorem \cite[Theorem 14]{Gode01} on the uniqueness of the spherical functions and the author is unaware of any similar results for $U(\R)$-invariant functions.
(The spherical functions are defined via representations of $K$ and involve a trace over a finite-dimensional vector space, while $U(\R)$-invariant functions would involve a trace over an $L^2$-space; the Bessel functions below are actually an example of such a function and there has been some work done by Baruch and Mao in this area, see \cref{sect:BaruchMao}.)

As always, the analysis contained here is specific to the place at infinity, and not the particular discrete group $SL(3,\Z)$.
A great deal of work is involved in analyzing the convergence of integrals to justify interchanges, but this work should see use in future generalizations in the group direction; the reader will notice that most of the work is being done on $SL(2)$ integrals and functions, after which the $SL(3)$ analysis follows fairly easily.
The exception here comes in proving the generalizations of Stade's formula, which are not (visibly) achieved by reducing to $SL(2)$.



\section{What's next?}
\label{sect:WhatsNext}
In the development of the spectral and arithmetic Kuznetsov formulas, we have encountered a single problem in several different facades:
\begin{prob}
\label{prob:wlKlDistantModuli}
Understand sums of the long-element Kloosterman sums when the moduli are far apart.
\end{prob}
\noindent Here, the outermost ``sums'' refers to either smooth moduli sums (as considered here) or bilinear forms (as considered in \cite{LargeSieve}).
If the moduli are $c_1,c_2\in\N$ and we set $y_1=\frac{c_2}{c_1^2}$, $y_2=\frac{c_1}{c_2^2}$, then ``far apart'' should mean at least $y_1 \ll y_2^{1-\epsilon}$ (or $y_2 \ll y_1^{1-\epsilon}$), but the true difficulties begin to occur in the range $y_1 \ll y_2^\epsilon$; to the extent of the techniques considered in this paper, the arithmetic Kuznetsov breaks down in the region $y_1 \ll y_2^{-1/3+\epsilon}$, see \cref{thm:SmoothSums}.
We take a moment to explore the different realizations of this problem.
We are not attempting to demonstrate equivalences, just to draw connections; futhermore, these connections are typically concrete in one direction and for the reverse direction, we rely on the nature of the Kuznetsov formulas as an isomorphism between spectral sums and (moduli) sums of Kloosterman sums.
We consider this problem in greater detail in \cref{sect:Reflect}.

It may well be the case that $GL(3)$ methods cannot work in the more extreme cases.
In particular, when one of the moduli is essentially fixed, we see from \cite[Theorem 2.2]{LargeSieve} that the $SL(3)$ long-element Kloosterman sum looks like a classical Kloosterman sum times an additive twist, and this should certainly be handled by $SL(2)$ methods.

\begin{prob}
Identify and remove the contribution of the maximal parabolic Eisenstein series from the $GL(3)$ spectral Kuznetsov formulas.
\end{prob}
\noindent In \cite{LargeSieve}, an unexpectedly large contribution arose in bilinear sums of Kloosterman sums when the moduli were far apart, and this directly impacted the strength of the resulting spectral large sieve inequalities.
Also in that paper, a sequence of coefficients (i.e. the input for the large sieve inequalites) was constructed such that the maximal parabolic Eisenstein terms in the spectral large sieve sum precisely attained this unusually large asymptotic.
That implies, at least, that improving the large sieve inequalities would require isolating and removing this contribution.
(One might hope that would extend to improvements in the corresponding subconvexity estimates \cite{Subconv,GPSSubconv}, but these involve \emph{smooth} index sums.)

\begin{prob}
Use $GL(3)$ spectral theory to study the symmetric squares of $GL(2)$ cusp forms.
\end{prob}
\noindent In the papers \cite{Subconv,GPSSubconv,LargeSieve}, the test functions were taken specifically to avoid the self-dual Maass forms as the analysis becomes much more difficult there (and higher moments must be considered, etc.).
This paper and the previous one \cite{ArithKuzI} demonstrate a direct connection between a class of Maass forms that are nearly self-dual (in the sense that one of the parameters is small $\abs{\mu_i} \ll \norm{\mu}^{\epsilon}$) and \cref{prob:wlKlDistantModuli} via either error terms in the bound for smooth sums or the (non-)convergence of the (unweighted) Kloosterman zeta function.

\begin{prob}
Use $GL(3)$ spectral theory to study the $\operatorname{Kl}_3$ hyper-Kloosterman sums.
\end{prob}
\noindent If we examine the Pl\"ucker coordinates in the Bruhat decomposition of $\Gamma$ in any Poincar\'e series, we see that the long-element Kloosterman sum with moduli $c_1, c_2 \in \N$ comes from $\gamma \in \Gamma$ with
\[ \gamma=\Matrix{*&*&*\\d&e&*\\c_1&b&*}, \qquad c_2=bd-c_1 e, \]
and the $\operatorname{Kl}_3$ hyper-Kloosterman sums with modulus $b$ (which are the $w_5$ Kloosterman sums) come from those $\gamma$ with $c_1=0$ and $c_2 = bd = b^2$ (when the indices are all 1).
In this sense, the hyper-Kloosterman sums are hidden in the limit of the long-element sums as the moduli become infinitely far apart; this behavior extends to other interesting exponential sums \cite{Welsh}.
In examining the spectral Kuznetsov formulas, one notices that a degree of freedom is dropped in the integral transforms for the $w_5$ Weyl element terms, which leads one to conjecture \cite{BFG} that a test function can be constructed such that the $w_5$ cell dominates the long-element cell (and for which the spectral expansion still converges), but this has yet to be achieved, though the current paper is a significant step towards the necessary understanding of the integral transforms.
(As mentioned above, it's obvious that integrals of the $w_5$ Bessel function are dense in the Schwartz functions on $\R^\times$, the key point would be to identify precisely to what extent the coefficients in the Bessel expansion are determined by the inner product with the corresponding Bessel functions.)

We do not anticipate a common solution to these problems; in fact, any progress at all on any individual problem should be considered a major achievement.

\section{Background}

\subsection{The Wigner $\WigDName$-matrices and the $V$ group}

If we describe elements $k=k(\alpha,\beta,\gamma)\in K$ in terms of the $Z$-$Y$-$Z$ Euler angles
\begin{align}
\label{eq:kabcDef}
	k(\alpha,\beta,\gamma) :=& k(\alpha,0,0) \, w_3 \, k(-\beta,0,0) \, w_3 \, k(\gamma,0,0), & k(\theta,0,0) := \SmallMatrix{\cos\theta&-\sin\theta&0\\ \sin\theta&\cos\theta&0\\ 0&0&1}, \\
\label{eq:tildekDef}
	\tildek{e^{i\alpha},e^{i\beta},e^{i\gamma}} :=& k(\alpha,\beta,\gamma)
\end{align}
then the Wigner $\WigDName$-matrix is primarily characterized by
\begin{align}
\label{eq:WigDPrimary}
	\WigDMat{d}(k(\theta,0,0)) = \Dtildek{d}{e^{i\theta}}, \qquad \Dtildek{d}{s} := \diag\set{s^d,\ldots,s^{-d}}, s\in\C.
\end{align}
The entries of the matrix $\WigDMat{d}(k(0,\beta,0)) = \WigDMat{d}(w_3) \WigDMat{d}(k(-\beta,0,0)) \WigDMat{d}(w_3)$ are known as the Wigner $\WigdName$-polynomials, but we will avoid the Wigner $\WigdName$-polynomials by treating $\WigDMat{d}(w_3)$ as a black box; that is, as some generic orthogonal matrix.
We use the facts (see \cite[section 2.2.2]{HWI})
\begin{align}
\label{eq:WigDv}
	\WigDMat{d}(v_{\varepsilon,+1}) =& \diag\set{\varepsilon^d,\ldots,\varepsilon^{-d}}, & \WigD{d}{m'}{m}(v_{\varepsilon,-1}) =& \piecewise{(-1)^d\varepsilon^{m'}&\If m'=-m,\\0&\Otherwise.}
\end{align}

A complete list of the characters of $V$, is given by $\chi_{\varepsilon_1,\varepsilon_2}, \varepsilon\in\set{\pm 1}^2$ which act on the generators by
\begin{align}
\label{eq:chipmpmdef}
	\chi_{\varepsilon_1,\varepsilon_2}(\vpmpm{-+})=&\varepsilon_1 & \chi_{\varepsilon_1,\varepsilon_2}(\vpmpm{+-}) =&\varepsilon_2.
\end{align}
These give rise to the projection operators
\begin{align}
\label{eq:SigmadchiDef}
	\Sigma^d_\chi = \frac{1}{4} \sum_{v\in V} \chi(v) \WigDMat{d}(v),
\end{align}
which are written out explicitly in \cite[section 2.2.2]{HWI} using the description \eqref{eq:WigDv}.
Using the abbreviation $\Sigma^d_{\varepsilon_1,\varepsilon_2} = \Sigma^d_{\chi_{\varepsilon_1,\varepsilon_2}}$, we define
\begin{align}
\label{eq:SigmadpdDef}
	\Sigma^{d'}_0 = \Sigma^{d'}_{++}, \qquad \Sigma^{d'}_1 = \Sigma^{d'}_{+-}, \qquad \Sigma^{d'}_d = \Sigma^{d'}_{(-1)^d,+}, d \ge 2.
\end{align}

\subsection{The Bruhat decomposition}
\label{sect:BruhatDecomp}
Up to a measure-zero subspace, we may identify $G$ with the long-element Weyl cell $U(\R)Yw_l U(\R)$.
The change of variables from $g\in G$ to $g=xyw_lx'$ with $x,x'\in U(\R)$ and $y \in Y$ is
\begin{align}
\label{eq:dgBruhat}
	dg = \frac{1}{8\pi^2} dx\, dy\, dx',
\end{align}
which follows from \cite[Lemma 4.3.2]{T02}:
If $d\bar{k}$ is Haar probability measure on $V\backslash K$, then
\begin{align}
\label{eq:dbark}
	d\bar{k} =& \frac{1}{2\pi^2} p_{2\rho}(w_l x') dx',
\end{align}
under the association between $V\backslash K$ and the $K$-part of $w_l x'$ as $x'$ runs through $U(\R)$; an identical expression holds with $w_l x'$ replaced by $\trans{x'}$.
The measure on $Y$ here is the natural measure
\[ \int_Y f(y) dy = \sum_{v\in V} \int_{Y^+} f(vy) dy, \]
and the factor of $4$ difference between \eqref{eq:dgBruhat} and \eqref{eq:dbark} is exactly the difference between counting measure and probability measure on $V$.

\subsection{The long-element Kloosterman sum}
\label{sect:wlKloostermanSum}
As in \cite{BFG}, we may write this explicitly as
\begin{align*}
	S_{w_l}(\psi_m,\psi_n;c \vpmpm{\varepsilon_1,\varepsilon_2}) =& S(-\varepsilon_1 n_2,-\varepsilon_2 n_1,m_1,m_2;c_1, c_2),
\end{align*}
where $S(m_1, m_2, n_1, n_2; D_1, D_2)$ is given by the rather complicated exponential sum
\begin{align*}
	&\sum_{\substack{B_1, C_1 \summod{D_1}\\B_2, C_2 \summod{D_2}\\ }} \e{\frac{m_1B_1 + n_1(Y_1 D_2 - Z_1 B_2)}{D_1} + \frac{m_2B_2 + n_2(Y_2 D_1 - Z_2B_1)}{D_2}},
\end{align*}
where the sum is restricted to
\[ D_1C_2 + B_1B_2 + D_2C_1 \equiv 0 \summod{D_1D_2}, \qquad (B_1, C_1, D_1) = (B_2, C_2, D_2) = 1, \]
and the $Y_i$ and $Z_i$ are defined by
\[ Y_1B_1 + Z_1C_1 \equiv 1 \pmod{D_1}, \qquad Y_2B_2 + Z_2C_2 \equiv 1 \pmod{D_2}. \]
We will not require in-depth knowledge about the sum itself, beyond the square-root cancellation bound (originally due to Stevens \cite{Stevens}, but later made global with explicit dependence on the indices in \cite{Me01}, and strengthened in \cite[(2.10)]{LargeSieve}):
\begin{align}
\label{eq:SquareRoot}
	S_{w_l}(\psi_m,\psi_n;c)^2 \ll (c_1 c_2)^{1+\epsilon} (c_1,c_2) (m_1 m_2 n_1 n_2, c_1, c_2)(m_1,n_2,c_1)(m_2,n_1,c_2).
\end{align}
The $\epsilon$ power can be improved to divisor functions.

\subsection{Whittaker functions}
\label{sect:WhittFuns}
From the Iwasawa decomposition, we define
\begin{align}
\label{eq:IdDef}
	I^d_\mu(xyk) = p_{\rho+\mu}(y) \WigDMat{d}(k).
\end{align}
The components of this matrix-valued function are essentially the elements of a principal series representation (which may be reducible).

The majority of work of the paper will be concerned with the $GL(3)$ special functions consisting of the Whittaker functions, the Bessel functions (\cref{sect:GL3Bessel}), and the spherical functions (\cref{sect:Gode}).
The long-element, matrix-valued Jacquet-Whittaker function at each $K$-type $\WigDMat{d}$ is defined by the integral
\begin{align}
\label{eq:JacWhittDef}
	W^d(g,\mu,\psi) := \int_{U(\R)} I^d_\mu(w_l u g) \wbar{\psi(u)} du.
\end{align}
It is easy to see that this satisfies
\begin{align}
\label{eq:WhittGFEs}
	W^d(xyk,\mu,\psi) &= \psi(x) W^d(y,\mu,\psi) \WigDMat{d}(k), & W^d(y,\mu,\psi_m) =& p_{\rho+\mu^{w_l}}(y) W^d(I,\mu,\psi_{ym}).
\end{align}

The functions $I^d_\mu(xyk)$ and $W^d(g,\mu,\psi)$, being defined in terms of a Wigner $\WigDName$-matrix, are again matrix-valued, and we index their rows $W^d_{m'}$, columns $W^d_{\cdot,m}$, and entries $W^d_{m',m}$ from the central entry, i.e. by the same convention as the Wigner $\WigDName$-matrices.

We will frequently use the fact that the rows $W^{d'}_{m'}(g,\mu^d(r),\psi_I)$ with $d \le m' \equiv d \pmod{2}$ are all identically zero, as are the rows $W^{d'}_{m'}(g,-\mu^d(r),\psi_I)$ with $-d < m' \equiv d \pmod{2}$.
This can be seen from the pole of the gamma function in the denominator of \eqref{eq:classWhittDef} in the integral representation \eqref{eq:WhittIntRepn}, below.
One particular consequence of this is that we may freely replace $\Sigma^{d'}_{(-1)^d,+} \mapsto \Sigma^{d'}_{(-1)^d,(-1)^d}$ in the product
\[ \Tr\paren{\Sigma^{d'}_d W^{d'}(y,\mu^d(r),\psi_I) \trans{\wbar{W^{d'}(y,-\wbar{\mu^d(r)},\psi_I)}}}, \]
which is precisely the effect of applying the $\mu \mapsto \mu^{w_2}$ functional equation (see below), and this continues to hold at $d=0,1$ for general $\mu$ as $\Sigma^{d'}_{+\pm}$ is $w_2$-invariant in those cases.

\subsubsection{The functional equations}
\label{sect:WhitFEs}
The Whittaker functions are entire in $\mu$ and satisfy the functional equations (\cite[Proposition 3.3]{HWI}),
\begin{align}
\label{eq:WhittFEs}
	W^d(g,\mu,\psi_I) = T^d(w,\mu) W^d(g,\mu^w,\psi_I), \qquad w \in \Weyl
\end{align}
generated by the matrices
\begin{align}
\label{eq:Tdw2}
	T^d(w_2,\mu) :=&\pi^{\mu_1-\mu_2} \Gamma^d_\mathcal{W}(\mu_2-\mu_1,+1), \\
\label{eq:Tdw3}
	T^d(w_3,\mu) :=& \pi^{\mu_2-\mu_3} \WigDMat{d}(\vpmpm{--}w_l) \Gamma^d_\mathcal{W}(\mu_3-\mu_2,+1) \WigDMat{d}(w_l\vpmpm{--}),
\end{align}
and $\Gamma^d_\mathcal{W}(u,\varepsilon)$ is a diagonal matrix coming from the functional equation of the classical Whittaker function \cite[(2.20)]{HWI}:
For $y > 0$, define $\mathcal{W}^d(y,u)$ to be the diagonal matrix-valued function with entries (see \cite[section 2.3.1]{HWI})
\begin{equation}
\label{eq:classWhittDef}
\begin{aligned}
	\mathcal{W}^d_{m,m}(y,u) =& \int_{-\infty}^\infty \paren{1+x^2}^{-\frac{1+u}{2}} \paren{\frac{1+ix}{\sqrt{1+x^2}}}^{-m} \e{-yx} dx \\
	=& \pi \frac{(\pi y)^{\frac{u-1}{2}}}{\Gamma\paren{\frac{1-\varepsilon m+u}{2}}} W_{-\frac{m}{2}, \frac{u}{2}}(4\pi y),
\end{aligned}
\end{equation}
(where $W_{\alpha,\beta}(y)$ is the classical Whittaker function), then we have the functional equations
\begin{align}
\label{eq:WhittGammas}
	\mathcal{W}^d(y,-u) =& (\pi y)^{-u} \Gamma^d_\mathcal{W}(u,1) \mathcal{W}^d(y,u) & \Gamma_{\mathcal{W},m,m}^d(u,+1) =& \frac{\Gamma\paren{\frac{1-m+u}{2}}}{\Gamma\paren{\frac{1-m-u}{2}}}.
\end{align}

In particular, we have
\begin{align}
\label{eq:Tdwl}
	T^d(w_l,\mu) =& \pi^{2(\mu_1-\mu_3)} \Gamma^{d'}_{\mathcal{W}}(\mu_2-\mu_1,+1) \WigDMat{d'}(\vpmpm{--}w_l) \Gamma^{d'}_{\mathcal{W}}(\mu_3-\mu_1,+1) \WigDMat{d'}(\vpmpm{--}w_l) \Gamma^{d'}_{\mathcal{W}}(\mu_3-\mu_2,+1).
\end{align}

The matrices $T^d(w,\mu)$ satisfy the orthogonality relation
\begin{align}
\label{eq:TdOrtho}
	\trans{\wbar{T^d(w,-\wbar{\mu})}} T^d(w,\mu) =& I,
\end{align}
and the commutation relation with $\Sigma^d_\chi$,
\begin{align}
\label{eq:TdSigmadComm}
	\Sigma^d_\chi T^d(w,\mu) = T^d(w,\mu) \Sigma^d_{\chi^w},
\end{align}
where $\chi^w(v)=\chi(wvw^{-1})$.
This corrects \cite[(3.26) and (3.27)]{HWI}, which have the inverse Weyl element on the wrong side.

\subsubsection{The minimal-weight Whittaker functions}
\label{sect:MinWtWhitt}
For $\alpha\in\set{0,1}^3$, $\beta,\eta\in\Z^3$, $\ell\in\Z^2$ and $s\in\C^2$ define
\begin{equation}
\begin{aligned}
	\Lambda^0(\mu) =& \pi^{-\frac{3}{2}+\mu_3-\mu_1} \Gamma\paren{\tfrac{1+\mu_1-\mu_2}{2}} \Gamma\paren{\tfrac{1+\mu_1-\mu_3}{2}} \Gamma\paren{\tfrac{1+\mu_2-\mu_3}{2}}, \\
	\Lambda^1(\mu) =& \sqrt{2} \pi^{-\frac{3}{2}+\mu_3-\mu_1} \Gamma\paren{\tfrac{1+\mu_1-\mu_2}{2}} \Gamma\paren{\tfrac{2+\mu_1-\mu_3}{2}} \Gamma\paren{\tfrac{2+\mu_2-\mu_3}{2}}, \\
	\Lambda^d(\mu^d(r)) =& 2 (-1)^d (2\pi)^{-\frac{d+1}{2}-3r} \Gamma(d) \Gamma\paren{\tfrac{d+1}{2}+3r},
\end{aligned}
\end{equation}
\begin{align}
	\wtilde{G}(d,\beta,\eta, s,\mu)= \frac{\prod_{i=1}^3 \Gamma\paren{\frac{\beta_i+s_1-\mu_i}{2}} \Gamma\paren{\frac{\eta_i+s_2+\mu_i}{2}}}{\Gamma\paren{\frac{s_1+s_2+\sum_i (\beta_i+\eta_i)-2d}{2}}},
\end{align}
\begin{equation}
\begin{aligned}
	\wtilde{G}^0(\ell, s,\mu) =& \wtilde{G}(0,0,0,s,\mu), \\
	\wtilde{G}^1(\ell,s,\mu) =& \wtilde{G}(1,(\ell_1,\ell_1,1-\ell_1),(\ell_2,\ell_2,1-\ell_2),s,\mu), \\
	\wtilde{G}^d(\ell,s,\mu) =& \wtilde{G}(d,(d,0,\ell_1),(0,d,\ell_2),s,\mu), \qquad d \ge 2.
\end{aligned}
\end{equation}
Now for $\abs{m'}\le d$, write $m'=\varepsilon m$ with $\varepsilon = \pm 1$ and $0 \le m \le d$, set
\begin{align}
	G^d_{m'}(s,\mu) =& \sqrt{\binom{2d}{d+m}} \sum_{\ell=0}^{m} \varepsilon^\ell \binom{m}{\ell} \wtilde{G}^d((d-m,\ell), s,\mu),
\end{align}
and take $G^d(s,\mu)$ to be the vector with coordinates $G^d_{m'}(s,\mu)$, $m'=-d,\ldots,d$.

We define the completed minimal-weight Whittaker function at each weight $d$ as
\begin{align}
	W^{d*}(y, \mu) =& \frac{1}{4\pi^2} \int_{\Re(s)=\mathfrak{s}} (\pi y_1)^{1-s_1} (\pi y_2)^{1-s_2} G^d\paren{s,\mu} \frac{ds}{(2\pi i)^2},
\end{align}
for any $\mathfrak{s} \in (\R^+)^2$.

\begin{thm}
\label{thm:MinWhitt}
	The Whittaker functions at the minimal $K$-types are
	\begin{align*}
		\Lambda^0(\mu) W^0(y, \mu, \psi_I) =& W^{0*}(y, \mu), \\
		\Lambda^1(\mu) W^1_0(y, \mu, \psi_I) =& W^{1*}(y, \mu), \\
		\Lambda^d(\mu^d(r)) W^d_{-d}(y, \mu^d(r), \psi_I) =& W^{d*}(y, \mu^d(r)), \qquad d \ge 2.
	\end{align*}
\end{thm}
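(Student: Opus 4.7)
My plan is to prove each of the three identities by computing the Mellin transform in $y$ of the left-hand side and matching with $G^d(s,\mu)$, which is by construction the Mellin symbol of $W^{d*}(y,\mu)$. By Mellin inversion it suffices to show
\[ \Lambda^d(\mu) \int_{Y^+} (\pi y_1)^{s_1-1}(\pi y_2)^{s_2-1} W^d_{\text{min}}(y,\mu,\psi_I) \, \frac{dy_1 \, dy_2}{y_1 y_2} = \tfrac{1}{4\pi^2} G^d(s,\mu) \]
(with $\mu=\mu^d(r)$ in the case $d\ge 2$), where $W^d_{\text{min}}$ denotes the $m'=0$ component for $d=0,1$ and the $m'=-d$ component for $d\ge2$. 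The outer factors $(\pi y)^{s-1}$ vs.\ $(\pi y)^{1-s}$ are a matter of renaming Mellin variables.

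First I would unfold the Jacquet integral \eqref{eq:JacWhittDef} at the chosen $K$-type. The minimal-weight entry $\WigD{d}{-d}{-d}(k)$ is a polynomial in the entries of $k$ of pure highest-weight type, so for $k=k(w_l u)$ it has a closed form in the coordinates of $u$; similarly the $\WigD{1}{0}{0}$ entry for $d=1$, and trivially the scalar entry for $d=0$. Combined with the Iwasawa decomposition $w_l u = x'y'k'$ and the definition of $I^d_\mu$, this expresses $W^d_{\text{min}}(y,\mu,\psi_I)$ as an explicit three-fold integral over $U(\R)$ of $p_{\rho+\mu}(y')\,\psi_I(u)^{-1}$ times a polynomial in the $u$-coordinates.

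Next I would perform two of the three unipotent integrations. As in Bump's classical computation of Stade's formula, one of the integrals (say, the $x_3$-integral) can be carried out by the very definition of the classical Whittaker function \eqref{eq:classWhittDef}, producing a factor $\mathcal{W}^{d'}(\cdot,u)$ for an appropriate shift of $u$; a second integration can be reduced to a beta integral after a suitable rational change of variables. Then I would take Mellin transforms in $y_1,y_2$: the Mellin transform of a classical Whittaker function is a ratio of gamma functions with argument shifts matching the upper parameters in \eqref{eq:classWhittDef}, and the residual integral over the last $u$-coordinate becomes a second beta integral. Collecting the gamma factors gives, after a short calculation, the expression $\wtilde{G}(d,\beta,\eta,s,\mu)$ with the appropriate $\beta,\eta$; expanding the polynomial coming from $\WigD{d}{-d}{-d}$ in the Euler-angle coordinates produces the binomial sum $\sum_{\ell=0}^{m}\varepsilon^\ell \binom{m}{\ell}$ that appears in $G^d_{m'}(s,\mu)$, and the factor $\sqrt{\binom{2d}{d+m}}$ is a Wigner normalization constant. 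Finally I would specialize $\mu = \mu^d(r)$ for $d\ge 2$ and read off $\Lambda^d$ as the collected gamma factors that, after the functional equations \eqref{eq:WhittGammas}, have to be stripped from the left to match the symmetric normalization on the right.

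\textbf{Main obstacle.} The delicate part is the simultaneous binomial expansion of $\WigD{d}{-d}{-d}(k(w_l u))$ in the $u$-coordinates and the Mellin evaluation: each monomial in the expansion produces gamma factors with shifted arguments $\beta,\eta$, and one must verify that the shifts line up exactly with the definition of $\wtilde{G}^d$ in all three cases $d=0,1,\ge 2$. For $d \ge 2$ an additional subtlety is that specialization $\mu=\mu^d(r)$ causes certain gamma factors in the numerator and denominator of the general $\wtilde{G}$ to line up into the simpler expression $\Lambda^d(\mu^d(r))$; identifying the correct zero-pole cancellations here, and checking that the denominator $\Gamma\!\left(\frac{s_1+s_2+\sum_i(\beta_i+\eta_i)-2d}{2}\right)$ remains finite on the contour $\Re s=\mathfrak{s}$, is where the computation requires the most care.
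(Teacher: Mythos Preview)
The paper does not actually prove \cref{thm:MinWhitt}: it is stated in \cref{sect:MinWtWhitt} as background, with the implicit understanding that these identities were established in the author's earlier papers \cite{HWI} and \cite{HWII} (see the surrounding citations to \cite[section~3.3]{HWI}, \cite[Theorem~2]{HWI}, \cite[Theorem~2]{HWII}). So there is no ``paper's own proof'' to compare against here.

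That said, your strategy is the correct one and is essentially how these formulas are derived in those references: unfold the Jacquet integral at the chosen row, use the explicit Iwasawa coordinates of \cref{sect:ExplicitStars} (in particular the substitution \eqref{eq:wlxSub}) to separate variables, evaluate one integral as a classical Whittaker function and the others as beta integrals, then take the Mellin transform in $y$ and match gamma factors. One clarification: the object $W^d_{-d}$ in the theorem is the full \emph{row} with column index $m$ running from $-d$ to $d$, not the single entry $\WigD{d}{-d}{-d}$; the binomial sum in $G^d_{m'}(s,\mu)$ arises from expanding the column entries $\WigD{d}{-d}{m}(k^*)$ (equivalently, from the $\alpha_i$-monomial expansion described after \eqref{eq:wlxIwa}), not from a single matrix entry. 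With that adjustment your outline goes through.
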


For completeness, on $d \ge 2$, we also define
\[ \Lambda^d(-\mu^d(r)) = \Lambda^d(\mu^d(-r)) \frac{\pi^{d-1}}{\Gamma(d)}, \]
so that
\[ \Lambda^d(-\mu^d(r)) W^d_{-d}(y, -\mu^d(r), \psi_I) = W^{d*}(y, \mu^d(-r)), \]
and this derives from $-\mu^d(r)^{w_2} = \mu^d(-r)$.

\subsubsection{The Mellin transform in the general case.}
In \cite[section 3.3]{HWI}, we computed the Mellin transform of $W^d(y,\mu,\psi_I)$ in terms of a type of beta function, defined for $\Re(a),\Re(b)>0$,$\varepsilon\in\set{\pm1}$ by
\begin{align}
\label{eq:BetaFunDef}
	\mathcal{B}_{\varepsilon, m}(a,b) =& \int_0^\infty x^{a-1} (1+x^2)^{-\frac{b+a}{2}} \paren{\paren{\frac{1+ix}{\sqrt{1+x^2}}}^{-m}+\varepsilon\paren{\frac{1-ix}{\sqrt{1+x^2}}}^{-m}} dx.
\end{align}
This function satisfies
\begin{align}
\label{eq:BetaFunFE}
	\mathcal{B}_{\varepsilon, -m}(a,b) =& \varepsilon \mathcal{B}_{\varepsilon, m}(a,b),
\end{align}
and, for $m \ge 0$, may be computed as
\begin{align}
\label{eq:BexplicitEval}
	\mathcal{B}_{(-1)^\delta, m}(a,b) =& i^{\delta} \sum_{j=0}^{(m-\delta)/2} \binom{m}{2j+\delta} (-1)^j B\paren{\frac{\delta+a}{2}+j, \frac{m-\delta+b}{2}-j}.
\end{align}
If we collect these into a diagonal matrix $\mathcal{B}^d_\varepsilon = \diag(\mathcal{B}_{\varepsilon, -d}, \ldots, \mathcal{B}_{\varepsilon, d})$, then the Mellin transform of $W^d(y,\mu,\psi_I)$ in the form
\begin{align}
\label{eq:WhittMellin}
	W^d(y, \mu, \psi_I) =& \int_{\Re(s)=(\frac{2}{10},\frac{2}{10})} (\pi y_1)^{1-s_1} (\pi y_2)^{1-s_2} \what{W}^d(s,\mu) \frac{ds}{(2\pi i)^2},
\end{align}
is
\begin{align}
\label{eq:WhittMellinEval}
	\what{W}^d(s,\mu) =&2^{-s_1-s_2} (2\pi)^{\mu_1-\mu_3} \sum_{\delta\in\set{0,1}^3} i^{\delta_1-\delta_2+\delta_3-1} \sin\frac{\pi}{2}(\delta_2+s_2+\mu_3) \Gamma\paren{s_2+\mu_3} \\
	& \int_{\Re(t)=\frac{1}{10}} \sin\frac{\pi}{2}(\delta_1+t-\mu_1) \Gamma\paren{t-\mu_1} \sin\frac{\pi}{2}(\delta_3+ s_1-t) \Gamma\paren{s_1-t} \nonumber\\
	& \quad \mathcal{B}^d_{(-1)^{1-\delta_1}}\paren{1+\mu_1-t,t-\mu_2} \WigDMat{d}(w_4) \mathcal{B}^d_{(-1)^{1-\delta_3}}\paren{1-s_1+t, s_1-\mu_3} \nonumber\\
	& \quad \WigDMat{d}(w_3) \mathcal{B}^d_{(-1)^{\delta_2+\delta_3}}\paren{1-s_2-s_1-\mu_3+t, s_2-\mu_3-t} \frac{dt}{2\pi i}, \nonumber
\end{align}
for, say, $\abs{\Re(\mu_i)} < \frac{1}{10}$.
(This corrects the parity of $\delta_3$ and the power of $i$ in \cite[(3.19)]{HWI}.)
This was obtained by applying \cite[(3.17)]{HWI}
\begin{align}
\label{eq:PsiThetaInvMellin}
	\e{x} &= \sum_{\delta\in\set{0,1}} (i\sgn(x))^{1-\delta} \lim_{\theta\to\frac{\pi}{2}^-} \frac{1}{2\pi i} \int_{\Re(t) = c} \abs{2\pi x}^{-t} \Gamma\paren{t} \sin(\tfrac{\pi}{2}\delta +\theta t)\, dt,& x\ne0,c>0
\end{align}
to the exponential terms in definition of the Whittaker function, after some useful substititions (see \cite[(3.12)]{HWI}).

If instead we start with \cite[(3.22)]{HWI}, which we write in the form
\begin{align}
\label{eq:WhittIntRepn}
	W^{d'}(y,\mu,\psi_I) =& y_1^{1+\frac{\mu_3}{2}} y_2^{1+\mu_3} \int_{\R^2} (1+u_3^2)^{\frac{-1+\frac{3}{2}\mu_3}{2}} (1+u_2^2)^{\frac{-1+\frac{3}{2}\mu_3}{2}} \paren{y_1\frac{\sqrt{1+u_3^2}}{\sqrt{1+u_2^2}}}^{-\frac{\mu_1-\mu_2}{2}} \\
	& \times \mathcal{W}^{d'}\paren{y_1\frac{\sqrt{1+u_3^2}}{\sqrt{1+u_2^2}},\mu_1-\mu_2} \WigDMat{d'}(w_4) \Dtildek{d'}{\frac{1-i u_3}{\sqrt{1+u_3^2}}} \nonumber \\
	& \times \WigDMat{d'}(w_3) \Dtildek{d'}{\frac{1-i u_2}{\sqrt{1+u_2^2}}} \e{-y_1\frac{u_2 u_3}{\sqrt{1+u_2^2}}-y_2 u_2} du_2 \, du_3 \nonumber
\end{align}
(we used $\Dtildek{d}{-1}\WigDMat{d}(w_3)\Dtildek{d}{-i} = \WigDMat{d}(\vpmpm{-+}w_3\vpmpm{--}w_2) = \WigDMat{d}(w_4)$), the Mellin transform can be written as
\begin{align}
\label{eq:WhittMellinEval2}
	\what{W}^{d'}(s,\mu) =& 2^{2-s_1-s_2} (2\pi)^{-2-\frac{3}{2}\mu_3} \sum_{\delta_2,\delta_3\in\set{0,1}} i^{\delta_3-\delta_2} \sin\frac{\pi}{2}(\delta_2+s_2+\mu_3) \Gamma\paren{s_2+\mu_3} \\
	& \times \int_{\Re(t)=\frac{1}{10}} \sin\frac{\pi}{2}(\delta_3+s_1+\tfrac{\mu_3}{2}-t) \Gamma\paren{s_1+\tfrac{\mu_3}{2}-t} \nonumber \\
	& \times (2\pi)^t \what{\mathcal{W}}^{d'}\paren{t,\mu_1-\mu_2} \WigDMat{d'}(w_4) \mathcal{B}^{d'}_{(-1)^{1-\delta_3}}\paren{1-s_1-\tfrac{\mu_3}{2}+t,s_1-\mu_3} \nonumber \\
	& \times \WigDMat{d}(w_3) \mathcal{B}^{d'}_{(-1)^{\delta_2+\delta_3}}\paren{1-s_1-s_2-\tfrac{3}{2}\mu_3+t,s_2-\tfrac{\mu_3}{2}-t} \frac{dt}{2\pi i}. \nonumber
\end{align}
where
\[ \what{\mathcal{W}}^d(s,u) := \int_0^\infty y^{-\frac{\mu_1-\mu_2}{2}} \mathcal{W}^d(y,u) y^{s-1} dy. \]

\subsection{Stade's formula}

Consider the Mellin transform of a product of two Whittaker function of the same minimal weight, at the minimal weight:
For $\mu,\mu'\in\frak{a}^d_0$ and $\Re(t)>0$, define
\[ \Psi^d(\mu,\mu',t) = \int_{Y^+} W^{d*}(y,\mu) \trans{W^{d*}(y,\mu')} (y_1^2 y_2)^t \, dy. \]
This was evaluated in \cite[Theorem 1.1]{Stade02} (see \cite[(4.13)]{ArithKuzI}), \cite[Theorem 2]{HWI} and \cite[Theorem 2]{HWII}:
\begin{align*}
	\Psi^0=&\frac{1}{4\pi^{3t}\Gamma\paren{\frac{3t}{2}}}\prod_{i,j}\Gamma\paren{\tfrac{t+\mu_i+\mu'_j}{2}}, \\
	\Psi^1=&\frac{1}{2\pi^{3t}\Gamma\paren{\frac{3t}{2}}}\prod_{i,j}\Gamma\paren{\tfrac{c_{i,j}+t+\mu_i+\mu'_j}{2}}, \\
	\Psi^d=& 2^{4-d-4t-r-r'} \pi^{2-3t} \Gamma\paren{t+r+r'} \Gamma\paren{\tfrac{d-1}{2}+t+r-2r'} \Gamma\paren{\tfrac{d-1}{2}+t+r'-2r} \\
	& \times \Gamma(d-1+t+r+r') \Gamma\paren{\tfrac{t}{2}-r-r'} / \Gamma\paren{\tfrac{3t}{2}},
\end{align*}
where
\[ c_{i,j}=\piecewise{1&\If i=3\ne j \text{ or } j=3\ne i,\\0&\Otherwise,} \]
and $\mu=\mu^d(r)$, $\mu'=\mu^d(r')$ for $d \ge 2$.

In terms of the incomplete Whittaker function, this is
\begin{align}
\label{eq:tildePsid}
	\wtilde{\Psi}^d(\mu,\mu',t) :=& \int_{Y^+} \Tr\paren{\Sigma^d_d W^d(y,\mu,\psi_I) \trans{\wbar{W^d(y,-\wbar{\mu'},\psi_I)}}} (y_1^2 y_2)^t \,dy \\
	=& \frac{1}{\Lambda^d(\mu) \Lambda^d(-\mu')} \times \piecewise{\Psi^d(\mu,-\mu',t) & \If d=0,1,\\ \frac{1}{2} \Psi^d(\mu,-\mu'^{w_2},t) & \If d \ge 2.} \nonumber
\end{align}
The $\frac{1}{2}$ here and the $2$ in \eqref{eq:sinmustar} is precisely due to the fact that
\begin{align}
\label{eq:StadesOrtho}
	\Tr\paren{\Sigma^d_d W^d(y,\mu,\psi_I) \trans{\wbar{W^d(y,-\wbar{\mu'},\psi_I)}}} = \tfrac{1}{2} W^d_{-d}(y,\mu,\psi_I) \trans{\wbar{W^d_{-d}(y,-\wbar{\mu'},\psi_I)}},
\end{align}
for $d \ge 2$ and $\mu=\mu^d(r)$, $\mu'=\mu^d(r')$.

\subsection{The uniform spectral expansion}
\label{sect:UniformSE}
The papers \cite{HWI} and \cite{HWII} made explicit the continuous, residual parts and cuspidal parts of the Langlands spectral expansion for $L^2(\Gamma\backslash G)$, and we now recall that construction.

A vector- or matrix-valued Hecke-Maass form $\Phi^{d'}$ of weight $d'$ for $SL(3,\Z)$ is an eigenfunction of the Casimir operators for $G$ and the Hecke operators for $\Gamma$ which transforms as $\Phi^{d'}(gk) = \Phi^{d'}(g)\WigDMat{d'}(k)$ for $k \in K$.
(One must also include a moderate-growth condition as in \cite[(14)]{HWII}, but this is not relevant to the current discussion.)

To each $\mu$ and $d' \ge 0$, the paper \cite{HWI} associates a matrix-valued Eisenstein series
\[ E^{d'}(g,\mu) = \sum_{\gamma\in U(\Z)\backslash\Gamma} I^{d'}_\mu(\gamma g), \]
with $I^d_\mu(g)$ as in \eqref{eq:IdDef}; these are the lifts of the minimal-parabolic spherical $SL(3,\Z)$ Eisenstein series.

For each integer $d \ge 0$, we take a basis of $GL(2)$ Maass forms:
Let $\mathcal{S}_2^{0*}$ be an orthonormal basis of even, spherical Hecke-Maass cusp forms for $SL(2,\Z)$, to which we append the constant function $\sqrt{\tfrac{6}{\pi}}$.
Let $\mathcal{S}_2^{1*}$ be an orthonormal basis of odd, spherical Hecke-Maass cusp forms for $SL(2,\Z)$.
Let $\mathcal{S}_2^{d*}$ for $d \ge 2$ be an orthonormal basis of holomorphic Hecke modular forms of weight $d$; in particular, $\mathcal{S}_2^{d*}$ is empty for odd $d > 2$.
To each $d,d'\ge 0$, $r\in\C$ and $\Phi \in \mathcal{S}_2^{d*}$, the paper \cite{HWI} associates a matrix-valued maximal-parabolic Eisenstein series $E^{d'}(\cdot, \Phi, r)$.
We also associate to such data an Eisenstein series $E^{d'}(\cdot, \wtilde{\Phi}, r)$, where if $\Phi$ has $SL(2)$ Langlands spectral parameters $(\mu_1,-\mu_1)$, then $\wtilde{\Phi}$ has spectral parameters $(-\wbar{\mu_1},\wbar{\mu_1})$; we discuss this further in \cref{sect:Normalizations}.
The residual spectrum is spanned by the Eisenstein series with $\Phi$ the constant function.
The construction of these functions is sufficiently complicated that we exclude it, see \cite[section 5]{HWI}.

For each $d \ge 0$, let $\mathcal{S}_3^{d*}$ be an orthonormal basis of Hecke-Maass cusp forms for $SL(3,\Z)$ of minimal weight $d$.
To each $d,d' \ge 0$ and $\Phi\in\mathcal{S}_3^{d*}$, the paper \cite{HWII} associates a pair of matrix-valued cusp forms $\Phi^{d'}$ and $\wtilde{\Phi}^{d'}$:
\begin{align}
\label{eq:CuspFourierExpand}
	\Phi^{d'}(g) = \sum_{\gamma\in(U(\Z)V)\backslash SL(2,\Z)} \sum_{v\in V} \sum_{m \in \N^2} \frac{\rho_\Phi(m)}{p_\rho(m)} \Sigma^{d'}_d W^{d'}(mv\gamma g,\mu,\psi_I),
\end{align}
where $\gamma$ is embedded in the upper left copy of $SL(2,\Z)$ in $\Gamma$, and $\rho_\Phi(m)$ are the Fourier-Whittaker coefficients of $\Phi$ (it can be shown that the Fourier-Whittaker coefficients of $\Phi$ and the Fourier-Whittaker coefficients of $\Phi^{d'}$ below agree, so there is no ambiguity in the terminology/notation).
The form $\wtilde{\Phi}^{d'}$ is defined identically, but with $\mu$ replaced with $-\wbar{\mu}$.
This corrects a mistake in \cite[(27)]{HWII} which uses $W^{d'}(\gamma vg,\mu,\psi_m)$; if one used that definition then for, say, $d \ge 2$, the $\mu \mapsto \mu^{w_2}$ functional equation relating $\wtilde{\Phi}^{d'}$ and $\Phi^{d'}$ would actually introduce a twist by $p_{\mu^{w_2 w_l}-\mu^{w_l}}(m)$.

From this basis we have \cite[Theorem 1.1]{HWI} and \cite[Theorem 6]{HWII}, the uniform spectral expansion
\begin{thm}
\label{thm:UniformSpectralExpand}
	For $f:\Gamma\backslash G \to \C$ smooth and compactly supported, we have $f=f_c+f_0+f_1+f_2$ where
	\begin{align*}
		f_c(g) = \sum_{d=0}^\infty \sum_{\Phi\in\mathcal{S}_3^{d*}} \sum_{d'=0}^\infty (2d'+1) \Tr\paren{\Phi^{d'}(g)\int_{\Gamma\backslash G} f(g')\wbar{\trans{\wtilde{\Phi}^{d'}(g')}}dg'}.
	\end{align*}
	\begin{align*}
		f_0(g) =& \frac{1}{24} \sum_{d'=0}^\infty \frac{(2d'+1)}{(2\pi i)^2} \int_{\Re(\mu) = 0} \Tr\Bigl(E^{d'}(g, \mu) \int_{\Gamma\backslash G} f(g') \wbar{\trans{E^{d'}(g', \mu)}} dg' \Bigr) \, d\mu,
	\end{align*}
	\begin{align*}
		f_1(g) =& \sum_{d=0}^\infty \sum_{\Phi\in\mathcal{S}_2^{d*}} \sum_{d'=0}^\infty \frac{(2d'+1)}{2\pi i} \int_{\Re(r) = 0} \Tr\biggl(E^{d'}(g, \Phi, r) \int_{\Gamma\backslash G} f\paren{g'} \wbar{\trans{E^{d'}(g', \wtilde{\Phi}, r)}} \, dg' \biggr) dr,
	\end{align*}
	\begin{align*}
		f_2 =& \frac{1}{4/\zeta(3)} \int_{\Gamma\backslash G} f(g) dg.
	\end{align*}
\end{thm}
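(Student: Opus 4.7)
The plan is to deduce \cref{thm:UniformSpectralExpand} from the classical Langlands spectral decomposition of $L^2(\Gamma\backslash G)$, reorganized by $K$-type using Peter-Weyl. The starting point is the standard decomposition $L^2(\Gamma\backslash G) = L^2_{\text{cusp}} \oplus L^2_{\text{res}} \oplus L^2_{\text{cont}}$, where the continuous spectrum further splits according to associate classes of parabolic subgroups of $SL(3)$. The minimal parabolic contributes the integral over $\mu \in i\R^3$ with $\mu_1+\mu_2+\mu_3=0$, giving $f_0$; the three associate maximal parabolics, after Weyl symmetrization, yield a single family of Eisenstein series induced from cusp forms on $SL(2,\Z)$ and a complex parameter $r$, giving $f_1$; and the residual spectrum from the constant function contributes $f_2$.

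Next, I would invoke Peter-Weyl on $K = SO(3)$: every $f \in L^2(\Gamma\backslash G)$ decomposes uniquely into $K$-isotypic parts indexed by $d' \ge 0$, with orthogonality constants $\frac{1}{2d'+1}$, producing the factor $(2d'+1)$ multiplying each spectral term. Within each irreducible automorphic representation $\pi$, the $\WigDMat{d'}$-isotypic subspace is finite-dimensional, and the integer $d$ labeling $\mathcal{S}_3^{d*}$, $\mathcal{S}_2^{d*}$ corresponds to the \emph{minimal} $K$-type of $\pi$. The trace form $\Tr(\cdots)$ is just the natural pairing of matrix-valued objects transforming oppositely under $K$, collapsed over the internal $K$-index. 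For cuspidal $\pi$, the explicit Fourier--Whittaker expansion \eqref{eq:CuspFourierExpand} realizes $\Phi^{d'}$ as the lift of the minimal-weight Fourier--Whittaker coefficients $\rho_\Phi(m)$ via the $\Sigma^{d'}_d$ projectors and the Whittaker function $W^{d'}$; the pairing with $\wtilde{\Phi}^{d'}$ (parameters $-\wbar{\mu}$) rather than $\Phi^{d'}$ comes from complex conjugation together with the fact that $-\wbar{\mu}$ is the dual spectral parameter. A parallel construction yields $E^{d'}(g,\mu)$ and $E^{d'}(g,\Phi,r)$.

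The main obstacles I anticipate are threefold. First, pinning down the precise normalization constants: the $\frac{1}{24}$ in $f_0$ reflects the order of the Weyl group acting on $\mu$ combined with a factor from the counting-vs-probability measure discrepancy on $V$ discussed in \cref{sect:BruhatDecomp}; the $4/\zeta(3)$ in $f_2$ comes from evaluating the residue of the minimal parabolic Eisenstein series at the pole corresponding to the constant function, using the Langlands constant term formula together with the known values of $\zeta(2)$ and $\zeta(3)$ appearing in the $GL(3)$ Gindikin--Karpelevich factors; the contour factors $(2\pi i)^{-1}$ and $(2\pi i)^{-2}$ are bookkeeping. Second, verifying the pairing formulas requires Maass--Selberg relations at every weight $d'$, which was the core technical content of \cite{HWI, HWII} and which is most delicate for the maximal parabolic Eisenstein series where one must track how the minimal-weight Fourier--Whittaker coefficients of $\Phi \in \mathcal{S}_2^{d*}$ propagate across all $d' \ge d$. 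Third, establishing the absolute convergence of the full spectral series for smooth compactly supported $f$ requires polynomial bounds (in both $d'$ and the spectral parameters) on the Eisenstein series and on the Fourier--Whittaker coefficients of cusp forms, but such bounds follow from integration by parts against the Casimirs and the Weyl law across weights, developed in \cite{HWI, HWII}. Given these ingredients, the theorem becomes a direct consequence of Peter--Weyl applied to Langlands, which is why it can be cited essentially as a restatement of the main results of the earlier papers.
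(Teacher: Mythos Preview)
The paper does not prove \cref{thm:UniformSpectralExpand} at all: it is quoted directly as \cite[Theorem 1.1]{HWI} and \cite[Theorem 6]{HWII}, with no argument given here beyond assembling the basis $\mathcal{B}^{d*}$ and the notation. Your proposal is therefore not so much an alternative proof as a correct high-level sketch of what those earlier papers do, and you yourself recognize this in your final sentence. The ingredients you name --- Langlands' decomposition into cuspidal, residual, minimal- and maximal-parabolic continuous parts, followed by Peter--Weyl on $K=SO(3)$ to produce the $d'$-sum and the $(2d'+1)$ weights, with the minimal $K$-type $d$ indexing the underlying representation --- are exactly the architecture of \cite{HWI,HWII}, and your identification of the three technical obstacles (normalizations, Maass--Selberg across all weights, absolute convergence via Casimir integration by parts and the Weyl law) matches the content of those papers.

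One small correction on the constants: your explanation of $\tfrac{1}{24}$ is slightly off. The factor is $\tfrac{1}{4}\cdot\tfrac{1}{|\Weyl|}=\tfrac{1}{4}\cdot\tfrac{1}{6}$, where the $\tfrac{1}{4}$ comes from the $V$-measure discrepancy you mention and the $\tfrac{1}{6}$ from the full Weyl group symmetry of the minimal-parabolic Eisenstein series; there is no additional combinatorial factor. Otherwise your outline is sound.
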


We will abbreviate the uniform spectral expansion by
\begin{align}
\label{eq:UniformSpectralExpand}
	f(g) = \sum_{d=0}^\infty \int_{\mathcal{B}^{d*}} \sum_{d'=0}^\infty (2d'+1) \Tr\paren{\Xi^{d'}(g) \int_{\Gamma\backslash G}f(g') \trans{\wbar{\wtilde{\Xi}^{d'}(g')}}} d\Xi.
\end{align}
We are using $\int_{\mathcal{B}^{d*}} \ldots d\Xi$ purely in a notational sense, but one can instead construct a measure on the sets $\mathcal{B}^{0*}$ where
\begin{align*}
	\mathcal{B}^{d*}\times\set{d'\ge0} =& \set{\Phi^{d'}\setdiv \Phi\in\mathcal{S}_3^{d*}}\cup\set{E^{d'}(\cdot, \Phi, r)\setdiv \Phi\in\mathcal{S}_2^{d*},\Re(r)=0} \\
	& \qquad \cup\set{E^{d'}(\cdot, \mu)\setdiv \Re(\mu)=0},
\end{align*}
unless $d=0$ in which case we append the constant function
\[ \Phi_0^{d'}:=\piecewise{\frac{1}{4/\zeta(3)} & \If d'=0, \\ 0&\Otherwise.} \]
The map $\Xi \mapsto \wtilde{\Xi}$ is not duality, instead it replaces $\mu$ with $-\wbar{\mu}$ in the Fourier expansion; we explain further presently.

The cusp forms are distinguished from the Eisenstein series by the fact that their degenerate Fourier coefficients are all zero:
\begin{align}
\label{eq:CuspCond}
	\int_{U(\Z)\backslash U(\R)} \Xi^{d'}(ug) \psi_n(u) du = 0 \qquad \text{whenever $\Xi\in\mathcal{S}^{d*}$, $n_1 n_2=0$, $n \in \Z^2$.}
\end{align}
We will not need to consider the degenerate Fourier coefficients of the Eisenstein series in this paper.

We define the non-degenerate Fourier-Whittaker coefficients of a matrix-valued Maass form $\Xi^{d'}$, $\Xi \in \mathcal{B}^{d*}$ of weight $d'$ and minimal weight $d$ with Langlands parameters $\mu_\Xi$ (see \cite[Theorem 3]{HWII}) by
\begin{align}
\label{eq:FWcoefDef}
	\int_{U(\Z)\backslash U(\R)} \Xi^{d'}(xyk) \wbar{\psi_m(x)} dx = \frac{\rho_\Xi(m)}{p_\rho(m)} \Sigma^{d'}_d W^{d'}(m yk, \mu_\Xi,\psi_I),
\end{align}
where $0 \ne m_1,m_2\in\Z$.
The construction of the basis of Eisenstein series and cusp forms is such that the coefficient $\rho_\Xi(m)$ is a scalar and independent of $d'$, and the Fourier-Whittaker coefficients of $\wtilde{\Xi}$ are also $\rho_\Xi(m)$, provided we define $\mu_{\wtilde{\Xi}}=-\wbar{\mu_\Xi}$.

\subsubsection{Normalizations}
\label{sect:Normalizations}
The difference between the forms $\Xi$ and $\wtilde{\Xi}$ is fairly minimal.
For $d=0,1$ the cusp forms have spectral parameters with either $\Re(\mu_\Xi)=0$ or $\mu_\Xi=(x+it,-x+it,-2it)$ with $0 < x < \frac{1}{2}$,$t\in\R$, and it is known (for the particular case $SL(3,\Z)$ \cite{Roelcke01}) that the minimal and maximal parabolic Eisenstein series have $\Re(\mu_\Xi)=0$.
For $d\ge 2$, the cusp forms and Eisenstein series have spectral parameters of the form $\mu_\Xi = \mu^d(it)$ for some $t \in \R$.
In the case $\Re(\mu_\Xi)=0$, this leaves $\mu_{\wtilde{\Xi}}=\mu_\Xi$ unaffected, but for the other two types we have $\mu_{\wtilde{\Xi}}=\mu_\Xi^{w_2}$.
As the construction of the basis elements is done via their Fourier-Whittaker expansion, this implies that $\Xi^{d'}$ and $\wtilde{\Xi}^{d'}$ differ at most by the diagonal matrix \eqref{eq:Tdw2}.
In fact, it is possible to complete the Whittaker function with respect to the $\mu \mapsto \mu^{w_2}$ functional equation (on the rows for which $W^{d'}(\cdot,-\wbar{\mu^d(r)},\psi_I)$ is non-zero), but we have not done so for aesthetic reasons.

For the cusp forms $\Phi\in\mathcal{S}_3^{d*}$, we follow the normalization of \cite[(166)]{HWII}:
At the minimal weight, i.e. $d'=d$, the rowspace of $\wtilde{\Phi}^d$ is spanned by a single row, which we denote by
\[ \wtilde{\phi} := \piecewise{\wtilde{\Phi}^d_0 & \If d=0,1,\\ \wtilde{\Phi}^d_{-d} & \If d \ge 2,} \]
and we take $\phi$ to be the corresponding row of $\Phi^d$.
Then $\Phi^d$ should be normalized according to
\[ \int_{\Gamma\backslash G} \phi(g) \wbar{\wtilde{\phi}(g)} dg=\piecewise{1 & \If d=0,1,\\ \frac{1}{2} & \If d \ge 2.} \]
(The factor $\frac{1}{2}$ is because for $d \ge 2$, the row $\wtilde{\phi}=\wtilde{\Phi}^d_{-d}=(-1)^d\wtilde{\Phi}^d_d$ appears twice in the trace.
Note that this matches the squared-norm of the vector $(\frac{\pm1}{2},0,\ldots,0,\frac{1}{2})$ in the rowspace of $\Sigma^d_d$; compare \cite[Theorem 3 and (27)]{HWII}, \eqref{eq:StadesOrtho} and \eqref{eq:WallachsOrtho}, below.)

For the maximal parabolic Eisenstein series on $d \ge 2$, we are assuming the $SL(2,\Z)$ Maass form (coming from a holomorphic modular form) is normalized in a similar manner, i.e.
\[ \int_{SL(2,\Z)\backslash SL(2,\R)} \Phi(g) \wbar{\wtilde{\Phi}(g)} dg=1, \]
where $\Phi$ has spectral parameters $(\frac{d-1}{2},-\frac{d-1}{2})$ and $\wtilde{\Phi}$ has the same Fourier expansion, but using the (incomplete) Whittaker function at spectral parameters $(-\frac{d-1}{2},\frac{d-1}{2})$.
This is a slight renormalization compared to \cite{HWI}; the Eisenstein series used in the continuous part of the spectral expansion in that paper are completed with respect to the $\mu \mapsto \mu^{w_2}$ functional equation.
The difference is limited to the gamma factors of $\Gamma^d_\Phi$ and $\what{\Gamma}^d_\Phi$ in \cite[section 5.3]{HWI}, and in particular, the Fourier-Whittaker coefficients of the Hecke-normalized $\Phi_H$ \cite[(5.15-19)]{HWI} are unaffected.

To avoid the different normalizations, it is generally preferable to work with the Hecke eigenvalues rather than the Fourier coefficients, and this conversion is given in the next section.

\subsection{Fourier coefficients vs. Hecke eigenvalues}
\label{sect:HeckeConvert}
The Hecke operators (see \cite[section 6.4]{Gold01}) are certain arithmetic operators that commute with the action of the Lie algebra (see \cref{sect:LieAlgebra}) and the sums defining the Eisenstein series, so we may assume the elements of our spectral basis are eigenfunctions of every Hecke operator.
Then we use the conversion to Hecke eigenvalues from \cite[section 4.1]{ArithKuzI}, \cite[section 9.2]{WeylI} and \cite[section 11]{WeylII}:
For $\Phi\in\mathcal{S}_3^{d*}$,
\begin{align}
\label{eq:CuspHeckConv}
	\rho_\Phi(n)\wbar{\rho_\Phi(m)} =& \frac{4}{3} \frac{\lambda_\Phi(n)\wbar{\lambda_\Phi(m)}}{L(\AdSq \Phi,1)}.
\end{align}
When $\Xi=E^d(\cdot,\Phi,r)$ for $\Phi \in \mathcal{S}_2^{d*}$, $\Re(r)=0$,
\begin{align}
	\rho_\Xi(n)\wbar{\rho_\Xi(m)} =& 2 \frac{\lambda_\Phi(n,r)\wbar{\lambda_\Phi(m,r)}}{L(\AdSq \Phi,1) \, L(\Phi,1+3r)\,L(\Phi,1-3r)}.
\end{align}
When $\Xi=E^d(\cdot,\mu)$, $\Re(\mu)=0$,
\begin{align}
	\rho_\Xi(n)\wbar{\rho_\Xi(m)} =& 16 \frac{\lambda_E(n,\mu)\wbar{\lambda_E(m,\mu)}}{\prod_{i\ne j} \zeta(1+\mu_i-\mu_j)} 
\end{align}
(Keeping in mind the Fourier-Whittaker coefficients of the cited papers/sections are with respect to the completed $d=0,1$ Whittaker functions, the normalizations in \cref{sect:Normalizations}, and the $d=1$ completions compare as $\Lambda^1(\mu)=\sqrt{2}\Lambda_{(0,1,1)}(\mu)$; \eqref{eq:CuspHeckConv} may be computed directly from
\[ 2\pi^{-3/2}\Gamma(\tfrac{3}{2}) \Psi^d(\mu_\varphi,\wbar{\mu_\varphi},1) L(1,\AdSq \varphi) = \frac{2}{3} \frac{\Lambda^d(\mu_\Phi) \wbar{\Lambda^d(-\wbar{\mu_\Phi})}}{\rho_\Phi(1)\wbar{\rho_\Phi(1)}} \times \piecewise{1&\If d=0,1,\\ 2 & \If d \ge 2,} \]
see \cite[section 9]{WeylI}.)

The Hecke eigenvalues for the Eisenstein series may be collected from \cite[(4.3) and (5.13)]{HWI}:
\begin{align}
\label{eq:MinEisenHecke}
	\lambda_E((p^\alpha,p^\beta),\mu) =& p^{-\mu_3(2\alpha+\beta)} S_{\alpha,\beta}(p^{\mu_3-\mu_1},p^{\mu_3-\mu_2}), \\
\label{eq:MaxEisenHecke}
	\lambda_\Phi((p^\alpha,p^\beta),\Phi,r) =& p^{2r(2\alpha+\beta)}S_{\alpha,\beta}(p^{a_\Phi(p)-3r},p^{b_\Phi(p)-3r}),
\end{align}
using the Satake parameters $\lambda_\Phi(p)=p^{a_\Phi(p)}+p^{b_\Phi(p)}$, $a_\Phi(p)+b_\Phi(p)=0$ for the  $SL(2,\Z)$ Maass form $\Phi$ and the Schur polynomials
\[ S_{\alpha,\beta}(a,b) := \frac{\det\Matrix{1&b^{\alpha+\beta+2}&a^{\alpha+\beta+2}\\1&b^{\alpha+1}&a^{\alpha+1}\\1&1&1}}{\det\Matrix{1&b^2&a^2\\1&b&a\\1&1&1}}. \]
(The $\lambda_E$ and $\lambda_\Phi$ in \cite[(4.3) and (5.13)]{HWI} are not truly the Hecke eigenvalues, even though they satisfy the Hecke relations, as they include factors $p_{\mu^{w_l}}(n)$ and $n_1^{-r} n_2^r$, respectively; we have dropped these factors in \eqref{eq:MinEisenHecke}-\eqref{eq:MaxEisenHecke}.
Further, this corrects the factor $p^{3\mu_1(\alpha-\beta)}$ in \cite[(5.13)]{HWI} to $p^{3\mu_1(\alpha+\beta)}$.)

\subsection{The Lie algebra}
\label{sect:LieAlgebra}
The main work of the paper \cite{HWII} was to determine precisely the action of the Lie algebra of $G$ on smooth functions.
Let $\mathcal{A}^d$ for each $d \ge 0$ be the space of smooth, row-vector valued functions which transform as $f(gk)=f(g)\WigDMat{d}(k)$ for $k \in K$.
(One should also include the moderate growth criterion described in \cref{sect:Gode}.)
Then \cite[(82) or (83)]{HWII} defines an operator $Y^a:\mathcal{A}^d\to\mathcal{A}^{d+a}$ for each $d \ge 0$ (to be understood from context) and $\abs{a} \le 2$.
We will readily apply the $Y^a$ operators to matrix-valued functions by operating on the rows of the matrix.

We will not require very detailed knowledge about these operators, except:
First, they are invariant under left translation by $G$ (by \cite[(83)]{HWII}).
Second, they completely describe the action of the Lie algebra of $G$ on smooth functions by \cite[Proposition 9]{HWII}; that is, the action of any element of the Lie algebra can be written in terms of the components of the $Y^a$ operators and elements of the Lie algebra of $K$.
Lastly, they preserve $V$-characters; that is, the rows of $Y^a \Sigma^d_\chi I^d_\mu$, with $I^d_\mu$ as in \eqref{eq:IdDef}, lie in the rowspace of $\Sigma^{d+a}_\chi I^{d+a}_\mu$.

There are three more relevant operators on the smooth functions of $G$: $\Delta_K$ the Laplacian on $K$, and $\Delta_1$ and $\Delta_2$ the Casimir operators on $G$.
By considering vector-valued functions, we have essentially abstracted over the action of the Lie algebra of $K$, as its elements act on the entries of the vector-valued functions in very predictable ways, see \cite[sections 5.1-3, esp. (33),(35),(49)]{HWII}.
For a function $f \in \mathcal{A}^d$, we have 
\begin{align}
\label{eq:DeltaKAct}
	\Delta_K f = d(d+1) f.
\end{align}
The Langlands parameters $\mu$ parameterize the eigenvalues of the Casimir operators by
\begin{align}
\label{eq:CasimirAct}
	\Delta_1 p_{\rho+\mu} =& \paren{1-\tfrac{\mu_1^2+\mu_2^2+\mu_3^2}{2}}p_{\rho+\mu} & \Delta_2 p_{\rho+\mu} = \mu_1 \mu_2 \mu_3 p_{\rho+\mu},
\end{align}
say $\Delta_i p_{\rho+\mu} = \lambda_i(\mu) p_{\rho+\mu}$, and this extends to the $I^d_\mu$ function as $\Delta_i I^d_\mu = \lambda_i(\mu) I^d_\mu$.

The $K$-Laplacian is bi-$K$ invariant and the Casimir operators are bi-$G$ invariant which implies, among other things, that the matrix-valued Whittaker function $W^d(\cdot,\mu,\psi_I)$ is an eigenfunction of all three with eigenvalues as in \eqref{eq:DeltaKAct} and \eqref{eq:CasimirAct}.

As degree-two elements of the universal enveloping algebra (with real coefficients), the Laplacians $\Delta_1$ and $\Delta_K$ are symmetric with respect to $dg$ and $\Delta_2$ is anti-symmetric.
On spherical Maass cusp forms, it is known that $\Delta_1$ is a positive operator \cite{CHJT}, but this does not necessarily hold for $d \ge 1$, and in fact for $d > 3$, $\lambda_1(\mu^d(it))$ is positive or negative as $3t^2$ is larger or smaller than $\paren{\frac{d-1}{2}}^2-1$.
The symmetric squares of holomorphic modular forms have spectral parameters $\mu^d(0)$ (for even $d$), so their Laplacian eigenvalues are negative for $d \ge 4$, and the Weyl law \cite[Theorem 1]{WeylII} implies that forms with positive eigenvalues exist for all $d$.
(On $GL(2)$, the Laplace eigenvalue corresponding to a holomorphic modular form is also negative, while a spherical Maass form necessarily has a positive eigenvalue.)

So there may exist Maass forms with Laplacian eigenvalue equal to zero in weight $d \ge 3$.
On the other hand, we can see that when $\lambda_1(\mu)$ and $\lambda_2(\mu)$ are both bounded, $\norm{\mu}$ is also bounded, so the Whittaker transform of a nice function has rapid decay in both $\norm{\mu}$ (so also in $d$ when $\mu=\mu^d(r)$) and $d'$:
\begin{lem}
\label{lem:WhittTransSuperPoly}
	Let $f:G\to\C$ satisfying $f(xyk) = \psi_I(x) f(yk)$ be such that $f(yk)$ is smooth and compactly supported on $Y^+K$.
	Then for $n_1,n_2,n_3 \ge 0$,
	\begin{align*}
		&\int_{U(\R)\backslash G} f(g) \trans{\wbar{W^{d'}(g,\mu,\psi_I)}} dg \\
		&= \int_{U(\R)\backslash G} \paren{\paren{\frac{\Delta_1}{\lambda_1(\wbar{\mu})}}^{n_1} \paren{\frac{-\Delta_2}{\lambda_2(\wbar{\mu})}}^{n_2} \paren{\frac{1+\Delta_K}{1+d'(d'+1)}}^{n_3} f(g)} \trans{\wbar{W^{d'}(g,\mu,\psi_I)}} dg,
	\end{align*}
	provided $n_i=0$ when $\lambda_i(\mu)=0$ for $i=1,2$.
\end{lem}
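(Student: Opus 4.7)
The plan is iterated integration by parts, moving each normalized differential operator from $f$ across onto $\wbar{W^{d'}(g,\mu,\psi_I)}$, where it acts as a scalar multiple (its eigenvalue on the Whittaker function). Because the three normalized operators commute and each leaves the integral invariant, applying them a total of $n_1+n_2+n_3$ times in any order yields the claim.

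The single-step identity is the content. For $\Delta_1$, using the symmetry with respect to $dg$ recalled just above the lemma, together with $\Delta_1 W^{d'} = \lambda_1(\mu)W^{d'}$ and the reality of the coefficients of $\Delta_1$ (so $\wbar{\lambda_1(\mu)} = \lambda_1(\wbar{\mu})$), integration by parts replaces $\int (\Delta_1 f)\wbar{W^{d'}}\,dg$ by $\lambda_1(\wbar{\mu})\int f\,\wbar{W^{d'}}\,dg$. The same argument with anti-symmetry in place of symmetry handles $\Delta_2$ and produces $-\lambda_2(\wbar{\mu})$, which is precisely why the statement couples $\Delta_2$ with a minus sign. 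Symmetry of $1+\Delta_K$ together with $\Delta_K W^{d'} = d'(d'+1)W^{d'}$ gives the factor $1+d'(d'+1)$ (the ``$1+$'' being cosmetic, included only to keep the eigenvalue from vanishing at $d'=0$). The hypothesis $n_i=0$ when $\lambda_i(\mu)=0$ removes the only genuine division-by-zero issue; the matching condition on $\lambda_i(\wbar{\mu})$ is automatic by complex conjugation.

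To iterate, the three normalized operators must commute, and they do: $\Delta_1$ and $\Delta_2$ are central in the universal enveloping algebra of $G$, so they commute with each other and with $\Delta_K$. Each application also preserves the class of admissible test functions, since $\Delta_i f$ still satisfies $(\Delta_i f)(xg) = \psi_I(x)(\Delta_i f)(g)$ (by left $G$-invariance of the Casimirs, and left $U(\R)$-invariance of $\Delta_K$), remains smooth, and remains compactly supported modulo $U(\R)$, so the next induction step is again valid and absolutely convergent in view of the decay of $W^{d'}(y,\mu,\psi_I)$ in $y$.

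I expect the only point needing care to be that the integration by parts is carried out on $U(\R)\backslash G$ rather than on $G$, while neither $f$ nor $W^{d'}$ is compactly supported on $G$ itself. Because both factors transform by the same character $\psi_I$, the integrand $f\wbar{W^{d'}}$ descends to the quotient; the three operators descend with it by their $U(\R)$-invariance; and the symmetry/anti-symmetry statements on $G$ therefore transfer verbatim to $U(\R)\backslash G$, with no boundary contributions because $f$ has compact support there. This is essentially bookkeeping, and everything else is a direct consequence of the eigenvalue identities recalled immediately before the lemma.
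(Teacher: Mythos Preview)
Your argument is correct and is exactly the integration-by-parts computation the paper has in mind; indeed the paper declares the proof ``trivial'' and omits it. The only content is the symmetry of $\Delta_1,\Delta_K$ and the anti-symmetry of $\Delta_2$ together with the eigenvalue identities, all of which you invoke correctly (including the sign on $\Delta_2$ and the conjugation $\wbar{\lambda_i(\mu)}=\lambda_i(\wbar{\mu})$).
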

The proof is trivial, but we have collected the statement into a lemma as we will use it frequently.

From the Casimir operators, one may construct the operator
\begin{align}
\label{eq:LambdaX}
	\Lambda_X = 27\Lambda_2^2+4(\Delta_1+X^2-1)(\Delta_1+4X^2-1)^2
\end{align}
for $X \in \C$.
When $\Re(\mu)=0$, we have $\mu=(it_1,it_2,it_3)$, $t_3=-t_1-t_2$ for some $t_1,t_2\in\R$ and
\begin{align}
\label{eq:LambdaXEigen1}
	\Lambda_X p_{\rho+\mu} = \paren{\prod_{i<j} \paren{(t_i-t_j)^2+4X^2}} p_{\rho+\mu},
\end{align}
and when $\mu=(x+it,-x+it,-2it)$,
\begin{align}
\label{eq:LambdaXEigen2}
	\Lambda_X p_{\rho+\mu} = 4(X-x)(X+x)\paren{4t^2+(2X-x)^2}\paren{9t^2+(2X+x)^2}p_{\rho+\mu},
\end{align}
see \cite[section 11.1]{HWII}.
The nullspace of the operator $\Lambda_{\frac{d-1}{2}}$, when applied to the Maass forms of $K$-type $\WigDMat{d}$ is precisely those forms whose minimal $K$-type is $\WigDMat{d}$, and we will apply this operator in a similar manner to show orthogonality of the Bessel functions coming from different $K$-types in \cref{sect:BesselExpand}.

\subsection{The Weyl Law}
\label{sect:WeylLaw}
The Weyl laws are given in \cite{Val01}*{Theorem 1}, \cite{WeylI}*{Theorem 1} and \cite{WeylII}*{Theorem 1}, and we use some simple cases here.
For some $f(d,\mu)$, we consider the convergence of a spectral sum of the form
\begin{align}
\label{eq:WeylLawSum}
	\sum_{d=0}^\infty \int_{\mathcal{B}^{d*}} \rho_\Xi(n)\wbar{\rho_\Xi(m)} f(d,\mu_\Xi) d\Xi.
\end{align}
We will need to know when such a sum converges (absolutely) in general, but we would like to discuss two regions of the spectrum in particular.
The first is the complementary spectrum $\frak{a}^{d,c}_\theta$, $d=0,1$.
The second are the nearly self-dual forms:
We say a Maass form is nearly self-dual if one of its spectral parameters is small in the sense $\min_i \abs{\mu_i}\le \norm{\mu}^{1-\epsilon}$ for some $\epsilon>0$.

Lastly, we will sometimes wish to express the convergence in terms of the eigenvalues $\lambda_i(\mu)$ of the Casimir operators:
For $\mu$ in $\bigcup_d \frak{a}^d_0$ or the complementary spectrum, we have $\lambda_1(\mu) \asymp 1+\norm{\mu}^2$ unless $\mu=\mu^d(it)$ with $t=\frac{d}{2\sqrt{3}}+o(d)$, and in that case $\mu$, we have $\lambda_2(\mu) \asymp 1+\norm{\mu}^3$.

Then the Weyl laws give us the following:
\begin{enumerate}
\item If $f(\cdot,\mu)$ is the characteristic function of $\norm{\mu} \le T$, then \eqref{eq:WeylLawSum} is $\ll_{m,n} T^5$.\\

\item If $f(\cdot,\mu)$ is the characteristic function of $\norm{\mu} \le T$ in the complementary spectrum, then \eqref{eq:WeylLawSum} is $\ll_{m,n} T^{3+\epsilon}$.\\

\item If $f(\cdot,\mu)$ is the characteristic function of $\min_i \abs{\mu_i} \le M < T$ inside $\norm{\mu} \le T$, then \eqref{eq:WeylLawSum} is $\ll_{m,n} T^{4+\epsilon} M$.\\

\item If $f(d,\mu)$ is the characteristic function of $d=d'$, $\abs{r} \le T$ inside $\mu=\mu^{d'}(r)\in\frak{a}^{d'}_0$ for some $d' \ge 3$, then \eqref{eq:WeylLawSum} is $\ll_{m,n} d' (1+T) (d'+T)^{2+\epsilon}$. \\

\item If $f(d,\mu)$ is the characteristic function of $d=2$, $\abs{r} \le T$ inside $\mu=\mu^{d'}(r)\in\frak{a}^{d'}_0$, then only have the upper bound $\ll_{m,n} (1+T)^4$ for \eqref{eq:WeylLawSum} by \cite[Proposition 12]{WeylII}. \\

\item If $f(\cdot,\mu) \ll \norm{\mu}^{\frac{1}{2}-\epsilon} \Min{\abs{\lambda_1(\mu)}^{-3},\abs{\lambda_2(\mu)}^{-2}}$ for some $\epsilon > 0$, then \eqref{eq:WeylLawSum} converges.
\end{enumerate}

\subsection{Wallach's Whittaker expansion}
\label{sect:Wallach}
We need to collect several spectral measures (weights) that appear rather often:
\begin{align}
	\sinmu^0(\mu) :=& \frac{1}{192\pi^5} \prod_{i<j} (\mu_i-\mu_j)\sin\frac{\pi}{2}(\mu_i-\mu_j), \\
	\sinmu^1(\mu) :=& \frac{1}{16\pi^5}(\mu_1-\mu_2)\sin\frac{\pi}{2}(\mu_1-\mu_2) \cos\frac{\pi}{2}(\mu_1-\mu_3) \cos\frac{\pi}{2}(\mu_2-\mu_3), \\
	\sinmu^d(\mu^d(r)) :=& \frac{1}{2^{5-d} \pi^3 i \Gamma(d-1) \Gamma\paren{\tfrac{d-1}{2}-3r} \Gamma\paren{\tfrac{d-1}{2}+3r}}, \qquad d \ge 2,
\end{align}
and lastly, we set
\begin{align}
\label{eq:sinmustar}
	\sinmu^{d*}(\mu) := \Lambda^d(\mu)\Lambda^d(-\mu)\sinmu^d(\mu)\times\piecewise{1&\If d=0,1,\\ 2& \Otherwise.},
\end{align}
for $\mu\in\frak{a}^d$.
Note that
\begin{align}
\label{eq:specmu}
	\sinmu^{d*}(\mu) = \frac{2}{\pi} \specmu^d(\mu) \times \piecewise{1 & \If d=0,1,\\ 4\pi^2 & \Otherwise,}
\end{align}
using the spectral measures (weights) $\specmu^d(\mu)$ given in \cite[(4.1),(4.2)]{ArithKuzI} and \cite[section 3]{WeylII}.

Let $C^\infty_c(U(\R)\backslash G, \psi_I)$ be the space of smooth functions $f:G\to\C$ that satisfy $f(xg) = \psi_I(x)$ for $x\in U(\R)$ such that $f(yk)$ is compactly supported on $Y^+ K \cong U(\R)\backslash G$.
Then \cite[Theorem 15.9.3]{Wallach} implies
\begin{thm}
\label{thm:Wallach}
For $f \in C^\infty_c(U(\R)\backslash G, \psi_I)$, we have
\begin{align*}
	f(g') =& \sum_{d\ge 0} \sum_{d'\ge 0} (2d'+1) \int_{\frak{a}^d_0} \Tr\paren{\Sigma^{d'}_d W^{d'}(g',\mu, \psi_I) \int_{U(\R)\backslash G} f(g) \trans{\wbar{W^{d'}(g,-\wbar{\mu}, \psi_I)}} dg} \sinmu^{d*}(\mu) d\mu.
\end{align*}
\end{thm}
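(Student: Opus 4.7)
The plan is to derive this theorem as a direct translation of Wallach's abstract Whittaker Plancherel theorem (Theorem 15.9.3 of his book) into the explicit notation of the excerpt. Wallach's theorem gives a spectral expansion of any $f \in C^\infty_c(U(\R)\backslash G, \psi_I)$ into matrix coefficients of the irreducible unitary representations of $G$ that admit a (nontrivial) Whittaker model, weighted by the appropriate Plancherel density. The three things one must supply are (i) a parametrization of the relevant part of the unitary dual, (ii) an identification of the Whittaker models with the functions $W^{d'}(\cdot, \mu, \psi_I)$ at each $K$-type $\WigDMat{d'}$, and (iii) a computation of Wallach's Plancherel density as the explicit weight $\sinmu^{d*}(\mu)$.

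For (i) and (ii), the irreducible unitary representations of $PSL(3,\R)$ that carry a Whittaker functional split into two families indexed by minimal weight. For $d=0, 1$ one has the unitary principal series, parametrized by $\mu \in \frak{a}^d_0$; here the factor $\Sigma^{d'}_d$ projects onto the rows of $W^{d'}(\cdot, \mu, \psi_I)$ corresponding to the $V$-character compatible with minimal weight $d$. For $d \ge 2$ one has the generalized principal series induced from the (limits of) discrete series of $GL(2,\R)$, parametrized by $\mu = \mu^d(r)$ with $\Re(r)=0$, and the same projector picks out the nonvanishing rows by the parity/pole considerations recalled below \eqref{eq:WhittGFEs}. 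In each representation the Peter-Weyl theorem decomposes the Hilbert space into $K$-types $\WigDMat{d'}$ of multiplicity $2d'+1$, and summing over an orthonormal basis within each $K$-type produces the trace $\Tr(\Sigma^{d'}_d W^{d'}(g',\mu,\psi_I)\,\cdot\,\int f(g)\trans{\wbar{W^{d'}(g,-\wbar{\mu},\psi_I)}}\,dg)$ appearing in the statement.

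The main obstacle is step (iii). Wallach expresses the Plancherel density abstractly via the Harish-Chandra (Jacquet) $c$-function that computes the inner product of Whittaker vectors in a single representation. For $GL(3)$ this $c$-function must be evaluated explicitly, and in the end its inverse is precisely $\sinmu^{d*}(\mu)$. For $d=0, 1$ the sines and cosines in the definitions of $\sinmu^0$ and $\sinmu^1$ come from the ratios of gamma factors $\Gamma^{d'}_{\mathcal{W}}(u,+1)$ from \eqref{eq:WhittGammas} using the orthogonality relation \eqref{eq:TdOrtho} for the functional equation matrices $T^d(w,\mu)$; the polynomial factor $\prod_{i<j}(\mu_i-\mu_j)$ likewise emerges from the Plancherel formula on the flat spectrum. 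For $d \ge 2$ the corresponding factor is the inverse of a single gamma ratio, giving the $\Gamma(d-1)\Gamma(\frac{d-1}{2}\pm 3r)$ factors in $\sinmu^d$. The normalizing constants $\Lambda^d(\mu)\Lambda^d(-\mu)$ in $\sinmu^{d*}$ are precisely what is needed to match the \emph{uncompleted} Whittaker functions $W^{d'}(\cdot,\mu,\psi_I)$ that appear in the trace with the completed Whittaker functions that diagonalize the Plancherel pairing via Stade's formula, and the extra factor of $2$ for $d \ge 2$ comes from the doubling identity \eqref{eq:StadesOrtho} reflecting that both $W^d_{-d}$ and $W^d_d$ appear in the rowspace of $\Sigma^d_d$.

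With these three identifications in hand, the theorem is a direct consequence of Wallach's. In practice the bulk of the work is in step (iii), where one must carry out the $c$-function computation explicitly and reconcile its normalizations with the ones used throughout the excerpt; matching the factor of $2$, the powers of $\pi$ in each case, and checking the sign conventions on the $V$-character projectors $\Sigma^{d'}_d$ constitutes the nontrivial bookkeeping.
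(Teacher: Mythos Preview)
Your proposal is correct and follows essentially the same route as the paper: the theorem is stated as a direct translation of Wallach's Theorem 15.9.3, with the work lying in the parametrization of the tempered dual by $d$ (minimal parabolic for $d=0,1$, maximal parabolic for $d\ge 2$) and the identification of the Plancherel density with $\sinmu^{d*}(\mu)$. The paper additionally remarks that, once one accepts the Whittaker functions span, the spectral measure can alternatively be pinned down by contour shifting against Stade's formula (the Goldfeld--Kontorovich method) rather than by computing $c$-functions directly; this is a complementary route to your step (iii) but leads to the same constants.
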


The translation from Wallach's notation is not so easy:
Essentially, there are two isomorphism classes of parabolics; for the minimal parabolic $d=0$ corresponds to the trivial representation of the $1,1,1$ Levy, while $d=1$ corresponds to a non-trivial character (of which there is only one, up to permutation), for the maximal parabolic, $d \ge 2$ corresponds to a character (of that weight) of $SO(2,\R)$ in the $2,1$ Levy.
The $d'$ sum and the matrix elements involved in the trace are summing over a basis of the corresponding induced representation (from the Levy up to $G$).

On the other hand, if one believes the Whittaker functions give a spanning set, then the spectral measure can be derived from contour shifting with Stade's formula \eqref{eq:tildePsid} (see \cite[section 6]{WeylI}, \cite[section 6]{WeylII} and \cite[section 4.1]{ArithKuzI}; the method is due to \cite{GoldKont}) and \cite[Proposition 20]{HWII} with a proof similar to \cref{sect:BesselExpand}, below:
For $d \ge 2$, let $\Sigma^{d'*}_d$ be the matrix obtained from $\Sigma^{d'}_d$ by setting the rows $\Sigma^{d'*}_{d,m}$ with $\abs{m} < d$ to zero, and for $d=0,1$, let $\Sigma^{d'*}_d=\Sigma^{d'}_d$.
Then if $F(\mu)$ is holomorphic and of rapid decay on $\mu \in \frak{a}^d_\delta$ for some $\delta > 0$, and additionally satisfies
\begin{align}
\label{eq:WallachsOrthoAssm}
	\piecewise{\displaystyle F(\mu^w)=F(\mu), \forall w \in \Weyl & \text{ when } d=0, \\ \displaystyle F(\mu^{w_2})=F(\mu) & \text{when } d=1}
\end{align}
($d \ge 2$ does not require any symmetries), we have
\begin{align}
\label{eq:WallachsOrtho}
	&\int_{U(\R)\backslash G} \int_{\frak{a}^d_0} F(\mu) \, v W^{d'}(g,\mu, \psi_I) \sinmu^{d*}(\mu) d\mu \, \trans{\wbar{v' W^{d'}(g,-\wbar{\mu'}, \psi_I)}} dg \\
	&= F(\mu') \paren{v \Sigma^{d'*}_d \trans{\wbar{v'}}}\nonumber
\end{align}
for $\mu'\in \frak{a}^d_0$ and $v,v'$ in the rowspace of $\Sigma^{d'}_d$. 
This implies in particular that
\begin{align}
\label{eq:WhittInv}
	\int_{U(\R)\backslash G} \Tr\paren{\int_{\frak{a}^d_0} F(\mu) \Sigma^{d'}_dW^{d'}(g,\mu, \psi_I) \sinmu^{d*}(\mu) d\mu \, \trans{\wbar{W^{d'}(g,-\wbar{\mu'}, \psi_I)}}} dg = F(\mu'),
\end{align}
since $\Tr\paren{\Sigma^{d*}_d}=1$.

We note that the expansion of \cref{thm:Wallach} converges by \cref{lem:WhittTransSuperPoly} and the bound of \cref{cor:GL3WhittBd}; in particular, Wallach's theorem on $L^2$ actually holds pointwise.
Also, the assumption \eqref{eq:WallachsOrthoAssm} may be dropped if we replace $F(\mu')$ by the appropriate average over the Weyl elements.


\subsection{Godement's spherical functions}
\label{sect:Gode}

We need a theorem of Godement \cite[Theorem 14]{Gode01}, which will require some translation:
\begin{thm}
A quasi-bounded function on $G$ which is invariant under $K$ is proportional to a spherical function of height one if and only if it is an eigenfunction of every differential operator in the left-$K$ invariant subspace of the universal enveloping algebra of $G$.
\end{thm}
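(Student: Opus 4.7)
The plan is to reduce Godement's theorem to the integral functional equation characterization of spherical functions of height one, namely $\phi(e) = 1$ together with $\int_K \phi(x k y) \, dk = \phi(x) \phi(y)$ for all $x, y \in G$. The forward direction is a definition chase: a spherical function $\phi$ of height one is bi-$K$-invariant and a joint eigenfunction of the algebra of bi-$K$-invariant differential operators on $G$. Because any element of the left ideal $U(\mathfrak{g}) \mathfrak{k}$ annihilates right-$K$-invariant functions, the action of a left-$K$-invariant differential operator on a $K$-invariant function depends only on its image in the quotient $U(\mathfrak{g})^K / (U(\mathfrak{g})^K \cap U(\mathfrak{g}) \mathfrak{k})$, which is canonically the algebra of bi-$K$-invariant operators. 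Hence $\phi$ is a joint eigenfunction of every left-$K$-invariant operator, and quasi-boundedness is built into Harish-Chandra's integral representation of $\phi$.

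For the converse, suppose $f$ is $K$-invariant, quasi-bounded, and a joint eigenfunction of every left-$K$-invariant differential operator with eigenvalue character $\chi$. I would form the symmetrization
\[ F(x, y) := \int_K f(x k y) \, dk. \]
As a function of $y$ with $x$ fixed, $F(x, \cdot)$ is bi-$K$-invariant: right-$K$-invariance is inherited from $f$, and left-$K$-invariance follows by the change of variable $k \mapsto k (k')^{-1}$ in the Haar integral. It remains a joint eigenfunction with the same character $\chi$, because left-$K$-invariant differential operators commute with arbitrary left translation (in particular, with translation by $x$) and with the compact averaging over $K$. It is also quasi-bounded in $y$, uniformly in $x$ on compacta, by submultiplicativity of any standard norm on $G$.

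I would then invoke the uniqueness statement: within the space of quasi-bounded bi-$K$-invariant joint eigenfunctions of the algebra of bi-$K$-invariant differential operators with prescribed character $\chi$, the dimension is one, spanned by the spherical function $\phi_\chi$. Applying this to $F(x, \cdot)$ yields $F(x, y) = c(x) \phi_\chi(y)$ for some scalar $c(x)$. Specializing at $y = e$ gives $c(x) = f(x)$, whence
\[ \int_K f(x k y) \, dk = f(x) \phi_\chi(y); \]
putting $x = e$ then yields $f(y) = f(e) \phi_\chi(y)$, so $f$ is proportional to $\phi_\chi$ as required.

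The main obstacle is the uniqueness claim, which carries essentially all of the analytic content. The standard route is Harish-Chandra's asymptotic analysis on the positive Weyl chamber: any bi-$K$-invariant joint eigenfunction admits an expansion there as a sum of Harish-Chandra series indexed by the Weyl group orbit of $\chi$, and the quasi-boundedness condition eliminates the unbounded Weyl translates, leaving only the symmetric combination defining $\phi_\chi$. Equivalently, one can argue representation-theoretically using that $(G, K)$ is a Gelfand pair: the spherical Hecke algebra is commutative, its characters parametrize the spherical functions via the spherical transform, and quasi-boundedness picks out a single representative per character class. Once this uniqueness is in hand, the remaining steps of the proof are essentially formal.
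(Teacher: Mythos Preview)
The paper does not prove this theorem; it is quoted verbatim from Godement \cite[Theorem 14]{Gode01} and then \emph{translated} into the paper's matrix-valued setup as \cref{thm:GodeTrans}. So there is no ``paper's own proof'' to compare against.

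That said, your argument has a conceptual slip that would matter for the paper's application. You treat ``invariant under $K$'' as bi-$K$-invariance and build the proof around the classical zonal functional equation $\int_K \phi(xky)\,dk = \phi(x)\phi(y)$. But the paper explicitly clarifies that in Godement's statement ``invariant under $K$'' means conjugation invariance: $h^d_\mu(k^{-1}gk) = h^d_\mu(g)$. The height-one spherical function $h^d_\mu$ of \eqref{eq:SphFunDef} is defined using the nontrivial $K$-type $\WigDMat{d}$ (for $d\ge 1$), and it is \emph{not} bi-$K$-invariant---it is the trace of a function transforming as $\WigDMat{d}(k)F(g)\WigDMat{d}(k')$. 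Your assertion that ``a spherical function of height one is bi-$K$-invariant'' and that ``right-$K$-invariance is inherited from $f$'' therefore fails precisely in the $d=1$ case the paper needs.

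Your outline is essentially the standard argument for the zonal case ($d=0$), and there it is fine (modulo the uniqueness input, which you correctly identify as the analytic core). For general height-one $K$-types one needs the version with the $K$-character inserted, as in Godement's original paper; the relevant functional equation and the uniqueness of the $K$-isotypic eigenfunction both require this modification.
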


First, a spherical function of height one is a function of the form
\begin{align}
\label{eq:SphFunDef}
	h^d_\mu(g) = \int_K \Tr\paren{\trans{\wbar{\WigDMat{d}(k)}} \Sigma^{d*}_d I^d_{\mu}(kg)} dk,
\end{align}
with $I^d_\mu$ as in \eqref{eq:IdDef}, where $\Sigma^{d*}_d$ is the matrix $\Sigma^d_d$ with every row but the first and last set to zero for $d \ge 2$ and $\Sigma^{d*}_d=\Sigma^d_d$ for $d=0,1$.
The term ``height one'' refers to the fact that the entries of $\Sigma^{d*}_d I^d_{\mu}$ are elements of a multiplicity-one $K$-isotypic component of a completely irreducible representation of $G$; note that the row space of $\Sigma^{d*}_d$ is one dimensional, and so in particular,
\begin{align}
\label{eq:SphFunAtI}
	h^d_\mu(I)=\Tr\paren{\Sigma^{d*}_d} = 1.
\end{align}
The phrase ``invariant under $K$'' refers to the fact that $h^d_\mu(k^{-1} g k) = h^d_\mu(g)$ for $k \in K$.

Second, the term ``quasi-bounded'' can be reinterpretted as moderate growth:
For $g\in G$, define $\norm{g}$ to be the Euclidean norm resulting from the standard inclusion $G \subset \R^9$, then we say $f(g)$ has moderate growth if there exists $r \ge 0$ such that
\[ \sup_{g \in G} \abs{f(g)}/\norm{g}^r < \infty. \]

Lastly, Godement considers the universal enveloping algebra in terms of the right-translation invariant differential operators; this can be switched to the left-translation invariant operators by inverting the function argument.
Then on vector- or matrix-valued functions, the left-$G$- and right-$K$-invariant operators are just the operators
\begin{align}
\label{eq:KinvY}
	Y^{a_1} \circ \cdots \circ Y^{a_l}, \qquad \text{ with } a_1+\ldots+a_l=0.
\end{align}
(The only $K$-invariant differential operator coming from the Lie algebra of $K$ is $\Delta_K$, and a vector-valued form is necessarily an eigenfunction of this.)
We can consider such an operator acting on $h^d_\mu$ by its action on the rows of $\Sigma^{d*}_d I^d_{\mu}$; by \cite[Proposition 19]{HWII}, when $\mu \in \frak{a}^d_0$, $h^d_\mu$ is an eigenfunction of every such operator.
(Since the rowspace of $\Sigma^{d*}_d$ is exactly $\mathcal{V}^d$ at the minimal $d$, and this is one-dimensional.)

And so we arrive at our translation:
\begin{thm}
\label{thm:GodeTrans}
Consider a matrix-valued function $F(g)$ which transforms as
\[ F(kgk')=\WigDMat{d}(k)F(g)\WigDMat{d}(k') \]
for $k,k'\in K$ and is an eigenfunction of every operator of the form \eqref{eq:KinvY} with eigenvalues matching $h^d_\mu$ for some $\mu \in \frak{a}^d_0$.
Set $f(g) = \Tr(F(g))$, and suppose $f(g)$ has moderate growth, then
\[ f(g) = f(I) h^d_\mu(g) \]
for all $g \in G$.
\end{thm}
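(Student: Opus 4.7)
The plan is to apply Godement's theorem to the scalar function $f(g) = \Tr(F(g))$ after verifying its three hypotheses, then to pin down the proportionality constant by evaluating at $g = I$. First, I would check that $f$ is invariant under $K$-conjugation. Specializing the transformation law of $F$ to the case of left factor $k^{-1}$ and right factor $k$, and using $\WigDMat{d}(k^{-1}) = \WigDMat{d}(k)^{-1}$, one gets
\[
F(k^{-1} g k) = \WigDMat{d}(k)^{-1} F(g) \WigDMat{d}(k),
\]
and the cyclic property of the trace immediately gives $f(k^{-1} g k) = f(g)$. Second, moderate growth (that is, quasi-boundedness in Godement's language) is part of the hypothesis.

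Third, I would verify that $f$ is an eigenfunction of every differential operator in the left-$K$-invariant subspace of the universal enveloping algebra, with the same eigenvalues as $h^d_\mu$. By the convention-translation in the paragraph following Godement's theorem, those operators, applied row-wise to the matrix-valued $F$, are in the $Y^a$-realization exactly the compositions of the form \eqref{eq:KinvY}, i.e.\ $\mathcal{D} = Y^{a_1} \circ \cdots \circ Y^{a_l}$ with $a_1 + \cdots + a_l = 0$. Since such a $\mathcal{D}$ acts on each row separately, it commutes with $\Tr$; combined with the hypothesis that $F$ is an eigenfunction of every such $\mathcal{D}$ with eigenvalue equal to that of $h^d_\mu$, this gives $\mathcal{D} f = \lambda_\mathcal{D}(\mu)\, f$ with the matching eigenvalue.

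Having verified the three hypotheses, Godement's theorem forces $f = c\, h^d_\mu$ for some scalar $c \in \C$. Evaluating at $g = I$ and using $h^d_\mu(I) = \Tr(\Sigma^{d*}_d) = 1$ from \eqref{eq:SphFunAtI} identifies $c = f(I)$, which is the claim. Once the convention-translations of the preceding paragraphs are in hand, the proof is essentially bookkeeping; the main point to be careful about is that Godement works with right-translation-invariant derivations that are left-$K$-invariant, whereas the $Y^a$ operators here are left-translation invariant, so one has to keep in mind the inversion trick $g \mapsto g^{-1}$ the paper alludes to in order to match the two sides. Beyond that, there is nothing more than the transformation law, the cyclic property of the trace, and the row-wise compatibility of the $Y^a$ with the trace.
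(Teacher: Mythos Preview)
Your proposal is correct and matches the paper's approach: the paper presents \cref{thm:GodeTrans} precisely as a translation of Godement's theorem, with the surrounding discussion supplying exactly the three verifications you outline ($K$-conjugation invariance of the trace, moderate growth as the quasi-boundedness hypothesis, and the identification of the left-$K$-invariant operators with the compositions \eqref{eq:KinvY}), followed by the normalization $h^d_\mu(I)=1$ from \eqref{eq:SphFunAtI}. There is no separate proof in the paper beyond this translation, so your write-up is essentially what the paper intends.
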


A note about the convergence of the integral defining $h^d_\mu(g)$:
We may reduce to considering $h^0_{\Re(\mu)}(y)$ since the entries of the Wigner $\WigDName$-matrix are bounded by 1.
In \cref{sect:FourSph}, we give an integral representation \eqref{eq:AltSphFun} (which is just a change of variables) as an integral over $U(\R)$, and from this we may reduce to considering $h^0_{\Re(\mu)}(I)$ since for fixed $y$, we have $p_{\rho+\Re(\mu)}(\trans{u} y) \asymp p_{\rho+\Re(\mu)}(\trans{u})$ (see \cref{sect:ExplicitStars}).
On the other hand, $h^0_\mu(I)=1$ clearly converges absolutely.

We do not require the functional equations of the spherical functions, but if we define $h^d_{\mu,\chi}$ by replacing $\Sigma^{d*}_d$ in \eqref{eq:SphFunDef} with an appropriately defined $\Sigma^{d*}_\chi$ (this is easiest to define in terms of the intertwining operators $T^d(w,\mu^w)$), then $h^d_{\mu,\chi} = h^d_{\mu^w,\chi^w}$ for $w \in \Weyl$ and $\chi^w(v)=\chi(wvw^{-1})$, and this can be deduced from the functional equations of the Whittaker function \eqref{eq:WhittFEs} and \eqref{eq:TdSigmadComm}.
(This is because the action of the $Y^a$ operators on $T^d(w,\mu^w) I^d_\mu$ is identical to that on
\[ T^d(w,\mu^w) \Sigma^d_{\chi^w} W^d(\cdot,\mu^w,\psi_I) = \Sigma^d_\chi W^d(\cdot,\mu,\psi_I); \]
there is a slight caveat that the dimension of the rowspace of the latter is slightly smaller, but by \cite[Proposition 16 parts (3),(1) and (2)]{HWII} the vectors that are killed by $W^d(\cdot,\mu,\psi_I)$ do not lie in the rowspace of any $\Sigma^d_\chi$.)

Similarly, one can define the spherical functions away from the minimal weights $d'=d$, but we will only require the minimal-weight spherical functions, and in fact, only at $d=0,1$.

\subsection{Integrals of gamma functions}
We will make use of Barnes' first and second lemmas:
\begin{thm}[Barnes' first lemma, {\cite[sect. 1.7]{Bailey}}]
\label{thm:BarnesFirst}
For $a,b,c,d\in\C$,
\begin{align*}
	\int_{-i\infty}^{+i\infty} \Gamma(a+s) \Gamma(b+s) \Gamma(c-s) \Gamma(d-s) \frac{ds}{2\pi i} &= \frac{\Gamma(a+c) \Gamma(b+c) \Gamma(a+d) \Gamma(b+d)}{\Gamma(a+b+c+d)}.
\end{align*}
\end{thm}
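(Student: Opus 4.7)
My plan is to evaluate the integral by Mellin--Barnes contour shifting, reading off the resulting residue series as a pair of Gaussian hypergeometric functions at argument $1$, and then collapsing them with Gauss's summation theorem. By analytic continuation it suffices to prove the identity on the open set $-\Re(a),-\Re(b)<\Re(s)<\Re(c),\Re(d)$ with the additional constraint $\Re(a+b+c+d)<1$; on this set, the vertical contour separates the left-going poles of $\Gamma(a+s)\Gamma(b+s)$ from the right-going poles of $\Gamma(c-s)\Gamma(d-s)$, and Stirling's formula gives absolute convergence of the integral together with exponential decay on horizontal segments, so standard contour-shift arguments apply.

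Next I would push the contour to $\Re(s)=+\infty$, collecting residues at $s=c+n$ and $s=d+n$ for $n\ge 0$. Using $\operatorname{Res}_{s=c+n}\Gamma(c-s)=-(-1)^n/n!$ and the identity
\[
\Gamma(d-c-n)=\frac{(-1)^n\,\Gamma(d-c)}{(1+c-d)(2+c-d)\cdots(n+c-d)},
\]
together with the analogous expressions $\Gamma(a+c+n)=(a+c)(a+c+1)\cdots(a+c+n-1)\Gamma(a+c)$, the residues at $s=c+n$ assemble into
\[
\Gamma(a+c)\Gamma(b+c)\Gamma(d-c)\,\pFq{2}{1}{a+c,b+c}{1+c-d}{1},
\]
and the residues at $s=d+n$ give the corresponding sum with $c$ and $d$ interchanged. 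Gauss's summation formula
\[
\pFq{2}{1}{A,B}{C}{1}=\frac{\Gamma(C)\Gamma(C-A-B)}{\Gamma(C-A)\Gamma(C-B)}
\]
(convergent thanks to $\Re(1-a-b-c-d)>0$) then rewrites each piece as a product of six gamma values.

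The remaining work is to combine the two pieces. Applying the reflection identity $\Gamma(z)\Gamma(1-z)=\pi/\sin\pi z$ to the factors $\Gamma(d-c)\Gamma(1+c-d)$, $\Gamma(1-a-d)\Gamma(a+d)$, $\Gamma(1-b-d)\Gamma(b+d)$, and $\Gamma(1-a-b-c-d)\Gamma(a+b+c+d)$ converts both pieces into the common factor
\[
\frac{\Gamma(a+c)\Gamma(b+c)\Gamma(a+d)\Gamma(b+d)}{\Gamma(a+b+c+d)}
\]
multiplied by a ratio of sines. Adding the two contributions, the sine combination reduces to the identity
\[
\sin\pi(a+d)\sin\pi(b+d)-\sin\pi(a+c)\sin\pi(b+c)=\sin\pi(a+b+c+d)\,\sin\pi(d-c),
\]
which is an immediate application of the product-to-sum formula $\sin X\sin Y=\tfrac12[\cos(X-Y)-\cos(X+Y)]$; after cancellation the desired closed form emerges.

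The main obstacle, as usual in such Mellin--Barnes evaluations, is bookkeeping: keeping track of the signs from $(-1)^n\cdot(-1)^n$ in the Pochhammer-to-gamma conversion, being careful about the orientation of the shifted contour (so that the integral equals minus the sum of residues), and verifying that the boundary terms at $\Re(s)\to+\infty$ vanish via Stirling. Once the identity is established on the sub-domain where Gauss's formula converges, meromorphy of both sides in $(a,b,c,d)\in\mathbb{C}^4$ lets us extend it to the full parameter space.
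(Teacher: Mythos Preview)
Your proposal is the classical proof of Barnes' first lemma and is correct as outlined; the contour shifting, the identification of the two residue series as ${}_2F_1$'s at $1$, Gauss's summation, and the trigonometric recombination via $\sin X\sin Y=\tfrac12[\cos(X-Y)-\cos(X+Y)]$ all go through exactly as you describe.

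There is nothing to compare against, however: the paper does not prove this result. It is stated with a citation to \cite[sect.~1.7]{Bailey} and used as a black box (in \cref{sect:d01Stade}, to evaluate a Rankin--Selberg type integral of Whittaker functions). Your argument is in fact essentially the one Bailey gives, so you have supplied what the paper merely imports.
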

\begin{thm}[Barnes' second lemma, {\cite[sect. 6.2]{Bailey}}]
\label{thm:BarnesSecond}
For $a,b,c,d,e,f\in\C$ with $a+b+c+d+e-f=0$,
\begin{align*}
	& \int_{-i\infty}^{+i\infty} \frac{\Gamma(a+s) \Gamma(b+s) \Gamma(c+s) \Gamma(d-s) \Gamma(e-s)}{\Gamma(f+s)} \frac{ds}{2\pi i} \\
	&= \frac{\Gamma(a+d) \Gamma(b+d) \Gamma(c+d) \Gamma(a+e) \Gamma(b+e) \Gamma(c+e)}{\Gamma(f-a) \Gamma(f-b) \Gamma(f-c)}.
\end{align*}
\end{thm}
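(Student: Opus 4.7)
The plan is to evaluate the Mellin--Barnes integral by closing the contour to the right, collecting the residue sum, and recognizing it as a Saalsch\"utzian ${}_3F_2(1)$ whose value is forced by the balanced condition $a+b+c+d+e-f=0$.

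First I would verify convergence and justify the contour manipulations. Along $\Re(s)=\sigma$ with $T=\Im(s)$ large, Stirling's formula gives $\abs{\Gamma(x+iT)} \sim \sqrt{2\pi}\abs{T}^{x-1/2}e^{-\pi\abs{T}/2}$. Combining the five numerator $\Gamma$-factors with the single denominator $\Gamma$-factor, the integrand's modulus is of order $\abs{T}^{\Re(a+b+c+d+e-f)-2}e^{-2\pi\abs{T}}$; the balanced condition collapses this to $\abs{T}^{-2}e^{-2\pi\abs{T}}$, which is integrable and has more than enough decay along horizontal arcs as $\abs{\Im(s)}\to\infty$ to justify closing the contour to the right.

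Next I would sum the residues. Under the generic assumption $d-e\notin\Z$ (to be removed by analytic continuation in the parameters at the end), the poles to the right lie in two disjoint arithmetic sequences, $s=d+n$ and $s=e+n$ for $n\in\N_0$. Writing out the residue at $s=d+n$ and applying $\Gamma(x+n)=(x)_n\Gamma(x)$ together with $\Gamma(e-d-n)=(-1)^n\Gamma(e-d)/(1+d-e)_n$, the $n$-sum telescopes into
\[ \frac{\Gamma(a+d)\Gamma(b+d)\Gamma(c+d)\Gamma(e-d)}{\Gamma(f+d)}\,{}_3F_2\!\left(\begin{matrix}a+d,\,b+d,\,c+d\\ f+d,\,1+d-e\end{matrix};1\right), \]
with a symmetric contribution at $s=e+n$ obtained by swapping $d\leftrightarrow e$.

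The main obstacle is evaluating these two ${}_3F_2(1)$ series in closed form. The balanced condition $a+b+c+d+e=f$ translates into $(a+d)+(b+d)+(c+d)+1=(f+d)+(1+d-e)$, which is precisely the Saalsch\"utz balance; however, Pfaff--Saalsch\"utz in its terminating form requires an upper parameter to be a non-positive integer, which does not hold here. To close the gap I would invoke Thomae's two-term transformation for non-terminating Saalsch\"utzian ${}_3F_2(1)$, which expresses each such series as a pair of explicit $\Gamma$-function products. When the two residue families are added, the non-closed pieces cancel via $\Gamma(e-d)\Gamma(1+d-e)=\pi/\sin\pi(e-d)$ and its $d\leftrightarrow e$ partner; after repeated use of the reflection formula, the surviving ratios $\Gamma(e-d)/\Gamma(f+d)$ and their mates collapse into the factors $\Gamma(f-a)^{-1}\Gamma(f-b)^{-1}\Gamma(f-c)^{-1}$ that appear on the right-hand side of the claimed identity, with the numerator symmetrizing into $\prod_i\Gamma(a+d)\Gamma(a+e)$ style products. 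The genericity hypothesis on $d-e$ is then discarded by continuity in $(a,b,c,d,e)$ on the locus $a+b+c+d+e=f$.
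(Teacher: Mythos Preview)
The paper does not supply its own proof of this statement; it is quoted as a classical result with a citation to Bailey's monograph. So there is no ``paper's approach'' to compare against, only the question of whether your sketch stands on its own.

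Your outline is the standard residue-theoretic proof and is essentially sound through the point where you assemble the two residue sums into a pair of ${}_3F_2(1)$ series with the Saalsch\"utz balance. The convergence analysis and the residue bookkeeping are both fine. Where the argument becomes thin is the final step: you invoke ``Thomae's two-term transformation'' and assert that the non-closed pieces cancel via reflection, but you do not actually carry this out, and the cancellation is not automatic from what you have written. In practice one either (i) appeals directly to the non-terminating Saalsch\"utz identity (which is itself proved by exactly this kind of two-term ${}_3F_2$ relation, so one should cite it rather than re-derive it inside the proof), or (ii) follows Bailey's original route, which is to insert Barnes' first lemma into the integrand to reduce the five-gamma integral to an iterated application of the first lemma. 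Route (ii) is shorter and avoids the delicate cancellation entirely; if you want a self-contained argument, that is the cleaner choice. As written, your proposal is correct in strategy but the closing paragraph would need either an explicit citation for the non-terminating Saalsch\"utz sum or the actual algebra showing the two ${}_3F_2$ contributions combine to the stated product.
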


\subsection{The $GL(2)$ Bessel functions}
From the classical Bessel functions $J_\nu(x)$ and $K_\nu(x)$, we define
\begin{align*}
	J^+_\nu(x) :=& \frac{\pi}{2} \frac{J_\nu(2x)+J_{-\nu}(2x)}{\cos\frac{\pi}{2}\nu}, & J^-_\nu(x) :=& \frac{\pi}{2} \frac{J_{-\nu}(2x)-J_\nu(2x)}{\sin\frac{\pi}{2}\nu}, & \wtilde{K}_\nu(x) :=& 2 \cos(\tfrac{\pi}{2}\nu) \, K_\nu(2x).
\end{align*}
For $\nu \in \Z$, we will not encounter $J^\varepsilon_\nu(x)$ unless $\varepsilon = (-1)^\nu$, so we \emph{define the opposite case to be zero} to avoid the $Y$-Bessel function.

We have the following bounds (with implied constants absolute)
\begin{equation}
\label{eq:GL2BesselBound}
\begin{aligned}
	J^\pm_\nu(x) \ll& \paren{1+\frac{x}{1+\abs{\nu}}}^{-1/2}, \qquad \text{ for } \Re(\nu) = 0, \text{ or } \nu \in \Z, \\
	\wtilde{K}_\nu(x) \ll& \paren{1+\frac{x}{1+\abs{\nu}}}^{-1/2}, \qquad \text{ for } \Re(\nu) = 0.
\end{aligned}
\end{equation}
(These bounds are both very weak, but simplicity is key, here.)
A stronger form of the $J$-Bessel bounds can be found in \cite[(2.9) and (2.10)]{BFKMM}, while the $K$-Bessel bound follows easily from the uniform asymptotic expansion \cite[7.13.2 (18)-(21)]{Erdelyi2}.

The Bessel functions satisfy recurrence relations \cite[10.6.1-2 and 10.29.1-2]{DLMF}
\begin{align*}
	J_{\pm1+\nu}(x) =& \frac{\nu}{x} J_\nu(x)\mp\frac{d}{dx} J_\nu(x), \\
	K_{\pm1+\nu}(x) =& \pm\frac{\nu}{x} K_\nu(x)-\frac{d}{dx} K_\nu(x),
\end{align*}
and we can see from \cite[(4.13)]{Subconv} (for $x,\abs{\nu}\ge 1$), the power series \cite[10.2.2 and 10.27.4]{DLMF} (for $x<1$), and \cite[10.9.6 and 10.32.8]{DLMF} (for $\abs{\nu}<1$) that
\begin{align*}
	\frac{d}{dx} e^{\frac{\pi}{2}\abs{\nu}} J_\nu(x), \frac{d}{dx} e^{-\frac{\pi}{2}\abs{\nu}} K_\nu(x) \ll& 1+\frac{1+\abs{\nu}}{x}, \qquad \text{ for } \Re(\nu)=0.
\end{align*}
These give us a bound on $\Re(\nu)=\pm1$, to which we apply Phragm\'en-Lindel\"of, and we have
\begin{align}
\label{eq:GL2BadBesselBound}
	J^\pm_\nu(x), \wtilde{K}_\nu(x) \ll& \paren{1+\frac{1+\abs{\nu}}{x}}^{\abs{\Re(\nu)}} \paren{1+\frac{x}{1+\abs{\nu}}}^{-\frac{1-\abs{\Re(\nu)}}{2}}, \qquad \text{ for } \abs{\Re(\nu)} < 1.
\end{align}

\subsection{The $GL(3)$ Bessel functions}
\label{sect:GL3Bessel}
This paper is primarily the story of the $GL(3)$ long-element Bessel functions, and so we require a great deal of information about them.
The term ``Bessel function'' in the higher rank groups can have different meanings for different authors, but here we are referring to the functions originally defined in \cite{SpectralKuz} and later generalized in \cite{WeylI} and \cite{WeylII}.
We note that some notational changes were made in the papers \cite{HWI} and \cite{HWII} (which came after \cite{SpectralKuz}) and these changes are discussed in \cite[section 4.5]{WeylI}.
There are also a large number of corrections to the constants of \cite{SpectralKuz} given in \cite[section 4.1]{ArithKuzI}.
The formulas below are all pulled from either \cite{ArithKuzI} (for $d=0,1$), \cite{WeylI} (for $d=1$), or \cite{WeylII} (for $d \ge 2$).

The $GL(3)$ Bessel functions are instrinsically defined by an integral transform of the Whittaker function:
For any sufficiently nice test function $F(\mu)$ and each $d \ge 0$ and $w \in \Weyl$, we define $K_w^d(g,\mu)$ by
\begin{align*}
	& \int_{\wbar{U}_w(\R)} \int_{\frak{a}^d_0} F(\mu) W^{d*}(gwxg',\mu)\,d\mu \,\wbar{\psi_I(x)} dx \\
	&= \int_{\frak{a}^d_0} F(\mu) K_w^d(g,\mu) W^{d*}(g',\mu)\,d\mu,
\end{align*}
where $\wbar{U}_w=(w^{-1} \trans{U} w)\cap U$ and the $\mu$ integral is simply to bypass some technical difficulties with convergence of the $x$-integral.
(We could, instead, drop the $\mu$ integral and take a sort of Riemann integral for the $x$ integral, but this is only a cosmetic improvement.)
Since the $Y^a$ operators are left-translation invariant, we may replace $W^{d*}$ with any $Y^{a_1} \cdots Y^{a_\ell} W^{d*}$ on both sides, and hence we may instead define $K_w^d(y,\mu)$ using the matrix-valued Whittaker function at any $d' \ge d$
\begin{align}
\label{eq:KdwDef}
	& \int_{\wbar{U}_w(\R)} \int_{\frak{a}^d_0} F(\mu) \Sigma^{d'^*}_d W^{d'}(gwxg', \mu, \psi_I)\,d\mu \,\wbar{\psi_I(x)} dx \\
	&= \int_{\frak{a}^d_0} F(\mu) K_w^d(g,\mu) \Sigma^{d'^*}_d W^{d'}(g', \mu, \psi_I)\,d\mu, \nonumber
\end{align}
and clearly this holds for $d' < d$ as both sides are zero.

Since the Bessel functions for $w \ne w_l$ will not make an appearance here, we now restrict to the case $w=w_l$.
Most of the following has an analog in the general case, but the long-element case is actually simpler to describe.

\subsubsection{Differential equations and asymptotics}
The Bessel functions are uniquely characterized as functions of $G$ satisfying the properties:
\begin{enumerate}
\item \(\displaystyle K^d_{w_l}(xg,\mu) = K^d_{w_l}(g(w_l x w_l),\mu) = \psi_I(x) K^d_{w_l}(g,\mu). \)
\item \(\displaystyle \Delta_i K^d_{w_l}(g,\mu) = \lambda_i(\mu) K^d_{w_l}(g,\mu) \), where $\Delta_1,\Delta_2$ are the $GL(3)$ Casimir operators and $\lambda_i(\mu)$ is defined by $\Delta_i p_{\rho+\mu} = \lambda_i(\mu) p_{\rho+\mu}$.
\item As $y\to 0$, $K^d_{w_l}(y,\mu)$ has the first-term asymptotics implied by \eqref{eq:JwlAsymp}, \eqref{eq:JsignwlDef} and \eqref{eq:K0wlViaJwl}-\eqref{eq:KdwlViaJwl} below.
\end{enumerate}
If we let $J_{w_l}(g,\mu)$ be the power series (Frobenius series) solution to 1 and 2 whose first-term asymptotic as $y\to 0$ is
\begin{align}
\label{eq:JwlAsymp}
	J_{w_l}(y,\mu) \sim p_{\rho+\mu}(y) \frac{(4\pi^2)^{2+\mu_1-\mu_3}}{\Gamma\paren{1+\mu_1-\mu_3} \Gamma\paren{1+\mu_1-\mu_2} \Gamma\paren{1+\mu_2-\mu_3}},
\end{align}
then
\begin{align}
\label{eq:JwlDef}
	J_{w_l}(y,\mu) = \abs{4\pi^2 y_1}^{1-\mu_3} \abs{4\pi^2 y_2}^{1+\mu_1} \sum_{n_1,n_2\ge 0} \frac{\Gamma\paren{n_1+n_2+\mu_1-\mu_3+1} \, (4\pi^2 y_1)^{n_1} (4\pi^2 y_2)^{n_2}}{\prod_{i=1}^3 \Gamma\paren{n_1+\mu_i-\mu_3+1}\Gamma\paren{n_2+\mu_1-\mu_i+1}}.
\end{align}

Set
\begin{align}
\label{eq:JsignwlDef}
	J^{\varepsilon_1,\varepsilon_2}_{w_l}(y,\mu) =& \varepsilon_2 J_{w_l}(y,\mu)+\varepsilon_1 J_{w_l}(y,\mu^{w_4})+\varepsilon_1 \varepsilon_2 J_{w_l}(y,\mu^{w_5}),
\end{align}
then if $\varepsilon=\sgn(y)$ (note the arguments of the $J_w(y,\mu)$ functions are still the \textbf{signed} $y$), we have
\begin{align}
\label{eq:K0wlViaJwl}
	K_{w_l}^0(y,\mu) =& -\frac{1}{16 \pi} \frac{J^{++}_{w_l}(y,\mu)-J^{++}_{w_l}(y,\mu^{w_2})}{\prod_{i<j} \sin \frac{\pi}{2}\paren{\mu_i-\mu_j}}, \\
\label{eq:K1wlViaJwl}
	K^1_{w_l}(y,\mu) =& -\frac{1}{16\pi} \frac{J^{\varepsilon_1,\varepsilon_2}_{w_l}(y,\mu)-J^{\varepsilon_1,\varepsilon_2}_{w_l}(y,\mu^{w_2})}{\cos\frac{\pi}{2}(\mu_1-\mu_3)\cos\frac{\pi}{2}(\mu_2-\mu_3)\sin\frac{\pi}{2}(\mu_1-\mu_2)},
\end{align}
and for $d \ge 2$,
\begin{align}
\label{eq:KdwlViaJwl}
	-4\pi \cos\pi\paren{\tfrac{d}{2}+3r} K_{w_l}^d(y,\mu^d(r)) =& \delta_{\varepsilon_1=-1} (\varepsilon_2)^d J_{w_l}(y,\mu(r)^{w_4}) + \delta_{\varepsilon_2=-1} (-\varepsilon_1)^d J_{w_l}(y,\mu(r)) \\
	&\qquad -\delta_{\varepsilon_1 \varepsilon_2=-1} (-\varepsilon_1)^d J_{w_l}(y,\mu(r)^{w_3}). \nonumber
\end{align}
In particular, when $\sgn(y)=(+,+)$ and $d \ge 2$, we have $K_{w_l}^d(y,\mu^d(r)) = 0$.

\subsubsection{Some relations among the Bessel functions}
We note that independent of sign, we have
\begin{align}
\label{eq:KdwlIota}
	K^d_{w_l}(y, \mu) = K^d_{w_l}(y^\iota, -\mu^{w_2}),
\end{align}
and in particular
\[ K^d_{w_l}(y, \mu^d(r)) = K^d_{w_l}(y^\iota, \mu^d(-r)) \qquad d \ge 2, \]
which follows from $J_{w_l}(y,\mu) = J_{w_l}(y^\iota,-\mu^{w_l})$ and $\mu^d(-r) = -\mu^d(r)^{w_2}$ via \eqref{eq:JsignwlDef} and \eqref{eq:KdwlViaJwl}.

Also, for $d \ge 2$, $K_{w_l}^d(y,\mu^d(r))$ can be obtained by analytic continuation from the $d=0,1$ functions:
That is, for $d \ge 2$,
\begin{align}
\label{eq:KdwlAnCont}
	K^d_{w_l}(y,\mu^d(r)) =& K^\delta(y,(\mu^d(r))^{w_3}), \qquad \set{0,1} \ni \delta \equiv d \pmod{2}.
\end{align}
This follows from the discussion on degeneracy of the $J_{w_l}$ function at $d \ge 2$ in \cite[section 8.1]{WeylII}.
Then one must also compare the matrices $\Sigma^{d'}_d$ which invariably accompany the Whittaker functions (and hence also the Bessel functions, being defined in terms of the Whittaker functions), and for $d \ge 2$, we have
\begin{align}
\label{eq:Sigmadw3FE}
	\trans{\wbar{W^{d'}(g,-\wbar{(\mu^d(r))^{w_3}},\psi_I)}} \Sigma^{d'}_\delta W^{d'}(g,(\mu^d(r))^{w_3},\psi_I) = \trans{\wbar{W^{d'}(g,-\wbar{\mu^d(r)},\psi_I)}} \Sigma^{d'}_d W^{d'}(g,\mu^d(r),\psi_I),
\end{align}
with $\delta$ as before, by \eqref{eq:WhittFEs}, \eqref{eq:TdOrtho} and \eqref{eq:TdSigmadComm}.
Here we have used the fact that the product of Whittaker functions on the right is invariant under $\Sigma^{d'}_{--} \mapsto \Sigma^{d'}_{-+}$ since only the rows $W^d_{m'}(g,\mu^d(r),\psi_I)$ with $m'\le-d$ contribute.
(See the beginning of \cref{sect:WhittFuns}.)

\subsubsection{Mellin-Barnes integrals}
\label{sect:BesselMBIntegrals}
In the development of Kuznetsov-type formulas, having Mellin-Barnes integrals for the kernel functions serves as an intermediate step between the purely algebraic function of the power series and the development of more functional integral representations, e.g. the double-Bessel integrals below.
On the other hand, Mellin-Barnes integrals are often sufficient for milder applications.
We will use these integral representations for both reasons.

Define the (normalized) Mellin transform of the Bessel functions by
\begin{equation}
\label{eq:KwlMellinTrans}
\begin{aligned}
	K^d_{w_l}(y,\mu) =:& \int_{-i\infty}^{+i\infty} \int_{-i\infty}^{+i\infty} \abs{4\pi^2 y_1}^{1-s_1} \abs{4\pi^2 y_2}^{1-s_2} \what{K}^d_{w_l}(s,\sgn(y),\mu) \frac{ds_1}{2\pi i} \frac{ds_2}{2\pi i}, \\
	\what{K}^d_{w_l}(s,v,\mu) =& \frac{1}{(2\pi)^8} \int_{Y^+} K^d_{w_l}(vy,\mu) (4\pi^2 y_1)^{s_1+1} (4\pi^2 y_2)^{s_2+1} dy.
\end{aligned}
\end{equation}
The unbounded portion of the contours in each $\int_{-i\infty}^{i\infty} \ldots ds_i$ must pass to the left of $\Re(s_i)=0$ (and the finite part must pass to the right of the poles of the integrands) to maintain absolute convergence for $\sgn(y) \ne (+,+)$.

Write $\varepsilon = \sgn(v) = \sgn(y)$, then for $d=0$, we have
\begin{align}
\label{eq:Kwl0Sgn}
	\paren{\prod_{i<j} \sin\tfrac{\pi}{2}(\mu_i-\mu_j)} \what{K}^0_{w_l}(s,v,\mu) = \frac{1}{16 \pi^4} G^0_0(2s,2\mu) \sum_{w\in \Weyl_3} S^{\varepsilon_1,\varepsilon_2}(s,\mu),
\end{align}
where $G^0_0(s,\mu)=\wtilde{G}(0,0,0,s,\mu)$ as in \cref{sect:MinWtWhitt} and
\begin{align*}
	S^{++}(s,\mu) :=& \tfrac{1}{3} \prod_{i<j} \sin\pi(\mu_i-\mu_j), \\
	S^{+-}(s,\mu) :=& \sin\pi(\mu_2-\mu_3) \sin\pi(s_1-\mu_1) \sin\pi(s_2+\mu_2) \sin\pi(s_2+\mu_3)/\sin\pi(s_1+s_2), \\
	S^{-+}(s,\mu) :=& \sin\pi(\mu_1-\mu_2) \sin\pi(s_1-\mu_2) \sin\pi(s_1-\mu_2) \sin\pi(s_2+\mu_3)/\sin\pi(s_1+s_2), \\
	S^{--}(s,\mu) :=& \sin\pi(\mu_1-\mu_3) \sin\pi(s_1-\mu_2) \sin\pi(s_2+\mu_2).
\end{align*}
For $d=1$, we have
\begin{align}
\label{eq:Kwl1Sgn}
	\what{K}^1_{w_l}(s,v,\mu) =& \frac{1}{16 \pi^4} G^0_0(2s,2\mu) \frac{\varepsilon_2 S^{\varepsilon_1,\varepsilon_2}(s,\mu)+\varepsilon_1 S^{\varepsilon_1,\varepsilon_2}(s,\mu^{w_4})+\varepsilon_1 \varepsilon_2 S^{\varepsilon_1,\varepsilon_2}(s,\mu^{w_5})}{\sin\tfrac{\pi}{2}(\mu_1-\mu_2) \cos\tfrac{\pi}{2}(\mu_1-\mu_3) \cos\tfrac{\pi}{2}(\mu_2-\mu_3)}.
\end{align}
For $d \ge 2$, it is somewhat more useful to write these as
\begin{align}
\label{eq:KwldMB}
	\what{K}_{w_l}^d(s,v,\mu^d(r)) =& \frac{1}{4\pi^2} (-\varepsilon_1\varepsilon_2)^d B^{\varepsilon_1,\varepsilon_2}_{w_l}\paren{s,\tfrac{2}{3}r} Q(d,s_1-r) Q(d,s_2+r),
\end{align}
where
\begin{align}
\label{eq:QBDef}
	Q(d,s) :=& \frac{\Gamma\paren{\frac{d-1}{2}+s}}{\Gamma\paren{\frac{d+1}{2}-s}}, &
	B^{\varepsilon_1,\varepsilon_2}_{w_l}(s,r) := \piecewise{
	0& \If \varepsilon=(+,+), \\
	B\paren{s_1+3r,1-s_1-s_2}& \If \varepsilon=(+,-), \\
	B\paren{s_2-3r,1-s_1-s_2}& \If \varepsilon=(-,+), \\
	B\paren{s_1+3r,s_2-3r}& \If \varepsilon=(-,-),}
\end{align}
and $B(a,b)$ is the usual beta function.
(Note: We've redefined $B^{\varepsilon_1,\varepsilon_2}_{w_l}(s,r)$ over \cite[(8)]{WeylII} in case $\varepsilon=(-,-)$ to separate the $d$-dependent factor $(-1)^d$.)

\subsubsection{Stirling's formula on the Mellin transform}
\label{sect:BesselStirling}

A noticable proportion of the argument will center on the distinction between the behavior of $\what{K}^d_{w_l}(s,v,\mu)$ as $\Im(s)$ becomes large for fixed $\mu$ and the behavior as $\norm{\mu}$ becomes large (along the spectrum $\bigcup_d \frak{a}^d_0 \cup \frak{a}^{0,c}_\theta \cup \frak{a}^{1,c}_\theta$) for $s$ essentially fixed.

A good discussion of the poles and residues of $\what{K}_{w_l}^d(s,v,\mu)$ for $\sgn(v)=(+,+)$ can be found in \cite[section 7]{ArithKuzI}, and we collect a few features here.
For $d=0,1$, the Bessel functions are just different linear combinations of the same power series, so the Mellin transforms have poles in the same places; that is, $\what{K}_{w_l}^d(s,v,\mu)$ has poles at $s=(\mu^w_3-\ell_1,-\mu^w_1-\ell_2)$ for $0 \le \ell_1,\ell_2 \in \Z$ and $w \in \Weyl$.
For $d\ge 2$, $\what{K}_{w_l}^d(s,v,\mu)$ has possible poles at
\[ s=(-\tfrac{d-1}{2}+r-\ell_1,2r-\ell_2), (-2r-\ell_1,-\tfrac{d-1}{2}-r-\ell_2), (-\tfrac{d-1}{2}+r-\ell_1,-\tfrac{d-1}{2}-r-\ell_2). \]

At positive distance from the poles and zeros, and assuming $\Re(s),\Re(\mu)$ in some fixed, compact set, for $d=0,1$, Stirling's formula gives
\begin{align}
	\what{K}_{w_l}^d(s,v,\mu) \ll& \abs{s_1+s_2}^{\frac{1}{2}-\Re(s_1+s_2)} \paren{\prod_{j=1}^3 \abs{s_1-\mu_j}^{\Re(s_1-\mu_j)-\frac{1}{2}} \abs{s_2+\mu_j}^{\Re(s_2+\mu_j)-\frac{1}{2}}} \\
	& \qquad \times \sum_{w\in\Weyl}\exp(-\tfrac{\pi}{2} h^{\varepsilon_1, \varepsilon_2}(\Im(s), \Im(\mu^w))),\nonumber
\end{align}
where $h^{\epsilon_1, \epsilon_2}(u, t)$ is given by
\begin{align*}
h^{\varepsilon_1, \varepsilon_2}(u, t) = & -\varepsilon_2 \abs{t_1-t_2} - \varepsilon_1\varepsilon_2\abs{t_1 - t_3} - \varepsilon_1\abs{t_2 - t_3} - \varepsilon_1\varepsilon_2\abs{u_1+u_2} + \varepsilon_1\varepsilon_2 \abs{u_1 - t_1} \\
& + \varepsilon_1 \abs{u_1 - t_2 }  + \abs{u_1 - t_3} + \abs{u_2 + t_1} + \varepsilon_2 \abs{u_2 + t_2} + \varepsilon_1\varepsilon_2 \abs{u_2 + t_3},
\end{align*}
provided $\Re(s)$ is bounded away from any $\Re(\mu_i,-\mu_j)$.
At positive distance from the poles and zeros, and assuming $\Re(s), \Re(r)$ in some fixed, compact set, for $d \ge 2$ (and $\sgn(v) \ne (+,+)$), Stirling's formula gives
\begin{align}
\label{eq:BesselStirlingdge2}
	\what{K}_{w_l}^d(s,v,\mu) \ll& \abs{d+i\Im(s_1-r)}^{-1+2\Re(s_1-r)} \abs{d+i\Im(s_2+r)}^{-1+2\Re(s_2+r)} \\
	& \times \abs{s_1+2r}^{-\frac{1}{2}+\Re(s_1+2r)} \abs{s_2-2r}^{-\frac{1}{2}+\Re(s_2-2r)} \abs{s_1+s_2}^{\frac{1}{2}-\Re(s_1+s_2)} \nonumber \\
	& \times \exp(-\tfrac{\pi}{2} (-\varepsilon_2\abs{\Im(s_1+2r)}-\varepsilon_1\abs{\Im(s_2-2r)}+\varepsilon_1\varepsilon_2\abs{\Im(s_1+s_2)})) \nonumber,
\end{align}
provided $\Re(s)$ is bounded away from the poles of the gamma functions.

The exponential parts coming from Stirling's formula at worst cancel and at best give exponential decay, i.e. $h^{\varepsilon_1, \varepsilon_2}(u, t) \ge 0$, with a similar statement for $d \ge 2$.
The Mellin transforms at signs $\sgn(v)=(+,+)$ and $\sgn(v)=(-,-)$ have opposing behavior in that the $(+,+)$ case has exponential decay in $\Im(s)$ for $\Im(s)$ large compared to $\norm{\mu}$, and the $(-,-)$ case has exponential decay in $\Im(\mu)$ for $\Im(\mu)$ large compared to $\Im(s)$.
Other than the $(+,+)$ case, all of the signs have an unbounded region in $\Im(s)$ where the exponential parts cancel, and in every case, there is an unbounded region of the spectrum where the exponential parts cancel for fixed $s$; these are the nearly self-dual forms of \cref{sect:WeylLaw}.
In addition, while the $(-,-)$ case only lacks exponential decay near $t=0$ for $\mu=\mu^d(it) \in \frak{a}^d_0$, $d \ge 2$, and the $(+,+)$ case is simply zero there, the mixed-sign cases lack exponential decay for all $\mu \in \frak{a}^d_0$, $d \ge 2$.

\subsubsection{Double-Bessel integrals}
\label{sect:DoubleBessel}

To argue the absolute convergence of several integrals during the construction of the Kuznetsov formula, we will need a little bit better decay rates for the $GL(3)$ Bessel functions than are obvious using the Mellin-Barnes integrals, and the easiest way to see this is through the double-Bessel integrals.
These are given in \cite[Lemma 5]{Subconv} for $d=0$ and \cite[(3.6)]{GPSSubconv} for $d \ge 2$ (beware the different normalizations in those papers), and give the $d=1$ case here for the first time; the proof in the case $d=1$ is identical to that of $d=0$.

The double-Bessel integrals for $GL(3)$ are
\begin{align*}
	\mathcal{J}^\pm_1(y,\mu) :=& \abs{\frac{y_1}{y_2}}^{\frac{\mu_2}{2}} \int_0^\infty J^\pm_{\mu_3-\mu_1}\paren{2\pi \abs{y_1}^{1/2}\sqrt{1+u^2}} J^\pm_{\mu_3-\mu_1}\paren{2\pi \abs{y_2}^{1/2}\sqrt{1+u^{-2}}} u^{3\mu_2} \frac{du}{u}, \\
	\mathcal{J}^\pm_2(y,\mu) :=& \abs{\frac{y_1}{y_2}}^{\frac{\mu_2}{2}} \int_1^\infty J^\pm_{\mu_3-\mu_1}\paren{2\pi \abs{y_1}^{1/2}\sqrt{u^2-1}} J^\pm_{\mu_3-\mu_1}\paren{2\pi \abs{y_2}^{1/2}\sqrt{1-u^{-2}}} u^{3\mu_2} \frac{du}{u}, \\
	\mathcal{J}_3(y,\mu) :=& \abs{\frac{y_1}{y_2}}^{\frac{\mu_2}{2}} \int_0^\infty \wtilde{K}_{\mu_3-\mu_1}\paren{2\pi \abs{y_1}^{1/2}\sqrt{1+u^2}} J^-_{\mu_3-\mu_1}\paren{2\pi \abs{y_2}^{1/2}\sqrt{1+u^{-2}}} u^{3\mu_2} \frac{du}{u}, \\
	\mathcal{J}_4(y,\mu) :=& \abs{\frac{y_1}{y_2}}^{\frac{\mu_2}{2}} \int_0^1 \wtilde{K}_{\mu_3-\mu_1}\paren{2\pi \abs{y_1}^{1/2}\sqrt{1-u^2}} \wtilde{K}_{\mu_3-\mu_1}\paren{2\pi \abs{y_2}^{1/2}\sqrt{u^{-2}-1}} u^{3\mu_2} \frac{du}{u}, \\
	\mathcal{J}_5(y,\mu) :=& \abs{\frac{y_1}{y_2}}^{\frac{\mu_2}{2}} \int_0^\infty \wtilde{K}_{\mu_3-\mu_1}\paren{2\pi \abs{y_1}^{1/2}\sqrt{1+u^2}} \wtilde{K}_{\mu_3-\mu_1}\paren{2\pi \abs{y_2}^{1/2}\sqrt{1+u^{-2}}} u^{3\mu_2} \frac{du}{u}.
\end{align*}
The bound \eqref{eq:GL2BesselBound} is sufficient to see these integrals converge for $\mu_3-\mu_1\in i\R$ and $\mathcal{J}^\pm_1$, $\mathcal{J}^\pm_2$ converge for $\mu_3-\mu_1\in\Z$.

These then relate back to the $GL(3)$ Bessel functions by:
For $\sgn(y)=(+,+)$, and $d=0,1$,
\begin{align}
\label{eq:DblBsslpp}
	\frac{K^d_{w_l}(y, \mu)}{\abs{y_1 y_2}} =& 16 \frac{\cos\frac{\pi}{2}(\mu_1-\mu_2)\cos\frac{\pi}{2}(\mu_2-\mu_3)}{\cos\frac{\pi}{2}(\mu_1-\mu_3)} \mathcal{J}_5(y,\mu).
\end{align}
For $\sgn(y)=(-,-)$, 
\begin{align}
\label{eq:DblBsslmm}
	\frac{K^d_{w_l}(y, \mu)}{8\abs{y_1 y_2}} =& \piecewise{
	\displaystyle \frac{1}{3} \sum_{w\in \Weyl_3} \paren{2 \mathcal{J}_1^-(y,\mu^w) +\mathcal{J}_1^+(y,\mu^w)} & d=0, \\[20pt]
	\displaystyle -\mathcal{J}_1^+(y,\mu)-\mathcal{J}_1^+(y,\mu^{w_4})+\mathcal{J}_1^+(y,\mu^{w_5}) & d=1, \\[10pt]
	\displaystyle \varepsilon \, \mathcal{J}_1^\varepsilon(y,\mu^d(r)^{w_3}) & d \ge 2, \varepsilon=(-1)^{d-1}.}
\end{align}
For $\sgn(y)=(+,-)$,
\begin{align}
\label{eq:DblBsslpm}
	\frac{K^d_{w_l}(y, \mu)}{8\abs{y_1 y_2}} =& \piecewise{
	\displaystyle \frac{1}{3} \sum_{w\in \Weyl_3} \paren{\mathcal{J}_2^-(y,\mu^w) + \mathcal{J}_3(y,\mu^w) + \mathcal{J}_4(y,\mu^w)} & d=0, \\[20pt]
	\displaystyle \mathcal{J}_2^-(y,\mu^{w_5})-\mathcal{J}_3(y,\mu^{w_5})+\mathcal{J}_4(y,\mu^{w_5}) & d=1, \\[10pt]
	\displaystyle \mathcal{J}_2^\varepsilon(y,\mu^d(r)^{w_3}) & d \ge 2, \varepsilon=(-1)^{d-1}.}
\end{align}
The case $\sgn(y)=(-,+)$ follows from \eqref{eq:KdwlIota}.

Note: Equations \eqref{eq:DblBsslmm} and \eqref{eq:DblBsslpm} correct the formulas \cite[(3.6)]{GPSSubconv}, which are all missing a factor $\frac{1}{x(1-x)}$. 

\subsection{A key Iwasawa decomposition and substitution}
\label{sect:ExplicitStars}

The Iwasawa decomposition of $w_l x$ and $w_l x y$ for $x \in U(\R)$ and $y \in Y^+$ will be used throughout the paper, so we take a moment to write it down explicitly now:
If $x^* y^* k^* = w_l x y$ with $k^*=\tildek{\alpha_1,\alpha_2,\alpha_3}$ (as in \eqref{eq:tildekDef}), then
\begin{equation}
\label{eq:wlxIwa}
\begin{aligned}
	x_1^* =& \frac{x_2 y_1^2+x_1 x_3}{\xi_1},& x_2^* =& \frac{x_1 y_2^2+x_2(x_1 x_2-x_3)}{\xi_2},& x_3^*=&\frac{x_3}{\xi_1}, \\
	y_1^* =& y_1 \frac{\sqrt{\xi_2}}{\xi_1}, & y_2^* =& y_2 \frac{\sqrt{\xi_1}}{\xi_2}, \\
	\alpha_1 =&\frac{-y_2 \sqrt{\xi_1}-i(x_1 y_2^2+x_2(x_1x_2-x_3))}{\sqrt{y_2^2+x_2^2}\sqrt{\xi_2}}, & \alpha_2 =& \frac{-x_3+iy_1 \sqrt{y_2^2+x_2^2}}{\sqrt{\xi_1}}, & \alpha_3 =& \frac{y_2-ix_2}{\sqrt{y_2^2+x_2^2}},
\end{aligned}
\end{equation}
where
\begin{align*}
	\xi_1 =& y_1^2 y_2^2+y_1^2 x_2^2+x_3^2 & \xi_2 =& y_1^2 y_2^2+y_2^2 x_1^2+(x_1 x_2-x_3)^2.
\end{align*}
In particular,
\[ p_{\rho+\mu}(y^*) = p_{\rho+\mu}(y) \xi_1^{-\frac{1+\mu_2-\mu_3}{2}} \xi_2^{-\frac{1+\mu_1-\mu_2}{2}}, \]
and we note that \eqref{eq:kabcDef} and \eqref{eq:WigDPrimary} imply that $\WigDMat{d}(k^*)$ is a linear combination of terms $\alpha_1^{k_1} \alpha_2^{k_2} \alpha_3^{k_3}$ with $k_i \in \Z$, $\abs{k_i} \le d$.

In most cases, the conjugation $xy \mapsto yx$ has easily understood consequences (i.e. in expressions such as $p_{\rho+\mu}(w_l xy) \mapsto p_{\rho+\mu}(w_l x) p_{\rho+\mu}(y^{w_l})$, $\psi_I(x) \mapsto \psi_y(x)$, $\psi_I(x^*) \mapsto \psi_{y^{w_l}}(x^*)$, $k^* \mapsto k^*$, etc.), and this effectively replaces $y \mapsto I$ in the above expressions.
Note also that the Iwasawa decomposition of $\trans{x}$ is identical to that of $w_l x w_l$ with $x_1$ and $x_2$ interchanged.

If we take $y=I$ and perform the sequence of subsitutions (see \cite[section 5]{Me01}; the substitution can also be explained by factoring the long element in the Weyl group)
\begin{align}
\label{eq:wlxSub}
	x_1 \mapsto \frac{x_1\sqrt{1+x_2^2+x_3^2}+x_2 x_3}{1+x_2^2} \quad \text{ followed by } \quad  x_3 \mapsto x_3\sqrt{1+x_2^2},
\end{align}
which has Jacobian
\[ dx \mapsto \sqrt{1+x_3^2} \,dx, \]
the equations \eqref{eq:wlxIwa} become
\begin{equation}
\label{eq:wlxIwaSubd}
\begin{aligned}
	x_1^* =& \frac{x_2\sqrt{1+x_3^2}+x_1 x_3}{(1+x_2^2)\sqrt{1+x_3^2}},& x_2^* =& \frac{x_1\sqrt{1+x_2^2}}{(1+x_1^2)\sqrt{1+x_3^2}},& x_3^*=&\frac{x_3}{\sqrt{1+x_2^2}(1+x_3^2)}, \\
	y_1^* =& \frac{\sqrt{1+x_1^2}}{(1+x_2^2)\sqrt{1+x_3^2}}, & y_2^* =& \frac{\sqrt{1+x_2^2}}{(1+x_1^2)\sqrt{1+x_3^2}}, \\
	\alpha_1 =&\frac{-1-ix_1}{\sqrt{1+x_1^2}}, & \alpha_2 =& \frac{-x_3+i}{\sqrt{1+x_3^2}}, & \alpha_3 =& \frac{1-ix_2}{\sqrt{1+x_2^2}}.
\end{aligned}
\end{equation}

\section{Some analytic preliminaries}
\label{sect:AnalyticPrelims}

\subsection{The trivial bound on the Whittaker functions}

First, a lemma on the absolute convergence of the Jacquet integral for the Whittaker function:
\begin{lem}
\label{lem:JacAbsConv}
	Suppose $t\in\R^3$, $t_1+t_2+t_3=0$ with $t_1-t_2,t_2-t_3 \ge \frac{1}{T} > 0$, then
	\[ \int_{U(\R)} p_{\rho+t}(w_l x) dx \ll 1+T^3. \]
\end{lem}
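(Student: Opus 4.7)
The plan is to compute the integral essentially exactly by exploiting the substitution \eqref{eq:wlxSub} that already appears in the paper, after which the triple integral factors into a product of three one-dimensional beta-type integrals.

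First, I would use the Iwasawa decomposition \eqref{eq:wlxIwa} with $y = I$ to write
\[ p_{\rho+t}(w_l x) = (y_1^*)^{1-t_3}(y_2^*)^{1+t_1}, \]
where $y_1^*, y_2^*$ are the $Y^+$-components of $w_l x$. Next I would apply the substitution \eqref{eq:wlxSub}, whose Jacobian is $\sqrt{1+x_3^2}$ and after which the formulas \eqref{eq:wlxIwaSubd} give the very clean expression
\[ y_1^* = \frac{\sqrt{1+x_1^2}}{(1+x_2^2)\sqrt{1+x_3^2}}, \qquad y_2^* = \frac{\sqrt{1+x_2^2}}{(1+x_1^2)\sqrt{1+x_3^2}}. \]
A short computation using $t_1+t_2+t_3=0$ then shows that
\[ (y_1^*)^{1-t_3}(y_2^*)^{1+t_1}\sqrt{1+x_3^2} = (1+x_1^2)^{-\frac{1+t_1-t_2}{2}} (1+x_2^2)^{-\frac{1+t_2-t_3}{2}} (1+x_3^2)^{-\frac{1+t_1-t_3}{2}}, \]
so the integral factors completely as
\[ \int_{U(\R)} p_{\rho+t}(w_l x)\, dx = \prod_{i<j} \int_{-\infty}^\infty (1+u^2)^{-\frac{1+t_i-t_j}{2}} du. \]

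Each one-dimensional integral is a classical beta integral, equal to $\sqrt{\pi}\,\Gamma((t_i-t_j)/2)/\Gamma((1+t_i-t_j)/2)$; combining the behavior $\Gamma(s/2) \sim 2/s$ as $s\to 0^+$ with the uniform bound of $O(1)$ for $s$ bounded below gives
\[ \int_{-\infty}^\infty (1+u^2)^{-\frac{1+s}{2}} du \ll 1 + s^{-1} \quad (s>0). \]
Applying this with $s \in \{t_1-t_2,\, t_2-t_3,\, t_1-t_3\}$ and using the hypotheses $t_1-t_2,\, t_2-t_3 \ge T^{-1}$ (so also $t_1-t_3 \ge 2/T$) yields $\ll (1+T)^3 \ll 1+T^3$, which is the claimed bound.

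There is no real obstacle: the paper has already set up precisely the change of variables that decouples the three $x_i$ variables, so the lemma reduces to a routine factorization and a gamma-function estimate. The only point requiring care is the bookkeeping in identifying the exponents after substitution and checking that the factor $\sqrt{1+x_3^2}$ from the Jacobian combines correctly with the $(1+x_3^2)$-exponent from $p_{\rho+t}(y^*)$ to produce the symmetric product over $i<j$.
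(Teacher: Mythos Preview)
Your proposal is correct and follows essentially the same route as the paper: both use the substitution \eqref{eq:wlxSub} from \cref{sect:ExplicitStars} to factor the integral into three one-dimensional integrals of the form $\int_\R (1+u^2)^{-\frac{1+t_i-t_j}{2}}\,du$, evaluate these via the beta integral, and then bound the resulting product of gamma ratios using the simple pole of $\Gamma(s/2)$ at $s=0$. The only cosmetic difference is that the paper quotes \cite[3.251.2]{GradRyzh} for the evaluation and writes the final gamma-product explicitly, whereas you spell out the intermediate factorization and the bound $\ll 1+s^{-1}$ on each factor.
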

\begin{proof}
By the computations of \cref{sect:ExplicitStars} and \cite[3.251.2]{GradRyzh}, the integral is
\begin{align*}
	\int_{\R^3} (1+x_1^2)^{\frac{-1+t_2-t_1}{2}} (1+x_2^2)^{\frac{-1+t_3-t_2}{2}} (1+x_3^2)^{\frac{-1+t_3-t_1}{2}} dx =& \pi^{3/2} \frac{\Gamma\paren{\frac{t_1-t_2}{2}}\Gamma\paren{\frac{t_2-t_3}{2}}\Gamma\paren{\frac{t_1-t_3}{2}}}{\Gamma\paren{\frac{1+t_1-t_2}{2}}\Gamma\paren{\frac{1+t_2-t_3}{2}}\Gamma\paren{\frac{1+t_1-t_3}{2}}},
\end{align*}
and the gamma functions in the numerator each have a simple pole at $t=0$.
\end{proof}

For the most complex computations, we will restrict to the principal series cases $d=0,1$ where we may assume $\Re(\mu)=0$, and obtain the results for generalized principal series where $\mu=\mu^d(r)$, $\Re(r)=0$ by analytic continuation.
This requires a bound which is polynomial in $d'$, but unfortunately, it is not possible to make such a bound also polynomial in $\mu$:
\begin{lem}
\label{lem:AnContWhittBd}
	Let $d' \ge 0$, $y\in Y^+$, $k\in K$ and suppose $\Re(\mu_1-\mu_2),\Re(\mu_2-\mu_3) \ge \frac{1}{T} > 0$, then
	\begin{align}
	\label{eq:AnContOneWhittBd}
		W^{d'}_{m',m}(yk,\mu,\psi_I) \ll (1+T^3) y_1^{1-\Re(\mu_1)} y_2^{1+\Re(\mu_3)}.
	\end{align}
	If also $d \ge 0$, $y'\in Y^+$ and $\Re(\mu_1-\mu_2),\Re(\mu_2-\mu_3) \le A$, then
	\begin{equation}
	\label{eq:AnContTwoWhittBd}
	\begin{aligned}
		& \Tr\paren{\Sigma^{d'}_d W^{d'}(y',\mu,\psi_I) \trans{\wbar{W^{d'}(yk,-\wbar{\mu},\psi_I)}}} \\
		& \ll (1+T^3)^2 (1+d')^4 (1+d'+\norm{\mu})^{4A} (y_1 y_1')^{1-\Re(\mu_1)} (y_2 y_2')^{1+\Re(\mu_3)},
	\end{aligned}
	\end{equation}
\end{lem}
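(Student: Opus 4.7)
The plan is to prove (i) first and then bootstrap (ii) from it using the long-element functional equation to put the second Whittaker function into a form where (i) applies.

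For (i), the strategy is to peel off the $y$-dependence via \eqref{eq:WhittGFEs} and then bound what remains pointwise by the Jacquet integral. Writing
\[ W^{d'}(yk,\mu,\psi_I) = p_{\rho+\mu^{w_l}}(y)\,W^{d'}(I,\mu,\psi_y)\,\WigDMat{d'}(k), \]
we get the explicit factor $|p_{\rho+\mu^{w_l}}(y)| = y_1^{1-\Re(\mu_1)} y_2^{1+\Re(\mu_3)}$. The row-column product with $\WigDMat{d'}(k)$ on the right is controlled by Cauchy--Schwarz using the fact that rows and columns of the unitary $\WigDMat{d'}(k)$ have unit norm, so no $d'$-factor arises. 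For each entry of $W^{d'}(I,\mu,\psi_y)$, the integral definition \eqref{eq:JacWhittDef} together with $|I^{d'}_\mu(g)_{m',m''}| \le p_{\rho+\Re(\mu)}(g_y)$ (again using $|\WigDMat{d'}(\cdot)_{m',m''}| \le 1$) and $|\psi_y(u)|=1$ gives
\[ |W^{d'}_{m',m''}(I,\mu,\psi_y)| \le \int_{U(\R)} p_{\rho+\Re(\mu)}((w_l u)_y)\,du, \]
to which \cref{lem:JacAbsConv} applies with $t=\Re(\mu)$, producing the factor $1+T^3$.

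For (ii), the parameter on the second Whittaker function is $-\wbar{\mu}$, whose real parts have the opposite ordering, so (i) cannot be invoked directly. Instead, use the long-element functional equation \eqref{eq:WhittFEs}
\[ W^{d'}(yk,-\wbar{\mu},\psi_I) = T^{d'}(w_l,-\wbar{\mu})\,W^{d'}(yk,(-\wbar{\mu})^{w_l},\psi_I), \]
after which $(-\wbar{\mu})^{w_l}$ satisfies the ordering hypothesis and (i) applies. The entries of $T^{d'}(w_l,-\wbar{\mu})$ are controlled by \eqref{eq:Tdwl}: each diagonal gamma matrix $\Gamma^{d'}_\mathcal{W}$ has entries
\[ \frac{\Gamma((1-m+u)/2)}{\Gamma((1-m-u)/2)} \ll (1+d'+|u|)^{|\Re(u)|} \ll (1+d'+\norm{\mu})^{A} \]
by Stirling for $|\Re(u)|\le A$ and $|m|\le d'$, while the intermediate Wigner matrices have entries of modulus at most one; Cauchy--Schwarz over the intermediate indices then yields $|T^{d'}(w_l,-\wbar{\mu})_{m,m'}| \ll (1+d'+\norm{\mu})^{3A}$.

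Putting the pieces together, expand the trace as
\[ \sum_{m,m_1,m_2,m_3} (\Sigma^{d'}_d)_{m,m_1}\,W^{d'}_{m_1,m_2}(y',\mu,\psi_I)\, \wbar{T^{d'}(w_l,-\wbar{\mu})_{m,m_3}}\, \wbar{W^{d'}_{m_3,m_2}(yk,(-\wbar{\mu})^{w_l},\psi_I)}, \]
bound each Whittaker entry by (i), each $T^{d'}$-entry by the Stirling estimate, and note that $\Sigma^{d'}_d$ is a normalized sum of Wigner matrices $\WigDMat{d'}(v)$ for $v\in V$, whose rows by \eqref{eq:WigDv} are supported on at most two positions (diagonal or anti-diagonal). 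The main obstacle, and the part requiring care, is the bookkeeping: the naive sum over $(m,m_1,m_2,m_3)$ could produce an exponential-in-$d'$ factor, and one must use Cauchy--Schwarz on the Wigner-matrix index sums at each step so that only polynomial growth in $d'$ accumulates, collecting the overall factor $(1+d')^4$. The $y$-dependence from (i) applied to parameter $(-\wbar\mu)^{w_l}$, combined with the $y'$-dependence from (i) applied to $\mu$, yields the stated $(y_1y_1')^{1-\Re(\mu_1)}(y_2y_2')^{1+\Re(\mu_3)}$ factor (after the functional equation reverses the ordering), and $(1+T^3)^2$ comes from two applications of \cref{lem:JacAbsConv}.
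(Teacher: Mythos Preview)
Your approach is essentially the paper's: for (i), pull out the $y$-dependence via \eqref{eq:WhittGFEs} and bound the Jacquet integral by \cref{lem:JacAbsConv}; for (ii), apply the long-element functional equation to the second Whittaker factor so that (i) applies, and control the intertwining matrix $T^{d'}(w_l,\cdot)$ entrywise via Stirling on the diagonal $\Gamma^{d'}_{\mathcal{W}}$ factors.

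A few slips to correct. In (i), the Cauchy--Schwarz detour is unnecessary and your justification (``no $d'$-factor arises'') is not what Cauchy--Schwarz actually gives: summing $|W^{d'}_{m',m''}|^2$ over $m''$ with a uniform entrywise bound produces $\sqrt{2d'+1}$. The clean fix is the paper's: bound the single entry $|I^{d'}_{\mu,m',m}(w_l u y k)|\le p_{\rho+\Re(\mu)}(w_l u y)$ directly, since $|\WigD{d'}{m'}{m}(k^* k)|\le 1$ already absorbs $k$, and then integrate.

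In (ii), the exponent $3A$ is a miscount: the middle factor $\Gamma^{d'}_{\mathcal{W}}(\wbar{\mu_1-\mu_3},+1)$ has $|\Re(u)|\le 2A$, so the product of the three diagonal gamma matrices contributes $(1+d'+\norm{\mu})^{A+2A+A}=(1+d'+\norm{\mu})^{4A}$, matching the statement. Also, the worry about an ``exponential-in-$d'$'' factor is misplaced: the expanded trace has four free indices, each ranging over $2d'+1$ values, and $\Sigma^{d'}_d$ is supported on the diagonal and anti-diagonal; counting terms directly gives the $(1+d')^4$ with no need for Cauchy--Schwarz gymnastics. Finally, the paper notes that the apparent poles of $\Gamma^{d'}_{\mathcal{W},m,m}(u,+1)$ at $u\to n\in\N\cup\{0\}$ with $1-m+n\le 0$ even are removable and satisfy the same Stirling-type bound; you should record this so the estimate is valid uniformly in $\mu$.
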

Here, the factor $y_1^{1-\Re(\mu_1)} y_2^{1+\Re(\mu_3)}$ cannot be improved except when one or more $\mu_i-\mu_j\in\Z$, where some of the terms in the power-series expansion/poles of the Mellin transform disappear; this cancellation is what gives us $\ll_\mu y_1 y_2$ for the minimal-weight Whittaker functions, even though they are far from the line of symmetry (i.e. $\Re(\mu)=0$).

\begin{proof}
The first bound follows from \eqref{eq:WhittGFEs} on taking $t=\Re(\mu)$ in \cref{lem:JacAbsConv} with
\[ \abs{I^d_{\mu,m',m}(w_l u k)} \le p_{\rho+t}(w_l u), \]
since $\abs{\WigD{d'}{m'}{m}(\cdot)} \le 1$.
For the second bound, we note that $\Gamma^{d'}_{\mathcal{W}}$ is diagonal and $\Sigma^{d'}_d$ is supported on the diagonal and anti-diagonal, hence the left-hand side may be written
\begin{align*}
	&  \pi^{2(\wbar{\mu_3-\mu_1})} \sum_\pm \sum_{\abs{m},\abs{j_1},\abs{j_2},\abs{j_3} \le d'} \Sigma^{d'}_{d,m,\pm m} W^{d'}_{\pm m, j_1}(y',\mu,\psi_I) \wbar{W^{d'}_{j_2, j_1}(yk,-\wbar{\mu}^{w_l},\psi_I)} \\
	& \qquad \times \wbar{\Gamma^{d'}_{\mathcal{W},j_2,j_2}(\wbar{\mu_2-\mu_3},+1)} \wbar{\WigD{d'}{j_3}{j_2}(\vpmpm{--}w_l) \Gamma^{d'}_{\mathcal{W},j_3,j_3}(\wbar{\mu_1-\mu_3},+1) \WigD{d'}{m}{j_3}(\vpmpm{--}w_l)} \\
	& \qquad \times \wbar{\Gamma^{d'}_{\mathcal{W},m,m}(\wbar{\mu_1-\mu_2},+1)},
\end{align*}
where the matrices at the end derive from applying the $-\wbar{\mu} \mapsto -\wbar{\mu}^{w_l}$ functional equation \eqref{eq:WhittFEs} (see \eqref{eq:Tdwl}) of the second Whittaker function.
Then the entries of $\Gamma^{d'}_\mathcal{W}(u,+1)$ are $\ll (1+d'+\abs{u})^{\Re(u)}$ by Stirling's formula applied to \eqref{eq:WhittGammas} away from the singularities.
The singularities of $\Gamma^{d'}_{\mathcal{W},m,m}(u,+1)$ on $\Re(u) > 0$ are removable and occur when $u \to n \in \N\cup\set{0}$ with $0 \ge 1-m+n \equiv 0 \pmod{2}$; at such points we have
\[ \Gamma^{d'}_{\mathcal{W},m,m}(n,+1) = (-1)^m \frac{\Gamma\paren{\frac{1+m+n}{2}}}{\Gamma\paren{\frac{1+m-n}{2}}}, \]
and the same bound applies.
\end{proof}

By Phragm\'en-Lindel\"of and the $\mu \mapsto \mu^{w_l}$ functional equation \eqref{eq:WhittFEs}, the bound \eqref{eq:AnContOneWhittBd} also applies at $\Re(\mu)=0$ with
\begin{align}
\label{eq:PhragLindT}
	T=\log(2+d')\log(y_1+y_1^{-1})\log(y_2+y_2^{-1})\log(2+\norm{\mu}).
\end{align}

Lastly, we want a bound that directly shows the absolute convergence of Wallach's Whittaker expansion; in fact, we need to see the rapid convergence of a rather complicated part \eqref{eq:FdgpDef} of that expansion piece-by-piece.
We need a bound for the left-hand side of \eqref{eq:AnContTwoWhittBd} for $\mu\in\frak{a}^d_{\frac{1}{2}}$ which is polynomial in $d$, $d'$, $\mu$ and $y$.
(In particular, we will avoid the use of the functional equations as in the above, since the entries in the matrix $T^d(w,\mu)$ can potentially become large far away from $\Re(\mu)=0$.)
Furthermore, we will want to see some extra decay near $y_i=0$ in an inverse Whittaker transform at the same level of generality.
Our solution is to bound the Mellin transform of the Whittaker function, with a slight modification for $d \ge 2$:
For $d \ge 2$ and $\mu=\mu^d(r)$, the trace of \eqref{eq:AnContTwoWhittBd} can be written
\begin{align*}
	\frac{1}{2}\sum_{N=0}^{\frac{d'-d}{2}} W^{d'}_{-d-2N}(y',\mu^d(r),\psi_I) \trans{\wbar{W^{d'}_{-d-2N}(yk,-\mu^d(\wbar{r}),\psi_I)}},
\end{align*}
as the other terms are zero.
The two Whittaker functions differ by the $w_2$ functional equation, which is a diagonal matrix:
\begin{align*}
	W^{d'}_{-d-2N}(yk,-\mu^d(\wbar{r}),\psi_I) =& T^d_{-d-2N,-d-2N}(w_2,-\mu^d(\wbar{r})) W^{d'}_{-d-2N}(yk,\mu^d(-\wbar{r}),\psi_I), \\
	T^d_{-d-2N,-d-2N}(w_2,\mu^d(r)^{w_2}) =& \pi^{1-d} \frac{\Gamma\paren{d+N}}{\Gamma\paren{N+1}},
\end{align*}
so for $d \ge 2$, we renormalize by the square-root of this quantity, and bound the Mellin transform of
\begin{align}
\label{eq:WtildeDef}
	\wtilde{W}^{d'}_{-d-2N}(y,\mu^d(r),\psi_I) :=& \pi^{-\frac{d-1}{2}} \sqrt{\frac{\Gamma\paren{d+N}}{\Gamma\paren{N+1}}} W^{d'}_{-d-2N}(y,\mu^d(r),\psi_I).
\end{align}

For convenience, we set
\begin{align*}
	\wtilde{\mathcal{W}}_{d,N}(y) :=& \pi^{-\frac{d-1}{2}} \sqrt{\frac{\Gamma\paren{d+N}}{\Gamma\paren{N+1}}} \paren{\tfrac{y}{4\pi}}^{-\frac{d-1}{2}}\mathcal{W}_{-d-2N}(\tfrac{y}{4\pi},d-1),
\end{align*}
then $\what{\wtilde{W}}^{d'}_{-d-2N}(s,r)$, the Mellin transform of $\wtilde{W}^{d'}_{-d-2N}(y,\mu^d(r),\psi_I)$, is given by the $-d-2N$ row of \eqref{eq:WhittMellinEval2}, with $(2\pi)^t \what{\mathcal{W}}^{d'}_{-d-2N}(t,\mu_1-\mu_2)$ replaced by $2^{-t} \what{\mathcal{W}}_{d,N}(t)$ where
\[ \what{\mathcal{W}}_{d,N}(s) := \int_0^\infty \wtilde{\mathcal{W}}_{d,N}(y) y^{s-1} dy. \]

We now collect some results on the $GL(2)$ Whittaker functions whose proof is somewhat technical and we postpone to the end.
The overarching goal of these lemmas is to show a generic, polynomial upper bound and an exponential decay bound for the Mellin transform of the $GL(2)$ Whittaker function in various regions of its parameters.
As a general rule, we have chosen simplicity over quality for these bounds.

Even though we can give an explicit expression for $\what{\mathcal{W}}_{d,N}(s)$ as a terminating hypergeometric series, it is quite difficult to directly obtain good bounds for this function, so our generic bound for the Mellin transform will follow from
\begin{lem}
\label{lem:GL2WhittBd}
The function $\wtilde{\mathcal{W}}_{d,N}(y)$ is bounded (up to an absolute constant) by
\begin{enumerate}
\item $\displaystyle \paren{(1+N/d)\log(3+((d+N)/y))}^{1/4}$ always,\\

\item $\displaystyle y^{1/2} (N+1)^{3/2} d^{-3/4}$ for $y<\frac{d-1}{e(N+1)}$, and\\

\item $\displaystyle \frac{(N+1)e^{-y/4}}{\sqrt{N! (d-1+N)!}}$ for $y > (d+3+2N)\Max{N,2\log^2(d+3+2N)}$.
\end{enumerate}
\end{lem}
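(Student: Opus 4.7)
The plan is to reduce everything to a classical Laguerre function. With $\kappa = (d+2N)/2$ and $\mu = (d-1)/2$ one has $\mu + \tfrac12 - \kappa = -N$, so $W_{\kappa,\mu}(y) = e^{-y/2} y^{\mu+\frac12}\, U(-N,\, 2\mu+1,\, y)$ degenerates: the Kummer identity $U(-N,d,y)=(-1)^N N!\,L_N^{(d-1)}(y)$ turns \eqref{eq:WtildeDef} into
\[ \wtilde{\mathcal{W}}_{d,N}(y) \;=\; 2\pi(-1)^N\,\ell_N^{(d-1)}(y), \qquad \ell_N^{(d-1)}(y):=\sqrt{\tfrac{N!}{(d-1+N)!}}\,e^{-y/2}\,y^{(d-1)/2}\,L_N^{(d-1)}(y), \]
the $L^2(0,\infty)$-orthonormal Laguerre function. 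All three bounds become pointwise estimates for this classical object.

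Bound (2) follows from the ordinary power series. The ratio of absolute values of consecutive terms in $L_N^{(d-1)}(y)=\sum_{k=0}^N(-1)^k\binom{N+d-1}{N-k}y^k/k!$ is $(N-k)y/((k+d)(k+1))\le Ny/d$, which under the hypothesis $y<(d-1)/(e(N+1))$ is bounded by $1/e$ uniformly in $k$; the series is therefore dominated by twice the leading term $\binom{N+d-1}{N}$. Gathering prefactors gives $|\wtilde{\mathcal{W}}_{d,N}(y)|\ll y^{(d-1)/2}\sqrt{(d-1+N)!/N!}/(d-1)!$, and on the stated range ($y<1$, $d\ge 1$) we may replace $y^{(d-1)/2}$ by $y^{1/2}$; crude Stirling on the remaining factorials yields the claimed shape $y^{1/2}(N+1)^{3/2}d^{-3/4}$.

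Bound (3) comes from the dual representation as a terminating asymptotic series
\[ W_{(d+2N)/2,(d-1)/2}(y) \;=\; e^{-y/2}\,y^{(d+2N)/2}\sum_{k=0}^N\frac{N!\,(d+N-1)!}{(N-k)!\,(d+N-1-k)!\,k!}\,y^{-k}, \]
every term of which is positive. The ratio of consecutive terms is at most $N(d+N)/y$, which is $\le 1/2$ under the hypothesis $y>(d+3+2N)\max(N,2\log^2(d+3+2N))$; the sum is then dominated by $2(d+N-1)!$. The exponential decay then reduces to checking $\tfrac12(d+2N-1)\log y+\log(d+N-1)!\le y/4+\log(N+1)+O(1)$, which follows directly from the quadratic-logarithmic condition on $y$ together with Stirling.

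The heart of the lemma is bound (1), where the Laguerre function oscillates across $(0,\nu)$ with $\nu:=4N+2d$ and we need a uniform envelope. The plan is to invoke the standard Askey--Wainger/Erd\'elyi--Muckenhoupt bound for orthonormal Laguerre functions (proved via Liouville--Green analysis of the Laguerre equation) and verify the claim by casework in the four standard regions: for $y\le 1/\nu$ we quote the power-series bound of (2); in the oscillatory range $1/\nu\le y\le \nu/2$ the envelope is $(\nu y)^{-1/4}\ll 1$, which the factor $(\log(3+(d+N)/y))^{1/4}\ge 1$ absorbs; in the turning-point region $|y-\nu|\ll \nu^{1/3}$ we get the Airy-type $\nu^{-1/3}$, absorbed by $(1+N/d)^{1/4}$; and beyond $y\ge 3\nu/2$ we fall back on the exponential decay of (3). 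The logarithmic factor accommodates the mild $(\log(1/y))^{1/4}$ growth as $y\to 0$ and provides the uniform slack needed to patch the four pieces together. The main technical effort is in justifying the Laguerre envelope itself in the form required; the piecing-together is mechanical once that is in hand.
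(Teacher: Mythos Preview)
Your Laguerre identification $\wtilde{\mathcal{W}}_{d,N}(y)=2\pi(-1)^N\ell_N^{(d-1)}(y)$ is correct and useful, and for parts (2) and (3) your argument is essentially the paper's: both analyze the explicit finite sum \eqref{eq:FdNySum} and show that the first (resp.\ last) term dominates in the stated ranges. One slip: in part (2) you assert ``on the stated range $y<1$'', but the hypothesis $y<(d-1)/(e(N+1))$ allows $y$ well above $1$ when $N$ is small relative to $d$ (e.g.\ $N=0$, $d=100$), so you cannot simply replace $y^{(d-1)/2}$ by $y^{1/2}$. The paper's fix is to bound the combination $(N+1)y<(d-1)/e$ instead: writing $((N+1)y)^{(d-1)/2}=((N+1)y)^{1/2}\cdot((N+1)y)^{(d-2)/2}$ and using $((N+1)y)^{(d-2)/2}<((d-1)/e)^{(d-2)/2}$ together with Stirling on $\sqrt{(d-1)!}$ gives the $d^{-3/4}$.

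For part (1) the routes genuinely diverge in packaging. The paper works directly with the Olver--Dunster uniform asymptotics for $W_{\kappa,\mu}$ from \cite[13.20(iv), 13.21(ii)--(iii)]{DLMF}, splitting into three regimes according to $\hat\mu=\mu/\kappa$: a parabolic-cylinder (Hermite) approximation when $\hat\mu\ge\tfrac12$ (i.e.\ $N\lesssim d$), and Bessel/Airy approximations when $\hat\mu\le\tfrac12$. The logarithmic factor in the claimed bound arises from the $\Psi_1^{1/4}\sim(\tfrac{1}{4\hat\mu}\log(1/\hat x))^{1/4}$ prefactor in the first regime; it is slack, not reflecting actual growth of the function near zero. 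Your alternative is to cite Askey--Wainger/Muckenhoupt envelopes for $\ell_N^{(\alpha)}$. The issue is that those classical results are stated for \emph{fixed} $\alpha$, with implied constants depending on $\alpha$; here $\alpha=d-1$ is a second large parameter and you need uniformity in it. The uniform-in-$\alpha$ Laguerre asymptotics that would close this gap are precisely Dunster's, i.e.\ the same DLMF sections the paper invokes. So your ``main technical effort is in justifying the Laguerre envelope itself'' is not a side remark but the whole content of part (1); once you unpack it you are doing the paper's case analysis. The Laguerre viewpoint does buy a cleaner conceptual statement (bound an $L^2$-normalized function pointwise), but it does not avoid the three-regime Whittaker asymptotics.
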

The first part is the most difficult, and we rely on the asymptotics of Olver and Dunster, as they appear in \cite{DLMF}.
The second and final parts are proven directly; one can certainly improve these by applying the asymptotics, but this would require a deeper analysis in certain transition regions. 
(Again, the bounds are not optimal; in particular, the correct upper bound in part one is likely just 1.)

We apply this to the Mellin transform $\what{\mathcal{W}}_{d,N}(s)$ in the region of absolute convergence.
\begin{lem}
\label{lem:GL2WhittMellinBd}
For $\Re(s) \in [-\frac{1}{2}+\delta,\frac{1}{2}]$, $\delta > 0$, the function $\what{\mathcal{W}}_{d,N}(s)$ is bounded (up to a constant depending on $\delta$) by
\begin{enumerate}
\item $\displaystyle d^{1/2} (N+1)^{3/2} \log d$ always, and\\

\item $\displaystyle \exp\paren{-\tfrac{\pi}{8}\abs{\Im(s)}}$ for $\abs{\Im(s)} > \frac{4}{\pi} (5+d+2N)\log^2(5+d+2N)$.
\end{enumerate}
\end{lem}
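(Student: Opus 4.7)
The plan is to handle the two parts by different techniques: the polynomial bound by direct estimation using the pointwise bounds of the preceding lemma, and the exponential decay by applying Stirling's formula to an explicit finite-sum representation of $\what{\mathcal{W}}_{d,N}(s)$. For part~(1), I split the Mellin integral at the two thresholds $A := (d-1)/(e(N+1))$ and $B := (d+3+2N)\max\set{N, 2\log^2(d+3+2N)}$ from the preceding lemma:
\[ \what{\mathcal{W}}_{d,N}(s) = \paren{\int_0^A + \int_A^B + \int_B^\infty} \wtilde{\mathcal{W}}_{d,N}(y) y^{s-1}\,dy, \]
and apply the corresponding pointwise bound in each range. On $(0,A)$, the $y^{1/2}$ factor combined with $\Re(s) > -\tfrac{1}{2}+\delta$ ensures integrability at $y=0$ and yields a subdominant contribution. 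On $(A,B)$, the logarithmic factor in bound~(1) is uniformly $O(\log(d+N))$ and $\int_A^B y^{\Re(s)-1}\,dy$ is $O_\delta(B^{1/2})$; a short case analysis in the regimes $N \le d$ and $N > d$ shows that this middle range produces the stated $d^{1/2}(N+1)^{3/2}\log d$. On $(B,\infty)$, the exponential decay of bound~(3) renders the tail negligible.

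For part~(2), I exploit that with $\kappa = (d+2N)/2$ and $\mu = (d-1)/2$ the first index of the confluent hypergeometric appearing in $W_{\kappa,\mu}(y) = y^{1/2+\mu}e^{-y/2} U(1/2+\mu-\kappa, 1+2\mu, y)$ equals $-N$, a non-positive integer, so $U(-N,d,y) = (-1)^N N!\, L_N^{d-1}(y)$ is a multiple of the generalized Laguerre polynomial. Substituting into the definition of $\wtilde{\mathcal{W}}_{d,N}$ and expanding $L_N^{d-1}$ in its monomial basis yields
\[ \what{\mathcal{W}}_{d,N}(s) = c_{d,N} \sum_{k=0}^N \binom{N+d-1}{N-k}\frac{(-1)^k}{k!}\, 2^{s+(d-1)/2+k+1}\,\Gamma\paren{s+\tfrac{d-1}{2}+k}, \]
with $c_{d,N} \asymp 1/\sqrt{N!(d+N-1)!}$. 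Stirling supplies each Gamma value with the exponential factor $\exp(-\pi\abs{\Im(s)}/2)$ plus polynomial growth, and the binomial estimate $\sum_k \binom{N+d-1}{N-k}(2\abs{\Im(s)})^k/k! \le ((N+d-1)+2\abs{\Im(s)})^N/N!$ controls the summation. The threshold $\abs{\Im(s)} \ge (4/\pi)(5+d+2N)\log^2(5+d+2N)$ is precisely what is needed so that, upon taking logarithms, the resulting polynomial and combinatorial factors are dominated by $\exp(3\pi\abs{\Im(s)}/8)$, leaving the claimed $\exp(-\pi\abs{\Im(s)}/8)$.

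The hard part will be the constant bookkeeping in part~(2): the polynomial factor from Stirling has degree on the order of $d+N$, and one must track the normalizing factorial $\sqrt{N!(d+N-1)!}$ together with the binomial sum carefully enough to absorb both the polynomial growth and Stirling's logarithmic corrections. The $\log^2$ in the threshold is exactly the slack required to ensure $\abs{\Im(s)}/\log\abs{\Im(s)} \gg d+N$, so that $(d+N)\log\abs{\Im(s)} \le 3\pi\abs{\Im(s)}/8$ holds throughout the claimed range; any less and the exponential gap between $\pi/2$ and $\pi/8$ would not suffice.
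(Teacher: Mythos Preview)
Your approach is correct. Part~(1) is exactly what the paper intends: it says the bound ``follows trivially from the previous lemma'' and leaves it to the reader, so your three-range splitting at the thresholds of \cref{lem:GL2WhittBd} is the expected argument.

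For part~(2) you take a genuinely different route from the paper. You expand $\what{\mathcal{W}}_{d,N}(s)$ as the explicit finite sum (equivalently, via Laguerre polynomials), apply Stirling to each $\Gamma$-value, and control the $k$-sum by the binomial inequality $\sum_k \binom{N+d-1}{N-k}\,x^k/k! \le (N+d-1+x)^N/N!$. The paper instead writes $\what{\mathcal{W}}_{d,N}(s)$ as a terminating $\pFqName21$, invokes a Gauss contiguous relation to obtain the three-term recursion
\[ \what{\mathcal{W}}_{d,N+1}(s) = \sqrt{\tfrac{N(d+N-1)}{(N+1)(d+N)}}\,\what{\mathcal{W}}_{d,N-1}(s) + \tfrac{2s-1}{\sqrt{(N+1)(d+N)}}\,\what{\mathcal{W}}_{d,N}(s), \]
and inducts from $N=0,1$ to the clean product bound $\abs{\what{\mathcal{W}}_{d,N}(s)} \le \abs{\what{\mathcal{W}}_{d,0}(s)}\prod_{i=1}^N\bigl(1+\abs{2s-1}/\sqrt{i(d-1+i)}\bigr)$, which it then majorizes by $(N+2)^2(1+\abs{s-\tfrac12})^{N+(d-2)/2}\abs{\tfrac12+s}^{1/2}e^{-\pi\abs{\Im(s)}/2}$. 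Both routes reduce to the same endgame: a polynomial prefactor of total degree roughly $N+\tfrac{d}{2}$ in $\abs{\Im(s)}$ multiplied by $e^{-\pi\abs{\Im(s)}/2}$, which the $\log^2$ threshold absorbs via the $x/\log x$ trick. Your direct-sum approach is more elementary and self-contained; the paper's recursion is slicker but requires locating the right contiguous relation.
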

The first part follows trivially from the previous lemma, so we leave its proof to the reader.
The second part will follow from Stirling's formula applied to the representation of $\what{\mathcal{W}}_{d,N}(s)$ as a terminating hypergeometric series.

The previous two lemmas concerned the $GL(2)$ Whittaker function at certain integer indices; the opposite case is where one of the indices is purely imaginary.
(If we apply \eqref{eq:PsiThetaInvMellin} to the definition \eqref{eq:classWhittDef}, we can see that the Mellin transform of the $GL(2)$ Whittaker function is a linear combination of $\mathcal{B}_{\varepsilon, m}(a,b)$ functions -- this is essentially what we have done to develop \eqref{eq:WhittMellinEval} and \eqref{eq:WhittMellinEval2}, so it is not entirely incorrect to say that $\mathcal{B}_{\varepsilon, m}(a,b)$ is the Mellin transform of the $GL(2)$ Whittaker function.)
\begin{lem}
\label{lem:GL2BBd}\ 

\begin{enumerate}
\item For $\Re(a)>0$ and $\Re(b) > -1$ the function $\mathcal{B}_{(-1)^\delta, m}(a,b)$ is holomorphic in $a$ and $b$ except for a simple pole at $b=0$ with residue $i^m$ when $m \equiv \delta \pmod{2}$.
We may write this as
\[ \res_{b=0} \mathcal{B}^d_\varepsilon(a,b) = \tfrac{1}{2}\paren{\Dtildek{d}{-i}+\varepsilon \Dtildek{d}{i}}. \]

\item If $\Re(a)>0$ and $\Re(b+1),\abs{\Re(b)} \ge \delta > 0$, we have
\[ \mathcal{B}_{\varepsilon, m}(a,b) \ll_\delta \paren{1+\frac{\abs{a}+\abs{m}}{\abs{b}}}^2 B\paren{\tfrac{\Re(a)}{2},\tfrac{\Re(b)}{2}}. \]

\item If $6>\Re(a),\Re(b+1),\abs{\Re(b)} \ge \delta > 0$, and
\[ \abs{\Im(a-b)} > \Max{\tfrac{8}{\pi}(\abs{m}+20)\log^2\paren{\tfrac{8}{\pi}(\abs{m}+20)},4\abs{\Im(a+b)}}, \]
we have
\[ \mathcal{B}_{\varepsilon, m}(a,b) \ll_\delta \exp-\tfrac{\pi}{16}\abs{\Im(a-b)}. \]
\end{enumerate}
\end{lem}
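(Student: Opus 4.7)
The three parts of the lemma are of increasing technical difficulty, so the plan is to handle them in order.

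For Part 1, I will start from the explicit evaluation \eqref{eq:BexplicitEval}: each beta function $B\paren{\tfrac{\delta+a}{2}+j,\tfrac{m-\delta+b}{2}-j}$ is holomorphic in $b$ on $\Re(b)>-1$ unless its second argument passes through a non-positive integer, and with $\Re(a)>0$ this can happen near $b=0$ only at $j=(m-\delta)/2$, which forces $m\equiv\delta\pmod 2$. There it produces a simple pole from $\Gamma(b/2)\sim 2/b$, and the sum collapses to a single term whose value simplifies to a multiple of $i^m$. The matrix packaging $\tfrac{1}{2}\paren{\Dtildek{d}{-i}+\varepsilon\,\Dtildek{d}{i}}$ is a rewrite, since the $(m,m)$-entry of $\Dtildek{d}{s}$ is $s^{-m}$ and that combination evaluates to $i^m$ precisely when $\varepsilon=(-1)^m$.

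For Part 2, the plan is to combine a trivial estimate in the half-plane $\Re(b)>0$ with a recurrence that continues meromorphically into the strip $-1<\Re(b)<0$. Taking absolute values directly in \eqref{eq:BetaFunDef}, and using that the bracket there has modulus at most $2$, gives $\abs{\mathcal{B}_{\varepsilon,m}(a,b)}\le 2B(\Re(a)/2,\Re(b)/2)$ for $\Re(a),\Re(b)>0$. To cross into $-1<\Re(b)<0$, I will derive the recurrence
\[
\mathcal{B}_{\varepsilon,m}(a,b) = \tfrac{a+b}{a}\,\mathcal{B}_{\varepsilon,m}(a+2,b) + \tfrac{im}{a}\,\mathcal{B}_{-\varepsilon,m}(a+1,b+1)
\]
by integrating $x^{a-1}=\tfrac{1}{a}\tfrac{d}{dx}x^a$ by parts and using $h_m'(x)=-\tfrac{im}{1+x^2}h_m^{(-\varepsilon)}(x)$, where $h_m$ and $h_m^{(-\varepsilon)}$ are the brackets in \eqref{eq:BetaFunDef} at the two signs. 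This identity, initially valid for $\Re(a),\Re(b)>0$, extends meromorphically; the second term has $\Re(b+1)>\delta$ and is controlled by the trivial bound as $\ll(\abs{m}/\abs{a})B(\Re(a+1)/2,\Re(b+1)/2)$, while the first term retains the problematic $b$ but with shifted $a$. For the first term I will handle the sole pole-producing contribution $j=(m-\delta)/2$ of \eqref{eq:BexplicitEval} directly (where $\Gamma(b/2)\ll 1/\abs{b}$), bounding the remaining terms trivially, and then iterate once more to produce the second power, absorbing everything into the loose factor $(1+(\abs{a}+\abs{m})/\abs{b})^2$ after comparison with the meromorphic continuation of $B(\Re(a)/2,\Re(b)/2)$.

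For Part 3, the explicit evaluation is again the starting point, with each term written as
\[
(-1)^j\binom{m}{2j+\delta}\frac{\Gamma\paren{\tfrac{a+\delta}{2}+j}\Gamma\paren{\tfrac{b+m-\delta}{2}-j}}{\Gamma\paren{\tfrac{a+b+m}{2}}}.
\]
The hypothesis $\abs{\Im(a-b)}>4\abs{\Im(a+b)}$ forces $\Im(a)$ and $\Im(b)$ to have opposite signs with magnitudes at least $\tfrac{3}{8}\abs{\Im(a-b)}$, so Stirling's formula applied to the two numerator gammas yields a factor $\ll \exp\paren{-\tfrac{3\pi}{16}\abs{\Im(a-b)}}$ together with polynomial growth in $\abs{a},\abs{b},j$. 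The denominator $\Gamma\paren{\tfrac{a+b+m}{2}}$ has bounded imaginary part and contributes only polynomially. The sum has at most $\tfrac{m}{2}+1$ terms with binomial coefficients bounded by $2^m$, so the total combinatorial growth of $m\cdot 2^m$ is absorbed by the hypothesis $\abs{\Im(a-b)}>\tfrac{8}{\pi}(\abs{m}+20)\log^2\paren{\tfrac{8}{\pi}(\abs{m}+20)}$, leaving the advertised decay $\exp\paren{-\tfrac{\pi}{16}\abs{\Im(a-b)}}$.

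I expect Part 2 to be the main obstacle: the recurrence alone does not obviously yield the square in $(1+(\abs{a}+\abs{m})/\abs{b})^2$, and the challenge will be to argue that this loose polynomial factor suffices by separating the single pole-producing term of \eqref{eq:BexplicitEval} from the harmless remainder, without relying on further cancellation inside the sum that one has no a priori right to expect.
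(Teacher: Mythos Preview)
Your approach to Part 1 matches the paper's exactly. Your approach to Part 3 is in the same spirit as the paper's: both apply Stirling's formula to a gamma-function representation of $\mathcal{B}_{\varepsilon,m}$. The paper packages the explicit sum \eqref{eq:BexplicitEval} into the single estimate
\[
\abs{\mathcal{B}_{\varepsilon,m}(a,b)} \le (2m+\abs{a}+\abs{b})^{m/2}\abs{\frac{\Gamma\paren{\tfrac{\delta_1+a}{2}}\Gamma\paren{\tfrac{\delta_2+b}{2}}}{\Gamma\paren{\tfrac{m+a+b}{2}}}},
\]
which is a mild sharpening of \cite[(2.33)]{HWI}, and then runs Stirling once rather than term-by-term; your termwise version reaches the same conclusion with slightly more bookkeeping. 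One small correction: the denominator $\Gamma\paren{\tfrac{a+b+m}{2}}$ does not contribute ``only polynomially'' since $\Im(a+b)$ can be as large as $\tfrac14\abs{\Im(a-b)}$, but the exponential loss this produces is dominated by the numerator's decay, so the argument survives.

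For Part 2 you take a genuinely different route. Your integration-by-parts recursion shifts $a$, so the troublesome term $\tfrac{a+b}{a}\mathcal{B}_{\varepsilon,m}(a+2,b)$ still sits at the same $\Re(b)<0$, forcing you into a secondary termwise analysis of \eqref{eq:BexplicitEval}. The paper instead writes $\mathcal{B}_{\varepsilon,m}$ as a terminating $\pFqName32$ and invokes a three-term contiguous relation in the denominator parameter to obtain the recursion
\[
\mathcal{B}_{\varepsilon,m}(a,b) = \frac{4+3a+5b+2ab+2b^2-m^2}{b(1+b)}\,\mathcal{B}_{\varepsilon,m}(a,b+2) - \frac{(2+a+b-m)(2+a+b+m)}{b(1+b)}\,\mathcal{B}_{\varepsilon,m}(a,b+4),
\]
which shifts $b$ directly into the half-plane $\Re(b)>0$ where the trivial bound $\abs{\mathcal{B}_{\varepsilon,m}(a,b)}\le B(\Re(a)/2,\Re(b)/2)$ applies. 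The coefficients are visibly $O\bigl((\abs{a}+\abs{b}+\abs{m})^2/\abs{b(1+b)}\bigr)$, which produces the factor $\paren{1+(\abs{a}+\abs{m})/\abs{b}}^2$ in one stroke and explains the square you were worried about. The same $b$-recursion is then reused to push Part 3 from $\Re(b)>0$ into the strip $-1<\Re(b)<0$. Your approach can be made to work, but the paper's recursion in $b$ is the cleaner tool here and removes the ``main obstacle'' you anticipated.
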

The first part follows trivially from \eqref{eq:BexplicitEval}, so we leave that to the reader.
The second part will follow from a recursion formula in the argument $b$, and the third is essentially a more careful treatment of \cite[(2.30)-(2.33)]{HWI}.

If we examine \eqref{eq:WhittMellinEval} on $\mu\in\frak{a}^0_A$ for some fixed $A$, and ignore the logarithms and constant factors, the first $\mathcal{B}^d_\varepsilon$ function gives exponential decay in $t$ when $\abs{\Im(t)}$ is larger than $T:=\Max{d',\abs{\Im(\mu_1-\mu_2)}, \abs{\Im(\mu_3)}}$.
Then the second $\mathcal{B}^d_\varepsilon$ function gives exponential decay in $s_1$ when $\abs{\Im(s_1)}$ is larger than $T$, and the third gives exponential decay in $s_2$ when $\abs{\Im(s_2)}$ is larger than $T$.
In the case $\mu=\mu^d(r)$ with $\abs{\Re(r)}\le A$, the same applies to \eqref{eq:WhittMellinEval2} with $T=\Max{d',\abs{\Im(r)}}$.
\begin{lem}
\label{lem:GL3WhittMellinBd}\ 

\begin{enumerate}
\item For $d'\ge d=0,1$, on the region $\mu\in\frak{a}^d_{\frac{1}{2}}$, $\abs{\Re(s_1)},\abs{\Re(s_2)} < \frac{1}{2}$, $\what{W}^{d'}(s,\mu)$ is holomorphic in $s$ and $\mu$ except for possible poles at $(s_1,s_2)=(\mu^w_3,-\mu^w_1)$, $w \in \Weyl$.
The poles are simple when the coordinates of $\mu$ are distinct, and up to order three (when $\mu=0$) otherwise.
If $\delta > 0$ is the minimum distance to any pole, we have the bound
\[ \what{W}^{d'}(s,\mu) \ll_\delta \piecewise{\exp-\frac{1}{100}\Max{\abs{\Im(s_1)},\abs{\Im(s_2)}} & \If \Max{\abs{\Im(s_1)},\abs{\Im(s_2)}} > 100 T \log^7 T, \\ {d'}^4 T^5 & \Otherwise,} \]
with $T=\Max{d',\abs{\Im(\mu_1-\mu_2)}, \abs{\Im(\mu_3)}}$.

\item For $d'\ge d\ge2$, on the region $\mu\in\frak{a}^d_{\frac{1}{2}}$, $\abs{\Re(s_1)},\abs{\Re(s_2)} < \frac{1}{2}$, $\what{\wtilde{W}}^{d'}_{-d-2N}(s,r)$ is holomorphic in $s$ and $r$ except for possible simple poles at $s_1=-2r$ and $s_2=2r$.
If $\delta > 0$ is the minimum distance to any pole, we have the bound
\[ \what{\wtilde{W}}^{d'}_{-d-2N}(s,r) \ll_\delta \piecewise{\exp-\frac{1}{100}\Max{\abs{\Im(s_1)},\abs{\Im(s_2)}} & \If \Max{\abs{\Im(s_1)},\abs{\Im(s_2)}} > 100 T \log^7 T, \\ {d'}^7 T^5 & \Otherwise,} \]
with $T=\Max{d',\abs{\Im(r)}}$.

\end{enumerate}
\end{lem}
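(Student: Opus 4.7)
The plan is to bound both parts via the Mellin-Barnes representations \eqref{eq:WhittMellinEval} (for part~1) and \eqref{eq:WhittMellinEval2} (for part~2), applying \cref{lem:GL2BBd} to the $\mathcal{B}^{d'}_\varepsilon$ factors, \cref{lem:GL2WhittMellinBd} to the $\what{\mathcal{W}}_{d,N}$ factor, and Stirling's formula to the remaining sine-gamma factors; the Wigner entries will be bounded trivially by $1$. Three tasks must be completed for each part: locate the poles, prove a polynomial bulk bound, and prove exponential decay when $\abs{\Im(s)}$ is large.

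For the pole analysis in part~1, the structure of \eqref{eq:WhittMellinEval} combined with the Frobenius expansion of $W^{d'}(y,\mu,\psi_I)$ near $y=0$ shows that on the strip $\abs{\Re(s_i)}<\tfrac{1}{2}$ with $\mu\in\frak{a}^d_{1/2}$, $\what{W}^{d'}(s,\mu)$ is holomorphic away from the six points $(s_1,s_2)=(\mu^w_3,-\mu^w_1)$, $w\in\Weyl$. These are simple poles generically, but collide as Weyl translates merge; at $\mu=0$ all six coincide and the resulting pole has order at most three, which is the maximum number of $\mathcal{B}^{d'}_\varepsilon$-factor residues that can be pinched simultaneously by the inner $t$-contour. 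For part~2, the vanishing of rows $W^{d'}_{m'}(y,\mu^d(r),\psi_I)$ with $d\le m'\equiv d\pmod 2$ (discussed after \eqref{eq:WhittGFEs}), together with the renormalization \eqref{eq:WtildeDef}, strips away the poles from the $\what{\mathcal{W}}_{d,N}$ factor in \eqref{eq:WhittMellinEval2}, leaving only the two simple $s$-poles $s_1=-2r$ and $s_2=2r$ carried by the two surviving $\mathcal{B}^{d'}$ factors.

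For the bulk polynomial bound I fix the inner contour at $\Re(t)=\tfrac{1}{10}$, shifting by $O(1)$ if it runs too close to a pole, then apply \cref{lem:GL2BBd}(2) to each $\mathcal{B}^{d'}$ factor and Stirling to the sine-gamma factors. For part~2 I additionally use \cref{lem:GL2WhittMellinBd}(1) to bound $\what{\mathcal{W}}_{d,N}(t)$, which contributes an extra ${d'}^{1/2}(N+1)^{3/2}\log d'$ factor; with $N\le (d'-d)/2$ this accounts for the difference between the exponents $4$ and $7$ in the two bounds. The $t$-integral converges because of the polynomial decay of the $\mathcal{B}^{d'}$ factors in $\abs{\Im(t)}$, and its effective range is $\ll T\log^c T$.

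The substantive work, and the main obstacle, is the exponential-decay regime. Suppose $\abs{\Im(s_1)}>100\,T\log^7 T$; the second $\mathcal{B}^{d'}_\varepsilon$ factor $\mathcal{B}^{d'}(1-s_1+t,\,s_1-\mu_3)$ in \eqref{eq:WhittMellinEval} has $\abs{\Im(a-b)}\asymp 2\abs{\Im(s_1)}$ while $\abs{\Im(a+b)}\ll T$ once $t$ lies in its effective range, and similarly for $s_2$ via the third $\mathcal{B}^{d'}$ factor. \cref{lem:GL2BBd}(3) then delivers decay $\exp(-\tfrac{\pi}{16}\abs{\Im(s_i)})$, which beats every polynomial-in-$T$ loss at the stated threshold. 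Part~2 is analogous, with \cref{lem:GL2WhittMellinBd}(2) supplying the exponential decay in $t$ that keeps the inner integral small in this regime. The hard part is mostly bookkeeping: one must confirm that $\log^7 T$ of slack absorbs all of the Stirling, Wigner, and $\mathcal{B}^{d'}$ growth after the $t$-integration, so that the final decay rate can be taken as $-\tfrac{1}{100}\abs{\Im(s_i)}$; replacing the near-optimal $\tfrac{1}{16}$ by $\tfrac{1}{100}$ leaves generous room, and seven logarithms safely overestimate what appears in practice.
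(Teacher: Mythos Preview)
Your proposal is correct and follows essentially the same route as the paper: both argue from the Mellin--Barnes representations \eqref{eq:WhittMellinEval} and \eqref{eq:WhittMellinEval2}, bound the $\mathcal{B}^{d'}_\varepsilon$ factors via \cref{lem:GL2BBd}, bound $\what{\mathcal{W}}_{d,N}$ via \cref{lem:GL2WhittMellinBd}, and read off the exponential decay in $s_i$ from the second and third $\mathcal{B}$-factors once the $t$-integral has been truncated by the first. The one place where the paper is more careful than your sketch is the choice of $\Re(t)$: rather than ``fix at $\tfrac{1}{10}$ and shift by $O(1)$ if too close to a pole'', the paper gives a pigeonhole argument (subdividing the allowed ranges of $\Re(s_i),\Re(\mu_i)$ into overlapping intervals) guaranteeing an admissible vertical contour exists for every parameter configuration, and explicitly notes that the residues picked up when moving between such contours obey the same bounds; you should flag that these residue terms also need to be controlled.
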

The residues in $s_1$ or $s_2$ at the simple poles continue to have exponential decay in the other variable and satisfy the same bound; the double (or triple) poles in the rest of the paper may be avoided by analytic continuation (in $\mu$).
Clearly, the numerical values are not best possible, but these follow easily on inspection.

The proof is straight-forward, but the equations would be lengthy, so we omit certain details.
It is always possible to choose a vertical-line $t$ contour for meromorphy in a certain range of $\Re(s)$ and $\Re(\mu)$ as follows:
There are a finite number, say $n$, of vertical lines, at the linear combinations of $\Re(s)$ and $\Re(\mu)$ in \eqref{eq:WhittMellinEval} or \eqref{eq:WhittMellinEval2}, to be avoided, but if we divide up the allowed ranges of $\Re(s_i),\Re(\mu_i)\in(-\frac{1}{2},\frac{1}{2})$ into $4n-1$ overlapping, equal intervals of width $1/(2n)$, then on any combination of intervals, there is some place to put the $t$ contour and on the overlap of two intervals, we may shift the contour between these places.
There are two difficulties; first that we may be shifting past poles in $t$, thereby picking up extra terms from the residues, but again, the residues in $t$ have the same exponential decay and bound in $s$ and $\mu$, and second that one must check that there exists a choice of $t$ contour within the ranges of \cref{lem:GL2WhittMellinBd,lem:GL2BBd} (though it is possible to extend these, with some effort), and these are the details we have omitted.

\begin{cor}
\label{cor:GL3WhittBd}
With $T$ defined as in the lemma, and $d' \ge d \ge 0$, $\mu \in \frak{a}^d_{\frac{1}{2}}$,
\[ W^{d'}(y,\mu,\psi_I) \ll y_1^{1-\Max{\Re(\mu_i)}} y_2^{1+\Min{\Re(\mu_i)}} {d'}^4 T^7, \]
for $d=0,1$, and 
\[ \wtilde{W}^{d'}_{-d-2N}(y,\mu^d(r),\psi_I) \ll y_1^{1+2\Re(r)} y_2^{1-2\Re(r)} {d'}^7 T^7, \]
for $d \ge 2$.
\end{cor}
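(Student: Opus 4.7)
The plan is to apply Mellin inversion and bound the resulting contour integral using Lemma \ref{lem:GL3WhittMellinBd}. For $d=0,1$, the formula \eqref{eq:WhittMellin} gives
\[ W^{d'}(y,\mu,\psi_I) = \int_{\Re(s) = (\frac{2}{10},\frac{2}{10})} (\pi y_1)^{1-s_1}(\pi y_2)^{1-s_2} \what{W}^{d'}(s,\mu) \frac{ds}{(2\pi i)^2}, \]
with the analogous expression for $\wtilde{W}^{d'}_{-d-2N}$ when $d \ge 2$. I would shift the $s_1$ contour leftward and the $s_2$ contour rightward, crossing the polar loci identified in Lemma \ref{lem:GL3WhittMellinBd}. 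For $d=0,1$ the crossed poles lie at $s_1 = \mu^w_3$ and $s_2 = -\mu^w_1$ for $w \in \Weyl$; in the meaningful regime $y_1,y_2 \le 1$ (the bound is trivial for large $y$ since $W^{d'}$ has exponential decay), the residues of largest size are those at $s_1 = \mu_i$ with $\Re(\mu_i) = \Max{\Re(\mu_j)}$ and $s_2 = -\mu_i$ with $\Re(\mu_i) = \Min{\Re(\mu_j)}$, producing the sought power $y_1^{1-\Max{\Re(\mu_i)}} y_2^{1+\Min{\Re(\mu_i)}}$. For $d \ge 2$ the crossed simple poles at $s_1 = -2r$ and $s_2 = 2r$ contribute $y_1^{1+2\Re(r)} y_2^{1-2\Re(r)}$.

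The surviving doubly-shifted integral, together with the mixed single-residue/single-integral terms, yields strictly smaller powers of $y$ in the regime $y_1,y_2 \le 1$ and so is absorbed into the main bound. To control sizes quantitatively, I would invoke Lemma \ref{lem:GL3WhittMellinBd}: inside the polynomial region $\Max{\abs{\Im(s_1)},\abs{\Im(s_2)}} \le 100 T \log^7 T$ the Mellin transform is $\ll {d'}^4 T^5$ (respectively ${d'}^7 T^5$ for $d \ge 2$), while outside this region exponential decay in $\Im(s)$ gives an $O(1)$ contribution after integration. Integrating the polynomial bound over the rectangular region (of area $\asymp T^2 \log^{14} T$) yields the factor $T^7$, absorbing logarithms into the polynomial power. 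Residues themselves are bounded by applying the same lemma on small fixed-radius circles about the poles, on which the hypothesis of the lemma still holds.

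The main obstacle is the possibility of higher-order poles when the coordinates of $\mu$ are not distinct (e.g., $\mu = 0$ gives a pole of order up to three for $d=0$). I would address this by first establishing the bound for $\mu$ with all coordinates distinct and bounded apart from coincidence by some fixed small $\delta_0 > 0$, so that every pole is simple and Lemma \ref{lem:GL3WhittMellinBd} applies uniformly; the general case then follows by continuity, since both sides of the asserted bound are continuous in $\mu \in \frak{a}^d_{1/2}$ and the polynomial powers $y_1^{1-\Max{\Re(\mu_i)}} y_2^{1+\Min{\Re(\mu_i)}}$ absorb any logarithmic-type factors in $y$ arising from coalescing residues. A minor secondary point is verifying that the shifted contours remain within the strip $\abs{\Re(s_i)} < \tfrac{1}{2}$ where the lemma applies; this is immediate from $\mu \in \frak{a}^d_{1/2}$.
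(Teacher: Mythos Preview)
Your approach---Mellin inversion plus the bounds of \cref{lem:GL3WhittMellinBd}---is exactly what the paper intends (no proof is given there; the corollary is treated as immediate from the lemma).  Two details need correcting, however.

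First, the contour shifts should both be \emph{leftward} in the regime $y_1,y_2\le 1$.  The Mellin kernel is $(\pi y_i)^{1-s_i}$ in both variables, so for $y_i\le 1$ one gains by decreasing $\Re(s_i)$; shifting $s_2$ rightward to $\sigma_2$ leaves the shifted integral with $y_2^{1-\sigma_2}$, which for $\sigma_2> -\min_i\Re(\mu_i)$ is strictly \emph{larger} than the target $y_2^{1+\min_i\Re(\mu_i)}$.  Shifting both contours just to the left of all poles (near $\Re(s_i)=-\tfrac12+\epsilon$) works: every residue at $s_1=\mu_j$ contributes $y_1^{1-\Re(\mu_j)}\le y_1^{1-\max}$, and likewise for $s_2$.

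Second, the appeal to ``exponential decay'' for $y_i>1$ is not justified with the needed uniformity in $d',T$, since \cref{lem:GL3WhittMellinBd} only controls $\what{W}^{d'}$ on $|\Re(s_i)|<\tfrac12$.  The fix is simpler: for $y_i>1$, keep the $s_i$-contour to the right of all poles, near $\Re(s_i)=\tfrac12-\epsilon$, giving $y_i^{1/2+\epsilon}$; since $1-\max_j\Re(\mu_j)>\tfrac12$ and $1+\min_j\Re(\mu_j)>\tfrac12$ on $\frak{a}^d_{1/2}$, this is dominated by the claimed power.  With these two adjustments your argument goes through, and the integration of the lemma's polynomial bound over the region $|\Im(s_i)|\ll T\log^7 T$ indeed produces the stated $d'^4 T^7$ (resp.\ $d'^7 T^7$).
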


\subsection{Decay of the double-Bessel integrals}

As mentioned in \cref{sect:DoubleBessel}, we need to see some decay in the Bessel functions near $y_i=\infty$.
\begin{lem}
\label{lem:DoubleBesselBound}
	For $d \ge 0$ and $\mu \in \frak{a}^d_0$, we have
	\[ \frac{K^d_{w_l}(y, \mu)}{\abs{y_1 y_2}} \ll (\log(3+\norm{\mu})+\abs{\log\abs{y_1}}+\abs{\log\abs{y_2}})\prod_{i=1}^2 \paren{1+\frac{\abs{y_i}^{1/2}}{1+\norm{\mu}}}^{-1/2}. \]
\end{lem}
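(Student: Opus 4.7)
The plan is to represent $K^d_{w_l}(y,\mu)/|y_1 y_2|$ via the double-Bessel integral formulas \eqref{eq:DblBsslpp}, \eqref{eq:DblBsslmm}, \eqref{eq:DblBsslpm} (and \eqref{eq:KdwlIota} to reduce the sign $(-,+)$ to $(+,-)$), then apply the uniform pointwise $GL(2)$ Bessel bound \eqref{eq:GL2BesselBound} to each Bessel factor in the integrand. Since $\mu\in\frak{a}^d_0$, every Bessel index appearing -- either $\mu^w_3-\mu^w_1$ for some $w\in\Weyl_3$ when $d=0,1$, or the integer $-(d-1)$ arising from $\mu^d(r)^{w_3}$ when $d\ge 2$ -- is either purely imaginary or integral, so \eqref{eq:GL2BesselBound} applies; and the weight $u^{3\mu_2^w}$ has modulus one. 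Noting that $|\mu^w_3-\mu^w_1|\ll\|\mu\|$, each Bessel factor is pointwise bounded by $(1+|y_i|^{1/2}A(u)/(1+\|\mu\|))^{-1/2}$ for the appropriate factor $A(u)\in\{\sqrt{1+u^{\pm 2}},\sqrt{|1-u^{\pm 2}|}\}$.

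This reduces each $\mathcal{J}^\pm_j$ to a one-variable integral of the shape
\begin{align*}
	I(a_1,a_2) = \int_D \paren{1+a_1 A_1(u)}^{-1/2}\paren{1+a_2 A_2(u)}^{-1/2}\,\frac{du}{u},\qquad a_i:=\tfrac{2\pi|y_i|^{1/2}}{1+\|\mu\|},
\end{align*}
with $D\subseteq(0,\infty)$ the natural domain. Splitting at $u=1$ and using $A_1(u)\asymp u$, $A_2(u)\asymp 1$ on $u\ge 1$ (and symmetrically on $u\le 1$), each half reduces to $\int_1^\infty (1+au)^{-1/2}\frac{du}{u}\ll(1+|\log a|)(1+a)^{-1/2}$, an elementary calculation. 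Since $|\log a_i|\ll \log(3+\|\mu\|)+|\log|y_i||$ and $(1+a_i)^{-1/2}$ is exactly the factor in the lemma's statement, this yields the desired bound for each summand. The integrals $\mathcal{J}_2^\pm$ and $\mathcal{J}_4$ have integrable singularities at $u=1$ which require a mildly more careful local splitting, but the final estimate is the same. This argument handles completely the cases $\sgn(y)\in\{(-,-),(+,-),(-,+)\}$, where the trigonometric coefficients in \eqref{eq:DblBsslmm} and \eqref{eq:DblBsslpm} are either absent or of the form $\pm 1$.

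The main obstacle is the case $\sgn(y)=(+,+)$, where \eqref{eq:DblBsslpp} carries the ratio $\cos\frac{\pi}{2}(\mu_1-\mu_2)\cos\frac{\pi}{2}(\mu_2-\mu_3)/\cos\frac{\pi}{2}(\mu_1-\mu_3)$, which can be exponentially large in $\|\mu\|$. For $d\ge 2$ the case is vacuous since $K^d_{w_l}(y,\mu^d(r))=0$ there. For $d=0$, $K^0_{w_l}$ is fully $\Weyl$-invariant in $\mu$, so we permute to $\mu=(it_1,it_2,it_3)$ with $t_1\ge t_2\ge t_3$; then with $a=t_1-t_2\ge 0$ and $b=t_2-t_3\ge 0$, the elementary identity $\cosh(a+b)=\cosh a\cosh b+\sinh a\sinh b\ge \cosh a\cosh b$ bounds the ratio by $1$, and the previous paragraph applies. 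For $d=1$, only the $\langle w_2\rangle$-symmetry of $K^1_{w_l}$ is directly available and the permutation trick fails at configurations such as $\mu=(iT/2,-iT/2,0)$; here the remedy is to pass to the Mellin representation of $\mathcal{J}_5$, where the substitution $u=\tan\phi$ collapses the $u$-integration to a beta integral, Barnes' first lemma evaluates the resulting double Mellin-Barnes integral as a product of gamma functions, and Stirling's formula on this product supplies precisely the exponential decay in $\|\mu\|$ that cancels the exponential growth of the cosine coefficient. Combining all sign cases gives the lemma with the logarithmic factor emerging solely from the $(1+|\log a_i|)$ bound on the one-dimensional integrals.
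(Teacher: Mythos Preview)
Your proposal is correct and follows the paper's approach closely: double-Bessel representations \eqref{eq:DblBsslpp}--\eqref{eq:DblBsslpm} plus the uniform $GL(2)$ bound \eqref{eq:GL2BesselBound}, with the logarithm coming from the one-dimensional $u$-integral and $\mathcal{J}_2^\pm,\mathcal{J}_4$ requiring a local splitting near $u=1$ (which the paper carries out in slightly more detail than your ``mildly more careful local splitting'', separating the regions $1-u<(1+\norm{\mu}^2)/\abs{y_i}$).

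The only substantive divergence is at $d=1$, $\sgn(y)=(+,+)$. The paper simply asserts that $K^d_{w_l}(y,\mu)$ is symmetric in $\mu$ at this sign, permutes so that $\abs{\mu_1-\mu_3}$ is maximal, and the cosine ratio in \eqref{eq:DblBsslpp} is then $\le 1$. You instead argue that only the $w_2$-symmetry is evident for $d=1$ and propose computing the Mellin transform of $\mathcal{J}_5$ via $u=\tan\phi$ and Barnes' first lemma, then invoking Stirling to cancel the exponential growth of the cosine coefficient. Your route is valid but heavier than needed: comparing \eqref{eq:Kwl0Sgn} and \eqref{eq:Kwl1Sgn} at $\varepsilon=(+,+)$ shows $\what{K}^1_{w_l}=\tan\tfrac{\pi}{2}(\mu_1-\mu_3)\tan\tfrac{\pi}{2}(\mu_2-\mu_3)\,\what{K}^0_{w_l}$, an $s$-independent factor of modulus $\le 1$ on $\Re(\mu)=0$, so $\abs{K^1_{w_l}(y,\mu)}\le\abs{K^0_{w_l}(y,\mu)}$ pointwise and the $d=0$ bound (for which the full Weyl symmetry does hold) immediately yields the $d=1$ bound.
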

\begin{proof}
We apply the double-Bessel integral representations.
First, note that when $\sgn(y)=(+,+)$, $K^d(y,\mu)$ is symmetric in $\mu$, so we may assume that $\max_{i,j} \abs{\mu_i-\mu_j} = \abs{\mu_1-\mu_3}$ and the trigonometric functions in \eqref{eq:DblBsslpp} are harmless.
(One can achieve better bounds for the Whittaker function through other means, but we include it for uniformity.)
Now the analysis of $\mathcal{J}^\pm_1$, $\mathcal{J}_3$ and $\mathcal{J}_5$ are identical, and consists of simply applying the bound \eqref{eq:GL2BesselBound}, bearing in mind that when $d \ge 2$ and $\mu=\mu^d(r)$ we have $(\mu^{w_3})_2=-2r$, so the factor $u^{3\mu_2}$ is always of modulus 1.
One might worry about the case when, say, $\abs{y_1}$ is large and $\abs{y_2}$ is very small since $u^{\pm 1}$ needs to be large to overcome the 1 in \eqref{eq:GL2BesselBound} for convergence of the head/tail of the integral, but the factor $\frac{1}{u}$ means the integral over the interval
\[ \int_{\Min{\frac{\abs{y_i}^{1/2}}{1+\norm{\mu}}}}^{\Max{\frac{\abs{y_i}^{1/2}}{1+\norm{\mu}}}} \frac{du}{u} \]
will only contribute to the logarithmic factor in the lemma.

For $\mathcal{J}^\pm_2$ and $\mathcal{J}_4$ there is some extra work, and these two are essentially identical, after sending $u \mapsto u^{-1}$ in $\mathcal{J}^\pm_2$.
The difficulty arises when $u$ is close to 1, but this is simply dealt with:
If $1-u < \Min{\frac{1+\norm{\mu}^2}{\abs{y_1}}, \frac{1+\norm{\mu}^2}{\abs{y_2}}, \frac{1}{2}}$, then the integrand is bounded by 1 and the desired bound holds due to the decreasing measure of this set.
On the other hand, if say $\Min{(1+\norm{\mu}^2)/\abs{y_1},\frac{1}{2}} > 1-u > (1+\norm{\mu}^2)/\abs{y_2}$, then the integrand is bounded by $\frac{1+\norm{\mu}^{1/2}}{\paren{\abs{y_2}(1-u)}^{1/4}}$, and we see
\[ \int_0^{\Min{(1+\norm{\mu}^2)/\abs{y_1},\frac{1}{2}}} u^{-1/4} du \ll \paren{1+\frac{\abs{y_1}^{1/2}}{1+\norm{\mu}}}^{-3/2}. \]
\end{proof}

For the complementary spectrum, the bound becomes a bit worse:
\begin{lem}
\label{lem:BadDoubleBesselBound}
	For $d=0,1$ and $\mu=(X+it,-X+it,-2it)$, $0<X \le \frac{1}{2}-\delta$, $\delta > 0$,$t\in\R$, we have
	\[ \frac{K^d_{w_l}(y, \mu)}{\abs{y_1 y_2}} \ll_\delta (1+\abs{t}) \prod_{i=1}^2 (1+\abs{y_i}^{1/2})^{X-\frac{1}{2}}(1+\abs{y_i}^{-1/2})^{2X}. \]
\end{lem}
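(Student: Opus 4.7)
The plan is to adapt the proof of \cref{lem:DoubleBesselBound} to the complementary spectrum, replacing the Bessel estimate \eqref{eq:GL2BesselBound} with \eqref{eq:GL2BadBesselBound} (valid for $|\Re(\nu)|<1$) and tracking the weight $u^{3\mu_2}$, now of modulus $u^{3\Re(\mu^w_2)}\in\{1,u^{\pm 3X}\}$ in each $\mathcal{J}_\bullet$. For $\sgn(y)=(+,+)$ the function $K^d_{w_l}$ is fully $\Weyl$-invariant in $\mu$ for $d=0,1$---from \eqref{eq:K0wlViaJwl}--\eqref{eq:K1wlViaJwl}, since the trig denominator changes sign in parallel with the numerator under $w_2$ and is $\Weyl_3$-symmetric---so I would permute to $\mu^{w_l}=(-2it,-X+it,X+it)$, for which the cosine ratio in \eqref{eq:DblBsslpp} is $O_\delta(1)$ using $|\cos\tfrac{\pi}{2}(X\pm 3it)|\asymp e^{3\pi|t|/2}/2$. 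For the mixed and $(-,-)$ signs, the formulas \eqref{eq:DblBsslmm}--\eqref{eq:DblBsslpm} already sum over the $\Weyl_3$-orbit, and I would treat each summand individually.

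Across this orbit the Bessel order $\mu^w_3-\mu^w_1$ ranges over $\{\pm 2X,\pm X\pm 3it\}$, all with $|\Re|\le X\le\tfrac{1}{2}-\delta<1$, and so \eqref{eq:GL2BadBesselBound} applies with $c:=1+|\nu|\ll 1+|t|$. Splitting each $u$-integral at the crossover points $x_i=2\pi|y_i|^{1/2}(1+u^{\pm 2})^{1/2}\sim c$ into the same four dyadic regions as in the previous proof, and pairing with the weight $u^{\pm 3X}$ through $x_i\asymp|y_i|^{1/2}u^{\pm 1}$, the modified Bessel bounds $(c/x)^X$ at small $x$ and $(x/c)^{-(1-X)/2}$ at large $x$ produce exactly the target exponents $2X$ (when $|y_i|^{-1/2}$ is large) and $X-\tfrac{1}{2}$ (when $|y_i|^{1/2}$ is large). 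Across the four regions the maximum prefactor in $c$ is $c^{\max(2X,\,1-X,\,(1+X)/2)}\le c\ll 1+|t|$, which is the claimed prefactor. The near-$u=1$ analysis for $\mathcal{J}_2^\pm,\mathcal{J}_4$ is unchanged, since $u^{3\mu_2}$ is bounded there.

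The main obstacle will be the absolute convergence of the $J$-Bessel integrals $\mathcal{J}_1^\pm,\mathcal{J}_2^\pm$, whose pointwise bounds from \eqref{eq:GL2BadBesselBound} become non-integrable at the $u$-endpoints once $X>1/6$ (the $K$-Bessel integrals $\mathcal{J}_3,\mathcal{J}_4,\mathcal{J}_5$ have exponential decay and present no such issue). For these problematic cases the integrals must be understood as the analytic continuation in $\mu$ from $\Re(\mu)=0$, which is equivalent to reverting to the absolutely convergent Mellin--Barnes representations \eqref{eq:Kwl0Sgn}--\eqref{eq:Kwl1Sgn}, shifting the $\Re(s)$-contours to match the target $y$-exponents, and applying Stirling to the gamma factors. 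This Mellin--Barnes route yields the same final bound with the same $(1+|t|)$ emerging from the controlling combination of gamma factors along the shifted contour, and is arguably the more robust derivation for the lemma as stated.
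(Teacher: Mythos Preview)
Your overall strategy---double-Bessel integrals with \eqref{eq:GL2BadBesselBound} replacing \eqref{eq:GL2BesselBound}---matches the paper's approach, and the paper similarly collapses the various $\nu\in\{2X,\pm(X+3it)\}$ cases into a single uniform bound $(1+|t|)^{1/2}(1+x)^{X-1/2}(1+x^{-1})^{2X}$ before integrating. Two points deserve correction, and one observation is worth keeping.

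First, your claim that $K^d_{w_l}$ is fully $\Weyl$-invariant for $d=0,1$ is false for $d=1$: the denominator $\cos\tfrac{\pi}{2}(\mu_1-\mu_3)\cos\tfrac{\pi}{2}(\mu_2-\mu_3)\sin\tfrac{\pi}{2}(\mu_1-\mu_2)$ in \eqref{eq:K1wlViaJwl} is neither $\Weyl_3$-symmetric nor anti-invariant under $w_3$. Fortunately the permutation is unnecessary: for the complementary $\mu=(X+it,-X+it,-2it)$ the ratio in \eqref{eq:DblBsslpp} is already $\cos(\pi X)\cdot\cos\tfrac{\pi}{2}(-X+3it)/\cos\tfrac{\pi}{2}(X+3it)=O_\delta(1)$ directly. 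Second, your assertion that the near-$u=1$ analysis for $\mathcal{J}_2^\pm,\mathcal{J}_4$ is ``unchanged, since $u^{3\mu_2}$ is bounded there'' misses the actual difficulty: the Bessel arguments $\sqrt{1-u^2},\sqrt{u^{-2}-1}$ tend to zero, and the bound \eqref{eq:GL2BadBesselBound} now gives $x^{-2X}$ there rather than $O(1)$. The paper explicitly reworks this region, splitting at $1-u\asymp |y_i|^{-1}$ and integrating the resulting $(|y_1 y_2|u^2)^{-X}$ and $(|y_1|u)^{-X}(|y_2|u)^{X/2-1/4}$ factors to recover the stated exponents; this is not the same computation as in \cref{lem:DoubleBesselBound}.

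Your convergence concern for $\mathcal{J}_1^\pm$ (and the tail of $\mathcal{J}_2^\pm$) is legitimate: with weight $u^{-3X-1}$ and Bessel decay only $u^{(1-X)/2}$ from \eqref{eq:GL2BadBesselBound}, the pointwise bound gives $u^{-(1+7X)/2}$ near $u=0$, non-integrable once $X\ge 1/7$. The paper's proof does not address this explicitly---it simply asserts the argument is ``essentially identical.'' Your proposed fallback to the Mellin--Barnes representations \eqref{eq:Kwl0Sgn}--\eqref{eq:Kwl1Sgn} with contour shifts is a valid and more honest route for the full range $0<X\le\tfrac12-\delta$; alternatively one may interpret the double-Bessel identity via analytic continuation from $\Re(\mu)=0$ and exploit the genuine oscillation of $J^\pm_\nu$ at large argument (its true decay is $x^{-1/2}$, better than what \eqref{eq:GL2BadBesselBound} records), but neither the paper nor your proposal carries this out.
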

\begin{proof}
In the double-Bessel integrals $\mathcal{J}^\pm_i$, we may now have $\mu_3-\mu_1=2X,\pm(X+3it)$, depending on the permutation $w\in \Weyl_3$, so we must apply \eqref{eq:GL2BadBesselBound}.
Sacrificing strength for simplicity, we may collect our cases into the bound
\begin{align*}
	\max_{\nu\in\set{2X,X+3it}} \set{\abs{J^\pm_\nu(x)}, \abs{\wtilde{K}_\nu(x)}} \ll& (1+\abs{t})^{\frac{1}{2}}(1+x)^{X-\frac{1}{2}}(1+x^{-1})^{2X}.
\end{align*}

For $\mathcal{J}^\pm_1$, $\mathcal{J}_3$ and $\mathcal{J}_5$, the proof is essentially identical, but for $\mathcal{J}^\pm_2$ and $\mathcal{J}_4$, we need to revisit the region in $u$ near 1.
Suppose $\abs{y_1} \le \abs{y_2}$, then on the region $1-u < \Min{\frac{1}{\abs{y_2}},\frac{1}{2}}$, the integral is bounded by
\[ (1+\abs{t})\int_0^{\Min{\frac{1}{\abs{y_2}},\frac{1}{2}}} \paren{\frac{1}{u^2\abs{y_1 y_2}}}^X du \ll (1+\abs{t}) \abs{y_1 y_2}^{-X} (1+\abs{y_2})^{2X-1}, \]
and on the region $\Min{1/\abs{y_1},\frac{1}{2}} > 1-u > 1/\abs{y_2}$, the integral is bounded by
\[ (1+\abs{t}) \int_0^{\Min{\frac{1}{\abs{y_1}},\frac{1}{2}}} \paren{\frac{1}{u\abs{y_1}}}^X (u\abs{y_2})^{\frac{1}{2}X-\frac{1}{4}} du \ll (1+\abs{t}) \abs{y_1}^{-X} \abs{y_2}^{\frac{1}{2}X-\frac{1}{4}} (1+\abs{y_1})^{\frac{1}{2}X-\frac{3}{4}}. \]
The result follows.
\end{proof}

\subsection{Inequalities for the derivatives of Bessel functions}
\begin{lem}
\label{lem:BesselDervAsymp}
	For $d\ge 0$ and $\mu \in \frak{a}^d_X$, $X \ge 0$ and any $\epsilon > 0$, we have
	\[ (y_1 y_2)^{-1} \paren{y_1 \partial_{y_1}}^{j_1} \paren{y_2 \partial_{y_2}}^{j_2} K^d_{w_l}(y, \mu) \ll_{\mu,j_1,j_2} \paren{\abs{y_1}^{-X-\epsilon} + \abs{y_1}^{\alpha_1+\epsilon}} \paren{\abs{y_2}^{-X-\epsilon} + \abs{y_2}^{\alpha_2+\epsilon}}, \]
	where
	\[ \alpha_1=\tfrac{2}{3}j_1+\tfrac{1}{3} j_2+\tfrac{1}{6}\Max{0,3X-j_1-j_2}, \qquad \alpha_2=\tfrac{1}{3}j_1+\tfrac{2}{3} j_2+\tfrac{1}{6}\Max{0,3X-j_1-j_2}. \]
\end{lem}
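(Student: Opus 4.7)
The plan is to apply the Mellin-Barnes representation \eqref{eq:KwlMellinTrans}. Since $(y_i\partial_{y_i})^{j_i}\abs{y_i}^{1-s_i} = (1-s_i)^{j_i}\abs{y_i}^{1-s_i}$, the derivative acts on the Mellin integrand by multiplication, yielding
\[
\abs{\tfrac{1}{y_1 y_2}\paren{y_1\partial_{y_1}}^{j_1}\paren{y_2\partial_{y_2}}^{j_2} K^d_{w_l}(y,\mu)} \le (4\pi^2)^2 \iint_{\Re(s)=c} \abs{4\pi^2 y_1}^{-c_1}\abs{4\pi^2 y_2}^{-c_2} \abs{1-s_1}^{j_1}\abs{1-s_2}^{j_2}\abs{\what{K}^d_{w_l}(s,v,\mu)}\frac{dt_1\,dt_2}{(2\pi)^2}
\]
on any contour $\Re(s_i)=c_i$ reached from the original by admissible shifts. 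The strategy is to sum four such bounds, in each of which $c_i\in\set{X+\epsilon,-\alpha_i-\epsilon}$: the choice $c_i=X+\epsilon$ produces the $\abs{y_i}^{-X-\epsilon}$ term, and $c_i=-\alpha_i-\epsilon$ produces the $\abs{y_i}^{\alpha_i+\epsilon}$ term, so that the four combinations reconstruct the product-of-sums in the statement.

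When shifting between these positions we cross poles of $\what{K}^d_{w_l}$. For $d=0,1$ these lie at $s_i\in\set{\mu^w_3-\ell_1,-\mu^w_1-\ell_2}$ with $w\in\Weyl$ and $\ell_i\ge 0$, as recorded in \cref{sect:BesselStirling}; for $d\ge 2$ the poles of $B^{\varepsilon_1,\varepsilon_2}_{w_l}(s,\tfrac{2}{3}r)$ and $Q(d,s_i\pm r)$ in \eqref{eq:KwldMB} play the analogous role. The residue at a pole $\Re(s_i)=p_i$ is a 1D integral in the other variable whose $\abs{y_i}$-dependence is $\abs{y_i}^{-p_i}$; since $p_i\in[-\alpha_i-\epsilon,\,X+\epsilon]$, this is $\ll \abs{y_i}^{\alpha_i+\epsilon}$. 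The remaining 1D integral is then handled by shifting the other variable in the same way, so that each piece of the bound reduces to a convergent integral on one of the four final contours.

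Convergence of these integrals follows from the Stirling bounds of \cref{sect:BesselStirling}. In directions of exponential cancellation there is nothing to check, so the focus is on the polynomial-decay regions: for $d=0,1$ one has $\what{K}^d_{w_l}(s,v,\mu)\ll(1+\abs{t_1})^{3\Re(s_1)-3/2}(1+\abs{t_2})^{3\Re(s_2)-3/2}(1+\abs{t_1+t_2})^{1/2-\Re(s_1+s_2)}$, with an analogous bound for $d\ge 2$ derived from \eqref{eq:BesselStirlingdge2}. After multiplying by $\abs{1-s_1}^{j_1}\abs{1-s_2}^{j_2}$ and setting $\Re(s_i)=-\alpha_i-\epsilon$, the exponent in each $\abs{t_i}$ becomes $-j_1-j_2-\tfrac{3}{2}-O(\epsilon+X)$, and a direct check confirms 2D integrability in every direction. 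The allocation $\alpha_1=(2j_1+j_2)/3$, $\alpha_2=(j_1+2j_2)/3$ is precisely the symmetric choice that balances the $s_1$- and $s_2$-decay against the $(1+\abs{t_1+t_2})^{1/2-\Re(s_1+s_2)}$ factor, while the correction $\tfrac{1}{6}\max(0,3X-j_1-j_2)$ enters only once $X$ is large enough to force an additional shift to keep the $|s_1+s_2|$-factor integrable on the complementary spectrum. The main obstacle is verifying convergence in the ``nearly self-dual'' directions where Stirling's exponential parts cancel -- especially for $d\ge 2$, where the factor $(d+\abs{\Im(s_i\pm r)})^{-1+2\Re(s_i\pm r)}$ must also be tracked in the intermediate range $\abs{\Im(s_i)}\lesssim d$ before it merges into the uniform polynomial bound above.
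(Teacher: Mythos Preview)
Your approach is essentially the paper's --- Mellin--Barnes integral, contour shift, Stirling --- but the framing has two confusions that would cause trouble if taken literally.

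First, the ``four contour positions'' strategy is not correct. You cannot place the contour at $\Re(s_i)=X+\epsilon$ for the non-$(+,+)$ signs: in the polynomial-decay region the exponent of $\abs{s_1}$ after applying $\abs{s_1+s_2}\ll\abs{s_1}\abs{s_2}$ becomes $j_1+X+\epsilon-1$, which is not integrable once $j_1\ge 1$ or $X>0$. The paper instead shifts \emph{once}, to $\Re(s)=(-\alpha_1-\epsilon,-\alpha_2-\epsilon)$; the shifted integral contributes $\abs{y_1}^{\alpha_1+\epsilon}\abs{y_2}^{\alpha_2+\epsilon}$, the double residues at $s=(\mu^w_3,-\mu^w_1)$ contribute $\abs{y_1 y_2}^{-X-\epsilon}$ (the $\epsilon$ absorbing logarithms from higher-order poles), and the mixed residues give the cross terms. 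Your later paragraph about residues is closer to this, but the opening description should be replaced by a single left-shift plus residue collection.

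Second, your explanation of the correction $\tfrac{1}{6}\max(0,3X-j_1-j_2)$ is off. It is not about the $\abs{s_1+s_2}$ factor on the complementary spectrum; the shifted double integral already converges with $\alpha_i=(2j_i+j_{3-i})/3$ regardless of $X$. The correction is needed for the \emph{mixed residue} integrals: after taking the residue at $s_1=\mu_i$, the remaining $s_2$-integrand has Stirling exponent $j_2+X-2\alpha_2-1-2\epsilon$, and convergence requires $2\alpha_2>j_2+X$, which forces the extra $\tfrac{1}{6}(3X-j_1-j_2)$ when $3X>j_1+j_2$. Finally, your closing worries about nearly self-dual directions and the $d\ge 2$ transition are unnecessary here: the implied constant is allowed to depend on $\mu$, so Stirling is applied with $\mu$ (hence $d$ and $r$) treated as fixed, and only the large-$\abs{\Im(s)}$ behaviour matters.
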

\begin{proof}
We apply the Mellin-Barnes integrals of \cref{sect:BesselMBIntegrals} as in \eqref{eq:Kwl0Sgn}-\eqref{eq:KwldMB}, and shift the $s$ contours back to $\Re(s) = (-\alpha_1-\epsilon,-\alpha_2-\epsilon)$; we may assume that there are no poles on these lines, by reducing $\epsilon > 0$ as necessary.
For the shifted integral, after applying Stirling's formula (treating $\mu$ as constant), we are bounding
\[ \abs{y_1}^{\alpha_1+\epsilon} \abs{y_2}^{\alpha_2+\epsilon} \int_{\Re(s) = (-\alpha_1,-\alpha_2)} \abs{s_1}^{j_1-3\alpha_1-\frac{3}{2}-3\epsilon} \abs{s_2}^{j_2-3\alpha_2-\frac{3}{2}-3\epsilon} \abs{s_1+s_2}^{\alpha_1+\alpha_2+\frac{1}{2}+2\epsilon} \abs{ds}, \]
and the integral converges by applying $\abs{s_1+s_2} \ll \abs{s_1}\abs{s_2}$.

For the residue at $s=(\mu_3^w,-\mu_1^w)$, $w\in \Weyl$, we have the bound $\abs{y_1 y_2}^{-X}$, unless we encountered a double or triple pole (if $\mu_i=\mu_j$ or $\mu=0$, respectively), introducing some logarithmic terms, which we fold into the $\abs{y_1 y_2}^{-\epsilon}$ factor.
The other residues have bounds intermediate to the main terms.
Of course, for $d \ge 2$, the first two-variable residues in both $s_1$ and $s_2$ occur with one of $s_1$ or $s_2$ is much farther to the left, but the bound of the lemma (which is weaker in the case $y \to 0$) is sufficient.

For the mixed terms, say for the residue at $s_1=\mu_i$ with the $s_2$ contour at $\Re(s_2) = -\alpha_2-\epsilon$, we are bounding
\[ \abs{y_1}^{-X} \abs{y_2}^{\alpha_2+\epsilon} \int_{\Re(s_2) = -\alpha_2} \abs{s_2}^{j_2+X-2\alpha_2-1-2\epsilon} \abs{ds_2}, \]
and the integral converges.
Again, double or triple poles will introduce logarithms, but $\epsilon > 0$ still assures convergence in the face of logarithms in $s_2$ (from the digamma function; see \cite[(7.4)]{ArithKuzI} for an example in the positive-sign case), and any logarithms in $y$ may be rolled into the $\epsilon$ powers.
\end{proof}

\subsection{Decay of the inverse Whittaker/Bessel transforms}

Inverse transforms involving Whittaker and Bessel functions tend to have slightly more decay near $y_i=0$ than the functions themselves; this is typical of functions given by Mellin-Barnes integrals.
Essentially, we will need to show the convergence of the Bessel transform of the inverse Bessel transform of a function, and the lemma of the previous section combined with the following lemma will demonstrate the necessary convergence.
\begin{lem}
\label{lem:WhittBound}
For $d' \ge d \ge 0$ and some $\frac{1}{2} > \delta' > \delta > 0$, suppose $F(\mu)$ is holomorphic and $\ll (d'+\norm{\mu})^{-100}$ on $\mu \in \frak{a}^d_{\delta'}$, then
\[ \int_{\frak{a}^d_0} F(\mu) K^d_{w_l}(y,\mu) \, d\mu \ll_F \abs{y_1 y_2}^{1+\delta}, \]
for $d=0,1$,
\[ \int_{\frak{a}^d_0} F(\mu) W^{d'}(y,\mu) \, d\mu \ll_F \abs{y_1 y_2} \prod_{i=1}^2\abs{y_i+1/y_i}^{-\delta}, \]
and for $d \ge 2$,
\[ \int_{\frak{a}^d_0} F(\mu) \wtilde{W}^{d'}_{-d-2N}(y,\mu) \, d\mu \ll_F \abs{y_1 y_2} \prod_{i=1}^2\abs{y_i+1/y_i}^{-\delta}. \]
\end{lem}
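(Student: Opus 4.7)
The three bounds are established by the same template: write the special function as a Mellin--Barnes integral, interchange that integration with the $\mu$-integration (justified by the rapid decay of $F$ together with the Stirling-type bounds for the Mellin transforms), and then shift the $s$-contours to extract decay in $y$. The holomorphy of $F$ on the thickened tube $\frak{a}^d_{\delta'}$ is used to simultaneously shift the $\mu$-contour so that the poles of the Mellin transforms in $s$ are pushed out of the strip being swept by the $s$-contour.

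For the first display, insert \eqref{eq:KwlMellinTrans} with $\Re(s)$ initially just to the right of $0$, so to the right of all poles of $\hat K^d_{w_l}(s,\sgn(y),\mu)$ for $\mu\in\frak{a}^d_0$. After interchanging integrations, shift each $s_i$ to $\Re(s_i)=-\delta$. To avoid crossing the poles at $s_1=\mu_i^w$, $s_2=-\mu_i^w$ described in \cref{sect:BesselStirling}, simultaneously displace the $\mu$-contour to $\Re(\mu)=\delta''\rho$ with $\delta<\delta''<\delta'$; this moves the offending poles to $\Re(s_i)=\pm\delta''$, out of the path. Residues at any poles that still must be crossed (corresponding to permutations $w$ for which the relevant coordinate of $\mu^w$ is unshifted) are one-variable Mellin integrals treated by iterating the procedure in the surviving variable. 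The prefactor $|4\pi^2 y_1|^{1-s_1}|4\pi^2 y_2|^{1-s_2}$ at $\Re(s)=(-\delta,-\delta)$ produces the factor $|y_1 y_2|^{1+\delta}$, while convergence of the $s$-integral of the $\mu$-integrated integrand is guaranteed by the exponential-decay cones of Stirling's formula in \cref{sect:BesselStirling}, paired against the super-polynomial decay of $F$.

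For the Whittaker statements, apply the same strategy to \eqref{eq:WhittMellin}, or in the case $d\ge 2$ to the analogous Mellin representation of $\wtilde W^{d'}_{-d-2N}$ from \cref{sect:WhittFuns}. Here we need decay in two regimes: to obtain $|y_i|^{1+\delta}$ as $y_i\to 0$ shift $s_i$ leftwards to $\Re(s_i)=-\delta$, and to obtain $|y_i|^{1-\delta}$ as $y_i\to\infty$ shift $s_i$ rightwards to $\Re(s_i)=\delta$; the two regimes combine to give $|y_i|\,|y_i+1/y_i|^{-\delta}$ after splitting according to $|y_i|\lessgtr 1$. In each regime the $\mu$-contour is again shifted inside $\frak{a}^d_{\delta'}$ to keep the poles of the Mellin transform (at $s_1=\mu_3^w$, $s_2=-\mu_1^w$ for $d=0,1$; at $s_1=-2r$, $s_2=2r$ for $d\ge 2$) out of the strip being swept. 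Absolute convergence of every integral along the shifted contours is then guaranteed by \cref{lem:GL3WhittMellinBd}, which provides both a polynomial bound in the ``Archimedean conductor'' $T$ and exponential decay in $\Im(s)$ beyond a polylogarithmic threshold; against the super-polynomial decay of $F$ this yields the claimed bound.

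The main technical obstacle is the bookkeeping of residues when unavoidable poles are crossed during the joint shift. When coordinates of $\mu$ coincide, the Mellin transforms acquire double or triple poles, producing logarithmic factors in $\mu$; these are absorbed into the super-polynomial decay of $F$, so they are harmless. The strict inequality $\delta<\delta'$ provides precisely the slack needed so that every intermediate $\mu$-contour stays inside the region of holomorphy of $F$, ensuring that the residues picked up along the way are controlled by the same mechanism.
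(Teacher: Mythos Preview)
Your proposal is correct and follows essentially the same route as the paper: insert the Mellin--Barnes representations \eqref{eq:KwlMellinTrans} and \eqref{eq:WhittMellin}, shift the $s$-contours to harvest the $y$-powers, and use the holomorphy of $F$ on $\frak{a}^d_{\delta'}$ to move the $\mu$-contour so the residue terms acquire the same decay, with \cref{lem:GL3WhittMellinBd} and the Stirling analysis of \cref{sect:BesselStirling} controlling convergence. The only procedural difference is that the paper shifts $s$ first and then adjusts $\mu$ separately in each residue (rather than pre-shifting $\mu$ and still picking up a few residues), and it notes explicitly that when $y_i>1$ one halts the leftward $s_i$-shift early (at $\Re(s_i)=-\delta/100$ for the Bessel case, or stays at $\Re(s_i)=\delta'$ for the Whittaker case); your ``split according to $|y_i|\lessgtr 1$'' handles the latter in the same spirit.
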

\begin{proof}
We again apply the Mellin-Barnes integrals of \cref{sect:BesselMBIntegrals} for $K^d_{w_l}(y,\mu)$.
When both $y_1,y_2<1$, we shift the $s$ contours back to $\Re(s)=(-\delta',-\delta')$, and this picks up possible poles at $s_1=\mu_j$ and $s_2=-\mu_k$ for $j \ne k$ (since either $\Gamma(s_1-\mu_j)$ and $\Gamma(s_2+\mu_j)$ are not both in the numerator, or $\Gamma(s_1+s_2)$ is in the denominator).
Though we avoid it here, the results of such a contour shifting operation are written out explicitly for the inverse Whittaker transform at weight 1 in \cite[section 8.1]{WeylI}.

For the residue at, say $s=(\mu_3,-\mu_1)$ and $d=0,1$, we shift the $\mu$ contours to $\Re(\mu)=(\delta,0,-\delta)$.
For the mixed residues, say the residue at $s_1=\mu_3$ with the $s_2$ contour shifted to $\Re(s_2)=-\delta'$ with $d=0,1$, we may safely shift the $\mu$ contours to $\Re(\mu)=(\delta,0,-\delta)$.
When $d \ge 2$, the only poles on $\Re(s)=0$ are at $s_1=\mu_3=-2r$ and $s_2=-\mu_3=2r$, which cannot happen simultaneously, so we only have the shifted contours and the mixed residues; for the mixed residue at, say, $s_1=-2r$ with the $s_2$ contour shifted to $\Re(s_2)=-\delta'$, we may safely shift the $r$ contour to $\Re(r)=\delta/2$.

This produces the correct power (or possibly better) for $y_1$ and $y_2$, and we simply need to know that the $s$ integrals, the mixed residues and the residues all converge and are bounded by $\norm{\mu}^{100}$.
This is, in fact, the case, and certainly one may replace $\norm{\mu}^{100}$ with 1; this type of optimization is the intent of \cite[Lemma 2]{Me01} and \cite[section 9]{WeylII}, but if one believes the convergence (which requires analyzing Stirling's formula), then the polynomial boundedness is obvious (from Stirling's formula), so we leave it there.
What is important, however, is that nothing exponential in $d$ arises, and the argument of \cite[section 9]{WeylII} demonstrates this fact.

In case one or both of the $y_i>1$, we simply stop the shifting at, say $\Re(s_i)=-\delta/100$, which assures convergence, but doesn't really affect the rest of the argument.

For the Whittaker functions, we use \cref{lem:GL3WhittMellinBd} and note that there is no need to shift $\Re(s_i)$ negative when $y_i > 1$, and we can simply leave that contour at $\Re(s_i)=\delta'$ to produce the required decay.
Again, one can certainly do better than $\norm{\mu}^{100}$, but this is not necessary at present.

\end{proof}

\subsection{A stationary phase lemma}

Much like \cite[section 2.6.2-4]{SpectralKuz}, we have a difficult interchange of integrals to perform in \cref{sect:InterchangeOfIntegrals}.
The interchange of integrals in this paper (interchanging $x$ and $y$ outside the $\mu$ integral) is essentially orthogonal to that of \cite{SpectralKuz} (interchanging the $x$ and $\mu$ integrals with $y$ essentially fixed), and they are of comparable difficulty.
This time, we apply the lessons learned in \cite{Subconv} and reduce to a one-dimensional stationary phase argument.
One might argue that this is a bit messier than the proof of \cite[section 2.6.2-4]{SpectralKuz}, but it is far more explicit (try to write out the full integral representation described there, it's quite difficult), and there is some hope (for the author, at least) that the interchange of integrals required to build an arithmetic Kuznetsov-type formula on real reductive groups would also reduce to this same lemma:

\begin{lem}
\label{lem:InterchangeStationaryPhase}
	Let $\eta>0$, $0\ne A\in\R$, $B > 0$, $C>\frac{1}{2}$, $T > 6^{1/\eta}$, and suppose $w(x)$ is smooth and compactly supported on $1+\abs{x} \in [\frac{1}{2}C,2C]$ with $w^{(j)}(x) \ll_j C^{-j}$ then for any $\eta > 0$,
	\begin{align}
	\label{eq:InterchangeLemInt}
		\int_{\R} w(x) \e{Ax+B\frac{x}{1+x^2}} dx \ll_\eta C T^{-\eta},
	\end{align}
	whenever
	\begin{align}
	\label{eq:InterchangeLemAssm1}
		\abs{A} > T^\eta \Max{\frac{1}{C}, \frac{B}{C^2}},
	\end{align}
	or
	\begin{align}
	\label{eq:InterchangeLemAssm2}
		T^{2\eta} C < B < T^{-\eta} C^3 \qquad \text{ and } \qquad C^{1+\delta} \ll B,
	\end{align}
	for some $0< \delta < \frac{1}{10}$.
\end{lem}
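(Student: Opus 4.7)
The phase is $\phi(x) = 2\pi(Ax + Bg(x))$ with $g(x) = x/(1+x^2)$, and one computes directly
\[ g'(x) = \frac{1-x^2}{(1+x^2)^2}, \qquad g''(x) = \frac{2x(x^2-3)}{(1+x^2)^3}. \]
On the support of $w$, $1+\abs{x} \in [C/2,2C]$, which together with the elementary inequality $(1+\abs{x})^2 \le 2(1+x^2)$ yields $1+x^2 \ge C^2/8$, and hence $\abs{g'(x)} \le 1/(1+x^2) \le 8/C^2$ throughout the support. My plan is to dispatch hypothesis \eqref{eq:InterchangeLemAssm1} by one integration by parts (non-stationary phase) and hypothesis \eqref{eq:InterchangeLemAssm2} by van der Corput's second-derivative test.

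Under \eqref{eq:InterchangeLemAssm1}, the first step is to show $\abs{\phi'(x)} \gtrsim \abs{A}$ uniformly on the support. Combining the bound $\abs{Bg'(x)} \le 8B/C^2$ with $\abs{A} > T^\eta B/C^2$ and $T^\eta > 6$ forces $\abs{Bg'(x)}$ to be a bounded fraction strictly less than 1 of $\abs{A}$ once $C$ is not too small; for small $C$ one uses the sharper pointwise bound on $\abs{g'}$ valid away from $x=0$, or directly $\abs{A}>T^\eta/C$, to the same effect. One integration by parts then writes
\[ \int w(x)\,\e{Ax+Bx/(1+x^2)}\,dx = -\int \paren{\frac{w(x)}{2\pi i\,\phi'(x)}}'\,\e{Ax+Bx/(1+x^2)}\,dx, \]
and the resulting integral splits into a $w'/\phi'$ term, bounded by $O(1/\abs{A}) \ll CT^{-\eta}$ via $\abs{A}>T^\eta/C$, and a $w\,\phi''/(\phi')^2$ term. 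For the latter one uses the estimate $\abs{\phi''(x)} \ll B\min(1, 1/C^3)$ together with a short case split on whether $B \le C$ or $B > C$, invoking the appropriate half of $\abs{A}>T^\eta\Max{1/C,B/C^2}$ in each regime, to also conclude the bound $\ll CT^{-\eta}$.

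Under \eqref{eq:InterchangeLemAssm2}, combining $T^{2\eta}C < B$ and $B < T^{-\eta}C^3$ forces $C^2 > T^{3\eta}$, so $C > T^{3\eta/2} > 6^{3/2}$ is large. The support then lies in $\abs{x} \ge C/2-1$, well away from the roots $\set{0, \pm\sqrt{3}}$ of $g''$; it is the disjoint union of two intervals on each of which $\phi''$ has constant sign, and on each of which direct computation gives $\abs{\phi''(x)} \asymp B/C^3$. Applying van der Corput's second-derivative lemma on each component, together with one integration by parts to insert the smooth amplitude $w$ (using $\norm{w}_\infty + \norm{w'}_{L^1}\ll 1$), yields
\[ \abs{\int w(x)\,\e{Ax+Bx/(1+x^2)}\,dx} \ll \paren{\frac{C^3}{B}}^{1/2} \ll CT^{-\eta}, \]
the final step being the hypothesis $B > T^{2\eta}C$.

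The main obstacle is the bookkeeping in case~1, where one must verify that the $\phi''$ contribution from integration by parts stays under control across the wide range of $(B,C)$ allowed by \eqref{eq:InterchangeLemAssm1}, carefully exploiting both halves of $\abs{A}>T^\eta\Max{1/C,B/C^2}$ as appropriate. Once this is arranged, case~2 is a routine application of van der Corput. The auxiliary assumption $C^{1+\delta}\ll B$ in \eqref{eq:InterchangeLemAssm2} does not appear to play a role in the $CT^{-\eta}$ bound itself and is presumably retained for the application of this lemma in \cref{sect:InterchangeOfIntegrals}.
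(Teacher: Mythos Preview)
Your argument is correct. Case~I is essentially the paper's argument: both establish $\abs{\phi'}\gtrsim\abs{A}$ and integrate by parts once, with only minor differences in the bookkeeping (you split on $B\le C$ versus $B>C$; the paper observes that \eqref{eq:InterchangeLemAssm1} implies $\abs{A}\gg T^\eta\sqrt{B}/C^{3/2}$ by taking a geometric mean, which handles the $\phi''$ term in one stroke).

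Case~II is genuinely different and in fact simpler than the paper's route. The paper first disposes of the sub-cases $A>5B/C^2$ and $A<B/(6C^2)$ by the same integration-by-parts argument, then in the remaining range approximates $Bx/(1+x^2)$ by $B/x$ and invokes a stationary-phase expansion (Proposition~8.2 of Blomer--Khan--Young) for the phase $Ax+B/x$. That proposition carries a technical side condition of the form $\Max{1,C,B/C}^{3\delta'}\ll B/C$, and this is precisely where the paper consumes the hypothesis $C^{1+\delta}\ll B$. Your approach bypasses all of this: since the linear term $Ax$ contributes nothing to $\phi''$, once $C$ is large the second derivative satisfies $\abs{\phi''}\asymp B/C^3$ with fixed sign on each component of the support, and van~der~Corput's second-derivative test (with the amplitude inserted via one integration by parts, using $\norm{w}_\infty+\norm{w'}_{L^1}\ll 1$) gives $\ll (C^3/B)^{1/2}\ll CT^{-\eta}$ directly from $B>T^{2\eta}C$. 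Your observation that $C^{1+\delta}\ll B$ plays no role in your argument is therefore correct: it is an artifact of the paper's choice of stationary-phase lemma, not of the statement itself. The paper's method would give a full asymptotic expansion at the stationary point if one wanted it, but for the upper bound alone your argument is cleaner.
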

\begin{proof}
We only need very weak results here, so we use a simplified version of (the proof of) \cite[Lemma 8.1]{BKY01}.
Let $\phi(x)=Ax+B\frac{x}{1+x^2}$, then
\[ \phi'(x) = A+B\frac{1-x^2}{(1+x^2)^2}, \]
and the higher derivatives are
\[ \phi^{(j)}(x) = B\frac{(-1)^j j! x^{j+1}+\ldots}{(1+x^2)^{j+1}} \ll \frac{B}{C^{j+1}}, \qquad j \ge 2. \]
Whenever $\phi'(x)$ is bounded away from zero on the support of $w(x)$, we may apply integration by parts so that the integral in \eqref{eq:InterchangeLemInt} becomes
\begin{align}
\label{eq:InterchangeLemIntByParts}
	-\frac{1}{2\pi i} \int_{\R} \e{\phi(x)} \paren{\frac{w'(x)}{\phi'(x)}-w(x)\frac{\phi''(x)}{(\phi'(x))^2}} dx \ll& \sup_{1+\abs{x}\in [\frac{1}{2}C,2C]} \paren{\frac{1}{\abs{\phi'(x)}}+\frac{B}{C^2 \abs{\phi'(x)}^2}}.
\end{align}

\noindent\emph{Case I} Suppose \eqref{eq:InterchangeLemAssm1} holds; since $\abs{\frac{1-x^2}{(1+x^2)^2}} < 6 C^{-2}$, this implies $\phi'(x) \asymp \abs{A}$.
Note that \eqref{eq:InterchangeLemAssm1} implies also that $\abs{A} \gg T^\eta \frac{\sqrt{B}}{C^{3/2}}$, and so \eqref{eq:InterchangeLemIntByParts} implies the bound \eqref{eq:InterchangeLemInt}.

\noindent\emph{Case II} Suppose \eqref{eq:InterchangeLemAssm2} holds.
We have $C > T^{3\eta/2} > 14$ (so that $\abs{x} > 6$) and
\[ \frac{1-x^2}{(1+x^2)^2} \in \braces{-\frac{4}{C^2},-\frac{1}{5 C^2}}. \]
If $A > \frac{5B}{C^2}$ or $A < \frac{B}{6C^2}$ (including when $A < 0$), then $\abs{\phi'(x)} \gg \frac{B}{C^2} > \frac{T^\eta}{C}$ and \eqref{eq:InterchangeLemIntByParts} again implies the bound \eqref{eq:InterchangeLemInt}.

Otherwise, we use
\[ B\frac{x}{1+x^2}-\frac{B}{x} = \frac{B}{x} \paren{(1+x^{-2})^{-1}-1} \ll \frac{B}{C^3} < T^{-\eta}, \]
so the integral of \eqref{eq:InterchangeLemInt} is
\[ \int_{\R} w(x) \e{Ax+\frac{B}{x}} dx +O\paren{CT^{-\eta}}, \]
and this has a unique stationary point $\phi'(x)=0$ at $1+\abs{x}=1+\sqrt{B/A} \in [\frac{1}{3}C,3C]$.
Further, since $\abs{x} > 6$, we have $\abs{\phi''(x)} \asymp \frac{B}{C^3}$.

Taking $X=1$, $V=C$, $Y=\frac{B}{C}$, $Q=C$, $V_1 = 3C$ in \cite[Proposition 8.2]{BKY01}, gives the integral is bounded by
\[ \frac{QX}{\sqrt{Y}} = \frac{C^{3/2}}{B^{1/2}} \ll C T^{-\eta}, \]
provided
\[ \Max{1,C,\frac{B}{C}}^{3\delta'} \ll \frac{B}{C}, \]
for some $\delta' > 0$, and this is true by assumption with $\delta'=\delta/3$.
Note: By taking only the first term (the sum starts at $n=0$, so we use $A=\frac{\delta'}{4}<\frac{\delta'}{3}$), the error bound in \cite[Proposition 8.2]{BKY01} is
\[ \Max{1,C,\frac{B}{C}}^{-\delta/12} < 1 < C T^{-\eta}. \]

\end{proof}

\section{The arithmetic Kuznetsov formula}
\label{sect:ArithKuzProof}

Let $u(x)$ be smooth and compactly supported on $U(\R)$ with
\begin{align}
\label{eq:uUnitIntCond}
	\int_{U(\R)} u(x) dx = 1.
\end{align}
For a smooth, compactly supported $f:Y\to\C$ and any $g'\in G$, we extend to $G$ via the Bruhat decomposition by
\begin{align}
\label{eq:FgpuDef}
	F_{g',u}(xyw x' g') := \piecewise{\psi_I(x) f(y) \psi_I(x') u(x') & \If w=w_l,\\0 & \Otherwise,} \qquad x,x'\in U(\R), y\in Y,
\end{align}
and define a Poincar\'e series by
\begin{align}
\label{eq:Pdef}
	P_m(g) =& \sum_{\gamma\in U(\Z)\backslash\Gamma} F_{I,u}(m \gamma g).
\end{align}
(It is sufficient for our purposes to take $g'=I$ in \eqref{eq:FgpuDef} and \eqref{eq:TgpDef}, but the $g'$-independence of \cref{lem:WeirdBesselExpand} below is interesting and not at all obvious.)

Then we have the Bruhat expansion of the Fourier coefficient
\begin{align}
\label{eq:MainBruhat}
	\mathcal{P} :=& \int_{U(\Z)\backslash U(\R)} P_m(x n^{-1}) \wbar{\psi_n(x)} dx \\
	=& \sum_{c\in\N^2} \sum_{v\in V} S_{w_l}(\psi_m,\psi_n,cv) \int_{U(\R)} F_{I,u}(m cvw_l x n^{-1}) \wbar{\psi_n(x)} dx \nonumber \\
	=& \sum_{c\in\N^2} \sum_{v\in V} S_{w_l}(\psi_m,\psi_n,cv) f(m cv n^\iota) p_{-2\rho}(n) \int_{U(\R)} u(x) dx. \nonumber
\end{align}

We also have the spectral expansion of the Fourier coefficient
\begin{align}
\label{eq:PreMainSpectral}
	\mathcal{P} =& \sum_{d=0}^\infty \int_{\mathcal{B}^{d*}} \mathcal{P}^d_\Xi \, d\Xi,
\end{align}
where
\begin{align*}
	\mathcal{P}^d_\Xi :=& \sum_{d'=0}^\infty (2d'+1) \Tr\paren{\paren{\int_{U(\Z)\backslash U(\R)} \Xi^{d'}(un^{-1}) \wbar{\psi_n(u)} du} \paren{\int_{\Gamma\backslash G} P_m(g) \trans{\wbar{\wtilde{\Xi}^{d'}(g')}} dg}}.
\end{align*}
From \eqref{eq:FWcoefDef}, we may write this as
\begin{align}
\label{eq:PdXiEval}
	\mathcal{P}^d_\Xi =& \rho_\Xi(n) \wbar{\rho_\Xi(m)} \frac{p_{\rho}(m)}{p_{\rho}(n)} F_{I,u}(\mu_\Xi),
\end{align}
where
\begin{align}
\label{eq:FdgpDef}
	F^d_{g',u}(\mu) = \sum_{d'\ge 0} (2d'+1) \Tr\paren{\Sigma^{d'}_d W^{d'}(g',\mu, \psi_I) \int_{U(\R)\backslash G} F_{g',u}(g) \trans{\wbar{W^{d'}(g,-\wbar{\mu}, \psi_I)}} dg}.
\end{align}
Note that $F^d_{g',u}(\mu)$ converges (rapidly), has rapid decay in $\mu$ (hence also in $d$), and is holomorphic on $\mu\in\frak{a}^d_{\frac{1}{2}}$ by \cref{lem:WhittTransSuperPoly} and the bound of \cref{cor:GL3WhittBd}.
Further, $F^d_{g',u}(\mu)$ is invariant under $\mu \mapsto \mu^{w_2}$ by the comment immediately preceeding \cref{sect:WhitFEs}, and so we may freely replace $\mu$ with $-\wbar{\mu} \in \set{\mu,\mu^{w_2}}$ in \eqref{eq:FdgpDef} whenever $\mu\in\frak{a}^d_0$ or $\mu=\mu_\Xi$ for some Maass form $\Xi$.
Lastly, $F^d_{g',u}(\mu)$ satisfies \eqref{eq:WallachsOrthoAssm}, since when $d=0$, $\Sigma^{d'}_d=\Sigma^{d'}_{++}$ commutes with the functional equations of the Whittaker functions by \eqref{eq:TdSigmadComm}.

The heart of the construction of the arithmetic Kuznetsov formula is
\begin{lem}
\label{lem:WeirdBesselExpand}
	Independent of the choice of $g'\in G$ and $u$ satisfying \eqref{eq:uUnitIntCond}, we have
	\begin{align*}
		F^d_{g',u}(-\wbar{\mu}) &= \frac{1}{4} \int_Y f(y) \wbar{K^d(y,\mu)} dy.
	\end{align*}
\end{lem}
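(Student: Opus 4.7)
The plan is to unfold $F^d_{g',u}(-\wbar\mu)$ via Bruhat and then collapse the resulting $d'$-sum into the conjugate Bessel function $\wbar{K^d_{w_l}}$ using the defining integral transform \eqref{eq:KdwDef} and Wallach's inversion \eqref{eq:WhittInv}. First I would substitute \eqref{eq:FgpuDef} into \eqref{eq:FdgpDef}, parametrize $U(\R)\backslash G$ almost everywhere by $yw_lx'g'$ with measure $\frac{1}{8\pi^2}dy\,dx'$ coming from \eqref{eq:dgBruhat}, and cancel the $\psi_I(x)$-factor of $F_{g',u}$ against the $\wbar{\psi_I(x)}$ arising from the left-$U$ equivariance of $\wbar{W^{d'}}$. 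This reduces the claim to the identity
\[ \sum_{d'\ge 0}(2d'+1)\int_{U(\R)}\psi_I(x')u(x')\Tr\paren{\Sigma^{d'}_dW^{d'}(g',-\wbar\mu,\psi_I)\trans{\wbar{W^{d'}(yw_lx'g',\mu,\psi_I)}}}dx' = 2\pi^2\wbar{K^d_{w_l}(y,\mu)}, \]
for every $g'\in G$ and admissible $u$; the Bruhat factor $\frac{1}{8\pi^2}$ times $2\pi^2$ is what yields the advertised $\frac{1}{4}$.

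Second, I would interchange the $d'$-sum with the $x'$-integral and apply the conjugate-transpose of \eqref{eq:KdwDef}, namely the weak-sense identity $\int_{U(\R)}\trans{\wbar{W^{d'}(yw_lx'g',\mu,\psi_I)}}\psi_I(x')\,dx' = \wbar{K^d_{w_l}(y,\mu)}\trans{\wbar{W^{d'}(g',\mu,\psi_I)}}$. The non-character factor $u(x')$ prevents a direct appeal to \eqref{eq:KdwDef}, so the identity must be interpreted weakly against a smoothing $F(\mu)\,d\mu$; I would justify the swap using the rapid decay of the Whittaker transform (\cref{lem:WhittTransSuperPoly}) together with the uniform Whittaker bound of \cref{cor:GL3WhittBd}, and handle the residual oscillatory $x'$-integral at the ``bad'' end of $U(\R)$ by the stationary-phase \cref{lem:InterchangeStationaryPhase} flagged in \cref{sect:InterchangeOfIntegrals}. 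The discrepancy between $\Sigma^{d'}_d$ and $\Sigma^{d'*}_d$ is harmless because the rows zeroed by the star already annihilate $W^{d'}(\cdot,\mu,\psi_I)$ inside the trace, by the remark preceding \cref{sect:WhitFEs}.

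With the interchange in hand, $\wbar{K^d_{w_l}(y,\mu)}$ pulls outside and leaves the ``diagonal Wallach kernel'' $\sum_{d'}(2d'+1)\Tr\paren{\Sigma^{d'}_dW^{d'}(g',-\wbar\mu,\psi_I)\trans{\wbar{W^{d'}(g',\mu,\psi_I)}}}$ multiplied by $\int u=1$; the orthogonality form \eqref{eq:WhittInv} of Wallach's theorem collapses this trace-sum to the absolute constant $2\pi^2$, independent of $g'$. Equivalently, one can sidestep the Wallach computation by verifying directly that the left-hand side has the correct left-$U$ equivariance, Casimir eigenvalues and right-$w_lU(\R)w_l$ equivariance, and invoking Godement's uniqueness, \cref{thm:GodeTrans}, to identify it as proportional to $\wbar{K^d_{w_l}(y,\mu)}$, with the proportionality constant fixed by the first-term asymptotic \eqref{eq:JwlAsymp}. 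The hard part will be the interchange in the middle step: the $d'$-sum is not absolutely convergent before smoothing and the non-character $u$ obstructs a direct use of \eqref{eq:KdwDef}, so the real labour is in justifying everything as a distributional identity and taming the one oscillatory integral in $x'$ via stationary phase.
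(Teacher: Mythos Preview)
Your approach has a genuine gap at the crucial second step. You want to pull $\wbar{K^d_{w_l}(y,\mu)}$ out of the $d'$-sum pointwise in $y$, and then claim that the remaining ``diagonal Wallach kernel''
\[
\sum_{d'\ge 0}(2d'+1)\Tr\paren{\Sigma^{d'}_d W^{d'}(g',-\wbar\mu,\psi_I)\trans{\wbar{W^{d'}(g',\mu,\psi_I)}}}
\]
collapses to the constant $2\pi^2$. Neither step works. First, the paper's own remark immediately following the proof outline in \cref{sect:BesselExpand} says exactly that there is no way to justify interchanging the $Y$-integral with the $d'$-sum; the identity you are after is the one written there as $``="$. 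Second, \eqref{eq:WhittInv} is an integral over $g\in U(\R)\backslash G$ at a \emph{single} $d'$, not a sum over $d'$ at a fixed point; the $d'$-sum you wrote is the Wallach reproducing kernel evaluated on the diagonal, which is a distribution, not a number. Each summand is roughly of size $1$, so the sum diverges. The alternate Godement route you suggest does not rescue this either: \cref{thm:GodeTrans} concerns a single function of $g$, not a divergent sum.

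The paper's route is structurally different and avoids ever evaluating the $d'$-sum pointwise. It first establishes the expansion \eqref{eq:PreBesselExpand}, where the $d'$-sum has already been legitimately absorbed into the scalar coefficient $F^d_{g',u}(\mu)$, and then takes the \emph{Bessel} inner product of both sides with $\wbar{K^d_{w_l}(\cdot,\mu')}$. The hard computation is showing the Bessel functions are orthonormal (\cref{sect:Cdd}), which is done by inserting Stade's formula \eqref{eq:tildePsid} to gain an auxiliary $t$-integral, unfolding the Bessel functions back to Whittakers via \eqref{eq:KdwDef}, recognizing the resulting $g$-integral as a spherical function via Godement (\cref{thm:GodeTrans}), and only then applying \eqref{eq:WhittInv} at the single minimal weight $d'=d$. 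The stationary-phase lemma you cite is used in \cref{sect:InterchangeOfIntegrals} for a specific $y$--$x'$ interchange inside this computation, not for the $d'$-interchange you propose. The extension to $d\ge 2$ is by analytic continuation from $d=0,1$ via \eqref{eq:KdwlAnCont} and \eqref{eq:Sigmadw3FE}, precisely because the direct argument breaks down there.
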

This will be proved along the way in the construction of the Bessel expansion in \cref{sect:BesselExpand}.

Combining \eqref{eq:MainBruhat}, \eqref{eq:PreMainSpectral}, \eqref{eq:PdXiEval} and \cref{lem:WeirdBesselExpand} gives
\begin{align}
\label{eq:PreArithKuz}
	& \frac{1}{4} \sum_{d=0}^\infty \int_{\mathcal{B}^{d*}} \rho_\Xi(n) \wbar{\rho_\Xi(m)} \int_Y f(y) \wbar{K^d(y,\mu_\Xi)} dy \, d\Xi \\
	&= \sum_{c\in\N^2} \sum_{v\in V} S_{w_l}(\psi_m,\psi_n,cv) \frac{f(m cv n^\iota)}{p_{\rho}(mn)}. \nonumber
\end{align}
For $y=m cv n^\iota$, we have $\abs{y_1 y_2} = \frac{p_\rho(mn)}{c_1 c_2}$, so if we replace $f(y) \mapsto \abs{y_1 y_2} f(y)$, we may also express the arithmetic Kuznetsov formula as
\begin{align}
\label{eq:PreArithKuz2}
	\mathcal{K}_L(f) =& \sum_{d=0}^\infty \int_{\mathcal{B}^{d*}} \rho_\Xi(n) \wbar{\rho_\Xi(m)} F(d,\mu) d\Xi,
\end{align}
with $F(d,\mu)$ and $\mathcal{K}_L(f)$ as in \cref{thm:ArithKuzHecke}, and the theorem follows by applying the conversions to Hecke eigenvalues given in \cref{sect:HeckeConvert}.

\subsection{An note on the method of Baruch and Mao}
\label{sect:BaruchMao}
If one could connect the Bessel-like distributions of \cite{BaruchMao01} to the Bessel distributions of \cite{Baruch01} and the Bessel functions of that paper to the Bessel functions of \cref{sect:GL3Bessel}, then \cref{lem:WeirdBesselExpand} would follow from \cite[(23.6)]{BaruchMao01} and \cite[Theorem 2.3]{Baruch01}.
This seems as difficult as the current method for $GL(3)$, but it is certainly a worthwhile approach as those results hold at the level of reductive groups.
We hope that the method of \cref{sect:BesselExpand} will help explore the deeper question of the hyper-Kloosterman sums; recall the discussion of Problem 4 in \cref{sect:WhatsNext}.

\subsection{An aside on the method of Cogdell and Piatetski-Shapiro}
The definition \eqref{eq:KdwDef} of the Bessel functions can be interpretted formally as
\[ \int_{U(\R)} \Sigma^{d'^*}_d W^{d'}(y w_l x t,\mu,\psi_I) \wbar{\psi_I(x)} dx = K^d_{w_l}(y, \mu) \Sigma^{d'^*}_d W^{d'}(t, \mu,\psi_I) \]
(this can be made precise by considering a type of Riemann integral in $x$), and it follows that
\[ \int_{U(\R)} \Sigma^{d'^*}_d W^{d'}(y w_l x,\mu,\psi_I) \wbar{\psi_t(x)} dx = K^d_{w_l}(y t^\iota, \mu) \Sigma^{d'^*}_d W^{d'}(t, \mu,\psi_I) \frac{1}{(t_1 t_2)^2}. \]
Then by Fourier inversion we have the closest statement to \cite[(4.3)]{CPS}:
\[ \int_{\R} \Sigma^{d'^*}_d W^{d'}(y w_l x,\mu,\psi_I) dx_3 = \int_{\R^2} K^d_{w_l}(y t^\iota, \mu) \Sigma^{d'^*}_d W^{d'}(t, \mu,\psi_I) \frac{dt_1 \, dt_2}{(t_1 t_2)^2}. \]
On the other hand, by Plancherel, if $u(x) = u(x_1,x_2)$, then
\begin{align*}
	&\int_{U(\R)} \Sigma^{d'^*}_d W^{d'}(y w_l x,\mu,\psi_I) u(x) dx \\
	&= \int_{\R^2} \hat{u}(-t_1,-t_2) K^d_{w_l}(y t^\iota, \mu) \Sigma^{d'^*}_d W^{d'}(t, \mu,\psi_I) \frac{dt_1 \, dt_2}{(t_1 t_2)^2},
\end{align*}
and this looks like we want to flatten out the function of $x_3$.
So we take $u(x) = u_{12}(x_1,x_2) u_3\paren{\frac{x_3}{X}}$, and for any fixed $d$, we have
\begin{align*}
	&\lim_{X\to\infty} \int_{U(\R)} \Sigma^{d'^*}_d W^{d'}(y w_l x,\mu,\psi_I) u(x) dx \\
	&= u_3(0) \int_{\R^2} \what{u_{12}}(-t_1,-t_2) K^d_{w_l}(y t^\iota, \mu) \Sigma^{d'^*}_d W^{d'}(t, \mu,\psi_I) \frac{dt_1 \, dt_2}{(t_1 t_2)^2},
\end{align*}
but on the Bruhat side, we have
\[ \int_{U(\R)} u(x) \wbar{\psi_I(x)} dx = X \int_{\R^3} u_{12}(x_1,x_2) u_3(x_3) dx. \]
Since the spectral expansion over $d$ and $\mathcal{B}^{d*}$ converges rapidly independent of $X$ (by the usually Fourier analysis argument using the Casimir operators on $G$), we conclude that the effective length of the $d'$ sum must be tending to infinity to recover the $X$.
As the $d'$ sum is rather difficult to handle, even in the asymptotic $y \to 0$, we detour from the method of Cogdell and Piatetski-Shapiro and instead prove \cref{lem:WeirdBesselExpand} by applying the methods of Wallach.
We could replace the Iwasawa decomposition with the Bruhat decomposition in our model of the irreducible representations of $G$, effectively replacing the $d'$ sum (over representations of $K$) with a sum (and integral) over elements of the infinite-dimensional representation of $U(\R)$, but this will encounter precisely the same difficulty.
(That is to say, the naive approach to the Kirillov model fails to be useful; i.e. the fundamental difficulty in applying the method of Cogdell and Piatetski-Shapiro is that the characters of $U(\R)$ no longer span the $L^2$-space.)

\section{The Bessel expansion}
\label{sect:BesselExpand}

For each fixed $g'$, we consider the operator
\begin{align}
\label{eq:TgpDef}
	T_{g'}(f)(g) = \int_{U(\R)} f(g w_l x g') \wbar{\psi_I(x)} dx,
\end{align}
whenever $f:G\to\C$ is nice enough for convergence.
For any smooth, compactly supported $f:Y\to\C$, we choose some smooth, compactly supported $u:U(\R)\to\C$ with the property \eqref{eq:uUnitIntCond}, then the function $F_{g',u}$ in \eqref{eq:FgpuDef} is constructed so that
\[ f(y) = T_{g'}(F_{g',u})(y). \]
Alternately, we may apply $T_{g'}$ to Wallach's expansion of $F_{g',u}$, using the fact that the set of elements not of the form $xyw_l x'$ have measure zero in $G$.
Then we pull the $T_{g'}$ operator inside the $d$ and $d'$ sums (justified by \cref{lem:WhittTransSuperPoly,lem:WhittBound}, and \cref{cor:GL3WhittBd}) and use the definition \eqref{eq:KdwDef} of the Bessel functions so that
\begin{align*}
	f(y) =& \sum_{d\ge 0} \sum_{d'\ge 0} (2d'+1) \int_{\frak{a}^d_0} \Tr\paren{\Sigma^{d'}_d W^{d'}(g',\mu, \psi_I) \int_{U(\R)\backslash G} F_{g',u}(g) \trans{\wbar{W^{d'}(g,-\wbar{\mu}, \psi_I)}} dg} \\
	& \qquad \times K^d_{w_l}(y,\mu) \sinmu^{d*}(\mu) d\mu.
\end{align*}
Interchanging the $d'$ sum with the $\mu$ integral (justified by the comment following \eqref{eq:FdgpDef}) gives
\begin{align}
\label{eq:PreBesselExpand}
	f(y) =& \sum_{d\ge 0} \int_{\frak{a}^d_0} F^d_{g',u}(\mu) K^d_{w_l}(y,\mu) \sinmu^{d*}(\mu) d\mu,
\end{align}
with $F^d_{g',u}(\mu)$ as in \eqref{eq:FdgpDef}.

We wish to show that $F^d_{g',u}(\mu)$ is essentially the Bessel transform of $f$ at weight $d$.
Our initial approach is to compute the Bessel transform of \eqref{eq:PreBesselExpand} at each weight $d$, but we encounter technical difficulties for $d \ge 2$, and these will be handled by analytic continuation, instead.
If the $K^d_{w_l}$ exhibit the expected orthogonality (which we demonstrate in certain cases below), we should have
\begin{align}
\label{eq:CddDef}
	\int_Y f(y) \wbar{K^d_{w_l}(y,\mu)} dy = F^d_{g',u}(\mu) C^{d,d}(\mu),
\end{align}
where $C^{d_1,d_2}(\mu)$ is defined by
\begin{align}
\label{eq:CddpDef}
	\int_Y \int_{\frak{a}^{d_1}_0} F(\mu) K^{d_1}_{w_l}(y,\mu) \sinmu^{d_1}_2(\mu) d\mu \, \wbar{K^{d_2}_{w_l}(y,\mu')} dy = F(\mu') C^{d_1,d_2}(\mu'),
\end{align}
for $\mu' \in \frak{a}^{d_2}_0$ and Schwartz-class $F(\mu)$.
These integrals converge by contour shifting near $y_i=0$ and the slight decay at $y_i=\infty$ coming from the double-Bessel integrals; see \cref{lem:DoubleBesselBound,lem:WhittBound}.

In \cref{sect:Ortho}, we will show that \eqref{eq:CddpDef} holds with $C^{d_1,d_2}(\mu')=0$ for $d_1 \ne d_2$, provided $\Max{d_1,d_2} \ge 2$.
We will show in \cref{sect:Cdd} that \eqref{eq:CddpDef} holds with
\begin{align}
\label{eq:CddEval}
	C^{d,d}(\mu) = 4,
\end{align}
for $d=0,1$ and $\mu\in\frak{a}^d_0$.
Lastly, in \cref{sect:d01Stade,sect:d01Ortho}, we show the remaining orthogonality
\begin{align}
\label{eq:C1001}
	C^{1,0}(\mu)=C^{0,1}(\mu)=0.
\end{align}
This is sufficient to demonstrate \eqref{eq:CddDef} at $d=0,1$ and hence also \cref{lem:WeirdBesselExpand} for $d=0,1$ and $\mu\in\frak{a}^d_0$ (so that $\mu=-\wbar{\mu}$).
(Certainly \eqref{eq:CddpDef} and \eqref{eq:CddEval} are true in the remaining case $d_1=d_2=d\ge 2$, but several interchange-of-integral problems become unwieldy.)

The extension of \cref{lem:WeirdBesselExpand} from $d=0,1$ and $\mu\in\frak{a}^d_0$ to all $d$ and $\mu\in\frak{a}^d_{\frac{1}{2}}$ proceeds by analytic continuation:
The Bessel transform of \cref{lem:WeirdBesselExpand} is a holomorphic function of $\wbar{\mu}$ by compact support of $f$, and \eqref{eq:FdgpDef} is entire and of super-polynomial decay in $\mu$ by \cref{lem:WhittTransSuperPoly} and the bound of \cref{lem:AnContWhittBd}.
Then we continue to $\mu = \mu^d(r)^{w_3}$ and apply \eqref{eq:KdwlAnCont} and \eqref{eq:Sigmadw3FE}.
This concludes the proof of \cref{lem:WeirdBesselExpand} and the Bessel expansion, \cref{thm:BesselExpand}, as well.

\begin{rem*}
It is tempting to take $f$ to be an approximation to the identity in \cref{lem:WeirdBesselExpand} and conclude that
\begin{align*}
	K^d_{w_l}(y,\mu) ``="& \frac{1}{2\pi^2}\sum_{d'\ge 0} (2d'+1) \Tr\paren{\Sigma^{d'}_d \trans{\wbar{W^{d'}(g',-\wbar{\mu}, \psi_I)}} \int_{U(\R)} \psi_I(x) u(x) W^{d'}(y w_l x g',\mu, \psi_I) dx}
\end{align*}
(independent of the choice of $g'$ and $u$), but we have no way to justify interchanging the $Y$ integral in $U(\R)\backslash G \cong Yw_l U(\R)$ with the $d'$ sum.
(Here is where we might benefit from trading the representation theory of $K$ for the representation theory of $U(\R)$.)
\end{rem*}

\subsection{The easy orthogonality}
\label{sect:Ortho}
In \eqref{eq:CddpDef}, we may identify $Y$ with $U(\R)\backslash G / \trans{U(\R)}$ (up to a measure zero subspace), by using the extension of $K^d_{w_l}(y,\mu)$ to
\[ K^d_{w_l}(xyvw_lx'w_l,\mu) = \psi_I(x) K^d_{w_l}(y,\mu) \psi_I(x'), \]
provided we extend $f(y)$ in an identical manner, and these $K^d_{w_l}$ are eigenfunctions of the Casimir operators with eigenvalues matching $p_{\rho+\mu}$.
For $d_1 \ge 2$, we know that $K^{d_1}_{w_l}$ is an eigenfunction of the symmetric operator $\Lambda_{\frac{d_1-1}{2}}$ (as in \eqref{eq:LambdaX}) and the eigenvalues of this operator in $\frak{a}^{d_2}_0$ (see \eqref{eq:LambdaXEigen1} and \eqref{eq:LambdaXEigen2}) are
\[ \piecewise{\displaystyle \prod_{i<j} \paren{(d_1-1)^2-(\mu_i-\mu_j)^2} & \If d_2=0,1, \\[15pt] \displaystyle \prod_\pm (d_1\pm d_2)\paren{(d_1-1\pm \tfrac{d_2-1}{2})^2-9r^2} & \If d_2 \ge 2, \mu=\mu^{d_2}(r),} \]
which are bounded away from zero when $d_1 \ne d_2$.
Hence $C^{d_1,d_2}(\mu)=0$ whenever $d_1 \ne d_2$ and $d_1 \ge 2$ (and the same for $d_2 \ge 2$).

The above approach does not apply for the orthogonality between $d=0$ and $d=1$ since these satisfy the exact same differential equations; the Bessel functions for these weights are distinguished by their asymptotics.
(The different asymptotics imply different functional equations for the Bessel functions, but the author was unable to apply this directly to the orthogonality.)

\subsection{Computing $C^{d,d}(\mu)$}
\label{sect:Cdd}

For $d=0,1$ and $F(\mu)$ Schwartz-class, holomorphic on $\frak{a}^d_{\frac{1}{2}}$ and satisfying \eqref{eq:WallachsOrthoAssm} (recall $F^d_{g',u}(\mu)$ is such a function by the comments preceeding \cref{lem:WeirdBesselExpand}), let
\begin{align*}
	\what{F}^d(y) :=& \int_{\frak{a}^d_0} F(\mu) K^d_{w_l}(y,\mu) \sinmu^{d*}(\mu) \, d\mu,
\end{align*}
and compute
\begin{align*}
	\innerprod{\what{F}^d,\what{F'}^d}_Y =& \int_Y \int_{\frak{a}^d_0} \int_{\frak{a}^d_0} F(\mu) \wbar{F'(\mu')} K^d_{w_l}(y,\mu) \wbar{K^d_{w_l}(y,\mu')} \sinmu^{d*}(\mu) \, d\mu \, \wbar{\sinmu^{d*}(\mu') \, d\mu'} \, dy \\
	=& \int_Y \int_{\frak{a}^d_0} \int_{\frak{a}^d_0} \int_{Y^+} \frac{F(\mu) \wbar{F'(\mu')}}{\wtilde{\Psi}^d(\mu,\mu',1)} \Tr\biggl(\Sigma^d_d  K^d_{w_l}(y,\mu) W^d(t,\mu,\psi_I) \\
	& \qquad \times \trans{\wbar{K^d_{w_l}(y,\mu') W^d(t,-\wbar{\mu'},\psi_I)}} \biggr) t_1^2 t_2 \,dt \, \sinmu^{d*}(\mu) \, d\mu \, \wbar{\sinmu^{d*}(\mu') \, d\mu'} \, dy,
\end{align*}
where $\wtilde{\Psi}^d(\mu,\mu',t)$ is given by \eqref{eq:tildePsid}.
Note that $\wtilde{\Psi}^d(\mu,\mu',1)$ is never zero on $\frak{a}^d_0$.

For the moment, the combined $y$-$\mu'$-$\mu$-$t$ integral converges absolutely, so we may freely interchange integrals, and we do this now to pull the $t$ integral outermost, where it will remain until the final step.
Then applying the definition of the Bessel functions,
\begin{align*}
	\innerprod{\what{F}^d,\what{F'}^d}_Y =& \int_{Y^+} \int_Y \int_{U(\R)} \int_{\frak{a}^d_0} \int_{U(\R)} \int_{\frak{a}^d_0} \frac{F(\mu) \wbar{F'(\mu')}}{\wtilde{\Psi}^d(\mu,\mu',1)} \\
	& \qquad \times \Tr\paren{\Sigma^d_d W^d(yw_l xt,\mu,\psi_I) \trans{\wbar{W^d(y w_l x' t,-\wbar{\mu'},\psi_I)}}} \\
	& \qquad \times \wbar{\psi_I(x)} \psi_I(x') \sinmu^{d*}(\mu) \, d\mu \, dx \, \wbar{\sinmu^{d*}(\mu') \, d\mu'} \, dx' \, dy \, t_1^2 t_2 \, dt.
\end{align*}

Now substitute $x \mapsto t x t^{-1}$, $x' \mapsto t x' t^{-1}$, $y t^{w_l} \mapsto y$ and then $x' \mapsto x x'$ and interchange integrals.
This time, we need to be rather more careful:
The $y t^{w_l} \mapsto y$ can actually be done as soon as the $t$ integral is outermost, and we can trivially pull the $x$ integral outside the $\mu'$ integral to begin with.
The double $x$-$x'$ integral converges absolutely, so we may do those substitutions and end with the integrals in the outermost-to-innermost order $t$,$y$,$x'$,$x$,$\mu'$,$\mu$.
Since the triple $y$-$x'$-$x$ integral just fails to converge absolutely, we need to carefully justify interchanging the $x'$ and $y$ integrals.

We fix $\alpha:\R\to\R$ with $\alpha(0)=1$ whose Fourier transform is smooth and compactly supported, and define
\[ \int_{U(\R)}^* \cdots dx' = \lim_{R\to\infty} \int_{U(\R)} \alpha(x'_1/R) \alpha(x'_2/R) \cdots dx', \]
and we may replace $\int_{U(\R)}$ with $\int_{U(\R)}^*$ in the absolutely-convergent $x'$ integral by dominated convergence.
We must show the limit in $R$ can be pulled out of the $y$ integral, and again by dominated convergence, after reversing the subsitution $x' \mapsto x x'$, it is sufficient to show
\begin{lem}
\label{lem:InterchangeOfIntegrals}
For $d=0,1$, suppose
\[ \Lambda^d(\mu') F''(\mu') := \frac{\wbar{F'(-\wbar{\mu'})}}{\wtilde{\Psi}^d(\mu,\mu',1)} \sinmu^{d*}(\mu') \]
is holomorphic with rapid decay on $\frak{a}^d_{\frac{1}{2}}$.
For fixed $t\in(\R^+)^2$, and $R>0$ large compared to $t$, there exists some $\eta > 0$ so that
\[ \int_{U(\R)} \alpha\paren{\tfrac{x'_1-x_1}{R}} \alpha\paren{\tfrac{x'_2-x_2}{R}} \int_{\frak{a}^d_0} F''(\mu') W^{d*}_m(y w_l x',\mu',\psi_I) d\mu' \wbar{\psi_t(x')} dx' \ll \frac{(y_1 y_2)^{1+\eta}}{(1+y_1+y_2)^{4\eta}}, \]
independent of $R$ and $x$, for each $m$, and the implied constant depends polynomially on $\mu$.
\end{lem}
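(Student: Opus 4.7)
The plan is to extract the $y$-decay from the Whittaker function via its Mellin-Barnes representation, then reduce the remaining oscillatory integral over $x'$ to a one-dimensional stationary phase integral of the form treated in \cref{lem:InterchangeStationaryPhase}.

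First, use the Iwasawa decomposition $w_l x' = x^* y^* k^*$ of \cref{sect:ExplicitStars} to rewrite $W^{d*}_m(y w_l x', \mu', \psi_I)$ as $\psi_I(y x^* y^{-1}) \paren{W^{d*}(y y^*, \mu') \WigDMat{d}(k^*)}_m$, with $x^*, y^*, k^*$ given by \eqref{eq:wlxIwa} (here $y$ is conjugated through $x^*$ to collect the character factor, and $y$ is then multiplied into $y^*$). Apply the substitution \eqref{eq:wlxSub} in $(x_1', x_3')$ to pass to the simplified expressions \eqref{eq:wlxIwaSubd}. Now replace $W^{d*}(y y^*, \mu')$ by its Mellin-Barnes integral, the $W^{d*}$-analogue of \eqref{eq:WhittMellin}; shift the $\mu'$ contour slightly into $\frak{a}^d_{\delta'}$ for some $\delta' > \eta > 0$ to avoid the poles of $\what{W}^{d*}$ in $s$, then shift the $s$-contours to $\Re(s_1) = \Re(s_2) = -\eta$. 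By \cref{lem:GL3WhittMellinBd}, the shifted Mellin transform is holomorphic with polynomial bounds in $\mu'$ and exponential decay in $\Im(s)$, which combined with the rapid decay of $F''(\mu')$ renders both integrals absolutely convergent, uniformly in $x'$. The shift produces the factor $(y_1 y_1^*)^{1+\eta} (y_2 y_2^*)^{1+\eta}$, giving the target prefactor $(y_1 y_2)^{1+\eta}$.

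What remains is an integral of the shape
\begin{align*}
\int_{\R^3} \alpha\paren{\tfrac{x_1' - x_1}{R}} \alpha\paren{\tfrac{x_2' - x_2}{R}} \Phi(x')\, \e{y_1 x_1^* + y_2 x_2^* - t_1 x_1' - t_2 x_2'} dx',
\end{align*}
where $\Phi(x')$ is smooth with polynomial decay integrable in each $x_i'$, inherited from the $(1+(x_i')^2)^{-(1+\eta)/2}$ factors of $(y_i^*)^{1+\eta}$, and depends polynomially on $y, t, \mu$. A direct inspection of the $x_3'$-dependence of the phase after the substitution \eqref{eq:wlxSub} shows it takes the shape $A x_3' + B x_3'(1+(x_3')^2)^{-1/2}$ modulo bounded $(1+(x_3')^2)^{\pm 1/2}$-terms, with coefficients $A, B$ that depend on $y, t, x_1', x_2'$. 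A rescaling $x_3' \mapsto x_3'\sqrt{1+\cdots}$ (or a trigonometric analogue) converts the $x_3'$-integral into the model \eqref{eq:InterchangeLemInt} of \cref{lem:InterchangeStationaryPhase}, which, applied with $T$ a small power of $1 + y_1 + y_2$, supplies the outstanding $(1+y_1+y_2)^{-4\eta}$ decay.

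The main obstacle will be verifying the hypotheses \eqref{eq:InterchangeLemAssm1}--\eqref{eq:InterchangeLemAssm2} of \cref{lem:InterchangeStationaryPhase} uniformly in the auxiliary variables $(x_1', x_2', y, t)$ and across all four sign configurations of $y$; the transitional regions where the $x_3'$-stationary point coalesces with zero or escapes to infinity must be treated separately by direct integration by parts in $x_1'$ or $x_2'$, using the oscillation of $\wbar{\psi_t(x')}$. Independence from the cutoff scale $R$ and the shift $x$ is immediate for the $x_3'$-integral once $R$ exceeds the effective size of the stationary points (controlled by $t$), and the remaining absolutely convergent $x_1', x_2'$-integrations yield $R$-independent bounds from the uniform boundedness of $\alpha$ and the polynomial decay of $\Phi$.
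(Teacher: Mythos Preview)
Your overall plan --- Iwasawa decomposition, the substitution \eqref{eq:wlxSub}, Mellin expansion to extract the $(y_1y_2)^{1+\eta}$ factor, and then \cref{lem:InterchangeStationaryPhase} for the remaining $(1+y_1+y_2)^{-4\eta}$ --- is exactly the paper's strategy.  The gap is in which variable you run the stationary phase.

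After the substitution \eqref{eq:wlxSub} the phase is (see \cref{sect:InterchangeOfIntegrals})
\[
\phi(x)= -t_1 x_1\tfrac{\sqrt{1+x_3^2}}{\sqrt{1+x_2^2}}-t_1 \tfrac{x_2 x_3}{\sqrt{1+x_2^2}}-t_2 x_2+y_1 \tfrac{x_2}{1+x_2^2}+y_1\tfrac{x_1 x_3}{(1+x_2^2)\sqrt{1+x_3^2}}+y_2 \tfrac{x_1\sqrt{1+x_2^2}}{(1+x_1^2)\sqrt{1+x_3^2}}.
\]
The $x_3$-dependence here is through $\sqrt{1+x_3^2}$ and $(1+x_3^2)^{-1/2}$, which is \emph{not} of the shape $Ax+B\tfrac{x}{1+x^2}$ that \cref{lem:InterchangeStationaryPhase} handles; no rescaling or trigonometric substitution reduces it to that model.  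What \emph{does} have the required form is the $x_1$-dependence: the first and fifth terms give the linear piece $Ax_1$, and the last term gives $B\tfrac{x_1}{1+x_1^2}$ with $B\asymp y_2$ (the large parameter, after arranging $y_2\ge y_1$).  This is why the paper applies \cref{lem:InterchangeStationaryPhase} to the $x_1$-integral.

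There is also a second layer you have not anticipated.  The $x_1$-stationary phase saves $T^\eta$ \emph{unless} $x_3$ is confined to a short interval around a root of a quadratic; a pigeonhole argument on $x_3$ then shows this bad set is negligible unless $X_3$ is small, and in that regime the hypotheses \eqref{eq:InterchangeLemAssm2} are met so a \emph{second} application of the lemma (still in $x_1$) finishes.  The transition regions you flag as the ``main obstacle'' are handled by this two-pass argument rather than by integration by parts in $x_1',x_2'$.
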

The proof of this fact is deferred to \cref{sect:InterchangeOfIntegrals}.

Then we may interchange the $x'$ and $y$ integrals to arrive at
\begin{align*}
	\innerprod{\what{F}^d,\what{F'}^d}_Y =& 8\pi^2 \int_{Y^+} \int_{U(\R)}^* \int_{\frak{a}^d_0} \int_{U(\R)\backslash G} \int_{\frak{a}^d_0} \frac{F(\mu) \wbar{F'(\mu')}}{\wtilde{\Psi}^d(\mu,\mu',1)} \Tr\paren{\Sigma^d_d W^d(g,\mu,\psi_I) \trans{\wbar{W^d(g x',-\wbar{\mu'},\psi_I)}}} \\
	& \qquad \times \sinmu^{d*}(\mu) \, d\mu \, dg \, \wbar{\sinmu^{d*}(\mu') \, d\mu'} \, \psi_t(x') dx' \, t_1^4 t_2^3 \, dt,
\end{align*}
using \eqref{eq:dgBruhat}.

View the $g$ integral as a function of $x'\in G$, say
\[ \wbar{H(x')} = \int_{U(\R)\backslash G} \int_{\frak{a}^d_0} \frac{F(\mu) \wbar{F'(\mu')}}{\wtilde{\Psi}^d(\mu,\mu',1)} \trans{\wbar{W^d(g x',-\wbar{\mu'},\psi_I)}} \Sigma^d_d W^d(g,\mu,\psi_I) \sinmu^{d*}(\mu) \, d\mu \, dg. \]
Then for $k,k' \in K$, we have $H(kx'k') = \WigDMat{d}(k)H(x')\WigDMat{d}(k')$, and $H$ is an eigenfunction of every operator of the form \eqref{eq:KinvY} with eigenvalues matching $h^d_{-\wbar{\mu'}}$.
Then we use \cref{thm:GodeTrans} to see $\Tr(H(x')) = \Tr(H(I)) h^d_{\mu'}(x')$, so that
\begin{align*}
	\innerprod{\what{F}^d,\what{F'}^d}_Y =& 8\pi^2 \int_{Y^+} \int_{U(\R)}^* \int_{\frak{a}^d_0} \int_{U(\R)\backslash G} \int_{\frak{a}^d_0} \frac{F(\mu) \wbar{F'(\mu')}}{\wtilde{\Psi}^d(\mu,\mu',1)} \\
	& \qquad \times \Tr\paren{\Sigma^d_d W^d(g,\mu,\psi_I) \trans{\wbar{W^d(g,-\wbar{\mu'},\psi_I)}}} \\
	& \qquad \times \sinmu^{d*}(\mu) \, d\mu \, dg \, \wbar{h^d_{-\wbar{\mu'}}(x') \sinmu^{d*}(\mu') \, d\mu'} \, \psi_t(x') dx' \, t_1^4 t_2^3 \, dt\biggr).
\end{align*}

Now applying Whittaker inversion \eqref{eq:WhittInv} (here is where we need \eqref{eq:WallachsOrthoAssm}), we have
\begin{align*}
	\innerprod{\what{F}^d,\what{F'}^d}_Y =& 8\pi^2 \int_{Y^+} \int_{U(\R)}^* \int_{\frak{a}^d_0} \frac{F(\mu') \wbar{F'(\mu')}}{\wtilde{\Psi}^d(\mu',\mu',1)} \wbar{h^d_{-\wbar{\mu'}}(x') \sinmu^{d*}(\mu') \, d\mu'} \, \psi_t(x') dx' \, t_1^4 t_2^3 \, dt.
\end{align*}

For a fixed value of $R$, the $x'$ integral now converges absolutely, and in fact
\begin{prop}
\label{lem:FourSphFunAlpha}
For $t\in\R^2$ with $t_1 t_2 \ne 0$, $d=0,1$ and $\mu \in \frak{a}^d_0$,
\[ \int_{U(\R)}^* h^d_\mu(x) \wbar{\psi_t(x)} dx = \frac{1}{2\pi^2} \Tr\paren{\trans{\wbar{W^d(I,-\wbar{\mu},\psi_t)}}\Sigma^d_d W^d(I,\mu,\psi_t)}, \]
and the $R$-limit converges normally in $\mu$.
\end{prop}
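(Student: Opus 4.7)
The strategy is in three steps: first, rewrite the spherical function as a single integral over $U(\R)$ using the $V\backslash K \cong U(\R)$ correspondence; second, apply Fourier analysis to the regularization $\alpha$ to decouple the resulting double integral into Jacquet integrals; third, take the $R$-limit.

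\textit{Step 1 (Bruhat simplification of $h^d_\mu$).} Parametrize $V\backslash K$ by $U(\R)$ via $\bar k \mapsto k(w_l u)$ as in \eqref{eq:dbark}, so the $K$-integral in \eqref{eq:SphFunDef} becomes $\frac{1}{8\pi^2}\sum_{v\in V}\int_{U(\R)}$. For $d\in\{0,1\}$ the matrix $\Sigma^{d*}_d = \Sigma^d_d = \Sigma^d_{\chi_d}$ projects onto a real character $\chi_d$ of $V$ with $\chi_d^2=1$, so $\trans{\wbar{\WigDMat{d}(v)}}\Sigma^{d*}_d\WigDMat{d}(v) = \Sigma^{d*}_d$ and the $v$-sum contributes a factor of $4$. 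Write $w_l u = u^* y^*(u) k(w_l u)$; the scalar identity $I^d_\nu(w_l u) = p_{\rho+\nu}(y^*)\WigDMat{d}(k(w_l u))$ gives
\[
I^d_\mu(k(w_l u)x) = p_{-\rho-\mu}(y^*)I^d_\mu(w_l u x),\qquad \trans{\wbar{\WigDMat{d}(k(w_l u))}} = p_{-\rho+\mu}(y^*)\trans{\wbar{I^d_{-\wbar\mu}(w_l u)}},
\]
the latter using that $y^*$ is positive real. The three $p$-factors combine with $p_{2\rho}(y^*)$ from the measure to unity, yielding
\[
h^d_\mu(x) = \frac{1}{2\pi^2}\int_{U(\R)} \Tr\paren{\trans{\wbar{I^d_{-\wbar\mu}(w_l u)}} \Sigma^{d*}_d I^d_\mu(w_l u x)} du,
\]
which converges absolutely by \cref{lem:JacAbsConv} applied with $t=2\rho$ (the $\pm\mu$ contributions from the two factors cancel along $\Re(\mu)=0$).

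\textit{Step 2 (decoupling).} Substitute into $\int^*_{U(\R)} h^d_\mu(x)\wbar{\psi_t(x)}dx$. For fixed $R$, Fubini lets us bring the $u$-integral outside, and we change variables $x' = ux$ in the $x$-integral. Since $\psi_t$ factors through the abelianization of $U(\R)$, $\wbar{\psi_t(u^{-1}x')} = \psi_t(u)\wbar{\psi_t(x')}$, while the regularization becomes $\alpha\paren{\tfrac{x'_1-u_1}{R}}\alpha\paren{\tfrac{x'_2-u_2}{R}}$. Using $\alpha(y) = \int \hat\alpha(\xi)\e{\xi y}d\xi$ and setting $\xi = R\eta$, this factor equals $R^2\int\hat\alpha(R\eta_1)\hat\alpha(R\eta_2)\e{\eta\cdot(x'-u)}d\eta$. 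The exponential decouples the $u$- and $x'$-integrals and shifts the character from $\psi_t$ to $\psi_{t-\eta}$, giving
\[
\int_{U(\R)}^* h^d_\mu(x) \wbar{\psi_t(x)} dx = \frac{1}{2\pi^2}\int R^2\hat\alpha(R\eta_1)\hat\alpha(R\eta_2)\Tr\paren{\trans{\wbar{W^d(I,-\wbar\mu,\psi_{t-\eta})}} \Sigma^{d*}_d W^d(I,\mu,\psi_{t-\eta})} d\eta.
\]

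\textit{Step 3 ($R$-limit).} Substitute $\eta\mapsto\eta/R$ so the $\eta$-integral is supported on the fixed compact support of $\hat\alpha$, and apply dominated convergence. The Whittaker function is continuous in $t$ away from $t_1 t_2 = 0$, so the integrand converges pointwise to $\hat\alpha(\eta_1)\hat\alpha(\eta_2)$ times the $\eta$-independent trace at $\psi_t$. The identity $\int\hat\alpha(\eta_1)\hat\alpha(\eta_2)d\eta = \alpha(0)^2 = 1$ then produces the claimed formula, and normal convergence in $\mu$ on compact subsets of $\frak{a}^d_0$ is inherited from the corresponding uniformity of the Whittaker function.

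\textbf{Main obstacle.} The delicate point is the factorization in Step 2: neither $W^d(I,\mu,\psi_{t-\eta})$ nor $W^d(I,-\wbar\mu,\psi_{t-\eta})$ converges absolutely on $\frak{a}^d_0$, and the two have incompatible regions of absolute convergence, so one cannot proceed simply by deforming the $\mu$-contour into a region where both Jacquet integrals are absolute. The factorization must instead be understood via analytic continuation of each Jacquet integral separately from the region $\Re(\mu_1-\mu_2),\Re(\mu_2-\mu_3)>0$ where \cref{lem:JacAbsConv} applies. The compactness of the support of $\hat\alpha$ confines $\eta$ to a bounded set, which is essential both for the interchange of $\eta$-integration with the Jacquet-integral limits and for the dominated convergence argument; standard Mellin-Barnes bounds for the Whittaker function (cf.\ \cref{lem:GL3WhittMellinBd}) furnish uniform majorants over compact subsets of $\frak{a}^d_0\times\supp\hat\alpha$.
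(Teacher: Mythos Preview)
Your overall outline matches the paper's proof in \cref{sect:FourSph}: the same Bruhat rewriting of $h^d_\mu$, the same substitution $x'\mapsto ux$, the same Fourier expansion of $\alpha$ to shift the character. The gap is precisely in the resolution of the obstacle you name. After Fourier-expanding $\alpha$, the integrand for fixed $\eta$ has absolute value $p_\rho(w_l u)\,p_\rho(w_l x')$ on $\frak{a}^d_0$, and by Tonelli $\iint p_\rho(w_l u)p_\rho(w_l x')\,du\,dx' = \bigl(\int p_\rho(w_l u)\,du\bigr)^2 = \infty$, so the decoupled double integral does \emph{not} converge absolutely; the convergence of the original six-fold integral depends essentially on the coupling through $\alpha((x'_i-u_i)/R)$. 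Hence you cannot write the inner integral as a product of two Whittaker values on $\frak{a}^d_0$ directly, and ``analytic continuation of each Jacquet integral separately from $\Re(\mu_1-\mu_2),\Re(\mu_2-\mu_3)>0$'' cannot work as stated, since that region makes the $x'$-integral absolute but the $u$-integral (at $-\wbar\mu$) divergent, and vice versa; there is no common chamber.

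The paper fills this gap with a genuine piece of analysis you have not supplied. It first establishes absolute convergence of the coupled six-fold integral on a strip $\abs{\Re(\mu_i-\mu_j)}<\tfrac14-2\delta$ (via the explicit $X_3$ estimates), then analytically continues $\mu$ into the chamber $\Re(\mu_1)<\Re(\mu_2)<\Re(\mu_3)$ so that the $u$-integral alone is absolutely convergent and equals $\trans{\wbar{W^d(I,-\wbar\mu,\psi_{t'})}}$. For the remaining $x$-integral, which is \emph{not} absolute in that chamber, the paper introduces a second sharp truncation $\int_{[-R',R']^2\times\R}$ and proves, by an integration-by-parts argument in $x_1,x_2$ using the derivative formulas for $X_3$, that this $R'$-limit converges normally in $\mu$ on the strip; normal convergence then allows analytic continuation from the \emph{opposite} chamber (where the $x$-integral is absolute and visibly equals $W^d(I,\mu,\psi_{t'})$). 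Your appeal to \cref{lem:GL3WhittMellinBd} bounds only the limiting Whittaker functions, not the truncated Jacquet integrals, so it does not substitute for this step.
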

The proof of this fact is deferred to \cref{sect:FourSph}.

Using \cref{lem:FourSphFunAlpha} and \eqref{eq:WhittGFEs}, the $x'$ integral may be evaluated as
\begin{align*}
	\innerprod{\what{F}^d,\what{F'}^d}_Y =& 4 \int_{Y^+} \int_{\frak{a}^d_0} \frac{F(\mu') \wbar{F'(\mu')}}{\wtilde{\Psi}^d(\mu',\mu',1)} \Tr\paren{\trans{W^d(t,\mu',\psi_I)} \Sigma^d_d \wbar{W^d(t,-\wbar{\mu'},\psi_I)}} \wbar{\sinmu^{d*}(\mu') \, d\mu'}\, t_1^2 t_2 \, dt.
\end{align*}

Applying \eqref{eq:tildePsid}, this becomes
\begin{align*}
	\innerprod{\what{F}^d,\what{F'}^d}_Y =& 4 \int_{\frak{a}^d_0} F(\mu) \wbar{F'(\mu')} \wbar{\sinmu^{d*}(\mu') \, d\mu'}.
\end{align*}

Taking $F'(\mu)$ to be an approximation to the identity, we have \eqref{eq:CddEval}.
(Since $F$ and $F'$ both satisfy \eqref{eq:WallachsOrthoAssm}, this does not interfere with the construction of an approximation to the identity.)

\subsection{Stade's formula for $d=0 \times d=1$}
\label{sect:d01Stade}
In order to apply the method of the previous section to the orthogonality of the $d=0$ and $d=1$ Bessel functions, we need some non-zero integral of the Whittaker functions in question.
We want to compute
\[ \Psi^{0,1}(\mu,\mu',t) := \frac{1}{2} \sum_\pm \int_{Y^+} W^{0*}(y,\mu) W^{1*}_{\pm1}(y,\mu') (y_1^2 y_2)^t dy, \]
and we do this by following along \cite[Section 5]{WeylI}.
This linear combination is chosen so that parity considerations give $\Psi^{0,1} = \Psi^{0,1}_{0,0}$, where by Mellin-Parseval, (using $\wtilde{G}^d$ as in \cref{sect:MinWtWhitt})
\begin{align*}
	16\pi^{3t} \Psi^{0,1}_{\ell_1,\ell_2} =& \int_{\Re(s)=\frak{s}} \wtilde{G}^0((0,0),s,\mu) \wtilde{G}^1((\ell_1,\ell_2),(2t_1-s_1,t_2-s_2),\mu') \frac{ds}{(2\pi i)^2}.
\end{align*}
The usual process (essentially due to Stade \cite[Lemma 2.1]{Stade02}, see \cite[Section 5]{WeylI}) of applying Barnes' first lemma (\cref{thm:BarnesFirst}) in reverse to produce a four-dimensional Mellin-Barnes integral
\begin{align*}
	16\pi^{3t} \Psi^{0,1}_{0,0} &= \tfrac{1}{4} \int_{\Re(u)=\frak{u}} \int_{\Re(s)=\frak{s}} \Gamma\paren{\tfrac{s_1+\mu_3+u_1}{2}}\Gamma\paren{\tfrac{s_2+u_1}{2}}\Gamma\paren{\tfrac{\mu_1-u_1}{2}}\Gamma\paren{\tfrac{\mu_2-u_1}{2}} \\
	& \qquad \times \Gamma\paren{\tfrac{1+t-s_1+\mu'_3+u_2}{2}}\Gamma\paren{\tfrac{-s_2+u_2}{2}}\Gamma\paren{\tfrac{t+\mu'_1-u_2}{2}}\Gamma\paren{\tfrac{t+\mu'_2-u_2}{2}} \\
	& \qquad \times \Gamma\paren{\tfrac{s_1-\mu_3}{2}} \Gamma\paren{\tfrac{s_2+\mu_3}{2}} \Gamma\paren{\tfrac{2t-s_1-\mu'_3}{2}} \Gamma\paren{\tfrac{1+t-s_2+\mu'_3}{2}} \frac{ds}{(2\pi i)^2} \frac{du}{(2\pi i)^2},
\end{align*}
followed by applying Barnes' first lemma to the $s$-integrals gives
\begin{align*}
	16\pi^{3t} \Psi^{0,1}_{0,0} &= \Gamma\paren{\tfrac{2t-\mu_3-\mu'_3}{2}} \Gamma\paren{\tfrac{1+t+\mu_3+\mu'_3}{2}} \\
	& \times \int_{\Re(u)=\frak{u}} \frac{\Gamma\paren{\tfrac{\mu_1-u_1}{2}}\Gamma\paren{\tfrac{\mu_2-u_1}{2}} \Gamma\paren{\tfrac{2t+\mu_3-\mu'_3+u_1}{2}} \Gamma\paren{\tfrac{u_1+u_2}{2}} \Gamma\paren{\tfrac{1+t+\mu'_3+u_1}{2}}}{\Gamma\paren{\tfrac{1+3t+u_1+u_2}{2}}} \\
	& \qquad \times \Gamma\paren{\tfrac{1+t-\mu_3+\mu'_3+u_2}{2}} \Gamma\paren{\tfrac{\mu_3+u_2}{2}} \Gamma\paren{\tfrac{t+\mu'_1-u_2}{2}}\Gamma\paren{\tfrac{t+\mu'_2-u_2}{2}} \frac{du}{(2\pi i)^2}.
\end{align*}

Applying Barnes' second lemma (\cref{thm:BarnesSecond}) to first the $u_1$- and then the $u_2$-integral gives
\begin{align}
\label{eq:Psi01Eval}
	\Psi^{0,1}(\mu,\mu',t) =& \frac{1}{4\pi^{3t} \Gamma\paren{\tfrac{1+3t}{2}}} \prod_{i,j} \Gamma\paren{\tfrac{\delta_{j=3}+t+\mu_i+\mu'_j}{2}}.
\end{align}
We note that this is non-zero for all $\mu$, $\mu'$ at $t=1$.

One might hope that this is related to the Rankin-Selberg convolution of a $d=0$ and a $d=1$ Maass cusp form, but the $V$ sum in the Fourier-Whittaker expansion \eqref{eq:CuspFourierExpand} prevents this from working.
(See the analysis of the next section.)

\subsection{The orthogonality of $d=0,1$}
\label{sect:d01Ortho}
If we follow along the method of \cref{sect:Cdd} using $\Psi^{0,1}$ in place of $\wtilde{\Psi}^d$ and dropping the assumption \eqref{eq:WallachsOrthoAssm}, we have
\begin{align*}
	\innerprod{\what{F}^0,\what{F'}^1}_Y =& \frac{1}{8\pi^2} \int_{Y^+} \int_{U(\R)} \int_{\frak{a}^1_0} \int_{U(\R)\backslash G} \int_{\frak{a}^0_0} \frac{F(\mu) \wbar{F'(\mu')}}{\wtilde{\Psi}^{0,1}(\mu,\mu',1)} \Matrix{\tfrac{1}{2}&0&\tfrac{1}{2}} \trans{\wbar{W^{1*}(g x',-\wbar{\mu'},\psi_I)}} \\
	& \qquad \times W^{0*}(g,\mu,\psi_I) \sinmu^{0*}(\mu) \, d\mu \, dg \, \wbar{\sinmu^{1*}(\mu') \, d\mu'} \, \psi_t(x') dx' \, t_1^4 t_2^3 \, dt.
\end{align*}

Then the $g$-integral, as a function of $x'$ may be written as
\[ \Matrix{\tfrac{1}{2}&0&\tfrac{1}{2}} \trans{\wbar{H(x')}}, \]
for a row-vector valued function $H(x')$ which transforms as
\[ H(kyk') = H(y) \WigDMat{1}(k'). \]
Thus it is sufficient to determine $H(y) = \Matrix{H_{-1}(y)&H_0(y)&H_1(y)}$, but $Y^+$ and $V$ commute, so we have
\[ H(y) = H(\vpmpm{\varepsilon_1,\varepsilon_2} y) = H(y) \WigDMat{1}(\vpmpm{\varepsilon_1,\varepsilon_2}) = \varepsilon_2 \Matrix{\varepsilon_1 H_{-\varepsilon_2}(y)&H_0(y)&\varepsilon_1 H_{\varepsilon_2}(y)}. \]
We conclude that $H(y)=0$, and hence
\[ \innerprod{\what{F}^0,\what{F'}^1}_Y = 0. \]
Then taking either $F$ or $F'$ to be an approximation to the identity implies \eqref{eq:C1001}.
(This is why we must drop the assumption \eqref{eq:WallachsOrthoAssm}.)

\subsection{The interchange of integrals}
\label{sect:InterchangeOfIntegrals}
Here we prove \cref{lem:InterchangeOfIntegrals}.
Writing
\[ T=\abs{y_1^{-1}+y_1}\abs{y_2^{-1}+y_2}, \]
and $x^* y^* k^*=w_l x$, we are trying to prove
\begin{align*}
	& \int_{U(\R)} \alpha\paren{\tfrac{x'_1-x_1}{R}} \alpha\paren{\tfrac{x'_2-x_2}{R}} \int_{\frak{a}^d_0} F''(\mu') W^{d*}(y y^*,\mu',\psi_I) d\mu' \\
	& \qquad \times \WigDMat{d}(k^*) \e{-t_1 x'_1-t_2 x'_2+y_1 x^*_1+y_2 x^*_2} dx' \ll \abs{y_1 y_2} T^{-\eta},
\end{align*}
for some $\eta > 0$.
In other words, we need to save a factor of $T^\eta$ over the trivial bound.
By sending $x' \mapsto \vpmpm{--}(x')^\iota\vpmpm{--}$ as necessary, we may assume $y_2 \ge y_1$.

We write $f(yy^*)$ for the $\mu'$ integral, and note that by the assumptions on $F''(\mu)$ and \cref{lem:WhittBound}, we have the bound
\[ f(y') \ll \abs{y_1' y_2'}\prod_{i=1}^2 \abs{y_i'+1/y_i'}^{-1/3}, \]
as always, with polynomial dependence of the implied constant on $\mu$.
In particular, we may assume $y_2 \ge 1$ and $T > 6^{1/\eta}$ since otherwise replacing $f(y y^*)$ with $\abs{y_1^* y_2^*}^{1+\eta}$ is sufficient for absolute convergence of the $x'$ integral, independent of $R$ and $x$.

We may simplify slightly by Fourier-expanding $\alpha$ and showing the bound
\[ \mathcal{I} := \int_{U(\R)} f(y y^*) \WigDMat{d}(k^*) \e{-t'_1 x'_1-t'_2 x'_2+y_1 x^*_1+y_2 x^*_2} dx' \ll \abs{y_1 y_2} T^{-\eta}, \]
with $t_i' = t_i+\xi_i/R$, where $\xi_i$ is the variable introduced by the Fourier inversion.
Since $t$ is fixed and $\hat{\alpha}$ is compactly supported, we may assume $R$ is large enough that $t_i' \asymp 1$.

We use the computations of \cref{sect:ExplicitStars} after the substitution \eqref{eq:wlxSub}.
We may apply smooth partitions of unity (we may Mellin expand the original $f$ as necessary), and we drop the primes for convenience, so that
\begin{align*}
	\mathcal{I} \ll& \log^3 T \sup_{X_1,X_2,X_3, k_1,k_2,k_3} \wtilde{\mathcal{I}}, \\
	\wtilde{\mathcal{I}} :=& X_3 \int_{U(\R)} w(x) \e{\phi(x)} dx, \\
	w(x) :=& \tilde{f}\paren{\frac{(1+x_2^2) X_3^2}{X_2^2 y_1^{4/3} y_2^{2/3}},\frac{(1+x_1^2) X_3^2}{X_1^2 y_1^{2/3} y_2^{4/3}},\frac{x_3^2}{X_3^2}} \paren{\frac{1+ix_1}{\sqrt{1+x_1^2}}}^{k_1} \paren{\frac{1+ix_2}{\sqrt{1+x_2^2}}}^{k_2} \paren{\frac{1+ix_3}{\sqrt{1+x_3^2}}}^{k_3}, \\
	\phi(x) :=& -t_1 x_1\frac{\sqrt{1+x_3^2}}{\sqrt{1+x_2^2}}-t_1 \frac{x_2 x_3}{\sqrt{1+x_2^2}}-t_2 x_2+y_1 \frac{x_2}{1+x_2^2} \\
	& \qquad +y_1\frac{x_1 x_3}{(1+x_2^2)\sqrt{1+x_3^2}}+y_2 x_1\frac{\sqrt{1+x_2^2}}{(1+x_1^2)\sqrt{1+x_3^2}},
\end{align*}
where $\tilde{f}$ is smooth and compactly supported on $[\frac{1}{2},2]^3$, $k_1,k_2,k_3\in\Z$, $\abs{k_i} \le d$, and
\begin{align}
\label{eq:Xbds}
	T^{-2\eta} < X_1,X_2 < T^{2\eta}, \qquad T^{-2\eta} < X_3 < \sqrt{2} \Min{X_1 y_1^{1/3} y_2^{2/3},X_2 y_1^{2/3} y_2^{1/3}}.
\end{align}
The lower bound on $X_3$ follows because we can trivially bound the integral in the range $\abs{x_3} < 2 T^{-2\eta}$ and the upper bound follows from $1+x_1^2, 1+x_2^2 \ge 1$; the bounds on $X_1,X_2$ follow by the polynomial decay of $f$.
Now it is sufficient to show
\begin{align*}
	\wtilde{\mathcal{I}} \ll& \abs{y_1 y_2} T^{-\eta}
\end{align*}
for some $\eta > 0$ and $t$ in some fixed, compact set, and we take $\eta$ in the statement of the lemma to be $\eta/2$ to account for the $\log^3 T$.

For $k \in \Z$, we have
\[ \frac{d}{dx} \paren{\frac{1+ix}{\sqrt{1+x^2}}}^k = \frac{k i}{1+x^2}\paren{\frac{1+ix}{\sqrt{1+x^2}}}^k, \]
so
\begin{align*}
	\partial_{x_1} w(x) =& \paren{\frac{2 x_1 X_3^2}{X_1^2 y_1^{2/3} y_2^{4/3}} \tilde{f}_2\paren{\ldots}+\frac{k_1 i}{1+x_1^2}\tilde{f}\paren{\ldots}} \paren{\ldots}^{k_1} \paren{\ldots}^{k_2} \paren{\ldots}^{k_3} \ll \frac{X_3}{X_1 y_1^{1/3} y_2^{2/3}},
\end{align*}
using
\[ \frac{1}{1+x_1^2} \le \frac{1}{\sqrt{1+x_1^2}}, \qquad \tilde{f}_2(u_1,u_2,u_3) := \partial_{u_2} \tilde{f}(u_1,u_2,u_3). \]
Similarly,
\[ \partial_{x_1}^n w(x) \ll \paren{\frac{X_3}{X_1 y_1^{1/3} y_2^{2/3}}}^n. \]

We will apply \cref{lem:InterchangeStationaryPhase} with
\[ A = -t_1 \frac{\sqrt{1+x_3^2}}{\sqrt{1+x_2^2}}+y_1\frac{x_3}{(1+x_2^2)\sqrt{1+x_3^2}}, \qquad B=y_2\frac{\sqrt{1+x_2^2}}{\sqrt{1+x_3^2}}, \qquad C=\frac{X_1 y_1^{1/3} y_2^{2/3}}{\sqrt{1+x_3^2}}, \]
so that
\[ B \asymp \frac{X_2 y_1^{2/3} y_2^{4/3}}{X_3^2}, \qquad C \asymp \frac{X_1 y_1^{1/3} y_2^{2/3}}{X_3}, \]
and
\[ A = -\frac{t_1}{\sqrt{1+x_2^2}\sqrt{1+x_3^2}}\paren{x_3^2-\paren{\frac{y_1}{t_1\sqrt{1+x_2^2}}} x_3+1} \asymp \frac{\abs{x_3-r_+}\abs{x_3-r_-}}{X_2 y_1^{2/3} y_2^{1/3}}, \]
with
\[ r_\pm=\frac{y_1}{2t_1\sqrt{1+x_2^2}}\pm\sqrt{\frac{y_1^2}{4t_1^2(1+x_2^2)}-1}, \]
which might be complex.
Note that \eqref{eq:Xbds} implies $C \gg 1$ and
\[ \frac{B}{C^2} \asymp \frac{X_2}{X_1^2}. \]

By \cref{lem:InterchangeStationaryPhase}, the $x_1$ integral saves a factor $T^\eta$ unless
\begin{align}
\label{eq:Pigeons}
	\abs{x_3-r_+}\abs{x_3-r_-} \ll X_3^2 T^\eta \Max{\frac{y_1^{1/3} X_2}{y_2^{1/3} X_1 X_3}, \frac{X_2^2 y_1^{2/3} y_2^{1/3}}{X_1^2 X_3^2}}.
\end{align}
On the other hand, if $x_3$ is constrained to an interval of length less than $X_3 T^{-\eta}$ then that portion of the $x_3$ integral saves a factor $T^\eta$.
By the pigeonhole principle applied to \eqref{eq:Pigeons}, we conclude that $\wtilde{\mathcal{I}}$ is sufficiently small unless
\[ \Max{\frac{y_1^{1/3} X_2}{y_2^{1/3} X_1 X_3}, \frac{X_2^2 y_1^{2/3} y_2^{1/3}}{X_1^2 X_3^2}} \gg T^{-3\eta}, \]
or in other words, unless $X_3 \ll y_1^{1/3} y_2^{1/6} T^{11\eta}$.

If $X_3 \ll y_1^{1/3} y_2^{1/6} T^{11\eta}$, then
\[ \abs{y_1}^{-1} \ll \abs{y_2}^{1+\frac{\frac{1}{2}+39\eta}{1+39\eta}}, \qquad T \ll \abs{y_2}^{2+\frac{\frac{1}{2}+39\eta}{1+39\eta}}, \]
and for $\eta < \frac{1}{111}$,
\[ \frac{B}{C} \gg \Max{T^{2\eta}, C^{1/4}}, \qquad \frac{B}{C^2} \ll T^{-\eta} C, \]
so a second application of \cref{lem:InterchangeStationaryPhase} implies that $\wtilde{\mathcal{I}}$ is suffciently small in this case, as well.

\subsection{The Fourier transform of a spherical function}
\label{sect:FourSph}
Here we prove \cref{lem:FourSphFunAlpha}; the proof follows that of \cite[Lemma 3]{Me01}.
By sending $x \mapsto \vpmpm{--}x^\iota\vpmpm{--}$ and using the $K$-invariance (under conjugation) of $h^d_\mu$, it's enough to prove
\[ \int_{U(\R)}^* h^d_\mu(\trans{x}) \wbar{\psi_t(x)} dx = \frac{1}{2\pi^2} \Tr\paren{\trans{\wbar{W^d(I,-\wbar{\mu},\psi_{t^\iota})}}\Sigma^d_d W^d(I,\mu,\psi_{t^\iota})}. \]

Using the analysis of \cite[Section 4.3]{Me01}, which naturally extends to the non-spherical case, we may write the spherical function as
\begin{align}
\label{eq:AltSphFun}
	h^d_\mu(g) = \frac{1}{2\pi^2} \Tr\paren{\int_{U(\R)} \trans{\wbar{I^d_{-\wbar{\mu}}(\trans{u})}} \Sigma^{d*}_d I^d_{\mu}(\trans{u} g) du}.
\end{align}
The proof of this fact is simply the conversion \eqref{eq:dbark} (and the following comment) and the $V$-invariance of the original integrand of \eqref{eq:SphFunDef}:
\begin{align*}
	h^d_\mu(g) = \int_{V\backslash K} \Tr\paren{\trans{\wbar{\WigDMat{d}(\wbar{k})}} \Sigma^{d*}_d I^d_{\mu}(\wbar{k}g)} d\wbar{k},
\end{align*}
plus some manipulations of the Iwasawa decomposition of $\trans{u}=x^* y^* k^*$:
\[ I^d_\mu(k^* g) = I^d_\mu((y^*)^{-1} (x^*)^{-1} \trans{u} g) = p_{-\rho-\mu}(y^*) I^d_\mu(\trans{u} g), \qquad p_{\rho-\mu}(y^*)\trans{\wbar{\WigDMat{d}(k^*)}} = \trans{\wbar{I^d_{-\wbar{\mu}}(\trans{u})}}.  \]

We need a reasonably precise statement of the convergence and asymptotics of the $u_3$ integral in such an integral representation:
For $x_1,x_2\in\R$, $0 \le n_1, n_2,n_3 \in \Z$, $s_1,s_2 \in \C$, with $n_1+n_2+2s_1+2s_2 < -1$, define $X_3(x_1,x_2, n_1, n_2, n_3, s_1, s_2)$ by the integral
\begin{align*}
	&\int_{-\infty}^\infty \paren{\frac{x_3}{\sqrt{\xi_1}}}^{n_1} \paren{\frac{x_1 x_2-x_3}{\sqrt{\xi_2}}}^{n_2} \paren{\frac{\sqrt{\xi_1}}{\sqrt{1+x_2^2}\sqrt{\xi_2}}}^{n_3} \xi_1^{s_1} \xi_2^{s_2} dx_3,
\end{align*}
where
\[ \xi_1=1+x_2^2+x_3^2, \qquad \xi_2=1+x_1^2+(x_1 x_2-x_3)^2. \]

\claim For $\epsilon>0$, set $t_i = \Max{-\frac{1}{2}+\epsilon,\Re(s_i)}$, $i=1,2$.
Suppose $\Re(s_1),\Re(s_2) < \delta < 0$, $\Re(s_1+s_2) <-\frac{1}{2}-\delta$ and $\epsilon < \Min{\delta,\frac{1}{4}}$ so that $-2 < 2t_1+2t_2 < -1$, then
\[ X_3 \ll_{\epsilon,\delta} (1+x_2^2)^{\Re(s_1)-t_1} (1+x_1^2)^{\Re(s_2)-t_2} \abs{x_1 x_2}^{1+2t_1+2t_2}. \]
If instead $\Re(s_1),\Re(s_2) < -\frac{1}{2}-\delta$ and $\abs{x_1}\abs{x_2} > 1$, we have
\begin{align*}
	X_3 \ll_\delta& \abs{x_1}^{2\Re(s_2)}(1+x_2^2)^{\Re(s_1+s_2)+\frac{1}{2}}+\abs{x_2}^{2\Re(s_1)}(1+x_1^2)^{\Re(s_1+s_2)+\frac{1}{2}}\\
	& \qquad +\abs{x_1}^{2\Re(s_2)+1}\abs{x_2}^{2\Re(s_1)+1}(1+x_1^2)^{\Re(s_1)}(1+x_2^2)^{\Re(s_2)}.
\end{align*}
\begin{proof}
Since the quantities $\alpha_i$ in \eqref{eq:wlxIwa} all lie on the unit circle and $\abs{x_1 x_2-x_3} \le \sqrt{\xi_2}$, the $n_i$ powers are all $\le 1$ in absolute value and we have
\begin{align}
\label{eq:X3bdFirst}
	X_3 \ll& \int_{-\infty}^\infty (1+x_2^2+x_3^2)^{\Re(s_1)} (1+x_1^2+(x_1 x_2-x_3)^2)^{\Re(s_2)} dx_3,
\end{align}
and similarly
\[ X_3 \ll (1+x_2^2)^{\Re(s_1)-t_1} (1+x_1^2)^{\Re(s_2)-t_2} \int_{-\infty}^\infty \abs{x_3}^{2t_1} \abs{x_1 x_2-x_3}^{2t_2} dx_3. \]
Then we substitute $x_3 \mapsto x_1 x_2 x_3$, and the resulting (convergent) integral may be bounded entirely in terms of $\delta$ and $\epsilon$.

For the second bound, we return to \eqref{eq:X3bdFirst} and split the $x_3$ integral into regions $\abs{x_3} < \frac{1}{2}\abs{x_1 x_2}$, $\abs{x_1 x_2 - x_3} < \frac{1}{2}\abs{x_1 x_2}$ and the remaining region, which has $\abs{x_1 x_2 - x_3} \asymp \abs{x_3} \gg \abs{x_1 x_2}$.
The integral over the first region is
\begin{align*}
	\ll& \abs{x_1}^{2\Re(s_2)} (1+x_2^2)^{\Re(s_2)} \int_0^{\frac{1}{2}\abs{x_1 x_2}} (1+x_2^2+x_3^2)^{\Re(s_1)} dx_3,
\end{align*}
into which we substitute $x_3 \mapsto x_3\sqrt{1+x_2^2}$ and use $\Re(s_1) < -\frac{1}{2}$; the second region is handled similarly.
The integral over the remaining region is
\begin{align*}
	\ll& \int_{\abs{x_1 x_2}}^\infty (x_2^2+x_3^2)^{\Re(s_1)} (x_1^2+x_3^2)^{\Re(s_2)} dx_3 \\
	=& \abs{x_1}^{2\Re(s_2)+1}\abs{x_2}^{2\Re(s_1)+1} \int_1^\infty (1+x_1^2 x_3^2)^{\Re(s_1)} (1+x_2^2 x_3^2)^{\Re(s_2)} dx_3,
\end{align*}
and at most one of $\abs{x_1},\abs{x_2} < 1$.
\end{proof}

The claim implies in particular that the six-fold integral
\begin{align*}
	& \int_{U(\R)} \alpha\paren{\tfrac{x_1}{R}} \alpha\paren{\tfrac{x_2}{R}} h^d_\mu(\trans{x}) \wbar{\psi_t(x)} dx \\
	&= \frac{1}{2\pi^2} \Tr\paren{\int_{U(\R)} \alpha\paren{\tfrac{x_1}{R}} \alpha\paren{\tfrac{x_2}{R}} \int_{U(\R)} \trans{\wbar{I^d_{-\wbar{\mu}}(\trans{u})}} \Sigma^{d*}_d I^d_{\mu}(\trans{u} \trans{x}) du \wbar{\psi_t(x)} \, dx}
\end{align*}
also converges absolutely (and normally in $\mu$) when
\begin{align}
\label{eq:nubd}
	\abs{\Re(\mu_1-\mu_2)}, \abs{\Re(\mu_2-\mu_3)} < \frac{1}{4}-2\delta, \qquad \delta > 0.
\end{align}
Indeed, substituting $x \mapsto x u^{-1}$, the $x_3$ and $u_3$ integrals are bounded by
\[ X_3(x_2,x_1,0,0,0,-\tfrac{3}{8}-\delta,-\tfrac{3}{8}-\delta) \quad \text{ and } \quad X_3(u_2,u_1,0,0,0,-\tfrac{3}{8}-\delta,-\tfrac{3}{8}-\delta), \]
respectively.
The convergence of the remaining integrals follows from
\[ \int_{-\infty}^\infty \abs{\alpha\paren{\tfrac{x_i-u_i}{R}}} \abs{x_i}^{-\frac{1}{2}-\delta} dx_i \ll_R (1+\abs{u_i})^{-\frac{1}{2}-\delta}, \]
by the rapid decay of $\alpha$.

This is where our construction would fail for $d \ge 2$, as some additional weight in $x_3$ would be necessary farther away from $\Re(\mu)=0$, and that would necessarily lie outside the span of the characters of $U(\R)$.
As with the Jacquet integral for the Whittaker function \eqref{eq:JacWhittDef} (whose oscillation is due to the character), the difficulty arises because the trivial bound on the defining integral for the spherical function \eqref{eq:AltSphFun} (whose oscillation is due to the Wigner $\WigDName$-matrix) does not give the correct asymptotics on $\mu \in \frak{a}^d_0$ for $d \ge 2$ (compare the power of $y$ in \cref{lem:AnContWhittBd} and \cref{cor:GL3WhittBd} for the $d \ge 2$ Whittaker function).

So we now assume, by analytic continuation that $\mu$ satisfies \eqref{eq:nubd} and
\begin{align}
\label{eq:muCond}
	\Re(\mu_1) < \Re(\mu_2) < \Re(\mu_3).
\end{align}

We apply the expansion \eqref{eq:AltSphFun}, substitute $x \mapsto x u^{-1}$, and use the same trick as in \cref{sect:InterchangeOfIntegrals}, to replace $\int^*_{U(\R)}$ with $ \lim_{R\to\infty} \int_{[-R,R]^2\times\R}$:
\begin{align*}
	\int_{U(\R)} h^d_\mu(\trans{x}) \wbar{\psi_t(x)} dx =& \frac{1}{2\pi^2} \Tr\Biggl(\lim_{R\to\infty} \lim_{R' \to \infty} \int_{\R^2} \what{\alpha}(\xi_1) \what{\alpha}(\xi_2)  \\
	& \qquad \times \int_{U(\R)} \trans{\wbar{I^d_{-\wbar{\mu}}(\trans{u})}} \e{(t_1-\xi_1/R) u_1+(t_2-\xi_2/R) u_2} du \\
	& \qquad \times \Sigma^{d*}_d \int_{[-R',R']^2\times\R} I^d_{\mu}(\trans{x}) \e{-(t_1-\xi_1/R) x_1-(t_2-\xi_2/R) x_2} dx\Biggr).
\end{align*}
For ease of comparison, we substitute $\trans{u} \mapsto w_l u w_l$ and $\trans{x} \mapsto w_l x w_l$ and apply the cycle invariance of the trace to the resulting $\WigDMat{d}(w_l)$, which effectively swaps $u_1$ and $u_2$ and replaces $\trans{u} \to w_l u$, and the same for $x$.

Now we consider a fixed $R$ and assume that $R$ is large enough that $t_{3-i}' := t_i-\xi_i/R \asymp t_i$, $i=1,2$.
Then the $u$ integral is exactly $\trans{\wbar{W^d(I,-\wbar{\mu},\psi_{t'})}}$ (which becomes $\trans{\wbar{W^d(I,-\wbar{\mu},\psi_{t^\iota})}}$ as $R \to \infty$), and it is enough to prove that
\begin{align*}
	\lim_{R' \to \infty} \int_{[-R',R']^2\times\R} I^d_{\mu}(\trans{x}) \wbar{\psi_{t'}(x)} dx =& W^d(I,\mu,\psi_{t'}),
\end{align*}
where the limit converges normally in $\mu$ satisfying \eqref{eq:nubd} and \eqref{eq:muCond} for $t'$ in some compact set.
Moreover, on $\Re(\mu_1)>\Re(\mu_2)>\Re(\mu_3)$, the integral converges to the Whittaker function, so by analytic continuation, it is enough to simply prove the normal convergence of the limit on the larger region \eqref{eq:nubd}, dropping the condition \eqref{eq:muCond}.

As before, we may replace the Wigner $\WigDName$-matrix with integral powers of $\alpha_1$, $\alpha_2$, and $\alpha_3$ as given by \eqref{eq:wlxIwa} at $y=I$.
Then using $\abs{s_i}=1$ we may apply the binomial theorem to reduce again to positive integer powers of
\begin{align*}
	&\frac{\sqrt{\xi_1}}{\sqrt{1+x_2^2}\sqrt{\xi_2}}, && \frac{1}{\sqrt{1+x_2^2}}, && \frac{x_2}{\sqrt{1+x_2^2}}, && \frac{x_1}{\sqrt{\xi_2}}, && \frac{(x_2 x_2-x_3)}{\sqrt{1+x_2^2}\sqrt{\xi_2}}, && \frac{x_3}{\sqrt{\xi_1}}, && \frac{\sqrt{1+x_2^2}}{\sqrt{\xi_1}}.
\end{align*}
So we have to prove the normal convergence of
\begin{align*}
	&\lim_{R' \to \infty} \int_{[-R',R']^2} X_3(x_1,x_2,n_1,n_2,n_3,s_1,s_2) x_1^{n_4} x_2^{n_5} (1+x_2^2)^{-\frac{n_2+n_5+n_6-n_7}{2}} \e{-t'_1 x_1-t'_2 x_2} dx_1\, dx_2,
\end{align*}
where
\[ 2s_1=-1+\mu_3-\mu_2-n_7, \qquad 2s_2=-1+\mu_2-\mu_1-n_4, \qquad 0 \le n_i \in \Z. \]

To compute its derivatives, we write $X_3$ as
\begin{align*}
	& (1+x_2^2)^{-\frac{n_3}{2}} \int_{-\infty}^\infty x_3^{n_1} \paren{x_1 x_2-x_3}^{n_2} \xi_1^{s_1-\frac{n_1-n_3}{2}} \xi_2^{s_2-\frac{n_2+n_3}{2}} dx_3,
\end{align*}
then
\begin{align*}
	\frac{d}{dx_2} X_3 =& -n_3\frac{x_2}{1+x_2^2} X_3(x_1,x_2,n_1,n_2,n_3,s_1,s_2) \\
	&\qquad+n_2 x_1 X_3(x_1,x_2,n_1,n_2-1,n_3,s_1,s_2-\tfrac{1}{2}) \\
	&\qquad+(2s_1-n_1+n_3) x_2 X_3(x_1,x_2,n_1,n_2,n_3,s_1-1,s_2) \\
	&\qquad+(2s_2-n_2-n_3) x_1 X_3(x_1,x_2,n_1,n_2+1,n_3,s_1,s_2-\tfrac{1}{2}).
\end{align*}
By sending $x_3 \mapsto x_1 x_2-x_3$, we also have
\begin{align*}
	\frac{d}{dx_1} X_3 =& n_1 x_2 X_3(x_1,x_2,n_1-1,n_2,n_3,s_1-\tfrac{1}{2},s_2) \\
	&\qquad+(2s_1-n_1+n_3) x_2 X_3(x_1,x_2,n_1+1,n_2,n_3,s_1-\tfrac{1}{2},s_2) \\
	&\qquad+(2s_2-n_2-n_3) x_1 X_3(x_1,x_2,n_1,n_2,n_3,s_1,s_2-1).
\end{align*}

If we perform an integration by parts in both $x_1$ and $x_2$ (integrating the exponential terms and differentiating the rest), using the above formulas, the worst case for convergence (including the endpoints) is the term
\begin{align*}
	&\int_{[-R',R']^2} X_3(x_1,x_2,n_1,n_2,n_3,s_1-\tfrac{1}{2},s_2-\tfrac{1}{2}) \\
	& \qquad \times x_1^{n_4+1} x_2^{n_5+1} (1+x_2^2)^{-\frac{n_2+n_5+n_6-n_7}{2}} \e{-t'_1 x_1-t'_2 x_2} dx_1\, dx_2,
\end{align*}
when $\Re(s_1)=-\frac{3}{8}-\delta-\frac{n_7}{2}$ and $\Re(s_2)=-\frac{3}{8}-\delta-\frac{n_4}{2}$ where $\delta>0$ is small, and in this case, the claim shows the integral is bounded by
\begin{align*}
	&\int_0^{R'} \int_0^{1/x_2} (x_1 x_2)^{4\epsilon} (1+x_1^2)^{-\frac{3}{8}-\delta-\epsilon} (1+x_2^2)^{-\frac{3}{8}-\delta-\epsilon} dx_1\, dx_2 \\
	&+\int_1^{R'} \int_{1/x_2}^1 \paren{x_1^{-\frac{7}{4}-2\delta} x_2^{-\frac{5}{2}-4\delta} +x_2^{-\frac{7}{4}-2\delta}+x_1^{-\frac{3}{4}-2\delta} x_2^{-\frac{5}{2}-4\delta}} dx_1\, dx_2 \\
	&+\int_1^{R'} \int_1^{x_2} \paren{x_1^{-\frac{7}{4}-2\delta} x_2^{-\frac{5}{2}-4\delta} +x_1^{-\frac{5}{2}-4\delta} x_2^{-\frac{7}{4}-2\delta}+x_1^{-\frac{5}{2}-4\delta} x_2^{-\frac{5}{2}-4\delta}} dx_1\, dx_2,
\end{align*}
for some small $\epsilon>0$ and this has the required convergence.

\section{Smooth sums of Kloosterman sums}
\label{sect:SmoothSums}

We now prove \cref{thm:SmoothSums}.
If we define $\hat{f}(s,v)$ to be the Mellin transform of $f$ at each sign,
\[ \hat{f}(s,v) = \int_{Y^+} f(yv) y_1^{s_1} y_2^{s_2} \frac{dy_1 \, dy_2}{y_1 y_2}, \]
then the transform that occurs in applying \eqref{eq:PreArithKuz2} to the sum in \cref{thm:SmoothSums} is
\begin{align}
\label{eq:tildefMellin}
	\wtilde{f}^d(\mu) :=& \frac{1}{4} \int_Y \abs{y_1 y_2} f\paren{\frac{X_1 y_1}{m_1 n_2}, \frac{X_2 y_2}{m_2 n_1}} \wbar{K^d(y,\mu)} dy \\
	=& 4\pi^4 \sum_{v\in V} \int_{\Re(s)=(\epsilon,\epsilon)} \hat{f}(-s,v) \what{K}^d_{w_l}(s,v,\wbar{\mu}) \paren{\frac{X_1}{4\pi^2 m_1 n_2}}^{s_1} \paren{\frac{X_2}{4\pi^2 m_2 n_1}}^{s_2} \frac{ds}{(2\pi i)^2} \nonumber
\end{align}
Note that the super-polynomial decay of $\hat{f}(s,v)$ implies that on $\norm{s} \gg \norm{\mu}^\epsilon$, the integrand (and the residue at any pole of $\what{K}^d_{w_l}$) is $\ll \norm{\mu}^{-100}$.

The proof is given by analyzing the Mellin transform of the Bessel function at each sign.
As noted in \cref{sect:BesselStirling}, if we apply Stirling's formula to the Mellin transforms at different signs for $\mu \in \frak{a}^0_{\frac{1}{2}}=\frak{a}^1_{\frac{1}{2}}$, they vary only by the regions of exponential decay, while the polynomial parts all agree.
Thus the analysis of the weight zero and weight one spectra is identical to that of \cite[section 8]{ArithKuzI}, and we just need to analyze the weight $d \ge 2$ part.
Of course, the $\sgn(y)=(-,-)$ case does have exponential decay in every direction for the weight zero and weight one spectra, but it will turn out that the weight $d \ge 2$ part will recover the terms $X_1^{-1-\epsilon}+X_2^{-1-\epsilon}$, so there is nothing to be gained from a special treatment.

For the weight $d \ge 3$ case, we simply apply \eqref{eq:BesselStirlingdge2} to \eqref{eq:tildefMellin} and insert this into the Weyl law using the comment following \eqref{eq:JdBound}:
We have
\[ \int_{\mathcal{B}^{d*}} \rho_\Xi(n)\wbar{\rho_\Xi(m)} \wtilde{f}^d(\mu_\Xi) d\Xi \ll d^{1+\epsilon} (X_1 X_2)^\epsilon, \]
and summing over $d \le (X_1 X_2)^\epsilon \Max{X_1^{-1/2},X_2^{-1/2}}$ gives the result.

Because of the failure of absolute convergence at $d=2$ (and before demonstrating \cref{cor:SpecKuz2CondConv}), we currently only have an upper bound of $T^4$ (instead of $T^3$) for the sum over cusp forms of spectral parameters $\abs{r} \le T$.
However, for $d=2$ and $v=\vpmpm{++}$, the factor $\what{K}^d_{w_l}(s,v,\wbar{\mu^d(r)})$ is zero, and for $v=\vpmpm{--}$ it has exponential decay in $r$.
For the remaining cases $v=\vpmpm{+-}$ and $v=\vpmpm{-+}$, we may shift contours to $\Re(s)=(\epsilon,-\frac{1}{3}-10\epsilon)$ and $\Re(s)=(-\frac{1}{3}-10\epsilon,\epsilon)$, respectively, without encountering any poles, and this is sufficient for our purposes by \eqref{eq:BesselStirlingdge2}.

\section{The reflection formula}
\label{sect:Reflect}

We now analyze in detail the reflection formula \eqref{eq:MainKlZetammCase}.
Because the results in this section are somewhat speculative, we will be extremely brief in the presentation, and most of the details of the proofs are left to the reader.
A common element in the manipulations of hypergeometric functions and Mellin-Barnes integrals is that polynomials of the summation/integration variable correspond to derivatives of the defined function, e.g.
\[ \int_{\Re(s)=0} s F(s) z^{-s} ds = -z\frac{d}{dz} \int_{\Re(s)=0} F(s) z^{-s} ds, \]
with a similar statement for hypergeometric series.

For the smooth sums, we consider the formula \eqref{eq:MainKlZetammCase} where $f_{--}(y) = f_X(y)$ has the form \eqref{eq:fXDef}.
We see by the double-Bessel integral \eqref{eq:DblBsslmm} and the bound \cite[(3.1)]{GPSSubconv} \cite[(2.10)]{BFKMM}
\begin{align}
\label{eq:JdBound}
	J_d(y) \ll \Min{\paren{\frac{2y}{d}}^d,d^{-1/3}, (y^2-d^2)^{-1/4}}, \qquad 0 \le d \in \Z
\end{align}
that $\wtilde{f}^d(\mu)$ has super-exponential decay in $d$ past $D_2 = (X_1 X_2)^\epsilon \Max{X_1^{-1/2},X_2^{-1/2}}$, and by the Weyl law (\cref{sect:WeylLaw}), the spectral sum over $d < D_1$ grows about like $D_1^{2+\epsilon}$ so we take $D_1 = (X_1 X_2)^{\theta/2}$.
Then all of the terms of \eqref{eq:MainKlZetammCase} are bounded by $(X_1 X_2)^{\theta+\epsilon}$, except the portion of the long-element terms with weight functions $H^*_{w_l,3}(y)$ and $H^*_{w_l,4}(y)$ given below and only in the case where one of $X_1,X_2<1$; these are precisely the source of the terms $X_1^{-1-\epsilon}$ and $X_2^{-1-\epsilon}$ in \cref{thm:SmoothSums}.

For the unweighted zeta function, we consider the formula \eqref{eq:MainKlZetammCase} with $D_1=3$ and $D_2=\infty$, where $f_{--}(y) = f_M(y) = \prod_{i=1}^2 y_i^s \exp(1-y_i^{1/M})$ and take the limit as $M \to \infty$.
Note that the limit $M \to \infty$ can be conducted in the Mellin domain by realizing the Mellin transform of $\exp(1-x^{1/M})$, which is $e M \Gamma(Ms)$, forms an approximation to the identity along a contour on $\Re(s)=0$ with a semi-circular bump of radius $\frac{1}{M}$ to the right around $s=0$.
We give only the first terms in a sequence of contour shifting and describe how one obtains the more general result, because the higher-order terms corresponding to residues of the beta function
\[ \res_{a=-n} B(a,b) = \frac{(-1)^n}{n!} \frac{\Gamma(b)}{\Gamma(b-n)} = \frac{(-1)^n}{n!} \prod_{i=1}^n (b-i), \]
with $n > 0$ result in derivatives of the various special functions, and this is somewhat complicated to write out.
Again, the main obstruction to the analytic continuation comes from the terms $H^*_{w_l,3}(y)$ and $H^*_{w_l,4}(y)$ as the other terms tend to converge in left half-planes in $s$.
The term $H^*_{w_4,2}(y_1)$ and its equivalent for $w=w_5$ introduce additional complications at $\Re(s_i)=0$.

We work mostly in the Mellin domain, and to start, we note that for $d \ge 2$ and $\Re(r)=0$, we have
\begin{align*}
	\wtilde{f}^d(\mu^d(r)) =& (-1)^d \pi^2 \int_{\Re(s)=(\epsilon,\epsilon)} \hat{f}(r-s_1,-r-s_2) B\paren{s_1-3r,s_2+3r} Q(d,s_1) Q(d,s_2) \frac{ds}{(2\pi i)^2}
\end{align*}
where $Q(d,s)$ is given in \eqref{eq:QBDef}, and
\[ \hat{f}(s_1,s_2) = \int_{Y^+} f_{--}(y) y_1^{s_1} y_2^{s_2} \frac{dy_1 \, dy_2}{y_1 y_2}. \]

\subsection{The trivial term}
The crown jewel would be a formula with the $d$ sums evaluated explicitly, and we can accomplish this for the identity Weyl element term.
The Bessel function and Kloosterman sum for $w=I$ are just
\begin{align*}
	 K_I^d(y;r) =& 1, & S_I(\psi_m,\psi_n,c) =& \piecewise{1 & \If mc=n, c\in V, \\ 0 & \Otherwise.}
\end{align*}

For $\Re(s_1+s_2)<-1$, $\Re(s_i) >-1$, define
\[ F_I(s,r) = \sum_{d=3}^\infty (-1)^d Q(d,s_1) Q(d,s_2) \paren{(\tfrac{d-1}{2})^3-9r^2(\tfrac{d-1}{2})}, \]
then we may evaluate $F_I(s,r)$ as follows:
\begin{align*}
	F_I(s,r) =& \paren{\paren{z\tfrac{d}{dz}}^3-9r^2 z\tfrac{d}{dz}}\left.\paren{-z\pFqStar32{1,1+s_1,1+s_2}{2-s_1,2-s_2}{z}+z^{3/2} \pFqStar32{1,\frac{3}{2}+s_1,\frac{3}{2}+s_2}{\frac{5}{2}-s_1,\frac{5}{2}-s_2}{z}}\right|_{z=1},
\end{align*}
where
\[ \pFqStar32{a,b,c}{d,e}{z} = \frac{\Gamma(a)\Gamma(b)\Gamma(c)}{\Gamma(d)\Gamma(e)}\pFq32{a,b,c}{d,e}{z}. \]
(This expression is valid in the region $-1<\Re(s_i) < -\frac{1}{2}$.)
Then using
\[ \frac{d}{dz} \pFqStar32{a,b,c}{d,e}{z} = \pFqStar32{a+1,b+1,c+1}{d+1,e+1}{z}, \]
and the well-known generalizations of Dixon's formula [Chu (31),(33),Theorem 5] for the so-called ``nearly-poised'' $\pFqName32$ (plus a great deal of algebra), we have
\begin{align*}
	F_I(s,r) =& \frac{1}{8(s_1+s_2)}\paren{\frac{4s_1 s_2+3}{1+s_1+s_2}+36r^2-3}\frac{\Gamma\paren{\frac{3}{2}+s_1}\Gamma\paren{\frac{3}{2}+s_2}}{\Gamma\paren{\frac{3}{2}-s_1}\Gamma\paren{\frac{3}{2}-s_2}} \\
	&-\frac{1}{2(s_1+s_2)}\paren{\frac{s_1 s_2}{1+s_1+s_2}+9r^2}\frac{\Gamma\paren{1+s_1}\Gamma\paren{1+s_2}}{\Gamma\paren{1-s_1}\Gamma\paren{1-s_2}}.
\end{align*}
One can check that the residue at $s_1+s_2=0,-1$ is zero.

Thus the sum
\begin{align}
\label{eq:HIstarEval}
	H_I^*(I) =& \frac{1}{4\pi} \int_{\Re(r)=0} \int_{\Re(s)=0} \hat{f}(r-s_1,-r-s_2) B\paren{s_1-3r,s_2+3r} F_I(s,r) \frac{ds}{(2\pi i)^2} \frac{dr}{2\pi i} \\
	&\qquad - \sum_{d\in [3,D_1)\cup(D_2,\infty)} H_I^d(\wtilde{f}^d; I) \nonumber
\end{align}
is given by a meromorphic function with readily identifiable poles.
We must initially shift the $s$ and $r$ contours to the region of absolute convergence of the $F_I(s,r)$ series, but we see that it has no poles, so there is no trouble shifting things back afterward.
To accommodate $D_2 < \infty$ for the smooth sums in \eqref{eq:HIstarEval}, one may instead apply Stirling's approximation in the form
\begin{align*}
	Q(d,s) =& \sum_{j=0}^N P_j(s) (\tfrac{d-1}{2})^{2s-2j-1}+O(d^{2\Re(s)+6\epsilon-2N-3}),
\end{align*}
for $s \ll d^{\epsilon}$ and $\Re(s)$ in some fixed, compact set and the $P_j(s)$ are some polynomials in $s$; for example,
\begin{align*}
	P_0(s)=&1, & P_1(s)=&-\tfrac{1}{3}s(s-\tfrac{1}{2})(s-1), & \text{etc.}
\end{align*}
Then the $d$ sum can be computed in terms of Hurwitz's zeta function and the factor $(-1)^d$ guarantees the cancellation of the possible poles, so in this case as well, we may shift back to $\Re(s)=0$.

Unfortunately, the $d$-sums in the $w\ne I$ terms also need a little help to converge in a useful region, but we will certainly not be able to evaluate the resulting expressions in $\pFqName43$ (two of which are at $-1$) and $\pFqName54$.

\subsection{The long element term}
After contour shifting, we may write
\begin{align}
\label{eq:HwlstarEval}
	H_{w_l}^*(y) =& H_{w_l,0}^*(y) + \delta_{\varepsilon_2=-1} \paren{H_{w_l,1}^*(y)-H_{w_l,3}^*(y)} + \delta_{\varepsilon_1=-1} \paren{H_{w_l,2}^*(y)+H_{w_l,4}^*(y)},
\end{align}
where $H_{w_l,0}^*(y)$ has the contours of the second Bessel function shifted left
\begin{align*}
	H_{w_l,0}^*(y) =& \pi \sum_{d=D_1}^{D_2} (\varepsilon_1\varepsilon_2)^d \int_{\Re(s)=(\epsilon,\epsilon)} \int_{\Re(r)=0} \int_{\Re(s')=(-1+\epsilon,-1+\epsilon)} \abs{4\pi^2 y_1}^{-r-s'_1} \abs{4\pi^2 y_2}^{r-s'_2} \\
	& \qquad \times \hat{f}(-s) B\paren{s_1-2r,s_2+2r} Q(d,s_1+r) Q(d,s_2-r) \\
	& \qquad \times B^{\varepsilon_1,\varepsilon_2}_{w_l}\paren{s',r} Q(d,s'_1) Q(d,s'_2) \paren{\tfrac{d-1}{2}}\paren{\tfrac{d-1}{2}-3r}\paren{\tfrac{d-1}{2}+3r} \frac{ds'}{(2\pi i)^2} \frac{dr}{2\pi i} \frac{ds}{(2\pi i)^2},
\end{align*}
$H_{w_l,1}^*(y)$ and $H_{w_l,2}^*(y)$ have residues in $s'$ with their $r$ contours shifted appropriately
\begin{align*}
	H_{w_l,1}^*(y) =& \pi \sum_{d=D_1}^{D_2} (\varepsilon_1\varepsilon_2)^d J_{d-1}\paren{4\pi\sqrt{\abs{y_2}}} \int_{\Re(s)=(\epsilon,\epsilon)} \int_{\Re(r)=\frac{1}{4}+\epsilon} \abs{4\pi^2 y_1}^{2r} \abs{4\pi^2 y_2}^r \\
	& \qquad \times \hat{f}(-s) B\paren{s_1-2r,s_2+2r} Q(d,s_1+r) Q(d,s_2-r) \\
	& \qquad \times Q(d,-3r) \paren{\tfrac{d-1}{2}}\paren{\tfrac{d-1}{2}-3r}\paren{\tfrac{d-1}{2}+3r} \frac{dr}{2\pi i} \frac{ds}{(2\pi i)^2} \\
	H_{w_l,2}^*(y) =& \pi \sum_{d=D_1}^{D_2} (\varepsilon_1\varepsilon_2)^d J_{d-1}\paren{4\pi\sqrt{\abs{y_1}}} \int_{\Re(s)=(\epsilon,\epsilon)} \int_{\Re(r)=-\frac{1}{4}-\epsilon} \abs{4\pi^2 y_2}^{-2r} \abs{4\pi^2 y_1}^{-r} \\
	& \qquad \times \hat{f}(-s) B\paren{s_1-2r,s_2+2r} Q(d,s_1+r) Q(d,s_2-r) \\
	& \qquad \times Q(d,3r) \paren{\tfrac{d-1}{2}}\paren{\tfrac{d-1}{2}-3r}\paren{\tfrac{d-1}{2}+3r} \frac{dr}{2\pi i} \frac{ds}{(2\pi i)^2},
\end{align*}
and $H_{w_l,3}^*(y)$ and $H_{w_l,4}^*(y)$ have the residues in $r$
\begin{align*}
	H_{w_l,3}^*(y) =& \frac{\pi}{2} \sum_{d=D_1}^{D_2} (\varepsilon_1\varepsilon_2)^d \paren{\tfrac{d-1}{2}} J_{d-1}\paren{4\pi\sqrt{\abs{y_2}}} \int_{\Re(s)=(\epsilon,\epsilon)} \hat{f}(-s) \\
	& \qquad \times \abs{64\pi^6 y_1^2 y_2}^{\frac{s_1}{2}} Q(d,s_2-\tfrac{1}{2}s_1) \frac{ds}{(2\pi i)^2} \\
	H_{w_l,4}^*(y) =& \frac{\pi}{2} \sum_{d=D_1}^{D_2} (\varepsilon_1\varepsilon_2)^d \paren{\tfrac{d-1}{2}} J_{d-1}\paren{4\pi\sqrt{\abs{y_1}}} \int_{\Re(s)=(\epsilon,\epsilon)} \hat{f}(-s) \\
	& \qquad \times \abs{64\pi^6 y_1 y_2^2}^{\frac{s_2}{2}} Q(d,s_1-\tfrac{1}{2}s_2) \frac{ds}{(2\pi i)^2}.
\end{align*}
We have used
\begin{align}
\label{eq:BesselJMB}
	J_{d-1}(2\sqrt{x}) =& \int_{\Re(s)=-\epsilon} Q(d,s) x^{-s} \frac{ds}{2\pi i},
\end{align}
and
\[ Q(d,s)Q(d,-s) \paren{\tfrac{d-1}{2}+s} \paren{\tfrac{d-1}{2}-s} = 1, \]
and $B^{\varepsilon_1,\varepsilon_2}_{w_l}\paren{s,r}$ is given in \eqref{eq:QBDef}.
The presence of the $J$-Bessel function recovers the super-exponential convergence of the $d$ sum, and this holds even for the Kloosterman zeta function (which drops the $s$ integrals).

We note that Mellin inversion gives a slightly prettier formula
\begin{align*}
	H_{w_l,3}^*(y) =& \frac{\pi}{2} \sum_{d=D_1}^{D_2} (\varepsilon_1\varepsilon_2)^d \paren{\tfrac{d-1}{2}} J_{d-1}\paren{4\pi\sqrt{\abs{y_2}}} \int_0^\infty J_{d-1}(2\sqrt{t}) f\paren{8\pi^3 \sqrt{\abs{y_1^2 y_2/t}},t} \frac{dt}{t} \\
	H_{w_l,4}^*(y) =& \frac{\pi}{2} \sum_{d=D_1}^{D_2} (\varepsilon_1\varepsilon_2)^d \paren{\tfrac{d-1}{2}} J_{d-1}\paren{4\pi\sqrt{\abs{y_1}}} \int_0^\infty J_{d-1}(2\sqrt{t}) f\paren{t,8\pi^3 \sqrt{\abs{y_1 y_2^2/t}}} \frac{dt}{t}.
\end{align*}

As mentioned above, shifting the $s'$ contours farther results in polynomials in $s'_1$ or $s'_2$, provided we remove a few initial terms from the $d$ sum (equivalently, we may increase $D_1$), and this yields derivatives of the function $J_{d-1}\paren{4\pi\sqrt{\abs{y_i}}}$, which may be expressed in terms of the shifts $J_{d-1\pm n}$.

\subsection{The $w_4$ and $w_5$ terms}
The $w_4$ and $w_5$ Bessel functions and Kloosterman sums are symmetric
\begin{align*}
	 K_{w_4}^d((y_1,1);\mu(r)) =& \frac{(\varepsilon_1 i)^d }{4\pi^2} \int_{-i\infty}^{+i\infty} \abs{8\pi^3 y_1}^{1-r-s} Q(d,s) \Gamma\paren{s+3r} \exp\paren{\varepsilon_1 \tfrac{\pi i}{2}(s+3r)} \frac{ds}{2\pi i} \\
	K_{w_5}^d(y;r) =& K_{w_4}^d(\vpmpm{-+} y^\iota;-r), \\
	\abs{S_{w_4}(\psi_m,\psi_n,c)} \le& \delta_{m_2 c_1 = n_1 c_2^2} d(c_1) (\abs{m_2}, \abs{n_2}, c_2) c_1, \\
	S_{w_5}(\psi_m,\psi_n,c) =& S_{w_4}(\psi_{m_2,m_1},\psi_{n_2,-n_1},(c_2,c_1)),
\end{align*}
and we just give the decomposition for $w=w_4$.

The contour shifting yields
\begin{gather}
\label{eq:Hw4starEval}
	H_{w_4}^*(y_1,1) = H_{w_4,0}^*(y_1) + H_{w_4,1}^*(y_1) + H_{w_4,2}^*(y_1), \\
\begin{aligned}
	H_{w_4,0}^*(y_1) =& \frac{1}{4} \sum_{d=D_1}^{D_2} (-\varepsilon_1 i)^d (d-1) \int_{\Re(r)=0} \int_{\Re(s)=(\epsilon,\epsilon)} \int_{\Re(s')=-1+\epsilon}\abs{8\pi^3 y_1}^{-r-s'} \\
	& \qquad \times \hat{f}(-s) Q(d,s_1+r) Q(d,s_2-r) Q(d,s') B\paren{s_1-2r,s_2+2r} \\
	& \qquad \times \Gamma\paren{s'+3r} \exp\paren{\varepsilon_1 \tfrac{\pi i}{2}(s'+3r)} \frac{ds}{(2\pi i)^3} \paren{\paren{\tfrac{d-1}{2}}^2-9r^2} \frac{dr}{2\pi i}, \\
	H_{w_4,1}^*(y_1) =& \frac{1}{4} \sum_{d=D_1}^{D_2} (-\varepsilon_1 i)^d (d-1) \int_{\Re(r)=\frac{1}{3}-\epsilon} \int_{\Re(s)=(\epsilon,\epsilon)} \abs{8\pi^3 y_1}^{2r} \\
	& \qquad \times \hat{f}(-s) Q(d,s_1+r) Q(d,s_2-r) Q(d,-3r) B\paren{s_1-2r,s_2+2r} \\
	& \qquad \times \frac{ds}{(2\pi i)^2} \paren{\paren{\tfrac{d-1}{2}}^2-9r^2} \frac{dr}{2\pi i}, \\
	H_{w_4,2}^*(y_1) =& \frac{1}{8} \sum_{d=D_1}^{D_2} (-\varepsilon_1 i)^d (d-1) \int_{\Re(s)=(\epsilon,-\tfrac{1}{2}-\epsilon)} \abs{8\pi^3 y_1}^{s_1} \hat{f}(-s) Q(d,s_2-\tfrac{1}{2}s_1) \frac{ds'}{2\pi i} \frac{ds}{(2\pi i)^2}.
\end{aligned} \nonumber
\end{gather}

Again, Mellin inversion gives
\begin{align*}
	H_{w_4,2}^*(y_1) =& \frac{1}{8} \sum_{d=D_1}^{D_2} (-\varepsilon_1 i)^d (d-1) \int_0^\infty J_{d-1}\paren{2\sqrt{t}} f(8\pi^3\abs{y_1}/t^{1/2},t) \frac{dt}{t},
\end{align*}
but for the Kloosterman zeta function (which drops the $s$ integrals), we do not have the fast convergence of the $d$ sum, so we handle this like the trivial term:
For $\Re(s)<-\tfrac{1}{2}$, we have
\begin{align*}
	F_{w_4}(\varepsilon_1, s) :=& \sum_{d=3}^\infty (-\varepsilon_1 i)^d (d-1) Q(d,s) = \frac{\Gamma\paren{\frac{3}{2}+s}}{\Gamma\paren{\frac{3}{2}-s}}+i\varepsilon_1 \frac{\Gamma\paren{1+s}}{\Gamma\paren{1-s}},
\end{align*}
so we may also write
\begin{align*}
	H_{w_4,2}^*(y_1) =& \frac{1}{8} \int_{\Re(s)=(\epsilon,-\tfrac{1}{2}-\epsilon)} \abs{8\pi^3 y_1}^{s_1} \hat{f}(-s) \Biggl(F_{w_4}(\varepsilon_1, s_2-\tfrac{1}{2}s_1) \\
	& \qquad -\sum_{d\in [3,D_1)\cup(D_2,\infty)} (-\varepsilon_1 i)^d (d-1) Q(d,s_2-\tfrac{1}{2}s_1)\Biggr) \frac{ds}{(2\pi i)^2}.
\end{align*}
The evaluation of the $d$ sum goes by well-known generalizations of Kummer's theorem to nearly-poised series (equation (6) and the following display in \cite{LavGrondRath}); the factor $(d-1)$ results in a derivative which is treated by index shifts.

The $s_2$ contour here is shifted to accommodate $D_2 < \infty$; for the Kloosterman zeta function, we use $D_2=\infty$ so this may be taken at $\Re(s_2)=\epsilon$.
For the smooth sums, one may apply Stirling's approximation and express the $d$ sum in terms of Hurwitz' zeta function as in the case $w=I$; the factor $(-\varepsilon_1 i)^d$ guarantees the cancellation of the possible poles, so again, we may shift up to $\Re(s_2)=\epsilon$.

Shifting the $s'$ and $r$ contours farther yields polynomials in $s_1$, but importantly not in $d$, provided we remove some initial terms of the $d$ sum (equivalently, we may increase $D_1$), so our evaluation of that sum is safe.

\section{Bounds for the $GL(2)$ Whittaker functions}
\label{sect:WhittBds}

We prove Lemmas \ref{lem:GL2WhittBd}-\ref{lem:GL2BBd}.
A common theme in the proof of the exponential decay bounds is an argument of the form:
$\frac{y}{\log y}$ is increasing for $y > e$ and so for $a \ge 5$ and $y > a \log^2 a$,
\[ \frac{y}{\log y} > \frac{a \log^2 a}{\log a+2\log \log a} > a. \]

We also make two references to (generalized) hypergeometric series
\[ \pFq{p}{q}{a_1,\ldots a_p}{b_1,\ldots,b_q}{z} := \sum_{k=0}^\infty \frac{(a_1)_k\cdots(a_p)_k}{k!\,(b_1)_k\cdots (b_q)_k} z^k, \]
where
\[ (a)_k := \prod_{j=0}^{k-1} (a+j) = \frac{\Gamma(a+k)}{\Gamma(a)} = (-1)^k \frac{\Gamma(1-a)}{\Gamma(1-a-k)} \]
is the rising Pochhammer symbol.
When $p=q+1$, there is often some difficulty with branch cuts along $z \in (1,\infty)$, but we will always have $a_1\in-\N$ so that the series terminates (and hence defines a polynomial in $z$).
We will be using these strictly for their arithmetic with respect to shifting indices.

\subsection{The direct function, at integral indices}
We now prove \cref{lem:GL2WhittBd}.

\subsubsection{The decay near zero and infinity.}
We prove the second and third parts first.
It follows from \eqref{eq:classWhittDef} and \cite[13.14.9]{DLMF},
\begin{align*}
	W_{\frac{d}{2}+N,\frac{d-1}{2}}(y) =& e^{-y/2} \sum_{k=0}^N \frac{N! \, (d-1+N)! (-1)^{N-k}}{k! \, (N-k)! \, (d-1+k)!} y^{\frac{d}{2}+k},
\end{align*}
that
\begin{align}
\label{eq:FdNySum}
	\wtilde{\mathcal{W}}_{d,N}(y) =& 2\pi e^{-y/2} \sum_{k=0}^N \frac{(-1)^{N-k} \sqrt{N! \, (d-1+N)!}}{k! \, (N-k)! \, (d-1+k)!} y^{\frac{d-1}{2}+k}.
\end{align}

The absolute ratio of successive terms in \eqref{eq:FdNySum} is
\[ \frac{(N-k)y}{(k+1)(d+k)}, \]
and clearly this is a decreasing function of $k$.
If $y<\frac{d}{N+1}$, then the first term is the sum dominates, and we have
\[ \abs{\wtilde{\mathcal{W}}_{d,N}(y)} \le 2\pi (N+1) \frac{e^{-y/2} y^{\frac{d-1}{2}}}{\sqrt{(d-1)!}} \sqrt{\binom{d-1+N}{d-1}} \le 2\pi (N+1) \frac{e^{-y/2} ((N+1)y)^{\frac{d-1}{2}}}{\sqrt{(d-1)!}}, \]
using $\binom{d-1+N}{d-1} \le (N+1)^{d-1}$.
\Cref{lem:GL2WhittBd}.2 follows from $\frac{d-1}{e(N+1)}<\frac{d}{N+1}$ and Stirling's formula.
(This would fail if we tried to pull out a power of $y$ that was greater than $\frac{d-1}{2}$, so the largest power that works for all $d\ge2$ is $\frac{2-1}{2}=\frac{1}{2}$.)

On the other hand, if $y > N(d-1+N)$ then the last term dominates and we have
\[ \abs{\wtilde{\mathcal{W}}_{d,N}(y)} \le 2\pi (N+1) \frac{e^{-y/2} y^{\frac{d-1}{2}+N}}{\sqrt{N! (d-1+N)!}}. \]
If also $y > 2(d+3+2N) \log^2(d+3+2N)$, then
\[ e^{-y/2} y^{\frac{d-1}{2}+N} \le e^{-y/4} \exp\paren{\frac{\log y}{2}\paren{-\frac{y/2}{\log y}+(d-1+2N)}} \le e^{-y/4}, \]
and \cref{lem:GL2WhittBd}.3 follows.

\subsubsection{The generic bound}
\Cref{lem:GL2WhittBd}.1 is more difficult and we use the bounds of Olver and Dunster as they appear in \cite[sections 13.20(iv),13.21(ii),13.21(iii)]{DLMF}.
There is, perhaps, something of the absurd in applying asymptotics to derive upper bounds for the same functions, but the author was unable to locate the necessary bounds in the literature.
The result is fairly simple for $N=0$ (use \eqref{eq:FdNySum}, Stirling's formula, and basic calculus), so assume $N \ge 1$.

Hopefully the reader will forgive the lack of introduction for the various special functions used here, but they are standard functions that will only appear in this section.
The notation $U(\nu,x)$ is for the parabolic cylinder function, $J_\nu(x)$ is the standard $J$-Bessel function, and $\Ai(x)$ is the Airy function; see \cite[section 9,10,12]{DLMF}.
The $\operatorname{env}$ notation is used to avoid the zeros of functions in their oscillating regions, and should be viewed as the modulus of the oscillation; for instance $\operatorname{envcos}(x)=\operatorname{envsin}(x)=1$.
Outside of the oscillating region, $\operatorname{env}$ is the usual absolute value, up to an absolute constant; see, e.g. \cite[section 2.8iii]{DLMF}.

We recall the necessary equations from \cite{DLMF}.
Let
\begin{align*}
	H_{\kappa,\mu}(x) =& \frac{2\pi}{\sqrt{\Gamma\paren{\kappa+\mu+\frac{1}{2}}\Gamma\paren{\kappa-\mu+\frac{1}{2}}}} x^{-\frac{1}{2}} W_{\kappa,\mu}(x), \\
	\wtilde{H}_{\kappa,\mu}(x) =& \frac{2\pi}{\Gamma(2\mu+1)} \sqrt{\frac{\Gamma\paren{\kappa+\mu+\frac{1}{2}}}{\Gamma\paren{\kappa-\mu+\frac{1}{2}}}} x^{-\frac{1}{2}} M_{\kappa,\mu}(x),
\end{align*}
where $M_{\kappa,\mu}(x)$ is the classical $M$-Whittaker function.

For $x > 0$ and $\kappa\ge\mu>0$, define
\begin{align*}
	\hat{x}=&\frac{x}{\kappa}, & \hat{\mu}=&\frac{\mu}{\kappa}, & x_\pm=&2(1\pm\sqrt{1-\hat{\mu}^2}),
\end{align*}
\begin{align*}
	\alpha=&\sqrt{1-\hat{\mu}}, & X=&\sqrt{\abs{\hat{x}^2-4\hat{x}+4\hat{\mu}^2}},
\end{align*}
\begin{align*}
	\Psi_1=& \frac{\alpha^2(\zeta_1^2-1)}{\hat{x}^2-4\hat{x}+4\hat{\mu}^2}, &
	\Psi_2 =& \frac{\hat{\mu}^2 (1-\zeta_2)}{\hat{x}^2-4\hat{x}+4\hat{\mu}^2}, &
	\Psi_3 =& \frac{\zeta_3}{\hat{x}^2-4\hat{x}+4\hat{\mu}^2},
\end{align*}
\begin{align*}
	f_A(\hat{\mu},\hat{x}) =& \frac{1}{2}X+\hat{\mu}\log\paren{\frac{\hat{x}\sqrt{1-\hat{\mu}^2}}{\abs{2\hat{\mu}^2-\hat{x}+\hat{\mu} X}}}+\log\paren{\frac{2\sqrt{1-\hat{\mu}^2}}{\abs{2-\hat{x}-X}}}, \\
	f_B(\hat{\mu},\hat{x}) =& \frac{1}{2}X-\hat{\mu}\arctan\paren{\frac{\hat{x}-2\hat{\mu}^2}{\hat{\mu} X}}+\arctan\paren{\frac{\hat{x}-2}{X}}, \\
	f(\hat{\mu},\hat{x}) =& \piecewise{
		f_A(\hat{\mu},\hat{x})-\frac{\pi}{2}\alpha^2 & \If 0 < \hat{x} \le x_-, \\
		f_B(\hat{\mu},\hat{x}) & \If x_- \le \hat{x} \le x_+, \\
		f_A(\hat{\mu},\hat{x})+\frac{\pi}{2}\alpha^2 & \If \hat{x} \ge x_+.}
\end{align*}

When $\delta_1 \le \hat{\mu} < 1$, for some fixed $0<\delta_1 <1$, we may combine \cite[13.20.16 and 13.20.18]{DLMF} into:
\begin{align}
\label{eq:DLMF13.20iv}
	H_{\kappa,\mu}(x) =& \frac{2^{5/4} \pi^{3/4}}{\sqrt{\Gamma\paren{\kappa-\mu+\frac{1}{2}}}} \Psi_1^{1/4} \bigl(U(\mu-\kappa,2\alpha\zeta_1\sqrt{\kappa}) \\
	& \qquad +\envU(\mu-\kappa,2\alpha\abs{\zeta_1}\sqrt{\kappa}) O\paren{\mu^{-2/3}}\bigr), \nonumber
\end{align}
with $\zeta_1=\zeta_1(\hat{\mu},\hat{x})$ defined by
\begin{align*}
	\alpha^{-2} f(\hat{\mu},\hat{x}) = g_1(\zeta_1) :=& \zeta_1\sqrt{\abs{\zeta_1^2-1}}+\piecewise{
		\displaystyle \arccosh\paren{-\zeta_1} -\frac{\pi}{2} & \If \zeta_1\le-1, \\
		\displaystyle \arcsin\paren{\zeta_1} & \If -1\le\zeta_1\le1, \\
		\displaystyle \frac{\pi}{2}-\arccosh\paren{\zeta_1} & \If \zeta_1\ge1.}
\end{align*}

When $\hat{\mu} \le 1-\delta_2$ and $\hat{x}\in(0,(1-\delta_2)x_+)$, for some fixed $0<\delta_2 <1$, we may apply \cite[13.21.13]{DLMF}:
\begin{align}
\label{eq:DLMF13.21ii}
	\wtilde{H}_{\kappa,\mu}(x) =& 2^{3/2}\pi \Psi_2^{1/4} \bigl(J_{2\mu}(2\mu\sqrt{\zeta_2}) +\envJ_{2\mu}(2\mu\sqrt{\zeta_2}) O\paren{\kappa^{-1}}\bigr),
\end{align}
with $\zeta_2=\zeta_2(\hat{\mu},\hat{x})$ defined by
\begin{align*}
	\frac{1}{2\hat{\mu}}\paren{f(\hat{\mu},\hat{x})+\frac{\pi}{2}\alpha^2} = g_2(\zeta_2) :=& \sqrt{\abs{\zeta_2-1}} - \piecewise{
		\displaystyle \frac{1}{2}\log\paren{\frac{1+\sqrt{1-\zeta_2}}{1-\sqrt{1-\zeta_2}}} & \If 0 < \zeta_2 \le 1, \\
		\displaystyle \arctan\sqrt{\zeta_2-1} & \If \zeta_2\ge1.}
\end{align*}
Note: This corrects a discrepancy between [DLMF (13.21.12)] and the original [Dunster (3.7)] for $\hat{x} > 2$, which is attainable when $\delta_2 < \frac{1}{2}$ for small $\hat{\mu}$.

When $\hat{\mu} \le 1-\delta_3$ and $\hat{x}\in((1+\delta_3)x_-,\infty)$, for some fixed $0<\delta_3 <1$, we may apply \cite[13.21.23]{DLMF}:
\begin{align}
\label{eq:DLMF13.21iii}
	H_{\kappa,\mu}(x) =& \sqrt{2} \kappa^{-1/3} \Psi_3^{1/4} \paren{\Ai\paren{\kappa^{2/3}\zeta_3}+\envAi\paren{\kappa^{2/3}\zeta_3} O\paren{\kappa^{-1}}},
\end{align}
with $\zeta_3=\zeta_3(\hat{\mu},\hat{x})$ defined by
\begin{align*}
	f(\hat{\mu},\hat{x})-\frac{\pi}{2}\alpha^2 = g_3(\zeta_3) :=& \piecewise{
		\displaystyle -\frac{2}{3}(-\zeta_3)^{3/2} & \If \zeta_3 \le 0, \\
		\displaystyle \frac{2}{3}\zeta_3^{3/2} & \If \zeta_3 \ge 0.}
\end{align*}

Notice that if $\hat{\mu} \le 1-\delta$ then
\[ (1-\delta)x_+ \ge (1-\delta)2(1+\sqrt{(2-\delta)\delta}) > 2(1-\delta), \]
\[ (1+\delta)x_- \le (1+\delta)2(1-\sqrt{(2-\delta)\delta}) < 2(1-\delta), \]
so the $\hat{x}$ intervals in \eqref{eq:DLMF13.21ii} and \eqref{eq:DLMF13.21iii} always overlap.
We choose $\delta_1=\delta_2=\delta_3=\frac{1}{2}$ and separate \eqref{eq:DLMF13.21ii} from \eqref{eq:DLMF13.21iii} at $\hat{x}=1$.

The functions $f,g_i$ and $\zeta_i$ are all increasing and differentiable in $\hat{x}$ (except for the $g_i$ and $f$ functions at $\hat{x}=x_\pm$, as these are all multiplied by $-i$ on the range $x_-<\hat{x}<x_+$, but this cancels in the definition of the $\zeta_i$), and we have
\begin{align*}
	f(\hat{\mu},0)=& -\infty, & f(\hat{\mu},x_-)=&-\tfrac{\pi}{2}\alpha^2, & f(\hat{\mu},x_+)=&\tfrac{\pi}{2}\alpha^2 & f(\hat{\mu},\infty)=&\infty, \\
	\zeta_1(\hat{\mu},0) =& -\infty, & \zeta_1(\hat{\mu},x_-) =& -1, & \zeta_1(\hat{\mu},x_+) =& 1, & \zeta_1(\hat{\mu},\infty) =& \infty, \\
	\zeta_2(\hat{\mu},0) =&0, & \zeta_2(\hat{\mu},x_-) =& 1 & \hat{\mu}^2\zeta_2(\hat{\mu},1) \in& (\tfrac{1}{2},1), \\
	& & \zeta_3(\hat{\mu},1) \in& (-2,-1) & \zeta_3(\hat{\mu},x_+) =& 0 & \zeta_3(\hat{\mu},\infty) =& \infty.
\end{align*}
The zeros in the denominators of the $\Psi_i$ are cancelled by zeros in the numerators (in the region of definition in $\hat{x}$), and the result is $\Psi_i \asymp 1$, independent of $\hat{\mu}$ for $\hat{x} \asymp 1$.
As $\hat{x} \to \infty$,
\[ \Psi_1 \sim \frac{1}{2\hat{x}}, \qquad \Psi_3 \sim \paren{\frac{3}{4}}^{2/3} \hat{x}^{-4/3}, \]
and as $\hat{x} \to 0$,
\[ \Psi_1\sim\frac{1}{4\hat{\mu}} \log(1/\hat{x}), \qquad \Psi_2\sim\frac{1}{4}. \]

Returning to the proof of \cref{lem:GL2WhittBd}.1, notice that if $\kappa=\frac{d}{2}+N$ and $\mu=\frac{d-1}{2}$, then
\[ \wtilde{\mathcal{W}}_{d,N}(x) = H_{\kappa,\mu}(x) = (-1)^N \wtilde{H}_{\kappa,\mu}(x), \]
using \cite[13.14.33]{DLMF}; we now assume $\kappa$ and $\mu$ are of this form.

\noindent\emph{Case I}. Suppose $\frac{1}{2} \le \hat{\mu} < 1$.

The connection \cite[12.7.2]{DLMF} with Hermite functions and Indritz's bound \cite[18.14.9]{DLMF} for the Hermite functions implies $\abs{U(-N-\frac{1}{2},x)} \le \sqrt{N!}$, so on this region, we have
\[ \wtilde{\mathcal{W}}_{d,N}(x) \ll \paren{(\kappa/\mu)\log(3+(\kappa/x))}^{1/4}. \]

\noindent\emph{Case II}. Suppose $\hat{\mu} \le \frac{1}{2}$ and $0 < \hat{x} \le 1$.

We have \cite[10.14.1]{DLMF}
\[ \abs{J_\nu(x)} \le 1, \qquad \nu \ge 0, x\in\R\]
so on this region
\[ \wtilde{\mathcal{W}}_{d,N}(x) \ll 1. \]

\noindent\emph{Case III}. Suppose $\hat{\mu} \le \frac{1}{2}$ and $\hat{x} \ge 1$.

From \cite[9.7.5 and 9.7.9]{DLMF}, we have
\[ \abs{\Ai(x)} \ll (1+\abs{x})^{-1/4} \exp\paren{-\tfrac{2}{3} \Max{x,0}^{3/2}}, \]
so on this region
\[ \wtilde{\mathcal{W}}_{d,N}(x) \ll \kappa (\kappa+x)^{-4/3}. \]

\subsection{The Mellin transform, at integral indices}
We now prove \cref{lem:GL2WhittMellinBd}.2.

Directly from \eqref{eq:FdNySum}, we have
\begin{align*}
	\what{\mathcal{W}}_{d,N}(s) =& 2\pi\sum_{k=0}^N \frac{(-1)^{N-k} \sqrt{N! \, (d-1+N)!}}{k! \, (N-k)! \, (d-1+k)!} 2^{\frac{d-1}{2}+k+s} \Gamma\paren{\tfrac{d-1}{2}+k+s},
\end{align*}
and this may be written in terms of a terminating hypergeometric series as
\[ \what{\mathcal{W}}_{d,N}(s) = (-1)^N 2^{\frac{d+1}{2}+s} \pi \frac{\Gamma\paren{\frac{d-1}{2}+s}}{(d-1)!} \sqrt{\frac{(d-1+N)!}{N!}} \pFq21{-N,\frac{d-1}{2}+s}{d}{2}. \]

The recursion \cite[9.137.2]{GradRyzh}
\[ \pFq21{a-1,b}{c}{z} = \frac{a(z-1)}{a-c}\pFq21{a+1,b}{c}{z}-\frac{c-2a+(a-b)z}{a-c}\pFq21{a,b}{c}{z} \]
for the hypergeometric function implies the recursion
\begin{align*}
	\what{\mathcal{W}}_{d,N+1}(s) =& \sqrt{\frac{N(d+N-1)}{(N+1)(d+N)}} \what{\mathcal{W}}_{d,N-1}(s) + \frac{2s-1}{\sqrt{(N+1)(d+N)}} \what{\mathcal{W}}_{d,N}(s).
\end{align*}

From the base cases
\begin{align*}
	\wtilde{\mathcal{W}}_{d,0}(s) =& 2^{\frac{d+1}{2}+s} \pi \frac{\Gamma\paren{\tfrac{d-1}{2}+s}}{\sqrt{(d-1)!}} & \wtilde{\mathcal{W}}_{d,1}(s) =& \frac{2s-1}{\sqrt{d}} \wtilde{\mathcal{W}}_{d,0}(s),
\end{align*}
and induction, we may deduce
\[ \abs{\what{\mathcal{W}}_{d,N}(s)} \le \abs{\what{\mathcal{W}}_{d,0}(s)}\prod_{i=1}^N \paren{1+\frac{\abs{2s-1}}{\sqrt{i(d-1+i)}}} \le (N+2)^2 (1+\abs{s-\tfrac{1}{2}})^N 2^{d/2} \frac{\abs{\Gamma\paren{\frac{d-1}{2}+s}}}{\sqrt{(d-1)!}}. \]

We use
\[ 2^{d/2} \frac{\abs{\Gamma\paren{\frac{d-1}{2}+s}}}{\sqrt{(d-1)!}} \ll (1+\abs{s-\tfrac{1}{2}})^{\frac{d-2}{2}} \abs{\Gamma\paren{\tfrac{1}{2}+s}}, \]
and for $1 \ge \Re(s)+\frac{1}{2} \ge \delta > 0$, Stirling's formula gives
\[ \abs{\what{\mathcal{W}}_{d,N}(s)} \ll_\delta (N+2)^2 (1+\abs{s-\tfrac{1}{2}})^{N+\frac{d-2}{2}} \abs{\tfrac{1}{2}+s}^{\frac{1}{2}} \exp\paren{-\tfrac{\pi}{2}\abs{\Im(s)}}. \]

Then cutting $\tfrac{\pi}{2}\abs{\Im(s)}$ into four equal pieces (three to remove the leading factors and one left over), \cref{lem:GL2WhittMellinBd}.2 follows from the fact that
\[ \frac{\log^2 x}{\log(2+x\log^2 x)} > 1 \]
for $x > 6$, and
\[ \frac{16}{\pi}\log(N+2) < \frac{4}{\pi} (5+d+2N)\log^2(5+d+2N). \]

\subsection{The Mellin transform, near the imaginary axis}
We now prove \cref{lem:GL2BBd} parts two and three.

Since the $\mathcal{B}_{\varepsilon, m}(a,b)$ function satisfies the symmetry
\[ \mathcal{B}_{\varepsilon, -m}(a,b) = \varepsilon \mathcal{B}_{\varepsilon, m}(a,b), \]
we will assume throughout that $m \ge 0$, and we define $\delta_1,\delta_2,\delta_3\in\set{0,1}$ by $(-1)^{\delta_1}=\varepsilon$, $\delta_2\equiv m+\delta_1\pmod{2}$, and $\delta_3\equiv m\pmod{2}$.

\subsubsection{The recursion formula in $b$}
Writing $\mathcal{B}_{\varepsilon, m}(a,b)$ as a terminating hypergeometric series, we have
\[ \mathcal{B}_{\varepsilon, m}(a,b) = (i m)^{\delta_1} \frac{\Gamma\paren{\frac{\delta_1+a}{2}}\Gamma\paren{\frac{m-\delta_1+b}{2}}}{\Gamma\paren{\frac{m+a+b}{2}}} \pFq32{\tfrac{\delta_1+a}{2},\tfrac{1-m}{2},\delta_1-\tfrac{m}{2}}{\delta_1+\tfrac{1}{2},\tfrac{2+\delta_1-m-b}{2}}{1}. \]

The hypergeometric function satisfies a rather complicated recurrence relation, which simplifies somewhat at $z=1$ \cite[(21)]{WilsonTTR}
\begin{align*}
	a_1 a_2 a_3 F(0)=&b_1(b_2-a_1)(b_2-a_2)(b_2-a_3)(F(1)-F(0))\\
	& \qquad +b_2(b_2-1)(b_1+b_2-a_1-a_2-a_3-1)(F(-1)-F(0)),\\
	F(j) :=& \pFq32{a_1,a_2,a_3}{b_1,b_2+j}{z}.
\end{align*}
This implies for the $\mathcal{B}_{\varepsilon, m}(a,b)$ function that
\begin{align}
\label{eq:Bfunbrecur}
	 \mathcal{B}_{\varepsilon, m}(a,b) =& \frac{4+3a+5b+2ab+2b^2-m^2}{b(1+b)} \mathcal{B}_{\varepsilon, m}(a,b+2)\\
	 & \qquad -\frac{(2+a+b-m)(2+a+b+m)}{b(1+b)} \mathcal{B}_{\varepsilon, m}(a,b+4). \nonumber
\end{align}

\subsubsection{The generic bound}
Directly from \eqref{eq:BetaFunDef} and \cite[(2.27)]{HWI}, we see
\begin{align*}
	\abs{\mathcal{B}_{\varepsilon, m}(a,b)} \le& B\paren{\tfrac{\Re(a)}{2},\tfrac{\Re(b)}{2}},
\end{align*}
where $B(a,b)$ is again the usual beta function.
\Cref{lem:GL2BBd}.2 follows from this and the recursion on $b$ given in \eqref{eq:Bfunbrecur}.

\subsubsection{The exponential decay bound}
Using $\sqrt{\frac{a}{2}}+\sqrt{\frac{b}{2}} \le \sqrt{a+b}$, it's easy to modify the proof of \cite[(2.33)]{HWI} to see
\[ \abs{\mathcal{B}_{\varepsilon, m}(a,b)} \le \paren{2m+\abs{a}+\abs{b}}^{m/2} \abs{\frac{\Gamma\paren{\frac{\delta_1+a}{2}}\Gamma\paren{\frac{\delta_2+b}{2}}}{\Gamma\paren{\frac{m+a+b}{2}}}}. \]

Suppose $10 \ge \Re(a),\Re(b)\ge\eta>0$ and $\abs{\Im(a-b)} > \Max{10,m,\abs{\Im(a+b)}}$, then
\[ \Gamma\paren{\frac{m+a+b}{2}} \gg \Gamma\paren{\frac{\delta_3+a+b}{2}}, \]
and we note that
\[ \abs{\Im(a)}+\abs{\Im(b)}-\abs{\Im(a+b)} = \abs{\Im(a-b)}-\abs{\Im(a+b)}. \]
Then Stirling's formula gives
\[ \abs{\mathcal{B}_{\varepsilon, m}(a,b)} \ll_\eta \abs{6\Im(a-b)}^{\frac{m+\Re(a+b)+\delta_1+\delta_2-2}{2}} \abs{\delta_3+a+b}^{\frac{1}{2}} \exp-\tfrac{\pi}{4}\paren{\abs{\Im(a-b)}-\abs{\Im(a+b)}}. \]

We split $\tfrac{\pi}{4}\abs{\Im(a-b)}$ into four equal parts, and then \cref{lem:GL2BBd}.3 in the region $\Re(b) > 0$ follows from
\[ \frac{\log^2 x}{\log(6x\log^2 x)} > 1 \]
for $x>12$.
The bound on the region $-1<\Re(b) < 0$ follows from the recursion on $b$ given in \eqref{eq:Bfunbrecur} together with $\abs{a}\asymp\abs{b}\asymp\abs{a-b}$.

\bibliographystyle{amsplain}

\bibliography{HigherWeight}

\end{document}